\documentclass[reqno]{amsart}

\usepackage{amsmath}
\usepackage{amsfonts}
\usepackage{rotating}
\usepackage{bbm}
\usepackage[all,cmtip]{xy}
\usepackage{amscd}
 
\usepackage{tikz}
\usetikzlibrary{arrows}

\usepackage{mathrsfs}
\usepackage{color}
\usepackage{xcolor}
\usepackage{mathtools}
\usepackage{amssymb}
\usepackage[latin1]{inputenc}
\usepackage{graphicx}
\usepackage{dsfont}
\usepackage{color}
\usepackage{hyperref}

\usepackage{marginnote}

\usepackage{comment}


\newtheorem{theorem}{Theorem}[section]

\newtheorem{lemma}[theorem]{Lemma}
\newtheorem{proposition}[theorem]{Proposition}
\newtheorem{corollary}[theorem]{Corollary}

\theoremstyle{definition}
\newtheorem{definition}[theorem]{Definition}

\newtheorem{example}[theorem]{Example}

\theoremstyle{remark}
\newtheorem{remark}[theorem]{Remark}

\numberwithin{equation}{section}






\newcommand{\p}{\partial}

\newcommand{\D}{{\mathbb{D}}}

\newcommand{\C}{{\mathbb{C}}}

\newcommand{\R}{{\mathbb{R}}}

\newcommand{\Z}{{\mathbb{Z}}}
\newcommand{\N}{{\mathbb{N}}}

 \newcommand{\im}{\mathrm{im}}
 \newcommand{\wind}{\mathrm{wind}}
 
 \newcommand{\muRS}{\mu_{\mathrm{RS}}}
 \newcommand{\muCZ}{\mu_{\mathrm{CZ}}}

\renewcommand{\epsilon}{\varepsilon}
\renewcommand{\theta}{\vartheta}



\usepackage{graphicx}

\begin{document}
\title[]{Symmetric periodic orbits and invariant disk-like global surfaces of section on the three-sphere}
\author{Seongchan Kim}
 \address{Institut de Math\'ematiques, Universit\'e de Neuch\^atel, Rue Emile-Argand 11, 2000 Neuch\^atel, Switzerland}
 \email {seongchan.kim@unine.ch}
\subjclass[2010]{Primary: 37J05, 37J55; Secondary:  53D35}

\date{Recent modification; \today}
\setcounter{tocdepth}{2}
\maketitle

\begin{abstract}
 We study Reeb dynamics on the three-sphere equipped with a tight contact form and an anti-contact involution. We prove the existence of a symmetric periodic orbit and provide necessary and sufficient conditions for it  to bound an invariant disk-like global surface of section. We also study the same questions under the presence of    additional  symmetry and    obtain similar results in this case. The proofs make use of  pseudoholomorphic curves in symplectizations. As applications, we study   Birkhoff's conjecture on disk-like global surfaces of section in the planar circular restricted three-body problem and  the existence of symmetric closed Finsler geodesics on the two-sphere.  We also present applications  to some classical Hamiltonian systems.

\end{abstract}

\tableofcontents

\newpage 

\section{Introduction} \label{sec:intro}

The goal of this article is to study   global dynamical properties of  Reeb flows on the tight three-sphere equipped with a symmetry.  Recall that a \emph{contact three-manifold} is a three-manifold $M$ equipped with a maximally non-integrable hyperplane distribution $\xi$, called a \emph{contact structure}. If $\xi$ is co-oriented, then there exists a global one-form $\lambda$ on $M$, called a \emph{contact form} (defining $\xi$),  such that $\xi = \ker  \lambda $.  Note that if $\lambda$ is a contact form, then $\lambda \wedge d\lambda$ is non-vanishing and that for every smooth non-vanishing function $f$ on $M$, the one-form $f\lambda$ is  a contact form defining the same contact structure as $\lambda$. By abuse of terminology, we also call the pair $(M, \lambda)$ a contact manifold. We are interested in the dynamics of the \emph{Reeb vector field} $R = R_{\lambda}$ of $\lambda$   uniquely characterized by the equations
\[
\lambda(R) = 1 \quad \text{ and } \quad \iota_R d \lambda = 0.
\]
A periodic  Reeb  orbit will be denoted by $P=(x,T)$, where   $ x \colon \R \to M$ solves the differential equation $\dot{x} = R \circ x $ and $T>0$ is a period. It is said to be  \emph{simply covered} if $T$ is the minimal period, namely, if $k \in \N$ is such that $T / k$ is a period of $x$, then $k=1$. It is called \emph{unknotted} if the trace $x(\R)$ bounds an embedded closed disk.  In the sequel, by a periodic orbit we always mean a periodic Reeb orbit.   Since the Reeb flow $\phi_R^t$ satisfies $(\phi_R^t)^* \lambda = \lambda, $  it preserves the splitting $TM = \left< R \right> \oplus \xi$ so that we have  the  transverse linearized Reeb flow  
\[
d\phi_R^t(z) |_{\xi_z} \colon \xi_z \to \xi_{\phi_R^t(z)}, \quad z \in M.
\]
A periodic orbit $P=(x,T)$ is called non-degenerate if $1$ is not an eigenvalue of  $d\phi_R^T( x(0)) |_{\xi_{x(0)}}$. This does not depend on the parametrization. If every periodic orbit is non-degenerate, then we say that $\lambda$ is non-degenerate.

A     smooth involution $\rho$ defined on a contact manifold $(M, \lambda)$ is said to be   \emph{anti-contact} if $\rho^* \lambda =  - \lambda$.  In this case, the triple $(M, \lambda, \rho)$ is called a \emph{real contact manifold}. Since $\dim M = 3$, every anti-contact involution  is orientation-preserving. We assume that the fixed point set $\mathrm{Fix}(\rho)$ is non-empty.   The Reeb vector field $R $ satisfies  $\rho^* R = -R$ from which we find      $\phi_R^t = \rho \circ \phi_R^{-t} \circ \rho$. 
A \emph{chord} $C=(c,T/2)$ is an integral curve $c \colon [0, T/2] \to M$ of $R$ with   boundary condition $c(0), c(T/2) \in \mathrm{Fix}(\rho)$.  Closing it up via  $\rho$  provides us with one of the main characters of this article,   a \emph{symmetric periodic orbit} $P=(x,T),$ defined as
\[
x \colon [0,T] \to M, \quad \quad x(t):= \begin{cases} c(t) & \text{ if }  t \in [0,T/2], \\ \overline{c}(t):= \rho \circ c(T-t) & \text{ if } t \in [T/2, T]. \end{cases}
\]


The other main character is an \emph{invariant global surface of section}. Recall that a global surface of section for $R$   is an embedded compact surface $\Sigma \subset M$ such that each connected component of the boundary $\p \Sigma$, called the \emph{spanning orbits},  is a periodic   orbit,    the interior $\mathring {\Sigma}$ of $\Sigma$ is transverse to $R$, and all   orbits of $R$  in $M \setminus \p \Sigma$ hit $\mathring{\Sigma}$ infinitely often forward and backward in time. When   $M$ is equipped with an anti-contact involution $\rho$, we call a global surface of section $\Sigma$ \emph{invariant} if $\rho(\Sigma) = \Sigma$.  In what follows, we only consider the case $\Sigma=\mathfrak{D}$ is a disk. It is obvious that the boundary is a symmetric periodic orbit. A result of {K}er\'{e}kj\'{a}rt\'{o}, see for instance  \cite[Proposition 3.3]{CK94involution}, shows that $\mathrm{Fix}(\rho) \cap \mathfrak{D}$ is a simple arc such that the two connected components of $\mathfrak{D} \setminus \mathrm{Fix}(\rho)$ are permuted by $\rho$. The presence of an invariant  disk-like  global surface of section $\mathfrak{D}$ enables us to reduce the Reeb dynamics on $M$ to the study of an area-preserving homeomorphism of $\mathfrak{D}$ satisfying a certain property: Abbreviate by $\psi \colon \mathring{\mathfrak{D}} \to \mathring{\mathfrak{D}} $ the associated first return map defined by $\psi(x) = \phi_R^{\tau(x)}(x)$, where
\[
\tau(x) := \min \{ t >0 \mid \phi_R^t(x) \in \mathring{\mathfrak{D}} \}
\]
is the first return time of $ x \in \mathring{\mathfrak{D}}$.  It   is reversible with respect to $\rho$ in the sense that $\psi \circ \rho |_{\mathring{\mathfrak{D}}} \circ \psi = \rho |_{\mathring{\mathfrak{D}}}$. It then follows from \cite[Proposition 5.3]{Kang2} that there exists  $x \in \mathring{\mathfrak{D}}$ such that $\psi(x) = x = \rho(x)$, called a \emph{symmetric fixed point}, corresponding to  a symmetric periodic orbit distinct from $ \p \mathfrak{D}$. See also \cite[Section 9.7]{FvK18book}.  By \cite[Theorem 1.1]{Kang2} there exist  either two or infinitely many symmetric periodic orbits. Even the existence of a non-symmetric periodic orbit  forces the existence of infinitely many symmetric periodic orbits.

\begin{example}\label{exam:hopf}    Recall that the Reeb flow of the standard contact form   on $S^3 =\{ (z_1, z_2) \in \C^2 \mid \lvert z_1 \rvert^2 + \lvert z_2 \rvert^2 =1 \}$ is given by $\phi_R^t(z_1, z_2) = (e^{ 2 it}z_1, e^{2 i t}z_2)$ and hence defines the Hopf fibration.  The associated first return map is the identity map, and the first return time is  the constant function $\pi$.    The fiber $S^1 \times \{ 0 \} \subset S^3$ binds an open book decomposition of $S^3$ whose pages are the disks 
 \[
 \mathfrak{D}_{\theta}=  \left\{ \frac{(z, e^{ i \theta})}{ \sqrt{ \lvert z\rvert^2 +1 }}   \; \;  \bigg|  \; \; z \in \C \right\} \subset S^3 , \quad e^{ i \theta} \in S^1.
   \]
 Each $\overline{\mathfrak{D}_{\theta}}$ is a disk-like global surface of section. 
 Consider the anti-contact involution $\rho (z_1, z_2) = (\overline{z_1}, - \overline{z_2})$.  The fiber $S^1 \times \{ 0 \}$ is a $\rho$-symmetric periodic orbit, and the open book decomposition is $\rho$-symmetric in the sense that $\rho( \mathfrak{D}_{\theta}) = \mathfrak{D}_{ \pi - \theta}$. 
 Among the disks, only $\mathcal{D}_{\pi/2}$ and $\mathcal{D}_{3\pi/2}$ are $\rho $-invariant.    
\end{example}

A  contact structure $\xi  $ is said to be \emph{overtwisted} if there exists an overtwisted disk, i.e.\ an embedded disk $D$   such that the boundary $\p {D}$ is a Legendrian curve and $T_q {D} \neq \xi_q$ for all $ q \in \p {D}$. If there exists no overtwisted disk, then $\xi$ is called \emph{tight}.  Assume that $M=S^3$. Since $\rho$ is orientation-preserving,   a result of  Smith \cite{Smi38} tells us that  $\mathrm{Fix}(\rho) \cong S^1$, provided that $\mathrm{Fix}(\rho) \neq \emptyset$.    In view of   results by Bennequin  and Eliashberg \cite{Ben83, Eli92},  there exists a unique, up to   diffeomorphism, tight contact structure $\xi_0$ on $S^3$ which is determined by a contact form  $\lambda = f \lambda_0 |_{S^3}$, where $f$ is a smooth non-vanishing function on $S^3$ and $\lambda_0 :=(1/2) (q_1 dp_1 - p_1 dq_1 + q_2 dp_2 - p_2 dq_2)$ is the Liouville form on $\R^4$. We  call the pair $(S^3, \xi_0)$ the \emph{tight three-sphere}. A contact form defining $\xi_0$ is called a \emph{tight contact form}. Let $\lambda = f\lambda_0$ be a tight contact form. Then a smooth involution $\rho$ is anti-contact if and only if $f \circ \rho = f$ and $\rho ^* \lambda_0 = -\lambda_0$. The triple $(S^3, \lambda, \rho)$ is called a \emph{real tight three-sphere} (with respect to $\rho$).  For example, a compact hypersurface in $\R^4$ which is star-shaped with respect to the origin and invariant under complex conjugation is a real tight three-sphere.  We say that a real tight three-sphere $(S^3, \lambda, \rho)$ is non-degenerate if $\lambda$ is non-degenerate.

The following statement is proved in Section \ref{se:profthemBF}. It follows easily from automatic transversality of embedded fast finite energy planes due to Hryniewicz \cite{Hry12fast} and the existence result of (not necessarily invariant) disk-like global surfaces of section due to Hryniewicz and Salom\~ao \cite{HS11}. We remark that     not every Reeb flow on the tight three-sphere admits a disk-like global surface of section, see  \cite{vK19}.

\begin{theorem}\label{ThmA1}  Assume that a real tight three-sphere $(S^3, \lambda, \rho)$ is non-degenerate.  A simply covered symmetric periodic orbit $P=(x,T)$ bounds an invariant disk-like global surface of section if and only if it is unknotted,   has self-linking number equal to $-1$ and   Conley-Zehnder index at least $3$,  and     $P$ is linked to every periodic orbit of  Conley-Zehnder index equal to $2$. 
 In this case, $P$ bounds two invariant disk-like global surfaces of section, and they form an embedded two-sphere containing $\mathrm{Fix}(\rho) \cong S^1$. 
 If $\lambda$ is dynamically convex, i.e.\ every periodic orbit has   Conley-Zehnder index at least $3$, the non-degeneracy assumption can be dropped. 
 \end{theorem}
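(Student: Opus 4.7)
The plan is to combine the Hryniewicz-Salom\~ao existence theorem for disk-like global surfaces of section with an equivariance analysis of the relevant moduli space of pseudoholomorphic planes.

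Necessity is routine. If $\mathfrak D$ is an invariant disk-like global surface of section with $\partial\mathfrak D=P$, then $P$ is unknotted because it bounds $\mathfrak D$; the condition $\mathrm{sl}(P)=-1$ follows from the fact that $R$ is a non-vanishing section of $\xi$ along $\mathring{\mathfrak D}$, which trivializes $\xi|_{\mathfrak D}$, together with the standard winding computation along $P$. The bound $\muCZ(P)\geq 3$ is part of the index characterization of binding orbits of open books whose pages are disk-like global sections (cf.\ Hryniewicz-Salom\~ao). Finally, every periodic orbit $P'\neq P$ must hit $\mathring{\mathfrak D}$, hence is nontrivially linked with $P$; this in particular applies to orbits of Conley-Zehnder index $2$.

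For sufficiency, I first apply the Hryniewicz-Salom\~ao theorem under its hypotheses to produce an, a priori non-invariant, disk-like global section $\mathfrak D_0$ bounded by $P$, realized as the projection of an embedded fast finite energy plane $\tilde u_0\colon\C\to\R\times S^3$ asymptotic to $P$. Automatic transversality of such planes, due to Hryniewicz, then shows that the moduli space $\mathcal M$ of embedded fast finite energy planes asymptotic to $P$, modulo biholomorphic reparametrization and $\R$-translation, is a smooth $1$-manifold and that the associated projected disks assemble into an open book decomposition of $S^3$ with binding $P$; in particular $\mathcal M\cong S^1$. Now symmetry enters. The lift $\tilde\rho(a,p):=(-a,\rho(p))$ is an antisymplectic involution of the symplectization $(\R\times S^3,d(e^a\lambda))$. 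Choose a $d\lambda$-compatible, $\R$-invariant almost complex structure $\tilde J$ satisfying $d\tilde\rho\circ\tilde J=-\tilde J\circ d\tilde\rho$ (possible by an averaging argument). Then $\tilde u\mapsto\tilde\rho\circ\tilde u\circ c$, with $c(z)=\bar z$ the standard antiholomorphic involution of $\C$, sends embedded fast finite energy planes asymptotic to $P$ to planes of the same type, inducing an involution $\sigma$ on $\mathcal M$. Since $\rho^*\lambda=-\lambda$, the differential $d\rho$ at any point of $P\cap\Fix(\rho)$ reverses the orientation of $\xi$, so $\sigma$ acts on $\mathcal M\cong S^1$ by an orientation-reversing involution, which has exactly two fixed points. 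These correspond to two $\tilde\rho$-invariant planes whose projections are the two $\rho$-invariant disk-like global sections claimed. Being distinct pages of the open book, they meet only along $P$, and their union is an embedded $2$-sphere $\mathbb S$. By the Ker\'ek\-j\'art\'o result cited in the introduction, $\Fix(\rho)\cap\mathfrak D$ is a simple arc on each invariant page with endpoints on $P\cap\Fix(\rho)$; the two arcs glue along their common endpoints to form $\Fix(\rho)\cong S^1\subset\mathbb S$.

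The main obstacle I foresee is the equivariant setup: choosing $\tilde J$ with the prescribed anti-involution property while keeping compatibility and $\R$-invariance, and ensuring the induced $\sigma$ is orientation-reversing (not trivial and not free) on the circle $\mathcal M$. The local computation of $d\rho$ on $\xi$ at a fixed point of $\rho$ lying on $P$ resolves the latter issue once the former is in place; the sample computation is consistent with Example \ref{exam:hopf}, where $\rho$ permutes the Hopf pages via $\mathfrak D_\theta\mapsto\mathfrak D_{\pi-\theta}$ and the two invariant pages are precisely $\mathfrak D_{\pi/2}$ and $\mathfrak D_{3\pi/2}$. The dynamically convex case follows by the same scheme, replacing the non-degenerate existence theorem with its limit version in the spirit of Hofer-Wysocki-Zehnder / Hryniewicz-Salom\~ao, where non-degeneracy is traded for the uniform index lower bound.
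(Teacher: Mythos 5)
Your overall architecture matches the paper's: necessity via \cite{HS11}, and sufficiency via the Hryniewicz--Salom\~ao existence result, automatic transversality of embedded fast planes, the resulting open book $\mathcal{M}_{\mathrm{fast}}(P,J)/\R\cong S^1$, and an induced involution on that moduli space. But there is a genuine gap at the hinge of the argument. The Hryniewicz--Salom\~ao theorem produces an embedded fast finite energy plane for \emph{some} compatible $J'$, which has no reason to be $\rho$-anti-invariant, whereas your involution $\widetilde{u}\mapsto\widetilde{\rho}\circ\widetilde{u}\circ c$ only acts on the moduli space attached to an \emph{anti-invariant} $J\in\mathcal{J}_{\rho}$. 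You never explain why $\mathcal{M}_{\mathrm{fast}}(P,J)$ is non-empty for such a $J$. The paper bridges exactly this with Theorem \ref{thm:cobordism}: a parametric moduli space over a path $\{J_t\}$ from $J'$ to $J$, compactness supplied by the linking hypothesis (this is where ``$P$ is linked to every orbit of $\muCZ=2$'' is actually consumed), and the Ehresmann fibration theorem to transport non-emptiness. Without this step your sufficiency argument does not close. Two smaller problems: your lift $\widetilde{\rho}(a,p)=(-a,\rho(p))$ is wrong --- it must be $\mathrm{Id}_{\R}\times\rho$, since otherwise $\widetilde{\rho}^*(e^r\lambda)\neq -e^r\lambda$ and the composition $\widetilde{\rho}\circ\widetilde{u}\circ c$ turns the positive puncture into a negative one, so it is not a plane ``of the same type.''

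Where you do diverge from the paper is in locating the invariant pages: you argue that $\sigma$ acts on $\mathcal{M}\cong S^1$ as an orientation-reversing involution and count its two fixed points, justified by the claim that $D\rho$ reverses the orientation of $\xi$ at points of $P\cap\mathrm{Fix}(\rho)$. This mechanism is plausible but under-justified as stated: you would need to relate the action on the circle of pages to the anti-complex-linear action on $\xi$ at the binding (or, more directly, to the fact that $\rho$ conjugates the Reeb flow to its inverse and hence reverses the open book coordinate). The paper sidesteps this entirely with a softer argument: for $q$ in a component $\mathcal{L}_i$ of $\mathrm{Fix}(\rho)\setminus x(\R)$, the page through $q$ and its image under $\rho_*$ both contain $q$, so by mutual disjointness of pages they coincide; Ker\'ekj\'art\'o then gives $\mathcal{L}_i\subset\mathrm{Fix}(\rho|_{\overline{u(\C)}})$, and the two components yield the two invariant pages forming the sphere containing $\mathrm{Fix}(\rho)$. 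If you repair the cobordism gap, I would recommend replacing the fixed-point count by this direct argument, which is both shorter and avoids any orientation bookkeeping. Your treatment of the dynamically convex case by a degenerate limit is consistent with the paper's.
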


  \medskip
 
\noindent
We say that a periodic orbit $\overline{P}=( \overline{x}, \overline{T})$ is  \emph{geometrically distinct from $P=(x,T)$} if $x(\R) \cap  \overline{x} (\R) = \emptyset$. In this case, $\overline{P}$ determines  a homology class $[\overline{x}] \in H_1(S^3 \setminus x(\R); \Z) \cong \Z$. If $[\overline{x}] \neq 0$, then $\overline{P} $ is said to be \emph{linked to $P$}. 
For definition of the self-linking number, see Subsection \ref{sec:self}.

\begin{remark} \label{rmk:selfadd} The two global surfaces of section $\mathfrak{D}_1$ and $\mathfrak{D}_2$ obtained in the above theorem are related by
\[
\mathring{\mathfrak{D}}_2 = \{ \phi_R^{\tau(x) /2}(x) \mid x \in \mathring{\mathfrak{D}}_1 \}.
\]
Therefore, if $x \in  \mathring{\mathfrak{D}}_1$ is a symmetric fixed point of the associated first return map,  then $\phi_R^{\tau(x)/2}(x)  \in  \mathring{\mathfrak{D}}_2$ is a symmetric fixed point of the first return map associated to $\mathfrak{D}_2$.  Thus, the two points belong to  a single symmetric periodic orbit geometrically distinct from $P = \p \mathfrak{D}_1 = \p \mathfrak{D}_2$. In view of a result of Hryniewicz \cite{Hry14system}, it is also unknotted and has self-linking number equal to $-1$.

\end{remark}

 Consider the family of   anti-contact involutions 
 \begin{equation}\label{antis}
  (z_1, z_2) \mapsto  ( e^{ i \theta_1}\overline{z _1}, e^{i \theta_2}\overline{z_2} )
  \end{equation}
   on $S^3$ for $\theta_1, \theta_2 \in \R$.    Abbreviate by $\mathscr{P}_*$  the set of unknotted  simply covered non-symmetric periodic orbits of self-linking number equal to $-1$ and Conley-Zehnder index equal to $2$.

\begin{theorem}\label{ThmA2} Consider   a non-degenerate real tight three-sphere $(S^3, \lambda, \rho )$ such that $\rho$ has the form \eqref{antis}. Assume that  $\mathscr{P}_* = \emptyset$. Then there exists    an unknotted simply covered symmetric periodic orbit $P$ of self-linking number equal to $-1$ and having Conley-Zehnder index in  $\{ 2,3,4 \}$. In the dynamically convex case,   one can drop the non-degeneracy assumption, and the Conley-Zehnder index of $P$ belongs to $\{3,4\}.$  
\end{theorem}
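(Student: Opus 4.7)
The plan is to adapt the pseudoholomorphic curve machinery underlying Theorem \ref{ThmA1} to the equivariant setting. First I would choose a $d\lambda|_{\xi}$-compatible complex structure $J$ on $\xi$ satisfying $d\rho\circ J=-J\circ d\rho$; such $J$ exist because the relevant space is contractible and $\rho$-averaging preserves compatibility. Its lift $\tilde J$ to the symplectization $\R\times S^3$ is then anti-holomorphic with respect to $\hat\rho(a,m)=(a,\rho(m))$, meaning $\hat\rho^*\tilde J=-\tilde J$, so $\hat\rho$ carries $\tilde J$-holomorphic maps to $\tilde J$-holomorphic maps after precomposition by complex conjugation on the domain.

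Second, I would exploit the normal form \eqref{antis}: up to deformation through anti-contact involutions of the stated type, $\rho$ is connected to a standard complex conjugation for which the answer is transparent. In the standard model the Hopf fiber $\{z_2=0\}$ is a $\rho$-symmetric periodic orbit of self-linking number $-1$ and Conley-Zehnder index $3$ bounding two invariant disk-like global surfaces of section via Example \ref{exam:hopf}. I would then interpolate $\lambda$ to this model through a path $\lambda_s$ of $\rho$-invariant contact forms and attempt to track a one-parameter family of $\hat\rho$-equivariant fast finite energy $\tilde J_s$-holomorphic planes, whose asymptotes would sweep out a family of $\rho$-symmetric periodic orbits.

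Third, I would run an equivariant bubbling-off analysis in the spirit of Hryniewicz-Salom\~ao \cite{HS11}. SFT compactness, performed modulo the $\hat\rho$-action, should produce in the limit either a smooth $\hat\rho$-equivariant fast plane whose asymptote $P$ is symmetric with $\muCZ(P)\in\{3,4\}$, or an equivariant bubble tree whose main component is a fast plane with lower-index asymptote of $\muCZ=2$. In the first case, automatic transversality of Hryniewicz \cite{Hry12fast} already yields an unknotted simply covered symmetric orbit of the desired form. In the second case, the bubbled-off plane is again $\hat\rho$-equivariant (up to reparametrization), so either the new asymptote is itself $\rho$-symmetric, giving the desired $P$ with $\muCZ=2$, or it comes as a pair of non-symmetric orbits swapped by $\rho$; the latter pair would lie in $\mathscr{P}_*$, contradicting the hypothesis.

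The hard part is controlling the equivariant bubbling: one must ensure that the $\hat\rho$-symmetric limit is a smooth plane rather than a higher-index nodal configuration, and simultaneously rule out degenerations into non-symmetric pairs that do not immediately violate $\mathscr{P}_*=\emptyset$. The upper bound $\muCZ(P)\leq 4$ should come from the Fredholm dimension count for fast planes in a two-dimensional equivariant family, any higher index forcing the family to contain an additional lower-index symmetric orbit which must then play the role of $P$. In the dynamically convex case the bound $\muCZ(P)\geq 3$ is automatic; without convexity I would rely on the self-linking bound together with the asymptotic index inequalities to secure $\muCZ(P)\geq 2$.
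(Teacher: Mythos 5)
Your overall strategy -- interpolate from $\lambda$ to a model Reeb flow, run an equivariant bubbling-off analysis, and use $\mathscr{P}_*=\emptyset$ to exclude non-symmetric bubble planes -- is the strategy of the paper. But two of your choices would make the argument break down as written. First, you interpolate to the round Hopf model. That contact form is totally degenerate (the Reeb flow is periodic), so the Hopf fiber is a degenerate asymptotic limit, and the asymptotic convergence results (Theorem \ref{thm:asymphfwz}), the Fredholm theory of Section \ref{sec:Fredth}, and automatic transversality all require non-degenerate asymptotics. The paper instead interpolates to an \emph{irrational ellipsoid} $E=H^{-1}(1)$ with $r_2^2/r_1^2\notin\Q$, which has exactly two simple closed orbits, both non-degenerate, with $\muCZ(P_1)=3$ and $\muCZ(P_2)\ge 5$; the positive asymptote of the whole construction is pinned to $P_1$. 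Moreover, the paper does not track a family of planes along a path $\lambda_s$ of contact forms; it builds a single $\widetilde\rho$-anti-invariant cobordism almost complex structure $\hat J$ on $\R\times S^3$ equal to $\widetilde J_E$ above and $\widetilde J$ below, and uses the implicit function theorem (Theorem \ref{thm:iFTfheotem}) to push an invariant generalized finite energy plane asymptotic to $P_1$ downward until the gradient blows up. Your continuation-in-$s$ scheme faces the additional problem that the model orbit need not persist as a periodic orbit of $\lambda_s$, so the asymptotic limit of your family is uncontrolled.

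The second gap is the index control, which is where the real work lies. Your statement that $\muCZ(P)\le 4$ ``should come from the Fredholm dimension count'' is not yet an argument: the bubble tree can have many levels, with both symmetric and non-symmetric asymptotic limits at each level, and one must propagate an index constraint all the way to the bottom planes. The paper does this in Proposition \ref{lem:indexbetweeb}: for a generic anti-invariant $\hat J$ (Theorem \ref{thm:genfinite}), combining the non-negativity of the Fredholm index of invariant half-spheres with the winding estimates of Lemma \ref{cor:lemaKF16} and the lower bounds $\muRS\ge 3/2$, $\muCZ\ge 2$ (Theorems \ref{FK16thminv} and \ref{HWZIIthm}) forces \emph{every} symmetric limit in the tree to have $\muRS=3/2$ and every non-symmetric one to have $\muCZ=2$. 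Only then does Lemma \ref{inexd"rel} give $\muCZ(P)\in\{2,3,4\}$ for the bottom invariant plane's asymptote, and only then does the embeddedness/fastness argument plus \cite[Theorem 1.7]{HWZ96unknotted} show a non-symmetric bottom limit would lie in $\mathscr{P}_*$. Finally, in the dynamically convex case you cannot simply ``drop'' non-degeneracy: the paper needs a $\rho$-equivariant perturbation lemma (Lemma \ref{lemmarealrbinson}) producing non-degenerate approximating forms with all short orbits of index $\ge 3$, followed by an Arzel\`a--Ascoli limit and lower semi-continuity of $\muCZ$; your proposal omits this step entirely.
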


\noindent
The reason why we only consider the anti-contact involutions $\rho$ of the form \eqref{antis}  is that  in the proof we interpolate between the  Reeb dynamics of $\lambda$ and the ``simplest" one,  where  the latter carries exactly two unknotted simply covered $\rho$-symmetric periodic orbits of self-linking number equal to $-1$ and Conley-Zehnder index $\geq 3$, see Section \ref{sec:thmA}. The hypothesis that $\mathscr{P}_* = \emptyset$ is used in the bubbling off analysis. The existence of a symmetric periodic orbit (not assuming any other properties) is related to a conjecture of Seifert,  see Remark \ref{rmk:Seifert} below.

\begin{remark}\label{rmk:jypothes}  As mentioned above, the proof of  Theorem \ref{ThmA1} makes use of  a result due to Hryniewicz and Salom\~ao saying that a simply covered periodic orbit  $P$  on a non-degenerate tight three-sphere bounds a disk-like global surface of section if and only if it is unknotted,   has self-linking number equal to $-1$ and Conley-Zehnder index $\geq 3$,   and is linked to   every periodic orbit of Conley-Zehnder index equal to $2$. In their proof, in fact they only make use of the assumption that $P$ is linked to every periodic orbit   which is the asymptotic limit of an embedded finite energy plane of  index equal to $2$. See \cite{HS11} for more details. By means of \cite[Theorem 1.7]{HWZ96unknotted} such periodic orbits have self-linking number equal to $-1$. Therefore, the assertion of Theorem \ref{ThmA1} holds as well if we replace the last condition   by saying   that  the set of unknotted  simply covered   periodic orbits of self-linking number equal to $-1$ and Conley-Zehnder index equal to $2$ is empty. 
\end{remark}

As an immediate corollary of Theorems \ref{ThmA1} and \ref{ThmA2},   Remark \ref{rmk:jypothes}, and  \cite[Corollary 1.2]{Kang2}  we obtain the following assertion which is a refinement of the results of Frauenfelder and Kang \cite{FK16}, where a contact form is assumed to be dynamically convex.

\begin{theorem}\label{ThmA3}
 Assume that  a non-degenerate real tight three-sphere $(S^3, \lambda, \rho ),$ where $\rho$ has the form \eqref{antis},  satisfies that there exists no  unknotted  simply covered   periodic orbit  of self-linking number equal to $-1$ and Conley-Zehnder index equal to $2$.   Then   there exists a symmetric periodic orbit which bounds   an invariant disk-like global surface of section  and has   Conley-Zehnder index equal to  3 or 4.  As a result, there exist either two or infinitely many symmetric periodic orbits. The existence of a non-symmetric periodic orbit  forces the existence of infinitely many symmetric periodic orbits. If $\lambda$ is dynamically convex, then the same assertions hold without the non-degeneracy assumption.
 \end{theorem}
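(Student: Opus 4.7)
The plan is to assemble the preceding results: Theorem \ref{ThmA2} (existence of a symmetric periodic orbit with controlled Conley-Zehnder index), Theorem \ref{ThmA1} in the form of Remark \ref{rmk:jypothes} (invariant disk-like global surface of section), Remark \ref{rmk:selfadd} (second symmetric periodic orbit via the first return map), and \cite[Corollary 1.2]{Kang2} (the two-or-infinity dichotomy and the forcing).

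First I would observe that the standing hypothesis---absence of any unknotted simply covered periodic orbit with self-linking number $-1$ and $\muCZ = 2$---is strictly stronger than $\mathscr{P}_* = \emptyset$, since $\mathscr{P}_*$ only collects the \emph{non-symmetric} such orbits. Hence Theorem \ref{ThmA2} applies and yields an unknotted simply covered symmetric periodic orbit $P$ of self-linking number $-1$ with $\muCZ(P) \in \{2,3,4\}$. The value $2$ is excluded by the hypothesis itself, so in fact $\muCZ(P) \in \{3,4\}$.

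Next, Remark \ref{rmk:jypothes} allows me to replace the linking condition in Theorem \ref{ThmA1} by precisely our hypothesis. Since $P$ satisfies the remaining conditions (simply covered, unknotted, self-linking $-1$, $\muCZ \geq 3$), it bounds an invariant disk-like global surface of section $\mathfrak{D}$, establishing the first assertion. By Remark \ref{rmk:selfadd}, applied to the two invariant disks obtained from $P$, the first return map has a symmetric fixed point which gives rise to a second symmetric periodic orbit geometrically distinct from $P$; thus at least two symmetric periodic orbits exist. The dichotomy and the forcing statement then follow from \cite[Corollary 1.2]{Kang2}. In the dynamically convex setting every periodic orbit has $\muCZ \geq 3$, so the standing hypothesis becomes vacuous, while both Theorem \ref{ThmA1} (via Remark \ref{rmk:jypothes}) and Theorem \ref{ThmA2} explicitly drop the non-degeneracy assumption in this case.

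Since the statement is explicitly advertised as ``an immediate corollary,'' there is no genuine technical obstacle; the substantive analytic work has been absorbed into Theorems \ref{ThmA1} and \ref{ThmA2}. The only subtlety is the bookkeeping around the hypothesis: one must verify it is strong enough to imply both $\mathscr{P}_* = \emptyset$ (to feed Theorem \ref{ThmA2}) and the weaker-linking form of Remark \ref{rmk:jypothes} (to upgrade $P$ to an invariant surface of section), and one must reuse it a second time to exclude the value $\muCZ(P) = 2$ produced by Theorem \ref{ThmA2}.
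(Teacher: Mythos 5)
Your proposal is correct and follows exactly the route the paper intends: the paper itself presents Theorem \ref{ThmA3} as an immediate corollary of Theorems \ref{ThmA1} and \ref{ThmA2}, Remark \ref{rmk:jypothes}, and \cite[Corollary 1.2]{Kang2}, and your bookkeeping (the hypothesis implies $\mathscr{P}_*=\emptyset$, excludes $\muCZ(P)=2$, and supplies the weakened linking condition of Remark \ref{rmk:jypothes}) is precisely the intended argument. The dynamically convex reduction is also handled as the paper does.
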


 \begin{remark}\label{rmk:Seifert} In \cite{Sei48} Seifert showed that  in a mechanical Hamiltonian system defined on $\R^{2n}$, a regular energy level satisfying certain conditions carries a symmetric periodic orbit with respect to complex conjugation. He conjectured the existence of at least $n$ geometrically distinct  symmetric periodic orbits. The case  $n=2$ is proved recently by     Giamb\`o,   Giannoni,  and Piccione \cite{GGP15}.  If an energy level  in $\R^4$  is star-shaped and the associated contact form  is dynamically convex, then a stronger statement holds: there exist at least two unknotted symmetric periodic orbits of self-linking number equal to $-1$, see \cite[Theorems 2.3 and 2.6]{FK16}. In view of Remark \ref{rmk:selfadd},  Theorem \ref{ThmA3} refines this statement.  
 \end{remark}

 Suppose that a real  tight three-sphere $(S^3, \lambda, \rho)$ is endowed with  a diffeomorphism $\sigma$ such that $\sigma^*\lambda =\lambda$ and $\sigma^p = \mathrm{Id}$ for  some $p \in \N$.    
Assume that 
\begin{equation}\label{eq:action}
\sigma \circ \rho \circ \sigma = \rho. 
\end{equation}
For each $j=0, 1, \ldots, p-1$, the smooth map $\rho_j :=  \sigma^{ j} \circ \rho  $ is an anti-contact involution with $\rho_0 = \rho$.   A periodic orbit $P=(x,T)$ is said to be \emph{$(\rho, \sigma)$-symmetric} if $\rho_j ( x(\R)) = x(\R)$ for all $j$.   A disk-like global surface of section $\mathfrak{D}$ is called \emph{$ (\rho, \sigma)$-invariant} if $\rho_j (\mathfrak{D} )  = \mathfrak{D} $ for all $j$. The boundary $\p \mathfrak{D}$ is a $(\rho, \sigma)$-symmetric periodic orbit. Since $\rho_0
|_{\mathfrak{D}}$ and $   \rho_1 |_{\mathfrak{D}}$ are area-preserving,   $\mathrm{Fix}(\rho_0|_{\mathfrak{D}} ) \cap \mathrm{Fix}(\rho_1|_{\mathfrak{D}})  \neq \emptyset$, i.e.\ there exists $ z_* \in \mathring{\mathfrak{D}}$ such that $\rho_0(z) = z= \rho_1(z)$. By definition of $\rho_j$, we   find that $\sigma(z_*) = z_*$ (and $\rho_j(z_*) = z_*$ for all $j$).  Therefore, the presence of a common fixed point of $\rho$ and $\sigma$  is a necessary  condition for  a  $ (\rho, \sigma) $-invariant global surface of section to exist.

Consider the special case that $p=2$. In this case $\sigma$ is   a contact involution, meaning that it is an   involution such that $\sigma^* \lambda= \lambda.$   Assumption \eqref{eq:action} now reads   $\rho \circ \sigma = \sigma \circ \rho$. The two commuting anti-contact involutions $\rho$ and $\rho_{\sigma} := \sigma \circ \rho$      generate   dihedral symmetry on $S^3$. A $(\rho, \sigma)$-symmetric periodic orbit or a $(\rho, \sigma)$-invariant global surface of section is also referred to as a \emph{doubly-symmetric  periodic orbit} or a \emph{doubly-invariant  global surface of section}, respectively. If $P=(x,T)$ is  a doubly-symmetric periodic orbit, then  $c  = x|_{[0,T/4]}$ is the  chord  with  boundary condition $c(0) \in \mathrm{Fix}(\rho )$ and $c(T/4) \in \mathrm{Fix}(\rho_{\sigma})$  and    recovers $x$ in such a way that

\[
x(t) = \begin{cases}  c(t) & \text{ if } t \in [0,T/4],  \\      \rho_{\sigma} \circ c(T/2-t) & \text{ if }  t \in [T/4, T/2], \\     \rho \circ \rho_{\sigma} \circ c(t-T/2) & \text{ if } t \in [T/2, 3T/4], \\ \rho \circ c(T-t) & \text{ if } t \in [3T/4, T].    \end{cases}
\]
As before, $\mathrm{Fix}(\rho) \cap \mathrm{Fix}(\rho_{\sigma}) \neq \emptyset$ is a necessary condition to the existence of a doubly-invariant disk-like global surface of section.

\begin{example} [{continued}] Fix $p \in \N$ and consider $\sigma (z_1, z_2)=   ( e^{2 \pi i / p} z_1, z_2)$.  Note that  $\rho_j (z_1, z_2) = (e^{ 2 \pi i j /p}\overline{z_1}, - \overline{z_2})$ and that $\mathrm{Fix}(\rho ) \cap \mathrm{Fix}(\sigma) = \{ (0,0,0, \pm 1) \}.$
The fiber $S^1 \times \{ 0 \}$  is a  $(\rho, \sigma)$-symmetric periodic orbit, and the given open book decomposition is $(\rho, \sigma)$-symmetric in the sense that $\rho_j ( \mathfrak{D}_{\theta}) = \mathfrak{D}_{\pi - \theta}$ for $j=0,1,\ldots, p-1$. The two disks $\mathfrak{D}_{\pi/2}$ and $\mathfrak{D}_{3\pi/2}$ are the only  $(\rho, \sigma)$-invariant  disk-like global surfaces of section, and they contain  $(0,0,0,1)$ and $(0,0,0,-1)$, respectively. 
 
  \end{example}

    The following assertion is proved in  Section \ref{se:profthemBF} in the same way as Theorem \ref{ThmA1}.

\begin{theorem}\label{ThmB1}  Assume that a non-degenerate real tight three-sphere $(S^3, \lambda, \rho)$ admits a $p$-periodic  strict contactomorphism $\sigma$ satisfying condition \eqref{eq:action}.   Assume further that  $\mathrm{Fix}(\rho) \cap \mathrm{Fix}(\sigma) \neq \emptyset$. 
A simply covered $(\rho, \sigma)$-symmetric periodic orbit $P=(x,T)$ bounds a  $(\rho, \sigma)$-invariant disk-like global surface of section if and only if it is unknotted,   has self-linking number equal to $-1$ and   Conley-Zehnder index greater than or equal to $3$,  and  $P$ is linked to every periodic orbit of Conley-Zehnder index equal to $2$. 
Moreover, $\# \left( \mathrm{Fix}(\rho ) \cap \mathrm{Fix}(\sigma) \right)= 2$, and  $P$ bounds   two $(\rho, \sigma)$-invariant disk-like global surfaces of section containing either point  in $  \mathrm{Fix}(\rho ) \cap \mathrm{Fix}(\sigma).$ If $\lambda$ is dynamically convex, the non-degeneracy assumption can be dropped.
\end{theorem}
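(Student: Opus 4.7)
My plan is to follow the same strategy outlined for Theorem \ref{ThmA1}, adding an equivariance layer to accommodate the additional contactomorphism $\sigma$. For the necessity direction, if $P$ bounds a $(\rho,\sigma)$-invariant disk-like global surface of section, then unknottedness is immediate, the self-linking number equal to $-1$ follows from the standard computation for boundaries of global surfaces of section of Reeb flows (since the interior is transverse to $\xi$), and the Conley-Zehnder index $\geq 3$ together with the linking condition against periodic orbits of index $2$ follow from the same arguments used for Theorem \ref{ThmA1}, which ultimately rest on the Hryniewicz-Salom\~ao characterization and the behavior of the first return map near $P$.

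For sufficiency, I would apply the existence result of Hryniewicz and Salom\~ao to obtain an (a priori non-invariant) disk-like global surface of section bounded by $P$, and then upgrade it to a $(\rho,\sigma)$-invariant one by choosing the almost complex structure $\tilde J$ on the symplectization $\R\times S^3$ equivariantly. Concretely, I would pick $\tilde J$ to be $\R$-invariant, compatible with $d(e^a\lambda)$, anti-invariant under the natural lift $\tilde\rho(a,x)=(a,\rho(x))$, and invariant under the lift $\tilde\sigma(a,x)=(a,\sigma(x))$; such a $\tilde J$ exists since $\rho$ and $\sigma$ commute and the space of equivariant choices is non-empty and contractible (one averages). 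Hryniewicz's automatic transversality then shows that the moduli space of embedded fast finite energy $\tilde J$-planes asymptotic to $P$, modulo $\R$-translation and reparametrization, is a smooth $1$-manifold which, together with the orbit cylinder over $P$, constitutes an open book decomposition of $S^3$ whose pages are all disk-like global surfaces of section for $R_\lambda$.

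Equivariance of $\tilde J$ induces an orientation-reversing involution of the page parameter $S^1$ by $\rho$ (orientation-reversing because $\rho^*\lambda=-\lambda$) and a $\Z/p$-action by $\sigma$. The $\rho$-action has exactly two fixed points, producing two $\rho$-invariant disk-like pages $\mathfrak D_1,\mathfrak D_2$. To see that $\sigma$ fixes each of these two pages rather than permuting them, I would invoke the hypothesis $\mathrm{Fix}(\rho)\cap\mathrm{Fix}(\sigma)\neq\emptyset$: by Ker\'ekj\'art\'o each $\rho$-invariant disk $\mathfrak D_i$ meets $\mathrm{Fix}(\rho)\cong S^1$ in an embedded arc $\alpha_i$, and $\alpha_1\cup\alpha_2=\mathrm{Fix}(\rho)$ with $\alpha_1\cap\alpha_2\subset\partial\mathfrak D_1=\partial\mathfrak D_2=x(\R)$. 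A common fixed point $z_*$ of $\rho$ and $\sigma$ lies in one of the $\alpha_i$'s, forcing $\sigma(\mathfrak D_i)$ to be a $\rho$-invariant page containing $z_*$ and therefore equal to $\mathfrak D_i$. Since $\sigma$ preserves $\mathrm{Fix}(\rho)$ and permutes the two arcs $\alpha_1,\alpha_2$ compatibly with the fixed pages, it must fix each $\alpha_i$ setwise. Restricted to $\alpha_i\cong[0,1]$, the periodic diffeomorphism $\sigma|_{\alpha_i}$ has at least one fixed point $z_*$, and a standard argument (the only non-trivial finite-order diffeomorphisms of a closed arc are involutions with a single interior fixed point, but $\sigma^p=\mathrm{Id}$ and the endpoints are fixed by $\rho|_{\alpha_i}$'s closure structure) forces $\sigma|_{\alpha_i}=\mathrm{Id}$, so each $\alpha_i$ contains exactly one point of $\mathrm{Fix}(\rho)\cap\mathrm{Fix}(\sigma)$, yielding $\#(\mathrm{Fix}(\rho)\cap\mathrm{Fix}(\sigma))=2$ and the required assignment of one point to each invariant page.

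The main obstacle I expect is precisely the equivariance analysis of the last paragraph: ensuring that the $\sigma$-action does not swap $\mathfrak D_1$ and $\mathfrak D_2$ and that it fixes each $\rho$-invariant disk pointwise along $\mathrm{Fix}(\rho)$, rather than as a non-trivial rotation. All other ingredients---existence of the foliation, automatic transversality, the page-parameter analysis for $\rho$---transcribe directly from the proof of Theorem \ref{ThmA1}. The dynamically convex case with non-degeneracy dropped is handled by the same approximation scheme as in Theorem \ref{ThmA1}, choosing the perturbations $\lambda_n\to\lambda$ to be $\rho$-anti-invariant and $\sigma$-invariant (again available by averaging), taking limits of the two invariant pages, and using dynamical convexity to rule out bubbling.
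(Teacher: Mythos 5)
Your overall strategy---choosing the almost complex structure in $\mathcal{J}_{\rho,\sigma}$, invoking automatic transversality to obtain the open book of fast planes, and then locating invariant pages---is the paper's strategy, and your necessity direction and the dynamically convex limiting argument transcribe from Theorem \ref{ThmA1} exactly as you say. Where the two arguments diverge is the step you yourself flag as the main obstacle. The paper resolves it more directly: it first picks $q\in\mathrm{Fix}(\rho)\cap\mathrm{Fix}(\sigma)$ and takes the page $u(\C)$ through $q$; since $q$ is fixed by every $\rho_j=\sigma^j\circ\rho$ and each $\rho_j$ permutes the (mutually disjoint) pages, one gets $\rho_j(u(\C))=u(\C)$ for all $j$, hence $u(\C)=\rho(u(\C))=\sigma(u(\C))$ in one stroke. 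The second invariant page is then produced dynamically as $\Phi(1/2,\mathfrak{D})$ for the parametrization $\Phi(t,x)=\phi_R^{t\tau(x)}(x)$ of the open book, using $\rho_j\circ\phi_R^t\circ\rho_j=\phi_R^{-t}$ to get $\Phi(1-t,\mathfrak{D})=\rho_j\circ\Phi(t,\mathfrak{D})$; this simultaneously shows that the $\rho$-invariant pages are exactly $t=0,1/2$ and feeds into the count $\#(\mathrm{Fix}(\rho)\cap\mathrm{Fix}(\sigma))=2$.

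Your own route to these last points has two concrete errors. First, $\sigma$ does \emph{not} preserve $\mathrm{Fix}(\rho)$ in general: condition \eqref{eq:action} gives $\sigma\circ\rho\circ\sigma^{-1}=\sigma^2\circ\rho$, so $\sigma(\mathrm{Fix}(\rho))=\mathrm{Fix}(\sigma^2\circ\rho)$, which differs from $\mathrm{Fix}(\rho)$ whenever $p>2$ (already for $\sigma=g_{p,1}$ and $\rho$ of the form \eqref{antis}). Hence the sentence beginning ``Since $\sigma$ preserves $\mathrm{Fix}(\rho)$ and permutes the two arcs\ldots'' does not get off the ground, and as written you have established $\sigma$-invariance only of the one page containing $z_*$. (This is repairable without that claim: $\sigma$ induces a finite-order, orientation-preserving homeomorphism of the circle of pages, and such a map with one fixed point is the identity, so $\sigma$ preserves every page once it preserves one; or use the paper's flow argument.) Second, your conclusion $\sigma|_{\alpha_i}=\mathrm{Id}$ is incompatible with the statement being proved: it would give $\alpha_1\cup\alpha_2=\mathrm{Fix}(\rho)\subset\mathrm{Fix}(\sigma)$ and hence infinitely many common fixed points rather than two. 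The correct picture is the opposite: $\sigma|_{\mathfrak{D}_i}$ is a \emph{non-trivial} finite-order orientation-preserving map of the disk (were it the identity on a global surface of section, $\sigma$ would be the identity everywhere), so it has exactly one interior fixed point, and that is where ``exactly one common fixed point per invariant page'' comes from.
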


The most interesting $p$-periodic contactomorphisms on $S^3$ might be the following:    Given coprime integers $p \geq q \geq1$, the three-sphere $S^3$ admits the  $p$-periodic diffeomorphism
  \[
  g_{p,q} \colon S^3 \to S^3, \quad (z_1, z_2) \mapsto \left( e^{ 2 \pi i /p}z_1, e^{ 2 \pi i q/ p} z_2 \right).
  \]
  Each   $g_{p,q}$ satisfies condition \eqref{eq:action} with respect to the anti-contact involutions of the form    \eqref{antis}, and $g_{p,q}^* \lambda_0 = \lambda_0$. Moreover,  $g_{p,q}$  generates a free $\Z_p$-action on $S^3$, and the orbit space $L(p,q)=S^3/\Z_p$ is a lens space. Note that $L(1,1)=S^3$ and $L(2,1) \cong \R P^3$. Recall that
   \[
   \pi_1( L(p,q)) \cong \Z_p \quad \text{ and } \quad \pi_2( L(p,q)) = \{ 0 \}.
   \]
We fix $p  \geq q \geq 1$ and assume that a tight contact form $\lambda$ on $S^3$ satisfies   $g_{p,q}^* \lambda = \lambda$. Thus,  it descends to a contact form $\overline{\lambda}$ on $L(p,q)$ whose associated contact structure is denoted by $\overline{\xi_0}$.  It is \emph{universally tight}, meaning that its lift to the universal covering is tight.  In view of condition \eqref{eq:action} the anti-contact involution $\rho$ descends to an anti-contact involution $\overline{\rho}$ on $(L(p,q), \overline{\lambda})$. We call the triple $(L(p,q), \overline{\lambda}, \overline{\rho})$ a \emph{real universally tight lens space}.

Since $g_{p,q}$ and $\rho$ have no common fixed point on $S^3$, there does not exist a $(\rho, g_{p,q})$-invariant disk-like global surface of section. Instead, we study   a $\overline{\rho}$-invariant $p$-rational disk-like global surface of section on $(L(p,q), \overline{\lambda}, \overline{\rho})$. Recall that a periodic orbit $P=(x,T)$ is said to be \emph{$p$-unknotted} if there exists an immersion $u \colon \D \to L(p,q)$ such that $u|_{ \D \setminus \p \D} $ is an embedding, and $u|_{\p \D} \colon \p \D \to x(\R)$ is a $p$-covering map. In this case, $u$ is called a \emph{$p$-disk for $P$}. For a $p$-unknotted periodic orbit, there exists a well-defined (rational) self-linking number, see Subsection \ref{sec:self}. Since $\pi_2( L(p,q)) = \{ 0 \}$, it does not depend on the choice of a $p$-disk. 
 By a \emph{$p$-rational disk-like global surface of section}, we mean   a $p$-disk  $u \colon \D \to L(p,q) $  such that     $u(\D \setminus \p \D)$  satisfies the same properties as the interior of a disk-like  global surface of section.  

\begin{theorem}\label{Thmnew} Let $(L(p,q), \overline{\lambda}, \overline{\rho})$ be as above. Assume that $\overline{\lambda}$ is dynamically convex, i.e.\ every contractible periodic orbit has  Conley-Zehnder index at least $3$. Then a simply covered symmetric periodic orbit $P$ bounds an invariant $p$-rational disk-like global surface of section if and only if it is $p$-unknotted and has self-linking number equal to $-1/p$. 
\end{theorem}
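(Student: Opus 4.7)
The forward direction is immediate: an invariant $p$-rational disk-like global surface of section is by definition a $p$-disk for $P$, so $P$ is $p$-unknotted, while the transversality of the Reeb flow of $\overline{\lambda}$ to the interior of the disk forces the self-linking to equal $-1/p$ via the standard computation (equivalently, the lift of the disk to $S^3$ has integer self-linking $-1$ by \cite[Theorem 1.7]{HWZ96unknotted}, which divides by $p$ to give the rational invariant; cf.\ Remark \ref{rmk:jypothes}).

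For the converse, the plan is to lift the whole picture to the universal cover $S^3$ along the $p$-fold quotient $\pi\colon S^3 \to L(p,q)$, invoke Theorem \ref{ThmA1} upstairs, and then descend. Since $P$ is simply covered and $p$-unknotted, its class in $\pi_1(L(p,q))\cong\Z_p$ necessarily has order exactly $p$ (else a $p$-disk with embedded interior could not exist), so the $p$-fold iterate $P^p$ lifts to a \emph{simply covered} periodic orbit $\tilde P$ of the lifted Reeb flow on $S^3$ of period $pT$. Lifting the $p$-disk of $P$ produces an embedded disk in $S^3$ bounded by $\tilde P$, from which I conclude that $\tilde P$ is unknotted with self-linking equal to $-1$; $\overline\rho$-symmetry of $P$ transfers to $\rho$-symmetry of $\tilde P$. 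Dynamical convexity likewise lifts: any periodic orbit $\gamma$ on $(S^3,\lambda)$ projects to a contractible loop in $L(p,q)$ (since $\pi_1(S^3)=0$), and $\muCZ$ is invariant under the local diffeomorphism $\pi$, so $\muCZ(\gamma)\geq 3$. The dynamically convex version of Theorem \ref{ThmA1} then produces a $\rho$-invariant disk-like global surface of section $\tilde{\mathfrak{D}}\subset S^3$ bounded by $\tilde P$.

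To descend $\tilde{\mathfrak{D}}$ to $L(p,q)$, I would select an almost complex structure in the construction of Theorem \ref{ThmA1} that is simultaneously $\rho$-anti-invariant and $g_{p,q}$-equivariant; such a choice is available by averaging, using the compatibility \eqref{eq:action}. The resulting finite energy foliation is then $g_{p,q}$-invariant as a foliation, so $g_{p,q}$ permutes its pages. Since $g_{p,q}$ acts freely on $S^3$ while Brouwer's theorem guarantees an interior fixed point for any continuous self-map of the closed disk, no page can be fixed by any nontrivial power of $g_{p,q}$; hence the orbit $\{g_{p,q}^k(\tilde{\mathfrak{D}})\}_{k=0}^{p-1}$ consists of $p$ distinct pages, pairwise interior-disjoint by the open-book structure. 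Consequently each $\Z_p$-orbit in $S^3$ meets the interior of $\tilde{\mathfrak{D}}$ in at most one point, so $\pi|_{\tilde{\mathfrak{D}}}$ is an immersion that embeds the interior and restricts to a $p$-fold covering $\tilde P \to P$ on the boundary. Transversality to the Reeb flow of $\overline\lambda$ and the infinite-return property descend automatically, while $\overline\rho$-invariance of the image follows from the $\rho$-invariance of $\tilde{\mathfrak{D}}$.

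The main obstacle is the equivariant construction in the descent step: one must ensure that the compactness, transversality, and uniqueness arguments underlying Theorem \ref{ThmA1} carry over to the combined $(\rho,g_{p,q})$-symmetry. I expect this to be a routine adaptation of the methods already developed in the proofs of Theorems \ref{ThmA1} and \ref{ThmB1}, via a suitably averaged almost complex structure.
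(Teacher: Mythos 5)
Your forward direction is essentially the paper's (which simply cites \cite[Lemma 3.10]{HLS15} for $sl(P)=-1/p$), but for the converse you take a genuinely different route. The paper never lifts to $S^3$: it fixes $J\in\mathcal{J}_{\overline{\rho}}(\overline{\xi},d\overline{\lambda}|_{\overline{\xi}})$ on the lens space itself and invokes \cite[Section 4]{HS16elliptic}, which already supplies $\mathcal{M}_{\mathrm{fast}}(P,J)/\R\cong S^1$ and a $\overline{\lambda}$-adapted $p$-rational open book with disk-like pages and binding $P$ directly on $L(p,q)$; the $\overline{\rho}$-invariant page is then extracted by the same fixed-point trick as in the proof of Theorem \ref{ThmA1} (take $q\in\mathrm{Fix}(\overline{\rho})\setminus x(\R)$ and compare the page through $q$ with its $\overline{\rho}$-image). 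Your lift-and-descend scheme is workable and has the virtue of reusing Theorem \ref{ThmA1} as a black box, but it relocates rather than removes the work: (i) the two facts you assert in passing --- that a simply covered $p$-unknotted $P$ generates $\pi_1(L(p,q))$, so that $\pi^{-1}(x(\R))$ is a single circle and $\tilde P$ is simply covered, and that the lifted spanning disk yields $sl(\tilde P)=-1$ --- are exactly the \cite{HLS15}-type inputs the paper's one-line forward direction rests on, and are not immediate from the definitions; and (ii) the descent step requires rerunning the compactness/uniqueness arguments behind the open book of Theorem \ref{ThmA1} for a simultaneously $\rho$-anti-invariant and $g_{p,q}$-invariant $J$ (available by the averaging of Lemma \ref{spaceofacs}), which is precisely the equivariant foliation theory that \cite{HS16elliptic} has already packaged downstairs. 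A minor point in your disjointness argument: Brouwer gives a fixed point in the \emph{closed} disk, not necessarily its interior, but since the $\Z_p$-action is free this still yields the contradiction you want. In short, both arguments are correct in outline; the paper's is shorter because the lens-space foliation machinery exists off the shelf, while yours is more self-contained modulo Theorem \ref{ThmA1} but obliges you to actually carry out the equivariant adaptation you defer in your final paragraph.
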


 The existence of a simply covered $p$-unknotted periodic orbit with self-linking number equal to $-1/p$  on $(L(p,1), \overline{\lambda})$ is proved by Schneider \cite{Sch19}. The statement is left open for $q >1$. Following Schneider's argument, we prove

\begin{theorem}\label{ThmB2} Let $(L(p,1), \overline{\lambda}, \overline{\rho})$ be a dynamically convex real universally tight lens space, where $\overline{\rho}$ is the restriction of an anti-contact involution on $S^3$ of the form \eqref{antis} with either $\theta_1=0$ or $\theta_2=0$.  Then   there exists    a simply covered symmetric periodic orbit which is $p$-unknotted and has self-linking number equal to $-1/p$. Its $p$-th iterate has   Conley-Zehnder index equal to  $3$ or $4$. 
\end{theorem}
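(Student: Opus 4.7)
The plan is to adapt Schneider's construction of $p$-rational open book decompositions on $L(p,1)$~\cite{Sch19} to the symmetric setting and locate an invariant page whose binding is the desired symmetric orbit. First, I would work simultaneously on $L(p,1)$ and its universal cover: the lift $\widetilde{\lambda}:=\pi^{*}\overline{\lambda}$ is a tight, $g_{p,1}$-invariant and $\rho$-anti-invariant contact form on $S^3$, and a simply covered $p$-unknotted $\overline{\rho}$-symmetric orbit with $\mathrm{sl}=-1/p$ downstairs corresponds to a simply covered, $\rho$-symmetric, unknotted orbit on $S^3$ with $\mathrm{sl}=-1$ that is preserved setwise (but not pointwise) by $g_{p,1}$. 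For orientation, the standard model is $\lambda_{0}|_{S^3}$ with $\theta_{1}=0$ (respectively $\theta_{2}=0$): then the Hopf fiber $S^{1}\times\{0\}$ (respectively $\{0\}\times S^{1}$) is $\rho$-symmetric and $g_{p,1}$-invariant, and descends to precisely the type of orbit we seek; the Hopf open book descends to an $S^{1}$-family of $p$-rational disk-like global surfaces of section containing two $\overline{\rho}$-invariant pages.

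Next, following Schneider, I would study the moduli space $\mathcal{M}$ of embedded finite-energy $p$-rational planes in the symplectization of $(L(p,1),\overline{\lambda})$ asymptotic to a simply covered $p$-unknotted orbit $P$ with $\mathrm{sl}=-1/p$, for a generic admissible almost complex structure $J$. Dynamical convexity rules out multi-level SFT breaking and bubbling of low-index configurations, and automatic transversality for embedded fast planes due to Hryniewicz~\cite{Hry12fast} yields a smooth two-dimensional moduli space whose elements, modulo the $\R$-action on the symplectization, foliate $L(p,1)\setminus P$ and form a $p$-rational open book with disk-like pages and binding $P$. The leaf space is an $S^{1}$ parametrizing asymptotic markers at the binding, and the Fredholm index formula combined with the iteration inequality forces $\muCZ(P^{p})\in\{3,4\}$.

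For the symmetric statement, I would choose $J$ with $(\overline{\rho}\times\mathrm{id}_{\R})^{*}J=-J$. The assignment $u\mapsto \overline{\rho}\circ u\circ c$, where $c$ denotes complex conjugation on the domain $\C$, then defines an involution on $\mathcal{M}$ that covers an orientation-reversing involution of the leaf space $S^{1}$. Since all elements of $\mathcal{M}$ share the same binding, this forces $\overline{\rho}(P)=P$, i.e.\ $P$ itself is $\overline{\rho}$-symmetric. Moreover, the induced orientation-reversing involution of $S^{1}$ has at least two fixed points, each corresponding to an $\overline{\rho}$-invariant $p$-rational disk-like global surface of section bounded by $P$, in accordance with Theorem~\ref{Thmnew}. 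To produce such a $P$ in the first place, I would interpolate between the descended standard model and $\overline{\lambda}$ through a smooth family of $\overline{\rho}$-anti-invariant, dynamically convex (after generic perturbation) contact forms, using the standard foliation as initial datum and tracking the equivariant moduli space along the path.

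The main obstacle will be combining equivariance with the compactness and transversality machinery. One has to arrange almost complex structures that are simultaneously $\overline{\rho}$-anti-invariant and generic within that class for transversality to hold, carry out SFT compactness equivariantly along the homotopy of contact forms, and ensure that the count of invariant leaves remains positive under deformation. The hypothesis $\theta_{1}=0$ or $\theta_{2}=0$ is used precisely so that the standard model contains a $\rho$-symmetric Hopf fiber to seed the continuation, ensuring a non-trivial $\Z_{2}$-degree for the induced involution on the leaf space. The CZ-index range $\{3,4\}$ then follows automatically from the Fredholm index of the spanning plane and dynamical convexity.
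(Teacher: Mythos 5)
Your overall strategy --- start from the irrational-ellipsoid model, use Schneider's moduli of $p$-rational planes, and impose equivariance via a $\overline{\rho}$-anti-invariant $J$ --- is the right one, but the proposal has a gap exactly where the theorem's content lies: the production of the orbit $P$. You cannot ``track the equivariant moduli space along a path of contact forms,'' because the positive asymptotic limit $\overline{P_1}$ of the model foliation is not a Reeb orbit of $\overline{\lambda}$; there is nothing to continue. The actual mechanism is a single symplectic cobordism interpolating between $f_E\lambda_0$ and $\overline{\lambda}$: one pushes an invariant generalized finite energy plane asymptotic to $\overline{P_1}^p$ downward (implicit function theorem, Theorem \ref{thm:iFTfheotem}) until the gradient blows up, and then extracts the orbit from the bottom of the resulting symmetric bubbling-off tree. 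The hard part is the index analysis of that tree (Lemma \ref{lemindxc}): the top curve need not be somewhere injective because its positive limit $\overline{P_1}^p$ is a $p$-fold cover, so one must factor through a polynomial with real coefficients, control the unique non-contractible negative limit via explicit symmetric trivializations, and use the equivariant Fredholm inequality (Theorem \ref{thm:genfinite}) to force a single real negative puncture with $\muRS=3/2$. Note also that dynamical convexity on a lens space constrains only contractible orbits, so ``ruling out low-index bubbling'' is not automatic; Lemma \ref{lem:covnoncontd} is needed to exclude non-contractible intermediate limits. Relatedly, your account of the hypothesis $\theta_1=0$ or $\theta_2=0$ is off: the Hopf fibre is $\rho_{\theta}$-symmetric for every $\theta$, so it is not needed to ``seed'' the argument; it enters in the trivialization bookkeeping of the bubbling-off analysis.

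A second gap: you assume from the outset that $P$ is $p$-unknotted with $sl(P)=-1/p$, but this must be proved. The limit plane at the bottom of the tree is asymptotic to some $m$-fold cover $P=(P_{\min})^m$ with $m$ dividing $p$, and a priori $m<p$ is possible. The paper shows the plane is embedded, that $+\infty$ is a relative prime puncture (a winding-number computation in the trivialization $\Psi$), that the page misses the binding, and then invokes results of Hryniewicz, Licata, and Salom\~ao together with the classification of lens spaces to conclude $m=p$, whence $sl(P)=-1/p$. Without this step neither the self-linking number nor the $p$-unknottedness is established. Finally, the passage from non-degenerate to merely dynamically convex $\overline{\lambda}$ requires the equivariant perturbation lemma (Lemma \ref{lemmarealrbinson}) and an Arzel\`a--Ascoli limit, which your proposal does not address.
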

\noindent 
The condition either $\theta_1 =0$ or $\theta_2=0$ is used in the bubbling off analysis in the proof, see Lemma \ref{lemindxc}.

 Suppose that a real tight three-sphere $(S^3, \lambda, \rho)$ admits $g_{p,1}$ for some $p$. We impose the following equivalence relation on the set of periodic orbits. Two periodic orbits $P_1$ and $P_2$ are equivalent if and only if each of them is a $\rho_j$-symmetric periodic orbit for some $j$ and $P_2 = g_{p,1}^i (P_1)$ for some $i$.  Therefore,   an equivalence class consists of either a single $(\rho, \sigma)$-symmetric periodic orbit or $p$ periodic orbits.        By the preceding two theorems and results due to Kang \cite{Kang2} we obtain the following statement as a corollary.

 \begin{theorem}\label{ThmB3}
 Let $(S^3, \lambda, \rho)$ be a dynamically convex real tight three-sphere such that $\rho$ is of the form \eqref{antis} with either $\theta_1=0$ or $\theta_2=0$. Assume that $\lambda$ satisfies $g_{p,1}^* \lambda = \lambda$ for some $p \geq 1$. Then there exists  a     $(\rho, g_{p,1})$-symmetric periodic orbit  on $S^3$ which  bounds   a  $\rho$-invariant   disk-like global surface  of section.    It is   the lift of a  $\overline{\rho}$-symmetric periodic orbit on $L(p,1)$ that bounds a    $\overline{\rho}$-invariant $p$-rational disk-like global surface of section.  As a result, there exist two or infinitely  many    equivalence classes of periodic orbits on $S^3$.  In particular,   in the case   $p=2$  there exist two or infinitely many doubly-symmetric periodic orbits.
 \end{theorem}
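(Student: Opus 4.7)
The plan is to pass to the quotient $L(p,1) = S^3/\Z_p$, equipped with the descended dynamically convex real universally tight structure $(\overline{\lambda}, \overline{\rho})$, apply Theorems \ref{ThmB2} and \ref{Thmnew} there to produce a $\overline{\rho}$-symmetric Reeb orbit together with a $\overline{\rho}$-invariant $p$-rational disk-like global surface of section it bounds, and then lift the data back to $S^3$. Concretely, Theorem \ref{ThmB2} yields a simply covered $\overline{\rho}$-symmetric periodic orbit $\overline{P}$ on $L(p,1)$ that is $p$-unknotted, has self-linking number $-1/p$, and whose $p$-th iterate has Conley--Zehnder index in $\{3,4\}$; Theorem \ref{Thmnew} then provides a $\overline{\rho}$-invariant $p$-rational disk-like global surface of section $\overline{\mathcal{D}}$ bounded by $\overline{P}$.

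I would then lift $\overline{P}$ through the covering $\pi\colon S^3 \to L(p,1)$. Since $\overline{P}$ is $p$-unknotted, its free homotopy class generates $\pi_1(L(p,1)) \cong \Z_p$, so its preimage is a single simply covered Reeb orbit $P$ on $S^3$ of period $p\overline{T}$. By construction $P$ is $g_{p,1}$-invariant, and it is $\rho$-symmetric because $\rho$ covers $\overline{\rho}$, hence $P$ is $(\rho, g_{p,1})$-symmetric. The $p$-disk bounding $\overline{P}$ lifts to an embedded disk in $S^3$ with boundary $P$ (the $p$-fold cover of $\overline{P}$ lifts to a single once-traversed loop around $P$), so $P$ is unknotted, and the standard multiplicativity under the $\Z_p$-cover yields $\mathrm{sl}(P) = p \cdot \mathrm{sl}(\overline{P}) = -1$ and $\muCZ(P) = \muCZ(\overline{P}^p) \in \{3,4\}$.

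To invoke Theorem \ref{ThmA1}, I first promote dynamical convexity from $\overline{\lambda}$ to $\lambda$. Every simply covered Reeb orbit $Q$ on $(S^3,\lambda)$ projects to a Reeb orbit $\overline{Q}$ on $L(p,1)$; writing $d$ for the gcd of $p$ with the deck shift produced by one period of $\overline{Q}$, the iterate $\overline{Q}^{p/d}$ is contractible in $L(p,1)$ and $Q$ is precisely its lift, so $\muCZ(Q) = \muCZ(\overline{Q}^{p/d}) \geq 3$ by dynamical convexity of $\overline{\lambda}$. Hence $\lambda$ is dynamically convex on $S^3$, no orbit has Conley--Zehnder index $2$, and the linking condition in Theorem \ref{ThmA1} is vacuous for $P$. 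Theorem \ref{ThmA1} therefore supplies a $\rho$-invariant disk-like global surface of section $\mathcal{D}$ on $S^3$ bounded by $P$, settling the first assertion.

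For the dichotomy, \cite[Theorem 1.1]{Kang2} applied to the reversible area-preserving first return map on $\mathring{\overline{\mathcal{D}}}$ yields two or infinitely many $\overline{\rho}$-symmetric periodic orbits on $L(p,1)$. Each such orbit lifts to a single $g_{p,1}$-orbit of $\rho_j$-symmetric orbits on $S^3$, that is, to one equivalence class, and conversely every equivalence class descends to a $\overline{\rho}$-symmetric orbit, giving the claimed dichotomy for equivalence classes. For the $p=2$ refinement, a doubly-symmetric orbit on $S^3$ is exactly the lift of a $2$-unknotted $\overline{\rho}$-symmetric orbit on $L(2,1)$, since $g_{2,1}$-invariance of the lift is equivalent to non-triviality of its homotopy class in $\Z_2$; moreover any symmetric fixed point of the first return map intersects $\overline{\mathcal{D}}$ exactly once per period and inherits the non-trivial homotopy class of $\overline{P}$, so it is automatically $2$-unknotted. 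The hardest part I foresee is upgrading Kang's dichotomy for all symmetric periodic orbits (of arbitrary winding around $\overline{P}$) to the corresponding dichotomy for the $2$-unknotted subset; carrying this out will likely require iterating the fixed-point argument of \cite{Kang2}, or invoking the ``non-symmetric forces infinitely many symmetric'' clause in parallel for the two commuting anti-contact involutions $\rho$ and $\rho_\sigma = \sigma\circ\rho$, so as to exclude the pathological scenario of finitely many doubly-symmetric orbits coexisting with infinitely many size-two equivalence classes.
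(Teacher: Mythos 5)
Your proposal is essentially the paper's (unwritten) argument: the theorem is stated there as a corollary of Theorems \ref{ThmB2} and \ref{Thmnew} applied on $L(p,1)$, lifted to $S^3$ and combined with Theorem \ref{ThmA1} (whose linking hypothesis is vacuous under dynamical convexity) and with Kang's fixed-point results, and your lifting computations ($sl(P)=p\cdot sl(\overline{P})=-1$, $\muCZ(P)=\muCZ(\overline{P}^p)$, connectedness and simple-coveredness of the lift) are correct. The one step you flag as unresolved --- upgrading the dichotomy from equivalence classes to doubly-symmetric orbits when $p=2$ --- is real if one only invokes \cite[Theorem 1.1]{Kang2}, but it is exactly what the rational-disk version \cite[Theorem 1.3]{Kang2} is for: applied to the $\overline{\rho}$-invariant $2$-rational disk-like global surface of section on $L(2,1)$, it yields two or infinitely many symmetric periodic orbits in the homotopy class of the binding, and these are precisely the orbits whose lifts are doubly-symmetric (this is also how the paper uses it in the restricted three-body and Hill's lunar problem applications). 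With that citation substituted for Theorem 1.1, your argument is complete.
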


\noindent
{\bf Further discussion.}  A symmetric periodic orbit is said to be \emph{symmetrically unknotted} if there exists a spanning disk which is invariant under the symmetry.  The following question naturally arises.

\begin{quote}
  \emph{Is an unknotted symmetric periodic orbit     symmetrically unknotted?}   
 \end{quote}

\noindent
  Theorem \ref{ThmA1} shows that if an unknotted symmetric periodic orbit $P$ has $sl(P) =-1$ and $\muCZ(P) \geq 3$ and satisfies the described linking condition, then the answer is affirmative, provided that the contact form is non-degenerate. In particular, if the  contact form is dynamically convex, then every unknotted symmetric periodic orbit with self-linking number equal to $-1$ is symmetrically unknotted.

 \bigskip

\noindent
\textbf{Acknowledgements:}  This work has its origin in inspiring discussion with Umberto Hryniewicz.  I  cordially thank  Naiara de Paulo, Urs Frauenfelder, Umberto Hryniewicz, Jungsoo Kang,  Pedro Salom\~ao, Felix Schlenk, and Otto van Koert for many helpful discussions. I also thank the anonymous referee for a number of valuable comments. This research was supported by   the grant 200021-181980/1 of the Swiss National Foundation.

\section{Applications}

\subsection{The   restricted three-body problem and Birkhoff's conjecture}\label{sec:3bp}

The planar circular restricted three-body problem (PCR3BP) studies the dynamics of a massless body, called the moon, attracted by two primaries, called the earth and the sun, according to Newton's law of gravitation. We scale the total mass of the system to one so that the mass of the sun is given by $\mu \in (0,1)$ and the mass of the earth equals $1-\mu$.  We assume that they move in circular orbits.  The moon is restricted to the plane spanned by the earth and the sun. Due to Jacobi, passing to a rotating coordinate system the Hamiltonian of the system becomes
\begin{align*}
&H : T^*\left( \C \setminus \left \{ E,S \right \} \right) \rightarrow \R \\
&H(q,p) = \frac{1}{2} |p|^2 - \frac{1-\mu}{|q -E|} - \frac{\mu}{|q-S |} + q_1 p_2 - q_2 p_1 -\mu p_2,
\end{align*}
where $E=(0,0)$ and $S=(1,0)$ are the positions of the earth and the sun, respectively.  This Hamiltonian is time-independent, and hence it is preserved along the Hamiltonian flow.

It is well-known that there are precisely three (if $\mu=1/2)$ or four (if $\mu \neq 1/2$) critical values of $H$. We are interested in   energies  $c$ smaller than the first (or smallest) critical level  $c_1$, where the corresponding energy hypersurface $H^{-1}(c)$ consists of three connected components.
When they are projected into the $q$-plane, two of them are bounded and   surround either primary.     We concentrate on the component $\Sigma_c \subset H^{-1}(c)$   whose projection into the $q$-plane surrounds the earth.  It is non-compact due to collisions with the earth. However, the two-body collision  can be always regularized. Using the Moser regularization \cite{Moser70} it was shown in \cite{AFvKG12} that the regularization  $\overline{\Sigma_c}$ of $\Sigma_c$ is contactomorphic to the universally tight real projective space $\R P^3 = L(2,1)$. It is invariant under the anti-contact involution   $\rho(q_1, q_2, p_1, p_2) = (q_1, -q_2, -p_1, p_2).$ Abbreviate $\mathcal{L}  :=   \text{Fix}(\rho)$ and   define $\mathcal{L}_1, \mathcal{L}_2 \subset \mathcal{L}$  such that  $\mathcal{L} = \mathcal{L}_1 \cup \mathcal{L}_2$ by 
$$
\pi(\mathcal{L}_1 )= \left \{  (q_1, 0) \in \pi( \overline{\Sigma_c}) :  q_1 \leq 0    \right \}, \quad  \pi(\mathcal{L}_2 )= \left \{  (q_1, 0) \in \pi( \overline{\Sigma_c}) :  q_1  \geq0    \right \},
$$
where    $\pi : T^* \C \rightarrow \C$ is the footpoint projection. Kang \cite{Kang14} classifies symmetric periodic orbits on $\overline{\Sigma_c}$ into the following two types. A symmetric periodic orbit $P= (x,T)$ is of type I if the associated half-chord $C=(c, T/2)$ satisfies   $c(0) \in \mathcal{L}_1$ and  $c(T/2) \in \mathcal{L}_2$, or the other way around.  Otherwise, it is of type II.

On the other hand, via  the Levi-Civita regularization \cite{Levi},   the regularized double cover $\widetilde{\Sigma_c}$ of $\Sigma_c$ is contactomorphic to the tight three-sphere. The anti-contact involution $\rho$ on $\overline{\Sigma_c}$ lifts to   the two commuting anti-contact involutions   on $\widetilde{\Sigma_c}$
\[
\rho_1 ( v_1, v_2, u_1, u_2)= (v_1, -v_2, -u_1, u_2)
\]
and
\[
  \rho_2 ( v_1, v_2, u_1, u_2)= (-v_1, v_2, u_1, -u_2)
  \]
    for $  (v_1, v_2, u_1, u_2) \in S^3 \subset \R^4 \equiv \C^2.$  The composition $\iota := \rho_1 \circ \rho_2$ is the antipodal map $(v_1, v_2, u_1, u_2) \mapsto (-v_1, -v_2, -u_1, -u_2)$.     
    The sets $\mathcal{L}_1$ and $\mathcal{L}_2$ lift to $\overline{\mathcal{L}_1} := \text{Fix}(\rho_1) $ and $\overline{\mathcal{L}_2} := \text{Fix}(\rho_2)$, respectively.  In this way, Kang observes that 
  symmetric periodic orbits of type I on $\overline{\Sigma_c}$ correspond to   doubly-symmetric periodic orbits   on $\widetilde{\Sigma_c}$ and those  of type II on $\overline{\Sigma_c}$ correspond to  symmetric, but not doubly-symmetric periodic orbits   on $\widetilde{\Sigma_c}$.

  In 1915,  Birkhoff proved the existence of  a  so-called   \emph{retrograde periodic orbit}  on each bounded component of the energy hypersurface for $c <c_1$,  see \cite{Birk15}. This is a symmetric periodic orbit of type I and   projects into the $q$-plane as a simple closed curve  encircling the corresponding primary in the   direction opposite to the    coordinate system.  A periodic orbit  projecting into the $q$-plane as a simple closed curve, but   encircling the corresponding primary in the  same direction as   the    coordinate system is called a \emph{direct periodic orbit}.  In  a real-world situation, a direct periodic orbit is more important since most orbits of moons in the solar system are direct. Note that the orbit of Triton, the largest moon of Neptune, is retrograde. Numerical experiments support the existence of a direct periodic orbit.  While Birkhoff did not give an analytic proof of its existence,  he pointed out that a direct periodic orbit is sometimes  degenerate. This led him to   conjecture that a (doubly covered)    retrograde periodic orbit always bounds a disk-like global surface of section.
 Birkhoff believed that a fixed point of  the associated first  return map, whose existence is assured by Brouwer's translation theorem,  corresponds to a (doubly covered) direct periodic orbit.

Albers, Fish, Frauenfelder, Hofer, and van Koert proved  in  \cite{AFFHvK12} that for every $c<-3/2$, there exists $\mu_0 = \mu_0(c) $ close enough to $1$ such that for each $\mu \in (\mu_0, 1)$,  $\widetilde{\Sigma_c}$ is strictly convex. In view of a result due to Hofer, Wysocki, and Zehnder \cite{HWZ98convex} this implies that Birkhoff's conjecture holds in this case.  In \cite{HS16elliptic} Hryniewicz and Salom\~ao studied if one can find a {rational global surface of section} in $\overline{\Sigma_c}$ whose spanning orbit is the simply covered retrograde periodic orbit.    Using  the result of   \cite{AFFHvK12}, they provide an affirmative answer if $\mu = 1- \epsilon$ for $\epsilon>0$ small enough. As the doubly covered retrograde periodic orbit is unknotted and has self-linking number equal to $-1$,   we have  the following refinement of the result of Hryniewicz and Salom\~ao.

 \begin{theorem}\label{thm:L3cp}
 Given $c<-3/2$, there exists $\mu_0 = \mu_0(c) $ close enough to $1$ such that for all $\mu \in (\mu_0, 1)$, the  doubly covered retrograde periodic orbit bounds  $\rho_1$-invariant disk-like global surfaces of section $\mathfrak{D}$ and $\iota(\mathfrak{D})$  in $\widetilde{\Sigma_c}$. They descend  to a  $\rho$-invariant rational disk-like global surface of section   in $\overline{\Sigma_c}$ spanned by the simply covered retrograde periodic orbit.    Moreover, there exist either two or infinitely many symmetric periodic orbits of type $I$ on $\overline{\Sigma_c}$. 
\end{theorem}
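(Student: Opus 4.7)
The plan is to work on the double cover $\widetilde{\Sigma_c}\cong S^3$ and then descend. First, by \cite{AFFHvK12}, for $c<-3/2$ and $\mu$ close enough to $1$, the hypersurface $\widetilde{\Sigma_c}$ is strictly convex, hence dynamically convex by \cite{HWZ98convex}. Since the simply covered retrograde orbit on $\overline{\Sigma_c}$ represents the nontrivial class of $\pi_1(\R P^3)\cong\Z_2$, it lifts to a single connected orbit on $\widetilde{\Sigma_c}$: the doubly covered retrograde orbit, which is simply covered as an orbit on $S^3$ and, because the retrograde orbit is of type I, is doubly-symmetric with respect to $\rho_1$ and $\rho_2$. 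By hypothesis it is unknotted with self-linking number equal to $-1$.

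I would then apply the dynamically convex case of Theorem \ref{ThmA1} to $(\widetilde{\Sigma_c},\lambda,\rho_1)$. Dynamical convexity forces every Conley-Zehnder index to be at least $3$, so the linking condition is vacuous and the remaining hypotheses have just been verified. The theorem produces two $\rho_1$-invariant disk-like global surfaces of section $\mathfrak{D}$ and $\mathfrak{D}'$ bounded by the doubly covered retrograde orbit, whose union is an embedded two-sphere containing $\mathrm{Fix}(\rho_1)\cong S^1$. To identify $\mathfrak{D}'$ with $\iota(\mathfrak{D})$, I would observe that $\iota=\rho_1\circ\rho_2=\rho_2\circ\rho_1$ is a strict contactomorphism commuting with $\rho_1$, so $\iota(\mathfrak{D})$ is also a $\rho_1$-invariant disk-like global surface of section spanned by the same orbit. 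The alternative $\iota(\mathfrak{D})=\mathfrak{D}$ is excluded by Brouwer's fixed point theorem, since $\iota$ is fixed-point-free on $S^3$ whereas it would otherwise restrict to a continuous self-map of the closed disk $\mathfrak{D}$.

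The two disks then descend to a single $2$-rational disk-like global surface of section on $\overline{\Sigma_c}$: more precisely, the restriction $\pi|_{\mathfrak{D}}\colon\mathfrak{D}\to\overline{\Sigma_c}$ of the covering projection embeds the interior (as $\iota$ swaps $\mathfrak{D}$ and $\mathfrak{D}'$ and does not identify interior points of $\mathfrak{D}$) and maps the boundary $2$-to-$1$ onto the simply covered retrograde orbit; transversality to the Reeb flow and the return property are inherited from $\mathfrak{D}$, and $\rho$-invariance is inherited from $\rho_1$-invariance.

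For the counting assertion I would invoke Theorem \ref{ThmB3} with $p=2$ and $\sigma=\iota=g_{2,1}$. Its hypotheses are satisfied: $\rho_1$ has the form \eqref{antis} with $\theta_1=0$, and $\lambda$ on $\widetilde{\Sigma_c}$ is dynamically convex and $\iota$-invariant (being the pullback of $\overline{\lambda}$). We obtain two or infinitely many doubly-symmetric periodic orbits on $\widetilde{\Sigma_c}$, which via Kang's bijection correspond to two or infinitely many symmetric periodic orbits of type I on $\overline{\Sigma_c}$. The main obstacle in this program is really just the identification $\iota(\mathfrak{D})=\mathfrak{D}'$ and the verification of the descent; both reduce to topological bookkeeping once the two disks provided by Theorem \ref{ThmA1} and the strict convexity result of \cite{AFFHvK12} are in hand.
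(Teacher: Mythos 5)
Your overall route is the one the paper intends: strict convexity from \cite{AFFHvK12} gives dynamical convexity, the doubly covered retrograde orbit is a simply covered, unknotted, $\rho_1$-symmetric orbit with self-linking number $-1$, so the dynamically convex case of Theorem \ref{ThmA1} applies with a vacuous linking condition; the counting then follows from Theorem \ref{ThmB3} with $p=2$, $\sigma=g_{2,1}=\iota$, combined with Kang's correspondence between doubly-symmetric orbits on $\widetilde{\Sigma_c}$ and type I orbits on $\overline{\Sigma_c}$. The observation that $\iota(\mathfrak{D})$ is again a $\rho_1$-invariant disk-like global surface of section (because $\iota$ is a strict contactomorphism commuting with $\rho_1$ and preserving the binding) and that $\iota(\mathfrak{D})\neq\mathfrak{D}$ by Brouwer is also fine.

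The genuine gap is in the descent. You deduce $\iota(\mathfrak{D})=\mathfrak{D}'$ from the fact that $\iota(\mathfrak{D})$ is a $\rho_1$-invariant disk-like global surface of section different from $\mathfrak{D}$, but Theorem \ref{ThmA1} only asserts the \emph{existence} of two such surfaces, not that they are the only ones; in general a spanning orbit bounds many disk-like global surfaces of section, so this identification is unjustified. Consequently the key point needed for $\pi|_{\mathring{\mathfrak{D}}}$ to be injective, namely $\iota(\mathring{\mathfrak{D}})\cap\mathring{\mathfrak{D}}=\emptyset$, is not established: two distinct embedded disks with the same boundary can perfectly well meet in their interiors. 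The cleanest repairs are (i) to run the proof of Theorem \ref{ThmA1} with $J\in\mathcal{J}_{\rho_1,\iota}$ (such $J$ exist by Lemma \ref{spaceofacs}, since $\iota\circ\rho_1\circ\iota=\rho_1$), so that $\iota$ permutes the pages of the adapted open book $\mathcal{M}_{\mathrm{fast}}(P,J)/\R$; then $\mathfrak{D}$ and $\iota(\mathfrak{D})$ are distinct pages and hence have disjoint interiors, foliating $\widetilde{\Sigma_c}\setminus P$ near nothing in common; or (ii) to bypass the descent altogether and apply Theorem \ref{Thmnew} directly on $\overline{\Sigma_c}\cong L(2,1)$ to the simply covered retrograde orbit, which is $2$-unknotted with $sl=-1/2$ — this is exactly how the paper handles the analogous step for Hill's lunar problem.
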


\subsection{Existence of    symmetric closed Finsler geodesics}

 
Let $N$ be a closed connected $n$-dimensional smooth manifold. Choose local coordinates on   $TN$,   $(q,v)=(q_1, \dots, q_n, v_1, \dots, v_n)$.   A \textit{Finsler metric} $F$ on  $N$ is a non-negative function   on $TN$ having the following properties:
\begin{itemize}
\item  $G:=F^2/2$ is smooth on $TN \setminus N$.
\item $F(q,v) = 0$ if and only if $v=0$.
\item  $F(q, \lambda v) = \lambda F(q,v)$ for all  $\lambda >0$.
\item  For each $ q \in N$ and $v \neq 0 \in T_q N$, the symmetric bilinear form 
\[
\left< \cdot, \cdot \right>_v : T_q N \times T_qN \rightarrow \R, \quad (u,w) \mapsto \frac{\p ^2 G}{\p s \p r}\bigg|_{s=r=0} (q, v + su +rw),
\]
is positive-definite.
\end{itemize}
The critical points of the Finsler energy functional
$$
\mathscr{E}(q) := \int_a^{b} G(q(t), \dot{q}(t) )dt, \quad q \colon [a,b]\to N 
$$ 
are called \emph{geodesics of the Finsler metric $F$} or simply \emph{$F$-geodesics}.  
Suppose that $N$ is equipped with an involution $f$. Abbreviate by $Q=\text{Fix}(f)$ the fixed point set of $f$, which is assumed to be non-empty.   We impose the following additional condition.
\begin{itemize}
\item ($f$-invariance) $F(f(q), -df(q) v) = F(q,v)$ for all $(q,v) \in TN\setminus N$.
\end{itemize}
We call such $F$ a \emph{real Finsler metric}  (with respect to $f$) and the triple $(N, F, f)$ a   \textit{real Finsler manifold}.

A closed  $F$-geodesic $q \colon [0, T] \to N$ is said to be \emph{symmetric} if $f( q(t)) = q(-t)$. It is straightforward to check that $q(t) \in Q$ and $-df(q(t))\dot{q}(t) =\dot{q}(t)$ for $t = 0, T/2$.   Define  the involution $\mathcal{F}$ on $TN \setminus N  $ as   $\mathcal{F}   (q,v) = (f(q), -df(q)v) $.  
The fixed point set $\mathrm{Fix}(\mathcal{F})$ consists of  those $(q,v) \in TN \setminus N$ such that $q \in Q$ and $v \in T_qN$ satisfy $-df(q)v = v$.  It follows that if $q \colon [0,T] \to N$ is a symmetric closed $F$-geodesic of speed $c>0$, then its tangent lift $\gamma =(q, \dot{q}) \colon [0,T] \to F^{-1}(c) $ is a closed curve such that  $\gamma(t) \in \mathrm{Fix}(\mathcal{F})$ for $ t = 0, T/2$.


There exists a globally defined one-form $\tau$ on $TN \setminus N$,  which is locally given by
$$
\tau   =  \sum_{j=1}^n\frac{ \partial G}{\partial v_j} dq_j .
$$
It defines  a contact form on $F^{-1}(c)$ for all $c>0$. An interesting feature of this form is that $d\tau$ is a symplectic form on $TN \setminus N$,  see  \cite[Proposition 2.1]{DGZ17}. Abbreviate  $\Sigma = F^{-1}(1) \subset TN$.   By abuse of notation we denote the restriction of $\tau$ to $\Sigma$  again by the same letter.  It was shown in \cite[Proposition 2.4]{DGZ17} that  unit speed $F$-geodesics on $N$ are precisely the projections of Reeb orbits  on $(\Sigma, \tau)$. Hence, we have the one-to-one correspondence
$$
\begin{Bmatrix} \text{unit speed symmetric closed} \\ \text{ geodesics on $(N,F,f)$} \end{Bmatrix} \longleftrightarrow \begin{Bmatrix} \text{symmetric periodic  orbits} \\ \text{on $( \Sigma , \tau, \mathcal{F})$}   \end{Bmatrix}.
$$

  Choose local coordinates on $T^*N$,   $(q,p) = (q_1, \ldots, q_n, p_1, \ldots, p_n)$. The fiberwise convexity of   $G$ gives rise to the Legendre transformation 
  \[
   \mathscr{L} : TN \setminus N \longrightarrow T^*N \setminus N, \quad (q,p) \mapsto  (q,p=\p G / \p v).
   \]
     It is straightforward to check that 
$$
 \mathscr{L}^* \lambda_{\mathrm{can}} = \tau,
$$
where $\lambda_{\mathrm{can}}$ is the Liouville 1-form on $T^*N$.  Abbreviate $\Sigma^*  = \mathscr{L}(\Sigma)\subset T^*N$ and $\lambda=\lambda_{\mathrm{can}}|_{\Sigma^*}$. As has been shown in \cite[Section 3]{DGZ17},   the  orbits of the Reeb vector field $R_{\tau}$ of $\tau$ on $\Sigma$ correspond  to the  orbits of the Reeb vector field $R_{\lambda} $  of $\lambda$ on $\Sigma^* $. 

Define the smooth involution $f_I : T^*N \setminus N \rightarrow T^*N \setminus N$ as $f_I  = \mathscr{L} \circ  \mathcal{F} \circ \mathscr{L}^{-1}$ satisfying $f_I ^* \lambda_{\mathrm{can}} = - \lambda_{\mathrm{can}}$. Note that
$$
f_I = d_* f \circ I = I \circ d_*f,
$$
where   $d_*f : T^*N \rightarrow \R$ is the cotangent lift of the involution $f$ and  $I(q,p) = (q,-p)$. It descends to an anti-contact involution on $(\Sigma^*, \lambda)$, denoted again by $f_I$.  As before,  there is the following one-to-one correspondence
$$
\begin{Bmatrix} \text{unit speed symmetric closed} \\ \text{   geodesics on $(N,F,f)$} \end{Bmatrix} \longleftrightarrow \begin{Bmatrix} \text{symmetric periodic  orbits} \\ \text{on $( \Sigma^* , \lambda, f_I  )$}   \end{Bmatrix}.
$$


Consider $\C^2$ with coordinates $(z_1, z_2)$, where $z_j = q_j + ip_j$ for $j=1,2$.   In \cite[Section 4]{HP08} Harris and Paternain constructed   a double covering  map $\Phi : S^3  \rightarrow \Sigma^*$ which satisfies 
$$
\Phi^* \lambda = h \lambda_0|_{S^3},
$$
where   the smooth map  $ h =h(F)  :  S^3 \rightarrow (0, \infty) $
is invariant under the antipodal map, i.e.  $h(z_1, z_2) = h(-z_1, -z_2)$. This in particular implies that the double cover of the geodesic flow of $(S^2, F)$ is conjugate to the Reeb flow   on $S^3$ of the   tight contact form $h\lambda_0$.  By pulling back the involution $f_I$ under $\Phi$, we obtain the two anti-contact involutions $\rho_1, \rho_2$ on $(S^3,h\lambda_0)$ such that $\rho_1 \circ \rho_2 = \rho_2 \circ \rho_1 = \iota$, where $\iota$ is the antipodal map on $S^3$. Thus,   we obtain the following one-to-one correspondence
$$
\begin{Bmatrix} \text{unit speed symmetric closed} \\ \text{   geodesics on $(N,F,f)$} \end{Bmatrix} \longleftrightarrow \begin{Bmatrix} \text{doubly-symmetric periodic   orbits} \\ \text{on $( S^3, h\lambda_0, \rho_1, \rho_2 )$}   \end{Bmatrix}.
$$

The \textit{reversibility} of a Finsler metric $F$ is defined as
$$
\lambda := \max \left\{ F(q,-v) : (q,v) \in TN, \; F(q,v)=1\right \} \geq 1.
$$
If $\lambda=1$, then we call $F$ \textit{reversible}, and if $\lambda>1$, it is \textit{non-reversible}. The results of Harris and Paternain \cite[Theorem B]{HP08} and Rademacher \cite[Theorem 4]{Rade04}, \cite[Proposition 1]{Rade08} imply the following. 

\begin{theorem}\label{thm"HP}
If a Finsler metric $F$ on $S^2$   with reversibility $\lambda$ is $\delta$-pinched for some $\delta > ( \lambda / (\lambda+1))^2$, i.e.\ the flag curvature  $K$ satisfies
\[
\left( \frac{\lambda}{\lambda+1} \right)^2 < \delta \leq K \leq 1,
\]
 then the contact form $h\lambda_0$ on $S^3$ is dynamically convex.  
\end{theorem}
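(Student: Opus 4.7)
The plan is to derive dynamical convexity of $h\lambda_0$ by combining a Morse-theoretic translation between closed Finsler geodesics on $S^2$ and periodic Reeb orbits on $S^3$ with quantitative bounds coming from the pinching hypothesis. The goal is to check that every contractible periodic orbit of $R_{h\lambda_0}$ has Conley--Zehnder index at least $3$. Under the double cover $\Phi \colon S^3 \to \Sigma^*$ every such orbit projects to a closed $F$-geodesic on $S^2$, possibly traversed twice, so it suffices to produce a lower bound on Conley--Zehnder indices in terms of invariants of the underlying geodesic.

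First I would invoke Theorem B of Harris and Paternain \cite{HP08}. Their result expresses the Conley--Zehnder index of a Reeb orbit of $h\lambda_0$ as the Morse index of the corresponding closed $F$-geodesic plus a correction term involving the rotation of the transverse Poincar\'e map, and accounts for the double covering by distinguishing orbits according to their free homotopy class in $\R P^3 \cong \Sigma^*$. Consequently, one is reduced to checking a quantitative Morse-index lower bound for every closed $F$-geodesic on $(S^2,F)$.

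Next I would apply the Finsler Morse--Sch\"onberg comparison inequality: under the flag-curvature bounds $\delta \leq K \leq 1$, the Morse index of a closed $F$-geodesic of length $L$ grows linearly in $L\sqrt{\delta}$, so all sufficiently long geodesics are immediately taken care of. The remaining short geodesics are excluded by Rademacher's length estimates \cite[Theorem 4]{Rade04} and \cite[Proposition 1]{Rade08}, which show that for a $\delta$-pinched Finsler metric on $S^2$ with reversibility $\lambda$, any closed $F$-geodesic has length at least $\pi(1+1/\lambda)$ whenever $\delta>(\lambda/(\lambda+1))^2$. Substituting this lower bound into the Harris--Paternain translation then yields $\muCZ\geq 3$ for every contractible Reeb orbit.

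The main obstacle is the iteration bookkeeping imposed by the double cover: a contractible Reeb orbit on $S^3$ can arise from a non-contractible closed geodesic on $S^2$ traversed twice, and because the Conley--Zehnder index does not behave linearly under iteration, one must track precisely how each Morse index combines with the transverse rotation along the lifted path. The pinching constant $(\lambda/(\lambda+1))^2$ is sharp exactly because it is the threshold at which Rademacher's length estimate matches the iteration inequalities for $\muCZ$ well enough to exclude every contractible orbit of index at most $2$; both of the cited ingredients are required simultaneously, and neither alone is quantitative enough to force dynamical convexity.
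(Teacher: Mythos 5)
The paper offers no proof of this statement at all: it is quoted verbatim as a consequence of Harris--Paternain's Theorem B together with Rademacher's length estimates, and your outline correctly reconstructs how those cited ingredients combine (the bounds $L \geq \pi(1+1/\lambda)$ and $\sqrt{\delta} > \lambda/(\lambda+1)$ force $L\sqrt{\delta} > \pi$ for every closed geodesic, hence a positive Morse index, and the index--iteration bookkeeping through the double cover $S^3 \to T^1S^2$ then gives $\muCZ \geq 3$). So your proposal is consistent with, and in effect an expansion of, the paper's purely citational treatment.
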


\begin{remark}
In \cite{HS13Finsler} Hryniewicz and Salom\~ao showed that the pinching condition in the preceding theorem is sharp in the sense that  if $0< \delta < ( \lambda / (\lambda+1))^2$, there exists a $\delta$-pinched Finsler metric on $S^2$ with reversibility $\lambda$ such that the contact form $h\lambda_0$ is not dynamically convex.  
\end{remark}

Consider a real tight three-sphere $(S^3, \lambda, \rho)$, where $\rho$ has the form \eqref{antis} with either $\theta_1=0$ or $\theta_2=0$.     Choose $\hat{\rho}(z_1, z_2) =  ( e^{ i (\pi- \theta_1)} \overline{z_1}, e^{ i (\pi- \theta_2 )} \overline{z_2})$, so that $\rho \circ \hat{\rho}= \iota = \hat{\rho} \circ \rho$.   Via $\Phi$, $\rho$ and $\hat{\rho}$ descend to an anti-contact involution  $f_I $    on $(\Sigma^*, \lambda)$.  By means of the Legendre transformation and the projection $\Sigma^* \to S^2$, $f_I$ defines an involution  $f $   on $ S^2 $.   By the preceding discussion and Theorem \ref{ThmB3} with $p=2$, we obtain the following.

\begin{theorem} Let   $f$ be the smooth function on $S^2$ described above with  either $\theta_1=0$ or $\theta_2=0$.    Assume that $S^2$ admits a real Finsler metric $F$ with respect to $f$.   If   $F$   has    reversibility $\lambda$  and is  $\delta$-pinched for some $\delta > ( \lambda / (\lambda+1))^2$,  then  there exist two or infinitely many symmetric closed $F$-geodesics. 
\end{theorem}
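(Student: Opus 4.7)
The plan is to reduce the existence question to one about doubly-symmetric periodic Reeb orbits on the tight three-sphere and then invoke Theorem \ref{ThmB3} with $p=2$. The preceding discussion already constructs the bijective correspondence between unit-speed symmetric closed $F$-geodesics on $(S^2,F,f)$ and doubly-symmetric periodic orbits on $(S^3,h\lambda_0,\rho_1,\rho_2)$, obtained by composing the Legendre transform $\mathscr{L}$ with the Harris--Paternain double cover $\Phi\colon S^3\to\Sigma^*$. Hence it suffices to produce two or infinitely many doubly-symmetric periodic orbits on $S^3$.

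First I would check that all hypotheses of Theorem \ref{ThmB3} hold for the contact form $h\lambda_0$ on $S^3$ with $p=2$. The deck transformation of $\Phi$ is the antipodal map, which is exactly $g_{2,1}$; since $h$ is invariant under the antipodal map and $\iota^*\lambda_0=\lambda_0$, one has $g_{2,1}^*(h\lambda_0)=h\lambda_0$. The lifted anti-contact involution $\rho=\rho_1$ is of the form \eqref{antis} with either $\theta_1=0$ or $\theta_2=0$, by our choice of $f$ via $\rho$ and $\hat{\rho}$. Finally, Theorem \ref{thm"HP} combined with the $\delta$-pinching hypothesis $\delta>(\lambda/(\lambda+1))^2$ ensures that $h\lambda_0$ is dynamically convex. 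Therefore Theorem \ref{ThmB3} applies and yields two or infinitely many doubly-symmetric periodic orbits on $(S^3,h\lambda_0,\rho_1,\rho_2)$.

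The remaining step is to verify that the count is preserved on passage to $S^2$. Every doubly-symmetric orbit on $S^3$ is $\iota$-invariant as a set, because $\iota=\rho_1\circ\rho_2$ permutes it into itself; hence it projects two-to-one onto an embedded symmetric periodic orbit of $(\Sigma^*,\lambda,f_I)$ whose full $\Phi$-preimage is exactly the original orbit. Consequently geometrically distinct doubly-symmetric orbits upstairs descend to geometrically distinct symmetric orbits on $\Sigma^*$, and the inverse Legendre transform followed by the footpoint projection sends these to geometrically distinct symmetric closed $F$-geodesics on $(S^2,F,f)$. The only point requiring any care is this descent bookkeeping; once it is in place, the dichotomy ``two or infinitely many'' transfers directly, completing the proof.
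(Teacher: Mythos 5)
Your proposal is correct and follows essentially the same route as the paper: the paper's proof is precisely the observation that the stated one-to-one correspondence between unit-speed symmetric closed $F$-geodesics and doubly-symmetric periodic orbits on $(S^3, h\lambda_0, \rho_1, \rho_2)$, together with dynamical convexity from Theorem \ref{thm"HP} under the pinching hypothesis, reduces the statement to Theorem \ref{ThmB3} with $p=2$. Your additional verification that the deck transformation is $g_{2,1}$ and that distinctness is preserved under descent is exactly the bookkeeping the paper leaves implicit.
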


\subsection{H\'enon$-$Heiles system}

The H\'enon-Heiles system describes chaotic motions of a star in a galaxy with an axis of symmetry \cite{HH64}. The Hamiltonian is   given by
\[
H(q_1, q_2, p_1, p_2) = \frac{1}{2} \lvert p \rvert^2 + \frac{1}{2} \lvert q \rvert^2 + q_1^2 q_2  -\frac{1}{3} q_2^3, \quad (q_1, q_2, p_1, p_2) \in \R^4.
\]
It is invariant under the   exact anti-symplectic involution
\[
\rho (q_1, q_2, p_1, p_2)=(-q_1, q_2, p_1, -p_2)  
\]
and the $3$-periodic exact symplectomorphism 
\[
 \sigma(q,p) := ( e^{ 2 \pi i /3}q,e^{ 2 \pi i /3}p), \quad (q,p)=(q_1 +iq_2, p_1 +ip_2)
  \]
satisfying  $\sigma \circ \rho \circ \sigma = \rho$.

There exist exactly two critical levels $0$ and $1/6$ of $H$, and for every $c \in (0, 1/6)$, the energy level $H^{-1}(c)$ contains a unique bounded component $\Sigma_c$ that    is diffeomorphic to $S^3$. It is proved by Schneider that $\Sigma_c$ bounds a strictly convex domain in $\R^4$ containing the origin, see \cite[Theorem 2.2]{Sch19}. Due to Hofer, Wysocki, and Zehnder  \cite[Theorem 3.7]{HWZ98convex} this shows that the contact form $\lambda =\lambda_0 |_{\Sigma_c}$ is dynamically convex, where $\lambda_0$ is the Liouville $1$-form on $\R^4$. In \cite{CPR79} Churchill, Pecelli, and Rod found eight periodic orbits on $\Sigma_c$, labeled by $\Pi_1, \ldots, \Pi_8$. They are related by 
\[
\sigma^2 ( \Pi_j) = \sigma( \Pi_{j+1}) = \Pi_{j+2}, \quad j=1, 4,
\]
and $\Pi_7$ and $\Pi_8$ are  $(\rho, \sigma)$-symmetric periodic orbits.  Therefore, they descend to four periodic orbits $\widehat{\Pi_1}, \widehat{\Pi_4},  \widehat{\Pi_7}$ and $\widehat{\Pi _8}$ on $(\Sigma_c / \Z_3, \widehat{\lambda}, \widehat{\rho})$, where the last two   are symmetric periodic orbits that are $3$-unknotted and have $sl=-1/3$.

Note that $\sigma|_{\Sigma_c} \neq g_{3,1}$. Therefore, the induced contact structure $\widehat{\xi}$ on $L(p,1) \cong \Sigma_c / \Z_3$ is different from $\overline{\xi_0}$. Consider the diffeomorphism $\psi \colon S^3 \to \Sigma_c$ defined as $\psi(z) = (1/\sqrt{H(z)})z$ and  the contactomorphism $\Phi$  on $(S^3, \lambda_0)$ given by
\[
\Phi ( q_1, q_2, p_1, p_2) = \frac{1}{\sqrt{2}} ( q_1 -p_2, -q_1-p_2, q_2 + p_1, q_2 - p_1).
\]
 It is easy to check that $(\Phi \circ \psi^{-1})^* \rho |_{\Sigma_c} = \rho_0 |_{S^3}$, where  
 \[
 \rho_0(q_1, q_2,p_1,p_2) = (-q_1, -q_2, p_1,p_2) ,
 \]
    and $(\Phi \circ \psi^{-1})^* \sigma |_{\Sigma_c} =  g_{3,1} $. Note that $\rho_0$ has the form \eqref{antis} with $\theta_1 =\theta_2=\pi$.  In this way we find that  $(\Sigma_c / \Z_3,  \widehat{\lambda}, \widehat{\rho})$ is contactomorphic to the dynamically convex real universally tight lens space $( L(3,2), \overline{\lambda_0}, \overline{\rho_0})$, where $\overline{\lambda_0}$ is the contact form induced by $\lambda$ via the maps $\psi$ and $\Phi$. The   periodic orbits $\widehat{\Pi_1}, \widehat{\Pi_4}, \widehat{\Pi_7}$, and $\widehat{\Pi _8}$   induce periodic orbits  $\overline{\Pi_1}, \overline{\Pi_4},  \overline{\Pi_7}$ and $\overline{\Pi _8}$    on $(L(3,2), \overline{\lambda}, \overline{\rho_0})$, where the last two   are symmetric periodic orbits which are $3$-unknotted and have $sl=-1/3$. An application of   Theorem \ref{Thmnew} and the result of Kang \cite{Kang2} implies  the following refinement of the result due to Schneider \cite[Corollary 2.5]{Sch19}.

\begin{theorem} For every $c \in (0, 1/6)$, the quotient manifold $ \Sigma_c / \Z_3$   carries a $\overline{\rho}$-symmetric periodic orbits that bounds a $\overline{\rho}$-invariant $3$-rational disk-like global surface of section. Moreover, $\Sigma_c/\Z_3$ carries   infinitely many $\overline{\rho}$-symmetric periodic orbits. They lift to infinitely many equivalence classes of periodic orbits on $\Sigma_c$, where the equivalence relation is defined as in the introduction.  
\end{theorem}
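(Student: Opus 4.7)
The plan is to reduce everything to the real universally tight lens space $(L(3,2), \overline{\lambda_0}, \overline{\rho_0})$ via the contactomorphism already constructed in the paragraph preceding the theorem, and then to apply Theorem \ref{Thmnew} together with Kang's dichotomy result \cite{Kang2}. All the serious input (strict convexity of $\Sigma_c$, the Churchill$-$Pecelli$-$Rod orbits, the explicit contactomorphism $\Phi\circ \psi^{-1}$ intertwining $\sigma|_{\Sigma_c}$ with $g_{3,1}$ and $\rho|_{\Sigma_c}$ with $\rho_0|_{S^3}$, and dynamical convexity of $\overline{\lambda_0}$) has already been recorded, so the proof should essentially be a bookkeeping exercise.

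For the first assertion, I would apply Theorem \ref{Thmnew} directly to $\overline{\Pi_7}$ (or equivalently $\overline{\Pi_8}$): the hypotheses of Theorem \ref{Thmnew} are met because $\overline{\lambda_0}$ is dynamically convex and $\overline{\Pi_7}$ is a simply covered $\overline{\rho_0}$-symmetric periodic orbit which is $3$-unknotted with self-linking number $-1/3$. This produces a $\overline{\rho_0}$-invariant $3$-rational disk-like global surface of section $\mathcal{D}$ bounded by $\overline{\Pi_7}$; pulling $\mathcal{D}$ back through the contactomorphism of the preceding discussion yields the desired $\overline{\rho}$-invariant $3$-rational disk-like global surface of section on $\Sigma_c/\Z_3$.

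For the second assertion I would invoke the results of Kang \cite{Kang2}. The first return map $\psi$ associated to $\mathcal{D}$ is a smooth area-preserving map of the interior reversible with respect to $\overline{\rho_0}$, and Kang's dichotomy states that $\psi$ carries either exactly two $\overline{\rho_0}$-symmetric fixed/periodic points or infinitely many, with the infinite alternative forced as soon as a single non-symmetric periodic orbit is present. The orbits $\overline{\Pi_1}$ and $\overline{\Pi_4}$ inherited from the Churchill$-$Pecelli$-$Rod list are, by the $\sigma^2(\Pi_j)=\sigma(\Pi_{j+1})=\Pi_{j+2}$ relation, not $\overline{\rho_0}$-symmetric and are geometrically distinct from $\overline{\Pi_7}$, so the dichotomy immediately produces infinitely many $\overline{\rho}$-symmetric periodic orbits on $\Sigma_c/\Z_3$.

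Finally, to transfer this back to $\Sigma_c$, I would observe that the covering projection $\Sigma_c\to\Sigma_c/\Z_3$ sends each equivalence class (in the sense of the introduction) to a single periodic orbit downstairs, and each $\overline{\rho}$-symmetric periodic orbit downstairs lifts to exactly one such class upstairs (either a $(\rho,\sigma)$-symmetric orbit, or a triple $\{P,\sigma(P),\sigma^2(P)\}$ of $\rho_j$-symmetric orbits). Hence infinitely many symmetric orbits downstairs correspond to infinitely many equivalence classes upstairs, completing the proof. The only step that is not entirely formal is the verification that the descended orbits $\overline{\Pi_j}$ retain the properties (simply covered, $3$-unknotted, self-linking $-1/3$, $\overline{\rho_0}$-symmetric or not) claimed above, but this is what justifies the orbit description given just before the theorem statement and presents no essential obstacle.
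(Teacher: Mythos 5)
Your proposal follows exactly the route the paper intends (the paper gives no written proof beyond ``an application of Theorem \ref{Thmnew} and the result of Kang''): apply Theorem \ref{Thmnew} to $\overline{\Pi_7}$ on the dynamically convex lens space $(L(3,2),\overline{\lambda_0},\overline{\rho_0})$, invoke Kang's dichotomy for the resulting invariant rational global surface of section, and lift back to $\Sigma_c$ via the correspondence between $\overline{\rho}$-symmetric orbits downstairs and equivalence classes upstairs. One step is misjustified, though it does not sink the argument: you claim $\overline{\Pi_1}$ and $\overline{\Pi_4}$ are \emph{not} $\overline{\rho_0}$-symmetric ``by the $\sigma^2(\Pi_j)=\sigma(\Pi_{j+1})=\Pi_{j+2}$ relation''. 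That relation only shows the individual $\Pi_j$ ($j=1,\dots,6$) are not $\sigma$-invariant, hence not $(\rho,\sigma)$-symmetric on $\Sigma_c$; it does not preclude each $\Pi_j$ from being $\rho_{j'}$-symmetric for a single $j'$ (indeed $\Pi_1,\Pi_2,\Pi_3$ are the brake orbits along the three symmetry axes, each invariant under one reflection), in which case the descended orbits $\overline{\Pi_1},\overline{\Pi_4}$ \emph{are} $\overline{\rho}$-symmetric. The correct way to close this step is to note that either alternative triggers the infinite branch of Kang's dichotomy: if $\overline{\Pi_1}$ (which is geometrically distinct from $\overline{\Pi_7},\overline{\Pi_8}$) is symmetric, there are at least three symmetric orbits, ruling out the ``exactly two'' case; if it is non-symmetric, the second clause of the dichotomy applies directly. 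With that repair your argument is complete and coincides with the paper's.
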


\subsection{Hill's lunar problem}
In \cite{Hill78} Hill studied the motion of the moon. The Hamiltonian of his lunar  problem,        obtained from  the  planar circular restricted three-body problem by blowing up the coordinates near the lighter primary, which we will call the earth,  is       given by
\[ 
H\colon T^* ( \R^2 \setminus \left \{ 0 \right \}) \rightarrow \R, \quad (q,p) = \frac{1}{2} |p|^2 - \frac{1}{|q|} + q_1 p_2 - q_2 p_1 - q_1^2 + \frac{1}{2}q_2^2 .
\]
 Note that $H$   admits the two commuting exact anti-symplectic involutions 
\begin{equation}\label{eq:twpinv}
\rho_1(q_1, q_2, p_1, p_2) = (q_1, -q_2, -p_1, p_2), \quad \rho_2 (q_1, q_2, p_1, p_2) = (-q_1 , q_2, p_1, -p_2).
\end{equation}
There exists a unique critical value $c_1 = -3^{4/3}/2$, and for every $c <c_1$, the energy level set $H^{-1}(c)$ contains a unique   component $\Sigma_c$ whose projection to the $q$-plane is bounded.
In \cite{Birk15} Birkhoff showed the existence of a retrograde periodic orbit on $\Sigma_c$ which is doubly symmetric, i.e.\ its trace is invariant under both $\rho_1$ and $\rho_2$. Numerical experiments show that a  direct periodic orbit also exists and is doubly-symmetric. See for instance \cite{Ayd19, Hen69}.

As before, $\Sigma_c$ is non-compact, and by regularizing we find that $(\overline{\Sigma_c}, \overline{\lambda}=\overline{\lambda_0}|_{\overline{\Sigma_c}})$ is contactomorphic to a universally tight lens space $ L(2,1)$.   It is easy to see that there exists a very negative energy $c_* <c_1$   such that for every $c<c_*$, the  image of the Levi-Civita embedding of $\Sigma_c$ bounds a strictly convex domain in $\C^2$. Therefore, $(\overline{\Sigma_c}, \overline{\lambda}=\overline{\lambda}_0|_{\overline{\Sigma_c}})$ is dynamically convex for  $c<c_*$. Because a retrograde periodic orbit is $2$-unknotted and has $sl=-1/2$, Theorem \ref{Thmnew} tells us that it bounds a $\rho_1$-invariant $2$-rational disk-like global surface of section  $\mathcal{D}$. Note that $\iota(\mathcal{D})$ is also a $\rho_1$-invariant $2$-rational disk-like global surface of section, where $\iota =-\mathrm{Id}= \rho_1 \circ \rho_2$. Together with \cite[Theorem 1.3]{Kang2} this proves the following statement.

\begin{theorem} Given $c<c_*$, a simply covered retrograde periodic orbit on $\overline{\Sigma_c}$ bounds a $\rho_1$-invariant rational disk-like global surface of section. As a consequence, there exist either two or infinitely many doubly-symmetric periodic orbits on $\overline{\Sigma_c}$. 
\end{theorem}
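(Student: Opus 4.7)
The plan is to reduce directly to Theorem~\ref{Thmnew} in the regularized lens-space picture and then invoke Kang's dichotomy, following the outline already laid out in the paragraph preceding the theorem. After Levi-Civita regularization, the triple $(\overline{\Sigma_c}, \overline{\lambda}, \rho_1)$ becomes a real universally tight lens space $(L(2,1), \overline{\lambda_0}, \overline{\rho_1})$, with $\rho_1$ coming from an anti-contact involution on $S^3$ of the form \eqref{antis} dictated by \eqref{eq:twpinv}. For $c<c_*$ the Levi-Civita image of $\Sigma_c$ bounds a strictly convex domain in $\C^2$, so by Hofer--Wysocki--Zehnder the pulled-back contact form on $S^3$ is dynamically convex; this property transfers to $L(2,1)$ because every contractible closed Reeb orbit on the quotient lifts to a closed Reeb orbit on $S^3$ with the same Conley--Zehnder index.

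Next I would verify the hypotheses of Theorem~\ref{Thmnew} for the simply covered retrograde orbit $P$. Birkhoff's retrograde periodic orbit is $\rho_1$-symmetric (in fact doubly symmetric), and its double cover in $S^3$ under the Levi-Civita projection is an unknotted symmetric periodic orbit of self-linking number $-1$; hence $P$ itself is $2$-unknotted in $L(2,1)$ with self-linking number $-1/2$. Applying Theorem~\ref{Thmnew} then produces a $\overline{\rho_1}$-invariant $2$-rational disk-like global surface of section $\mathcal{D}$ with $\partial\mathcal{D}=P$. Since $\iota = \overline{\rho_1}\circ\overline{\rho_2}$ is a strict contactomorphism of $(\overline{\Sigma_c}, \overline{\lambda})$ that commutes with $\overline{\rho_1}$, the image $\iota(\mathcal{D})$ is a second such surface of section.

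Finally, to extract the dichotomy about doubly-symmetric periodic orbits, I would feed $\mathcal{D}$ into \cite[Theorem 1.3]{Kang2}. That theorem takes as input a $\overline{\rho_1}$-invariant rational disk-like global surface of section whose spanning orbit is doubly-symmetric and outputs the required dichotomy; both hypotheses are satisfied here because $P$ is doubly-symmetric. The conclusion is that $\overline{\Sigma_c}$ carries either exactly two or infinitely many doubly-symmetric periodic orbits.

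The main point of attention, as I see it, is the descent of dynamical convexity from the strictly convex $S^3$ double cover down to $L(2,1)$; for $p=2$ this is an essentially parity-type argument on lifts of contractible orbits and poses no real difficulty. Once that is in place, the theorem is an almost immediate concatenation of Theorem~\ref{Thmnew} with Kang's dichotomy, and no new pseudoholomorphic curve analysis is needed.
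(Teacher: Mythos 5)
Your proposal is correct and follows essentially the same route as the paper: Levi--Civita regularization to the universally tight $L(2,1)$, dynamical convexity for $c<c_*$ from strict convexity of the regularized double cover, verification that the simply covered retrograde orbit is $2$-unknotted with $sl=-1/2$, an application of Theorem \ref{Thmnew} (with $\iota(\mathcal{D})$ as the second invariant surface), and finally \cite[Theorem 1.3]{Kang2} for the dichotomy. The paper's own argument is exactly this concatenation, stated in the paragraph preceding the theorem.
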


\noindent
Note that doubly-symmetric periodic orbits are symmetric periodic orbits of type I, see Subsection \ref{sec:3bp}.  Since Hill's lunar problem is a limit case of the planar circular restricted three-body problem that studies the motion near the earth, the statement of the above theorem is compatible with Theorem \ref{thm:L3cp}. For this reason, we expect that the above statement   holds for all $c<c_1$.

\section{Preliminaries}

In this section $(M, \lambda)$ or $(M, \lambda, \rho)$ always denotes a closed contact three-manifold or a closed real contact three-manifold with non-empty $\mathrm{Fix}(\rho)$, respectively.

\subsection{The Conley-Zehnder indices of periodic  orbits}\label{sec:ind1}

Let $P=(x,T)$ be a non-degenerate periodic  orbit on $(M, \lambda)$ and let $\mathfrak{T} \colon x_T^* \xi \to S^1 \times \C$ be a unitary trivialization, where $x_T(t):= x(Tt)$ and $S^1 = \R /\Z$.  We obtain a smooth path of $2 \times 2$ symplectic matrices 
\begin{equation}\label{eq:pathsymp}
\Phi_P^{\mathfrak{T}}(t) := \mathfrak{T} ( x_T(t)) \circ d\phi_R^{Tt}(x_T(0))|_{ \xi_{x_T(0)}} \circ \mathfrak{T} (x_T(0))^{-1}, \quad t \in \R
\end{equation}
satisfying that $\Phi_P^{\mathfrak{T}}(0) = \mathrm{Id}$ and  $\Phi_P^{\mathfrak{T}}(t +1 ) = \Phi_P^{\mathfrak{T}}(t) \Phi_P^{\mathfrak{T}}(1)$. 
The non-degeneracy of $P$ is equivalent to   $\det(\Phi_P^{\mathfrak{T}}(1) - \mathrm{Id}) \neq 0$.   We  define the Conley-Zehnder index of $P$ with respect to $\mathfrak{T}$ as 
\[
\muCZ^{\mathfrak{T}}(P ):= \muCZ (\Phi_P^{\mathfrak{T}} |_{[0,1]}) .
\]
For the original definition of the latter,  we refer the reader to   \cite{CZ83, CZ84}.

We now give an alternative analytic definition of the Conley-Zehnder index due to Hofer, Wysocki, and Zehnder \cite[Section 3]{HWZII} and extend the definition to \emph{any} periodic orbit.   Let $\Phi(t), t \in \R,$  be a smooth path of $2 \times 2$ symplectic matrices satisfying
\begin{equation}\label{eq:conditionsphi}
\Phi(0) = \mathrm{Id} \quad \text{ and } \quad  \Phi(t+1) = \Phi(t) \Phi(1). 
\end{equation}
The associated smooth loop of $2 \times 2$ symmetric matrices $S=S_{\Phi} $ is defined as
\[
S(t) := - J_0 \p_t \Phi(t) \Phi(t)^{-1}, \quad t \in S^1
\]
 where
 \[
 J_0 = \begin{pmatrix} 0 & -1 \\ 1 & 0 \end{pmatrix}
 \]
 is the standard complex structure. It provides us with the  unbounded operator
\[
A_S \colon W^{1,2}(S^1, \C) \to L^2(S^1, \C), \quad v \mapsto -J_0 \p_t v - Sv 
\]
having the following properties:
\begin{itemize}
\item It is self-adjoint.
\item The map $\ker A_S \to \ker( \Phi(1) - \mathrm{Id}), \; v \mapsto v(0)$ is a vector space isomorphism. 
\item The spectrum $\sigma(A_S) = \{ \eta \in \C \mid A_S - \eta \mathrm{Id} \text{ is non-invertible} \}$  consists of eigenvalues of $A_S$, is a discrete subset of $\R$, and accumulates only at $\pm \infty$. 
\end{itemize}
Let $ v \not\equiv 0$ be an eigenfunction of $A_S$ to the  eigenvalue $\eta$. As it is a solution of the ODE $-J_0 \p_t v - Sv = \eta v$, it is non-vanishing, so   we can  define the winding number of $v$ as 
\[
w(v, \eta, S) = \frac{1}{2\pi} \left( \mathrm{arg}( v(1)) - \mathrm{arg}( v( 0)) \right)  \in \Z.
\]
The following crucial properties are proved in \cite[Lemmas 3.4, 3.6, and 3.7]{HWZII}. 

\begin{lemma} [{Properties of the winding number}] \
\begin{enumerate}
\item If  $v_1$ and $v_2$ are eigenfunctions to the same eigenvalue $\eta$, then $w(v_1, \eta, S) = w(v_2, \eta, S)$. As a consequence, we can associate  with each eigenvalue $\eta$ of $A_S$ the winding number $w(\eta, S):=w(v, \eta, S)$, where $v$ is any eigenfunction of $A_S$ to the eigenvalue $\eta$. 

\item   For every $k \in \Z$, there exist exactly two eigenvalues  $\eta_1, \eta_2$ of $A_S$, counted with multiplicity, such that $k=w(\eta_1, S) = w(\eta_2, S)$. 

\item  The map $w \colon \sigma(A_S) \to \Z$, $\eta \mapsto w(\eta, S)$, is monotone, i.e.\ $\eta_1 \leq \eta_2$ implies that $w(\eta_1, S) \leq w(\eta_2, S)$. 
\end{enumerate}

\end{lemma}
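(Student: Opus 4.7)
The plan is to recast the eigenvalue equation as the Hamiltonian ODE $\dot v = J_0(S(t) + \eta I)v$ on $\R^2 \cong \C$ and study it in polar coordinates. Because the ODE is linear, any nonzero solution is nowhere vanishing and admits a continuous lift $v(t) = r(t)e^{i\theta(t)}$, $\theta \in \R$. Denoting by $\Psi_\eta \in C^\infty([0,1], \Sp(2,\R))$ the fundamental solution, an eigenfunction of $A_S$ to eigenvalue $\eta$ corresponds precisely to initial data $v_0 \in \ker(\Psi_\eta(1) - \mathrm{Id})$, and its winding number equals the integer $(\theta(1) - \theta(0))/(2\pi)$. A short computation (identifying $J_0$ with multiplication by $i$) yields the key identity
\[
\dot\theta(t) \;=\; \eta + \langle S(t)\,e^{i\theta(t)}, e^{i\theta(t)}\rangle,
\]
which depends on $v$ only through its direction $\theta$ and is strictly monotone increasing in $\eta$.

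For part (1) the winding number is a continuous, integer-valued function of $v_0 \in \R^2 \setminus \{0\}$. If $\ker(\Psi_\eta(1) - \mathrm{Id})$ is one-dimensional, then any two eigenfunctions are real scalar multiples of one another: positive rescaling leaves the continuous lift $\theta$ unchanged, while multiplication by $-1$ shifts $\theta$ everywhere by the constant $\pi$, so the winding is invariant in either case. If the kernel is two-dimensional then $\Psi_\eta(1) = \mathrm{Id}$, every $v_0 \neq 0$ is an eigenvector, and continuity of the winding on the connected set $\R^2 \setminus \{0\}$ forces it to be constant.

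For parts (2) and (3) I would deform along the real-analytic family $S_\tau := \tau S$, $\tau \in [0,1]$, and apply Kato's analytic perturbation theory. The operators $A_\tau := A_{S_\tau}$ are self-adjoint with compact resolvent, and $A_\tau - A_0$ is a bounded multiplication operator, so the spectrum is organised into analytic eigenvalue branches $\{\eta_j(\tau)\}$ equipped with locally analytic eigenfunctions; the winding $w(\eta_j(\tau), S_\tau)$ is integer-valued and continuous in $\tau$, hence constant along each branch. At $\tau = 0$ the equation $\dot v = \eta J_0 v$ gives that $\Psi_\eta(1)$ is rotation by angle $\eta$, so the spectrum of $A_0$ equals $\{2\pi k : k \in \Z\}$, each eigenvalue has real geometric multiplicity exactly $2$, and the winding equals $k$; hence (2) and (3) hold trivially at $\tau = 0$. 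The crucial rigidity is that two branches carrying different winding numbers can never collide, for if $\eta_j(\tau_*) = \eta_k(\tau_*)$ then the common eigenspace would contain eigenfunctions with two distinct windings, contradicting part~(1). Listing eigenvalues in non-decreasing order with multiplicity, the winding of the $j$-th eigenvalue is therefore an integer-valued continuous function of $\tau$, hence constant, so the pattern $(\ldots, -1, -1, 0, 0, 1, 1, \ldots)$ realised at $\tau = 0$ persists at $\tau = 1$, establishing both (2) and (3).

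The principal technical obstacle is making the perturbation-theoretic step rigorous for the unbounded operator $A_S$. I would handle this by passing to the bounded self-adjoint compact resolvent $(A_\tau - i\,\mathrm{Id})^{-1}$, which depends real-analytically on $\tau$ in operator norm, and applying Kato's analytic perturbation theorem there before transferring the conclusions back to $A_\tau$. A minor further subtlety is that branches within a fixed winding class may meet and exchange labels under the deformation; but by part~(1) the entire coalesced eigenspace shares a single winding number, so the count ``exactly two eigenvalues with multiplicity per value of $k$'' is preserved through any such exchange.
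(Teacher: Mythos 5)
Your argument is correct: part (1) via continuity and connectedness of the (periodic) initial-data set, and parts (2)--(3) via the deformation $S_\tau=\tau S$, Kato's analytic perturbation theory, constancy of the winding along eigenvalue branches, the explicit spectrum $2\pi\Z$ at $\tau=0$, and the rigidity that branches with distinct windings cannot collide. The paper itself gives no proof --- it quotes this verbatim from Lemmas 3.4, 3.6 and 3.7 of Hofer--Wysocki--Zehnder \cite{HWZII} --- and your reconstruction follows essentially the same route as the original source, so there is nothing to add.
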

\noindent
We define $\alpha(S) := \max \{ w(\eta, S) \mid \eta \in \sigma(A_S) \cap (-\infty, 0) \}$ and 
\[
p(S) := \begin{cases} 0 & \text{ if  there exists $\eta \in \sigma(A_S) \cap [0, \infty)$ such that $\alpha(S) = w(\eta, S)$}, \\ 1 & \text{ otherwise}.  \end{cases}
\]
 \begin{theorem}[{\cite[Theorem 3.10]{HWZII}}]\label{thm:indexdefhwz} If  $\ker(\Phi(1) - \mathrm{Id}) = \{ 0 \}$, then
 \begin{equation}\label{cznew}
 \muCZ(\Phi |_{[0,1]})  = 2\alpha(S) + p(S).
 \end{equation}
 \end{theorem}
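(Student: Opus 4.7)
The strategy is to show that both quantities, $\muCZ(\Phi|_{[0,1]})$ and $2\alpha(S)+p(S)$, define locally constant integer-valued functions on the space
\[
\mathcal{P}_* := \{\,\Phi \in C^\infty(\R,\Sp(2,\R)) \mid \Phi(0)=\mathrm{Id},\ \Phi(t+1)=\Phi(t)\Phi(1),\ \det(\Phi(1)-\mathrm{Id})\ne 0\,\},
\]
and that they agree on a system of representatives of its connected components. For $\muCZ$ this is the standard Conley-Zehnder axiomatics. The nontrivial part is the analogue for $2\alpha(S)+p(S)$ together with the normalization computation.

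\medskip
\noindent
\emph{Step 1: Homotopy invariance of the right-hand side.} Given a smooth homotopy $\Phi_\tau\in\mathcal{P}_*$ for $\tau\in[0,1]$, the loops of symmetric matrices $S_\tau=-J_0\partial_t\Phi_\tau\Phi_\tau^{-1}$ vary smoothly in $\tau$, and the self-adjoint operators $A_{S_\tau}$ form a continuous family with compact resolvent. Hence the eigenvalues of $A_{S_\tau}$ depend continuously on $\tau$ (counted with multiplicity). By the kernel isomorphism recalled in the excerpt, $0\in\sigma(A_{S_\tau})$ is equivalent to $\ker(\Phi_\tau(1)-\mathrm{Id})\ne\{0\}$, which is excluded throughout the homotopy; thus no eigenvalue ever crosses $0$. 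Because the winding number $w(\cdot,S_\tau):\sigma(A_{S_\tau})\to\Z$ is monotone and integer-valued, and continuous deformations of eigenfunctions cannot change an integer-valued winding number abruptly, the function $\tau\mapsto(\alpha(S_\tau),p(S_\tau))$ is locally constant. Hence $2\alpha(S)+p(S)$ is invariant under homotopies inside $\mathcal{P}_*$.

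\medskip
\noindent
\emph{Step 2: Normalization on model paths.} I would verify the formula on standard representatives of each component of $\mathcal{P}_*$. For the elliptic model $\Phi(t)=e^{2\pi i\theta t}$ with $\theta\in\R\setminus\Z$ (where multiplication by $i$ is identified with $J_0$), one computes $S\equiv 2\pi\theta\,\mathrm{Id}$, the eigenfunctions $v_n(t)=e^{2\pi int}$ with eigenvalues $\eta_n=2\pi(n-\theta)$, and winding numbers $w(\eta_n,S)=n$. For $\theta\in(k,k+1)$ this yields $\alpha(S)=k$, and since the unique eigenfunction with winding $k$ has negative eigenvalue $\eta_k=2\pi(k-\theta)<0$, we get $p(S)=1$ and $2\alpha+p=2k+1$, matching the known value of $\muCZ$ in the elliptic case. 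Analogous direct checks on the hyperbolic model $\Phi(t)=\mathrm{diag}(e^{\lambda t},e^{-\lambda t})$ and on the negative-hyperbolic model produce even integers matching $\muCZ$. Together with the classification of conjugacy classes in $\Sp(2,\R)$ these cover all components of $\mathcal{P}_*$ up to homotopy.

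\medskip
\noindent
\emph{Step 3: Conclusion and the main obstacle.} Combining Steps 1 and 2, the identity $\muCZ(\Phi|_{[0,1]})=2\alpha(S)+p(S)$ holds on every component of $\mathcal{P}_*$, proving the theorem. The main obstacle is making Step 1 genuinely rigorous: one has to show that the change in $(\alpha,p)$ is driven \emph{solely} by eigenvalues crossing $0$, and that under such a crossing (outside $\mathcal{P}_*$) the jump in $2\alpha+p$ equals the jump in $\muCZ$ prescribed by the Robbin-Salamon crossing formula. Concretely, if during a one-parameter family $S_\tau$ a simple eigenvalue with winding number $k$ moves from negative to positive at $\tau=\tau_0$, then on one side $\alpha(S_\tau)\ge k$ with $p(S_\tau)=1$ (no nonnegative eigenvalue realises $\alpha$), while on the other side the same $\alpha$ is realised by a nonnegative eigenvalue so $p$ drops to $0$; one checks case-by-case that the resulting jump of $2\alpha+p$ by $\pm 1$ matches the signature contribution of the crossing in the Robbin-Salamon formula for $\muCZ$. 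This crossing analysis, together with the fact that any two non-degenerate reference paths realising the same $\muCZ$ are connected through $\mathcal{P}_*$, upgrades Steps 1-2 to a full proof.
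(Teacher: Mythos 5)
The paper does not prove this statement: it is quoted verbatim from Hofer--Wysocki--Zehnder (their Theorem 3.10) and used as a black box, so there is no in-paper argument to compare yours against line by line. Measured against the original proof, your strategy --- homotopy invariance of $2\alpha(S)+p(S)$ inside the space of non-degenerate paths, plus normalization on model paths --- is essentially the classical route: one shows that the spectral quantity satisfies the properties that uniquely characterize $\muCZ$. Your Step 2 computation for the elliptic model is correct (including the observation that the eigenvalue $2\pi(k-\theta)$ has multiplicity two, so both eigenvectors of winding $k$ are negative and $p(S)=1$), and Step 1 is sound provided you justify that eigenfunctions of $A_{S_\tau}$ vary continuously in a norm strong enough to control the winding number, which follows from the resolvent continuity and the fact that the winding depends only on the eigenvalue.

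Two points deserve tightening. First, Step 3 is logically redundant as written: if you already grant that two non-degenerate paths in $\mathcal{P}_*$ with the same $\muCZ$ are homotopic through $\mathcal{P}_*$ (the standard classification of non-degenerate paths by the Conley--Zehnder index), then Steps 1--2 alone finish the proof and the crossing analysis is unnecessary; conversely, if you do not want to invoke that classification, the crossing analysis must be carried out in full, comparing the jump of $2\alpha+p$ at a simple zero crossing of an eigenvalue with the Robbin--Salamon crossing contribution for $\Phi_\tau$. You should commit to one of the two routes rather than stacking both. Second, your normalization step only exhibits representatives of the odd-index components; to cover even indices you need the positive hyperbolic model composed with symplectic loops, and hence a verification of the loop property $2\alpha(S_{\Psi\Phi})+p(S_{\Psi\Phi})=2\deg(\Psi)+2\alpha(S_\Phi)+p(S_\Phi)$ for a loop $\Psi$ (or a direct spectral computation for each twisted model); this is where the two-to-one property and the monotonicity of the winding map do real work, and it is glossed over in the proposal. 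With these two items supplied, the argument is complete and agrees with the source.
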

\noindent
Following \cite{HWZ98convex}, for \emph{any} smooth path of $2 \times 2$ symplectic matrices satisfying \eqref{eq:conditionsphi},  we define its Conley-Zehnder index by the formula \eqref{cznew}.  
The following statement is well-known. See for instance \cite[Appendix 8.1]{HWZ03foliation}. 

\begin{proposition}  \label{lem:indeitercz}   Let $\Phi(t), t \in \R,$ be a smooth path of $2 \times 2$ symplectic matrices satisfying \eqref{eq:conditionsphi}.
For every $k \in \N$, we have 
 \begin{align*}
&\muCZ (\Phi |_{[0,1]})  \geq 3 \quad  \Rightarrow \quad \muCZ (\Phi |_{[0,k]} ) \geq 2k+1 ,\\
 &\muCZ (\Phi |_{[0,1]}) \geq 1 \quad  \Rightarrow \quad \muCZ (\Phi |_{[0,k]}) \geq1,\\
 &\muCZ (\Phi |_{[0,1]})  <1   \quad  \Rightarrow \quad \muCZ  (\Phi |_{[0,k]}) <1 .
\end{align*}

\end{proposition}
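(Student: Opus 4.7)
My plan is to prove the proposition by leveraging the Hofer-Wysocki-Zehnder characterization $\muCZ = 2\alpha(S) + p(S)$ introduced just before the statement, combined with a Floquet-type analysis of what happens to the asymptotic operator under iteration.

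First I would set up the iteration on the operator level. Given $\Phi$ with the prescribed cocycle property, the rescaled path $\Phi^{(k)}(t) := \Phi(kt)$ still satisfies $\Phi^{(k)}(0) = \mathrm{Id}$ and $\Phi^{(k)}(t+1) = \Phi^{(k)}(t)\Phi^{(k)}(1)$, and a short computation gives $S_{\Phi^{(k)}}(t) = k S_\Phi(kt)$. Under the substitution $u(s) = v(s/k)$, the eigenvalue equation $A_{S_{\Phi^{(k)}}} v = \mu v$ on $L^2(S^1,\C)$ becomes $-J_0 \dot u - S_\Phi u = (\mu/k) u$ with $u(0) = u(k)$. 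Hence the spectrum of $A_{S_{\Phi^{(k)}}}$ is $k$ times the spectrum of the ``$k$-periodic asymptotic operator'' $A_{S_\Phi}^{(k)}$, and the winding number $w(\mu, S_{\Phi^{(k)}})$ equals the winding of $u$ around $0$ over the interval $[0,k]$. In particular, every $1$-periodic eigenfunction of $A_{S_\Phi}$ with eigenvalue $\eta$ and winding $w(\eta, S_\Phi) = m$ is also $k$-periodic, and its winding on $[0,k]$ equals $km$; so $\mu := k\eta$ is an eigenvalue of $A_{S_{\Phi^{(k)}}}$ with winding $km$.

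Next I would extract the quantitative lower bounds. Monotonicity of $\tilde w \colon \sigma(A_{S_\Phi}^{(k)}) \to \Z$ (which follows from the standard argument in \cite[Lemma 3.6]{HWZII} applied to the $k$-periodic setting) together with the previous observation yields $\alpha(S_{\Phi^{(k)}}) \geq k\alpha(S_\Phi)$. For the first implication, split $\muCZ(\Phi|_{[0,1]}) \geq 3$ into the two mutually exclusive cases $\alpha(S_\Phi) \geq 2$ and $\alpha(S_\Phi) = 1, p(S_\Phi) = 1$; in the first case $\alpha(S_{\Phi^{(k)}}) \geq 2k$ already forces $\muCZ(\Phi|_{[0,k]}) \geq 4k \geq 2k+1$, while in the second case I would use pigeonhole on the windings of $A_{S_\Phi}^{(k)}$ in the interval $(-\infty, 0)$ to produce an eigenvalue of winding exactly $k+$(something) and pin down the parity $p(S_{\Phi^{(k)}}) = 1$. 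The second implication ($\muCZ(\Phi|_{[0,1]}) \geq 1$) reduces to showing that $\alpha(S_\Phi) \geq 0$ implies $\alpha(S_{\Phi^{(k)}}) \geq 0$, which is immediate, and that the parity is consistent. The third (contrapositive) implication follows from the monotonicity in the reverse direction: if $\muCZ(\Phi|_{[0,k]}) \geq 1$ were to hold, then an eigenvalue of $A_{S_\Phi}^{(k)}$ of non-negative winding lying in $(-\infty,0)$ would by Floquet theory force a $1$-periodic eigenfunction of $A_{S_\Phi}$ of non-negative winding in $(-\infty,0)$, contradicting $\muCZ(\Phi|_{[0,1]}) < 1$.

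The main obstacle, I expect, will be the parity bookkeeping in the first inequality: the bound $\alpha(S_{\Phi^{(k)}}) \geq k\alpha(S_\Phi)$ alone gives $\muCZ(\Phi|_{[0,k]}) \geq 2k\alpha(S_\Phi)$, which falls short of $2k+1$ by exactly the parity term. Resolving this requires an argument, specific to the degenerate/positive/negative-hyperbolic/elliptic classification of $\Phi(1)^k$, ruling out the pathological configuration in which the largest negative winding of $A_{S_\Phi}^{(k)}$ is shared with an eigenvalue at $0$. In practice this is handled cleanly by reducing, via a homotopy of $\Phi$ that fixes both $\muCZ(\Phi|_{[0,1]})$ and the conjugacy class of $\Phi(1)$ (and hence of $\Phi(1)^k$), to one of the three normal forms (rotation, positive hyperbolic, negative hyperbolic), where both $\muCZ(\Phi|_{[0,1]})$ and $\muCZ(\Phi|_{[0,k]})$ are computed by a direct formula in terms of the rotation angle or stretching exponent, and the inequalities are verified by inspection.
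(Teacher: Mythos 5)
The paper does not actually prove this proposition: it is quoted as well known with a pointer to \cite[Appendix 8.1]{HWZ03foliation}, so your argument has to stand entirely on its own. Its skeleton is the right one and matches the standard approach: the rescaling $S_{\Phi^{(k)}}(t)=kS_{\Phi}(kt)$, the identification of $\sigma(A_{S_{\Phi^{(k)}}})$ with $k$ times the spectrum of the $k$-periodic problem for $-J_0\partial_t-S_\Phi$, and the resulting inequality $\alpha(S_{\Phi^{(k)}})\geq k\,\alpha(S_\Phi)$ coming from the $1$-periodic eigenfunctions are all correct, and they settle the case $\alpha(S_\Phi)\geq 2$ of the first implication and the case $\alpha(S_\Phi)\geq 1$ of the second.

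Two steps, however, do not work as written. First, in the third implication you assert that a negative eigenvalue of the $k$-periodic problem with non-negative winding ``would by Floquet theory force a $1$-periodic eigenfunction of non-negative winding.'' This is false: the $k$-periodic spectrum is the union, over the $k$-th roots of unity $\zeta$, of the spectra of the $\zeta$-twisted $1$-periodic problems, and only the sector $\zeta=1$ produces $1$-periodic eigenfunctions. Estimating the windings in the twisted sectors by those of the untwisted problem is exactly where the content of the iteration inequality lies, and the same missing comparison is what you would need to pin down the parity terms $p(S_{\Phi^{(k)}})$ in the borderline cases $(\alpha,p)=(1,1)$ and $(0,1)$, which you acknowledge but defer. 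Second, your fallback -- homotope $\Phi|_{[0,1]}$ rel endpoints to a normal form and verify by inspection -- is a legitimate repair (the extended index $2\alpha+p$ is invariant under homotopies fixing $\Phi(1)$, since $\dim\ker A_S=\dim\ker(\Phi(1)-\mathrm{Id})$ is then constant and no eigenvalue can cross $0$, and paths from $\mathrm{Id}$ to a fixed endpoint with equal index are homotopic rel endpoints), but the list ``rotation, positive hyperbolic, negative hyperbolic'' omits precisely the cases the extended definition \eqref{cznew} is designed to cover: shears with eigenvalue $\pm1$, the matrices $\pm\mathrm{Id}$, and rational rotations whose $k$-th power is degenerate. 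These cannot be excluded here, since the proposition is applied in the paper to degenerate, merely dynamically convex contact forms. A cleaner route -- the one the paper itself invokes later in the proof of Lemma \ref{lem:covnoncontd} -- is via the transverse rotation number, which satisfies $\rho(\Phi|_{[0,k]})=k\,\rho(\Phi|_{[0,1]})$ and determines $\muCZ$ up to the parity term in all cases; the three implications then reduce to elementary inequalities for $\lfloor k\rho\rfloor$.
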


If $P$ is contractible, then we choose a unitary trivialization $\mathfrak{T}$    that extends over a spanning disk. As a disk is contractible,   the Conley-Zehnder index depends only on the choice of a spanning disk. If $c_1(\xi)$ vanishes on $\pi_2(M)$,  then it is also independent of the choice of a spanning disk and hence  provides us with   a well-defined Conley-Zehnder index of a contractible periodic orbit.   If $M=S^3$, then the tight contact structure is  a unique, up to homotopy, trivial symplectic vector bundle, and, therefore,  every   periodic   orbit on   the tight three-sphere  is assigned a  well-defined Conley-Zehnder index.   In the case of lens spaces, the Conley-Zehnder index is well-defined for every contractible periodic orbit because $\pi_2$ vanishes. In either case, we simply write $\muCZ(P)$.

\subsection{Two indices of  symmetric  periodic   orbits}\label{sec:ind2}

 Let $P=(x,T)$ be a non-degenerate symmetric periodic orbit  on $(M, \lambda, \rho)$.  Its Conley-Zehnder index with respect to a unitary trivialization is defined as in the preceding subsection.  Recall that $P$ can   be interpreted as a Legendrian intersection point, i.e.\ $c(t) := x(t)|_{[0,T/2]}$ is a   chord with boundary condition $c(0), c(T/2) \in \mathrm{Fix}(\rho)$.  This implies that one can assign a symmetric periodic orbit a different index explained as below.

Let  $\mathfrak{T} $ be a  symmetric unitary trivialization of $x_T^* \xi$,  namely,   a unitary trivialization having the additional property that 
\[
\mathfrak{T} ( x_T(t)) \circ D\rho (x_T(t)  )|_{ \xi_{x_T(t) }} = I \circ  \mathfrak{T}  (x_T(-t))   , \quad t \in S^1, 
\] 
where $ I (z) = \overline{z}.$
 The existence of a symmetric unitary trivialization was proved in \cite[Lemma 3.10]{FK16}. We define the associated smooth path $\Phi_P^{\mathfrak{T}}$ of $2 \times 2$ symplectic matrices by the same formula   \eqref{eq:pathsymp}. We identify $\R \equiv \R \times \{ 0 \} \subset \C$. The chord $C=(c,T/2)$ is said to be non-degenerate if $ \Phi_P^{\mathfrak{T}}(1/2) \R \cap \R = \{ 0 \}$. If $P$ is non-degenerate (as a periodic orbit), then $C$ is non-degenerate (as a chord), see \cite[Proposition 3.7]{FK16}.   It is easy to check that    $ \Phi_P^{\mathfrak{T}}(-t) = I  \Phi_P^{\mathfrak{T}}(t) I$ for all $ t \in  \R$. We   define the Robbin-Salamon index of  a non-degenerate symmetric periodic orbit $P$ with respect to $\mathfrak{T}$ as the Maslov index for the Lagrangian path $\Phi_P^{\mathfrak{T}}|_{[0,1/2]} \R$ with respect to the reference Lagrangian $\R$:
\[
\muRS^{\mathfrak{T}} (P  ) := \muRS( \Phi_P^{\mathfrak{T}}|_{[0,1/2]} \R, \R).
\]
This definition is due to   Robbin and Salamon    \cite{RS93}. 

As before, we give  an alternative analytic definition of the Robbin-Salamon index due to Frauenfelder and Kang \cite[Section 3]{FK16}.  Let $\Phi(t), t \in \R$  be a smooth path of $2 \times 2$ symplectic matrices such that $\Phi(0) = \mathrm{Id}$,   $\Phi(t+1) = \Phi(t)\Phi(1)$, and $\Phi(-t) = I \Phi(t) I$.  Denote by $S=S_{\Phi}$ the associated smooth loop of $2 \times 2$ symmetric matrices. It is straightforward to check that $S(-t) = I S(t) I $. We in particular find that $S(0), S(1/2)$ are diagonal matrices. 

Set $D:= S|_{[0,1/2]}$. 
Consider the following Hilbert space of paths satisfying a Lagrangian boundary condition
\[
W^{1,2}_{\R} \left( \left[0, \frac{1}{2}\right] ,\C\right) = \left \{ v \in W^{1,2}\left( \left[ 0, \frac{1}{2} \right] ,\C\right) \; \bigg| \; v(0), v\left( \frac{1}{2} \right) \in \R  \right \}
\]
and the unbounded operator $A_D \colon W^{1,2}_{\R} ([0,1/2], \C) \to L^2([0,1/2], \C)$  defined by $A_D(v) = -J_0 \p_t v - Dv$. It has the   following properties:
\begin{itemize}
\item It is self-adjoint.
\item The map $\ker A_D \to  \Phi(1/2)\R \cap \R, \; v \mapsto v(0)$ is a vector space isomorphism. 
\item The spectrum $\sigma(A_D)$   consists of eigenvalues of $A_D$, is a discrete subset of $\R$, and accumulates only at $\pm \infty$. 
\end{itemize}
Let $ v \not\equiv 0$ be an eigenfunction of $A_D$ to the  eigenvalue $\eta$. As before, we   assign it the well-defined relative winding number
\[
w(v, \eta, D) = \frac{1}{2\pi} \left( \mathrm{arg}( v(1/2)) - \mathrm{arg}( v( 0)) \right)  \in \frac{1}{2} \Z.
\]
The following  properties are proved in \cite[Lemmas 3.4 and 3.5]{FK16}.  

\begin{lemma} [{Properties of the relative winding number}] \
\begin{enumerate}
\item The relative winding number depends only on the eigenvalue. Hence,   we   associate with every eigenvalue $\eta$ of $A_D$ the relative winding number $w(\eta, D):=w(v, \eta, D)$, where $v$ is any eigenfunction of $A_D$ to the eigenvalue $\eta$. 

\item   For every $k \in \Z$, there exists a unique eigenvalue $\eta$ of $A_D$   such that
\[
w( \eta, D) = \frac{k}{2}.
\]
\end{enumerate}

\end{lemma}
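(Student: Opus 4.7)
My plan is to reduce both items to standard first-order ODE arguments. The eigenvalue equation $A_D v = \eta v$ unfolds to the real linear system $\dot v = J_0(D(t) + \eta I)v$ on $\C \cong \R^2$, whose solutions are uniquely determined by the initial value $v(0) \in \R^2$. For item (1), the Lagrangian condition $v(0) \in \R$ selects a one-real-parameter family of solutions, and the second Lagrangian condition $v(1/2) \in \R$ is a single linear constraint on this family, so it either holds for every member or only for the zero solution. In particular, the $\eta$-eigenspace is at most one-dimensional and any two eigenfunctions differ by a nonzero real scalar. The relative winding number $w(v,\eta,D) = (\arg v(1/2) - \arg v(0))/(2\pi)$ is invariant under real-scalar multiplication (a positive scalar changes nothing; a negative scalar shifts both arguments by $\pi$), so item (1) follows at once.

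For item (2), I would introduce polar coordinates on the solution. Fix $\eta$ and let $v_\eta$ be the solution with $v_\eta(0) = 1$; uniqueness implies $v_\eta(t) \neq 0$ for all $t$, so we may write $v_\eta(t) = r_\eta(t) e^{i\theta_\eta(t)}$ with $\theta_\eta(0) = 0$, depending smoothly on $(t,\eta)$. A direct computation using $\dot\theta = \mathrm{Im}(\bar v \dot v)/|v|^2$ together with the identity $\omega(v, J_0 M v) = \langle v, Mv\rangle$ yields
\[
\dot\theta_\eta(t) \;=\; \langle D(t)\, e_\eta(t), e_\eta(t) \rangle + \eta \;=:\; F(t,\theta_\eta(t)) + \eta, \qquad e_\eta(t) := (\cos\theta_\eta(t), \sin\theta_\eta(t)).
\]
This is a genuine scalar ODE for $\theta_\eta$ alone, since in $\R^2$ the unit vector $e_\eta$ is determined by $\theta_\eta$. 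Setting $\Theta(\eta) := \theta_\eta(1/2)$, the condition for $v_\eta$ to be an eigenfunction translates into $\Theta(\eta) \in \pi\Z$, in which case $w(\eta, D) = \Theta(\eta)/(2\pi)$.

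The statement then reduces to showing that $\Theta \colon \R \to \R$ is a continuous, strictly increasing surjection. Continuity is smooth dependence on the parameter $\eta$. Strict monotonicity follows by differentiating the scalar ODE in $\eta$: the function $y(t) := \partial_\eta \theta_\eta(t)$ solves the linear IVP $\dot y = \partial_\theta F(t,\theta_\eta(t))\, y + 1$ with $y(0) = 0$, whose variation-of-parameters representation is strictly positive for $t>0$, giving $\Theta'(\eta) = y(1/2) > 0$. Since $D$ is continuous on $[0,1/2]$, $F$ is uniformly bounded, so $\Theta(\eta) = \eta/2 + O(1)$ and $\Theta(\eta) \to \pm\infty$ as $\eta \to \pm\infty$. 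Thus $\Theta$ is a homeomorphism and $\Theta^{-1}(\pi k)$ consists of a single eigenvalue for each $k \in \Z$, proving item (2). The main obstacle I anticipate is the polar-coordinate reduction to a \emph{genuine} scalar ODE in $\theta$ alone; this is precisely what makes the rest routine and is the step that exploits the two-dimensionality of the symplectic vector space together with the Lagrangian boundary condition (in the periodic HWZ setting one instead has to work with the monodromy and the closing condition, and the analogous scalar ODE is unavailable in higher rank).
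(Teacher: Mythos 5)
Your proof is correct. The paper itself offers no argument for this lemma --- it is quoted verbatim from Frauenfelder--Kang \cite[Lemmas 3.4 and 3.5]{FK16} --- so there is no in-text proof to compare against; your Pr\"ufer-angle argument (reduce $A_Dv=\eta v$ to $\dot v = J_0(D+\eta)v$, note the $\eta$-eigenspace is at most one-dimensional over $\R$ so the winding is well defined, then show $\eta\mapsto\theta_\eta(1/2)$ is a strictly increasing continuous bijection of $\R$ whose preimages of $\pi\Z$ are exactly the eigenvalues) is precisely the standard route for such boundary-value spectral problems and is in the same spirit as the cited reference and its periodic antecedent in Hofer--Wysocki--Zehnder. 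The one place worth a word of care is that $\arg v$ must be understood as a continuous lift along the non-vanishing path $v$, which you implicitly do; with that reading both the half-integrality of $w$ and its invariance under negative real rescaling are exactly as you state.
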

\noindent
Define $\alpha(D) := \max \{ w(\eta, D) \mid \eta \in \sigma(A_D) \cap (-\infty, 0) \}.$ If $ \Phi(1/2)\R \cap \R = \{ 0 \}$, then
\[
\muRS(\Phi|_{[0,1/2]}) = 2\alpha(D) + \frac{1}{2} \in \Z + \frac{1}{2}.
\]
We have a symmetric counterpart of Proposition \ref{lem:indeitercz}.

\begin{proposition} [{\cite[Proposition 3.9]{FK16}}] \label{lem:indeiterrs} For every $k \in \N$ we have
\begin{align*}
\muRS(\Phi|_{[0,1/2]}) \geq \frac{3}{2} \quad  &\Longrightarrow \quad \muRS(\Phi|_{[0,k/2]}) \geq \frac{2k+1}{2},\\
\muRS(\Phi|_{[0,1/2]})\geq \frac{1}{2} \quad  &\Longrightarrow \quad\muRS(\Phi|_{[0,k/2]})   \geq \frac{1}{2}, \\
\muRS(\Phi|_{[0,1/2]}) \leq  - \frac{1}{2} \quad  &\Longrightarrow \quad \muRS(\Phi|_{[0,k/2]}) \leq  -\frac{1}{2}.
\end{align*}

\end{proposition}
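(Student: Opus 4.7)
The plan is to mirror the proof of Proposition \ref{lem:indeitercz} using the analytic description of $\muRS$ through the operator $A_D$ and its winding-number-indexed spectrum. Setting $D = S|_{[0,1/2]}$ and $D_k = S|_{[0,k/2]}$, and relying on the formula $\muRS(\Phi|_{[0,k/2]}) = 2\alpha(D_k) + 1/2$ for non-degenerate chords (the degenerate case being recovered by a small perturbation of $S$), the three hypotheses rewrite respectively as $\alpha(D) \geq 1/2$, $\alpha(D) \geq 0$, and $\alpha(D) \leq -1/2$, and the conclusions as $\alpha(D_k) \geq k/2$, $\alpha(D_k) \geq 0$, and $\alpha(D_k) \leq -1/2$.

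The heart of the argument will be an injection $\iota \colon \sigma(A_D) \hookrightarrow \sigma(A_{D_k})$ sending each $\eta$ with winding $w(\eta, D)$ to the same number $\eta$, now viewed as an eigenvalue of $A_{D_k}$ of winding $k \cdot w(\eta, D)$. Given an eigenfunction $v \colon [0, 1/2] \to \C$ of $A_D$ at eigenvalue $\eta$, I would define an extension $\tilde v$ on $[0, k/2]$ piecewise, alternating between $v$ itself and its symmetric reflection $t \mapsto I v(1-t)$, composed with the translations induced by the $1$-periodicity of $S$. The key ODE identity underlying this is $S(t) = I S(1-t) I$, which follows immediately from $S(t+1) = S(t)$ and $S(-t) = I S(t) I$; this ensures that $t \mapsto I v(1-t)$ solves the same eigenvalue equation on $[1/2, 1]$ and matches $v(1/2) \in \R$ continuously at the junction $t = 1/2$. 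Iterating yields $\tilde v \in W^{1,2}_{\R}([0, k/2], \C)$ satisfying $A_{D_k} \tilde v = \eta \tilde v$, and a direct angle computation shows that each half-unit step contributes $w(\eta, D)$ to the relative winding, so that $w(\iota(\eta), D_k) = k \, w(\eta, D)$.

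With this injection in place, I would invoke the uniqueness of the eigenvalue of each given winding number (the lemma in Subsection~\ref{sec:ind2}) to finish each case. If $\alpha(D) \geq 1/2$, picking a negative $\eta \in \sigma(A_D)$ with $w(\eta, D) \geq 1/2$ produces $\iota(\eta) = \eta \in \sigma(A_{D_k}) \cap (-\infty, 0)$ of winding $\geq k/2$, so $\alpha(D_k) \geq k/2$; the case $\alpha(D) \geq 0$ is identical with winding $0$. For $\alpha(D) \leq -1/2$, one observes that this is equivalent to the unique eigenvalue of $A_D$ of winding $0$ being non-negative, which by the correspondence equals the unique eigenvalue of $A_{D_k}$ of winding $0$, so $\alpha(D_k) \leq -1/2$. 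The main obstacle I anticipate is the careful construction of $\tilde v$ across all $k$ junctions: one must record which gluing rule applies on each interval $[j/2, (j+1)/2]$, verify continuity at each $j/2$, confirm the boundary condition $\tilde v(k/2) \in \R$, and track the total argument change with the correct signs. Once this bookkeeping is in place, the spectral monotonicity and winding-uniqueness properties make the final counting automatic.
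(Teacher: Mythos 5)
The paper offers no proof of this statement: it is imported verbatim from \cite[Proposition 3.9]{FK16}, so there is nothing in the text to compare your argument against. Your reconstruction is correct and is the natural one. The reflection identity $S(t)=IS(1-t)I$ (which follows from $1$-periodicity together with $S(-t)=IS(t)I$) lets you continue an eigenfunction $v$ of $A_D$ over $[j,j+\tfrac{1}{2}]$ by $t\mapsto v(t-j)$ and over $[j+\tfrac{1}{2},j+1]$ by $t\mapsto \overline{v(j+1-t)}$; the junction values $v(0),v(\tfrac{1}{2})$ are real, so the pieces glue to a genuine solution of the same first-order ODE, the endpoint condition $\tilde v(k/2)\in\R$ holds, and each half-step contributes $+w(\eta,D)$ to the argument, giving $\eta\in\sigma(A_{D_k})$ with $w(\eta,D_k)=k\,w(\eta,D)$. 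Combined with uniqueness of the eigenvalue of each prescribed half-integer winding, the three implications follow exactly as you say. Two points deserve explicit care. First, the formula $\muRS(\Phi|_{[0,k/2]})=2\alpha(D_k)+\tfrac{1}{2}$ is only stated under the non-degeneracy hypothesis $\Phi(k/2)\R\cap\R=\{0\}$; rather than perturbing $S$ (which requires a semicontinuity argument for $\alpha$ at a degenerate eigenvalue), it is cleaner either to restrict to the non-degenerate case, which is all this paper ever uses, or to invoke the extended definition of $\muRS$ from \cite{FK16}. Second, your third case relies on monotonicity of $\eta\mapsto w(\eta,D)$, which the present paper omits from its list of properties of the relative winding number but which does hold and is established in \cite{FK16}; without it the equivalence between $\alpha(D)\le-\tfrac{1}{2}$ and non-negativity of the winding-zero eigenvalue would be unjustified.
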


 The following lemma reveals a relationship between the two indices   of a symmetric periodic orbit.

\begin{lemma}[{\cite[Proposition 3.8]{FK16}}] \label{inexd"rel}   If $P=(x,T)$ is a non-degenerate symmetric periodic orbit equipped with a symmetric unitary trivialization $\mathfrak{T}$, then   
\[
\lvert \muCZ^{\mathfrak{T}}(P) - 2\muRS^{\mathfrak{T}}(P) \rvert \leq 1.
\]
In particular, if $\muCZ^{\mathfrak{T}}(P) \geq 3$, then $\muRS^{\mathfrak{T}}(P) \geq 3/2$. 
\end{lemma}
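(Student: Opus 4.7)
The plan is to leverage the analytic characterizations of $\muCZ$ and $\muRS$ developed above, exploiting the symmetry $S(-t)=IS(t)I$ that the symmetric trivialization forces on the loop $S=S_\Phi$. I would first observe that the anti-linear involution $(Jv)(t):=Iv(-t)$ on $L^2(S^1,\C)$ commutes with $A_S$, so the spectrum splits according to the $\pm 1$-eigenspaces of $J$. Restriction to $[0,1/2]$ identifies the symmetric sector $Jv=v$ with the domain of $A_D$, while the anti-symmetric sector $Jv=-v$ yields the analogous operator $A_D'$ on paths satisfying $v(0),v(1/2)\in i\R$. Hence $\sigma(A_S)=\sigma(A_D)\sqcup\sigma(A_D')$ as multisets, with symmetric and anti-symmetric extension providing the inverse identifications.

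Next, I would track how winding numbers transform under the symmetric extension $\tilde v(t):=Iv(-t)$ for $t\in[-1/2,0]$. Since conjugation and time reversal each flip orientation, the change of argument of $\tilde v$ over $[-1/2,0]$ equals that of $v$ over $[0,1/2]$, so the winding doubles: $w(\tilde v,\eta,S)=2w(v,\eta,D)\in\Z$. Writing $\mu_k$ (resp.\ $\nu_k$) for the unique eigenvalue of $A_D$ (resp.\ $A_D'$) with relative winding $k/2$, this yields a bijection between these eigenvalues and the two eigenvalues of $A_S$ (counted with multiplicity) of integer winding $k$. The monotonicity of winding on $\sigma(A_S)$ then forces every element of $\{\mu_k,\nu_k\}$ to lie below every element of $\{\mu_{k+1},\nu_{k+1}\}$. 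Setting $a:=2\alpha(D)=\max\{k:\mu_k<0\}$ and $b:=\max\{k:\nu_k<0\}$, this interleaving immediately gives $|a-b|\le 1$: if, say, $b\ge a+2$, then $\mu_{a+1}\le \nu_{a+2}<0$ contradicts the definition of $a$.

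A short case analysis then pins down $p(S)$ and the difference $\muCZ-2\muRS$. If $b<a$, then $\alpha(S)=a$ and $\nu_a\ge 0$ realises $p(S)=0$, so $\muCZ-2\muRS=-1$. If $b=a$, then $\mu_a,\nu_a<0$ force $p(S)=1$ and the difference is $0$. If $b=a+1$, then $\alpha(S)=a+1$ and $\mu_{a+1}\ge 0$ realises $p(S)=0$, giving difference $+1$. In every case $|\muCZ-2\muRS|\le 1$, and the second assertion follows since $\muCZ\ge 3$ yields $2\muRS\ge 2$, hence $\muRS\ge 1$, and then $\muRS\ge 3/2$ because $\muRS\in\Z+1/2$. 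The main technical obstacle is the winding computation under symmetric extension --- verifying that the two sign reversals cancel and that the extended path indeed solves $A_S\tilde v=\eta\tilde v$ (which uses $S(-t)=IS(t)I$ together with $J_0I=-IJ_0$); everything else is combinatorial once the decomposition of $\sigma(A_S)$ with doubled windings is in hand.
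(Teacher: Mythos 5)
The paper does not prove this lemma at all: it is quoted verbatim from Frauenfelder--Kang \cite[Proposition 3.8]{FK16}, so there is no in-paper argument to compare against. Your reconstruction is correct and is essentially the argument of the cited source: the involution $(Jv)(t)=Iv(-t)$ is $\R$-linear, squares to the identity, and commutes with $A_S$ precisely because $S(-t)=IS(t)I$ and $J_0I=-IJ_0$, so $\sigma(A_S)$ splits into the spectra of the two boundary-value problems ($v(0),v(1/2)\in\R$ and $v(0),v(1/2)\in i\R$); the winding indeed doubles under the reflection extension since the two orientation reversals cancel; and the interleaving forced by monotonicity of $w$ on $\sigma(A_S)$ gives $\lvert a-b\rvert\le 1$, after which your three-case evaluation of $\alpha(S)$ and $p(S)$ against $2\muRS^{\mathfrak{T}}(P)=2a+1$ is exactly right. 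The only step you leave implicit is that $A_D'$ also has exactly one eigenvalue of relative winding $k/2$ for each $k$; this is not covered by the paper's Lemma 3.4 (stated only for the real boundary condition), but it follows immediately from your multiset decomposition together with the ``exactly two eigenvalues per integer winding'' count for $A_S$ and the ``exactly one per half-integer relative winding'' count for $A_D$. Non-degeneracy of $P$ guarantees $0\notin\sigma(A_S)$, hence $0\notin\sigma(A_D)\cup\sigma(A_D')$, so no boundary case arises, and the final deduction $\muCZ^{\mathfrak{T}}(P)\ge 3\Rightarrow\muRS^{\mathfrak{T}}(P)\ge 3/2$ using $\muRS^{\mathfrak{T}}(P)\in\Z+\tfrac12$ is correct.
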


Let $\mathfrak{T}_1, \mathfrak{T}_2 \colon x_T^* \xi \to S^1 \times \C$ be symmetric unitary trivializations of $x_T^* \xi$, where $P=(x,T)$ is a symmetric periodic orbit.   We define the winding number $\wind(\mathfrak{T}_1, \mathfrak{T}_2)$ of $\mathfrak{T}_2$ with respect to $\mathfrak{T}_1$ as follows. For any $\alpha \in \C \setminus \{ 0 \}$, we write $\alpha (t) =(t, \alpha)$, $t \in S^1$.   The winding number  $\wind(\mathfrak{T}_1, \mathfrak{T}_2)$ is then defined as the degree of the loop $\mathfrak{T}_1 \circ \mathfrak{T}_2^{-1} \circ \alpha \colon S^1 \to \C \setminus \{ 0 \}$. This does not depend on the choice of $\alpha$.

\begin{lemma} \label{lem:difftrirel} 
Let $P$,  $\mathfrak{T}_1$, and $\mathfrak{T}_2$ be as above.   We then have
\[
\muRS^{\mathfrak{T}_1}(P) = \muRS^{\mathfrak{T}_2}(P) + \wind(\mathfrak{T}_1, \mathfrak{T}_2).
\]

\end{lemma}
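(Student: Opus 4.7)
The plan is to realize the change of symmetric unitary trivialization as a $U(1)$-gauge transformation and to track how it affects the analytic description of $\muRS$ recalled in Subsection~\ref{sec:ind2}. Write $\Phi_i := \Phi_P^{\mathfrak{T}_i}$ and set $g(t) := \mathfrak{T}_1(x_T(t)) \circ \mathfrak{T}_2(x_T(t))^{-1} \in U(1)$, a smooth loop $S^1 \to U(1)$ whose degree is, by definition, $k := \wind(\mathfrak{T}_1, \mathfrak{T}_2)$. Formula \eqref{eq:pathsymp} gives $\Phi_1(t) = g(t)\,\Phi_2(t)\,g(0)^{-1}$. The symmetry of both trivializations forces the functional equation $g(t) = I g(-t) I$: choosing a continuous lift $\theta$ with $g(t) = e^{i\theta(t)}$, this reads $\theta(t) + \theta(-t) \in 2\pi\Z$, which forces $\theta(0) \in \pi\Z$. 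Combining this with the monodromy $\theta(t+1) = \theta(t) + 2\pi k$ and evaluating at $t = 1/2$ yields $\theta(1/2) - \theta(0) = \pi k$; in particular $g(0), g(1/2) \in \{\pm 1\} \subset \R$.

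Next I would unwind the effect of the gauge on the spectral data from Subsection~\ref{sec:ind2}. A direct computation using that $g$ commutes with $J_0$ yields
\[
S_1 = \dot\theta \cdot \mathrm{Id} + g\, S_2\, g^{-1},
\]
and the substitution $v = g u$ produces the intertwining $A_{D_1}(g u) = g \cdot A_{D_2}(u)$, with $D_i := S_i|_{[0, 1/2]}$. Since $g(0), g(1/2) \in \R$, multiplication by $g$ is an isomorphism $W^{1,2}_\R([0,1/2], \C) \to W^{1,2}_\R([0,1/2], \C)$ conjugating $A_{D_2}$ to $A_{D_1}$. Hence $\sigma(A_{D_1}) = \sigma(A_{D_2})$ and eigenfunctions are paired by $v = g u$.

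To conclude, I would compare the relative winding numbers. Since $v(t) = e^{i\theta(t)} u(t)$ is nowhere vanishing, a continuous choice of argument gives
\[
w(v, \eta, D_1) = w(u, \eta, D_2) + \frac{\theta(1/2) - \theta(0)}{2\pi} = w(u, \eta, D_2) + \frac{k}{2},
\]
so $\alpha(D_1) = \alpha(D_2) + k/2$. Using $\muRS^{\mathfrak{T}_i}(P) = 2\alpha(D_i) + 1/2$ in the non-degenerate case (and reducing the degenerate case to this by a small symmetry-preserving perturbation, since the proposed identity is integer-valued and stable under such perturbations) one obtains
\[
\muRS^{\mathfrak{T}_1}(P) - \muRS^{\mathfrak{T}_2}(P) = k = \wind(\mathfrak{T}_1, \mathfrak{T}_2),
\]
as claimed.

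The main obstacle is the endpoint analysis of $\theta$: one has to combine the symmetry constraint $g(t) = I g(-t) I$ at both fixed points $t = 0$ and $t = 1/2$ of $\rho$ on $x_T$ with the loop monodromy $\deg(g) = k$ in order to pin down $\theta(1/2) - \theta(0)$ modulo $2\pi$ as exactly $\pi k$. Everything else is standard bookkeeping inside the spectral description of $\muRS$.
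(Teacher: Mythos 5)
Your proposal is correct, but it takes a genuinely different route from the paper. The paper's proof is a two-line reduction: it quotes \cite[Corollary 3.14]{FK16}, which gives $\muRS^{\mathfrak{T}_1}(P) = \muRS^{\mathfrak{T}_2}(P) + \tfrac{1}{2}\muRS(\Lambda \R, \R)$ for the Lagrangian loop $\Lambda(t)\R = \mathfrak{T}_1(t)\mathfrak{T}_2(t)^{-1}\R$, and then identifies $\muRS(\Lambda\R,\R)$ with the Maslov index of that loop, which equals $2\wind(\mathfrak{T}_1,\mathfrak{T}_2)$. You instead prove the statement from scratch out of the spectral description of $\muRS$ in Subsection 3.2: you conjugate $A_{D_2}$ into $A_{D_1}$ by the $U(1)$-valued transition loop $g = e^{i\theta}$, check that $g(0), g(1/2) \in \{\pm1\}$ so the Lagrangian boundary condition is preserved, and shift every relative winding number by $(\theta(1/2)-\theta(0))/2\pi = k/2$, whence $\alpha(D_1) = \alpha(D_2) + k/2$. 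The computations all check out; in particular the crucial endpoint identity $\theta(1/2)-\theta(0) = \pi k$ does follow, because $\theta(t)+\theta(-t)$ is a continuous $2\pi\Z$-valued function of $t$ and is therefore a single constant $2\pi m$, which combined with the monodromy $\theta(t+1)=\theta(t)+2\pi k$ pins down the difference exactly (your gauge computation is in effect a hands-on proof of the cited corollary of \cite{FK16}, with the $\pi k$ identity playing the role of the Maslov-index count). What your approach buys is self-containedness and an explicit mechanism; what the paper's buys is brevity by outsourcing the loop property of $\muRS$ to \cite{FK16}. One minor remark: in the context of the lemma $P$ is a non-degenerate symmetric periodic orbit (the standing hypothesis of Subsection 3.2, and non-degeneracy of $P$ implies non-degeneracy of the chord by \cite[Proposition 3.7]{FK16}), so the closing perturbation step to handle a degenerate chord is not needed; it is nonetheless good that you flagged it, since the formula $\muRS = 2\alpha(D)+\tfrac{1}{2}$ is only available in the non-degenerate case.
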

\begin{proof} In view of  \cite[Corollary 3.14]{FK16} we have
\[
\muRS^{\mathfrak{T}_1}(P) = \muRS^{\mathfrak{T}_2}(P) + \frac{1}{2} \muRS( \Lambda \R, \R), 
\]
where $(\Lambda \R)(t)=\Lambda(t)\R := \mathfrak{T}_1(t) \circ \mathfrak{T}_2 ^{-1}(t) \R$, $t \in S^1$, is a loop of Lagrangians, where $\mathfrak{T}_j (t) := \mathfrak{T}_j (x_T(t), \cdot)$ and $\R \equiv \R \times \{ 0 \} \subset \C$. By the Maslov index property of the Robbin-Salamon index, $\muRS( \Lambda \R, \R)$ equals the Maslov index of the loop $\Lambda \R$. By definition of the Maslov index, we then conclude that
\[
\muRS( \Lambda \R, \R) = 2 \wind(\mathfrak{T}_1, \mathfrak{T}_2) 
\]
from which the lemma is proved. 
\end{proof}

If $P=(x,T)$ is a non-degenerate symmetric periodic orbit on $(M, \lambda, \rho)$  that is symmetrically contractible, meaning that it admits a spanning disk invariant under $\rho$, then we choose a symmetric unitary trivialization  that extends over the symmetric spanning disk. In this case, the Robbin-Salamon index of $P$ depends only on the spanning disk. If $c_1(\xi)|_{  \pi_2(M)}=\{0\}$, then it is well-defined, i.e.\ it is independent of the choice of the spanning disk, see \cite[Corollary 3.14]{FK16}.
If $M=S^3$, then $\pi_1(S^3, \mathrm{Fix}(\rho))=\{0\}$, and hence every symmetric periodic orbit is symmetrically contractible. Therefore, they have a well-defined Robbin-Salamon index. On lens spaces, every symmetrically contractible periodic orbit has a well-defined Robbin-Salamon index since $\pi_2$ vanishes.   In either case, we simply write $\muRS(P)$.

\subsection{Self-linking number}\label{sec:self}

Let $\gamma \colon S^1 \to M$ be an unknot transverse to $\xi = \ker \lambda$ and $u \colon \D \to M$ be an embedding such that $u( e^{ 2 \pi i t} )=\gamma(t)$, $ t \in S^1$. Choose any non-vanishing section $X$ of the pullback bundle $u^* \xi  \to \D$. The orientation of $\gamma$ and $u$ are given by the standard orientations, and $M$ is oriented in such a way that  $\lambda \wedge d \lambda >0$. We fix a Riemannian metric $g$ on $M$ and denote by exp the associated exponential map. As $\gamma$ is a transverse knot, if $X$ is sufficiently small, then
\[
\gamma(S^1) \cap \gamma_X (S^1) = \emptyset,
\]
where $\gamma_X \colon S^1 \to M$ is defined as $\gamma_X(t) := \exp_{\gamma(t)}X(t)$. By definition, the \emph{self-linking number} of $\gamma$ with respect to $u$, denoted by $sl(\gamma, u)$, is given by the algebraic intersection number of $\gamma_X$ with $u$.  It follows from the homotopy invariance of the algebraic intersection  number that it is independent of the choices of $X$ and $g$. If $c_1(\xi)|_{\pi_2(M)}=0$, then it is also independent of the choice of $u$. In this case, we simply write it as $sl(\gamma)$.

We now assume that a transverse knot $\gamma \colon S^1 \to M$ is  {$p$-unknotted} for some $p \geq 2$. Let $u$ be a $p$-disk for $\gamma$ and    define $\gamma_X$ as above. Then the \emph{rational self-linking number} of $\gamma$ with respect to $u$ is defined as 
\[
sl(\gamma, u):=\frac{1}{p^2}  \bullet \text{algebraic intersection number of $\gamma_X$ with $u$.}
\]
The orientations are given as above, and   the definition is also independent of all the involved choices, provided that $c_1(\xi)|_{\pi_2(M)}=0$. In this case,  we write it as $sl(\gamma)$.

\subsection{Open book decompositions}

 Let $M$ be a closed oriented three-dimensional manifold. An \emph{open book decomposition} of $M$ is a pair $(B, \pi)$, where $B$ is an oriented link, called the \emph{binding}, and $\pi \colon M \setminus B \to S^1$ is  a fibration. For every $\theta \in S^1$, the fiber $\Sigma_{\theta}:=\pi^{-1}(\theta)$ is the interior of a compact embedded surface  $\Sigma \subset M$ with $\p  \Sigma  = B$. The surface $\Sigma$ is  called a \emph{page} of the open book.  The existence of an open book decomposition of every closed oriented three-manifold was  proved by Alexander  \cite{Ale23}. In particular, every closed contact three-manifold admits an open book decomposition.

  If every $\Sigma_{\theta}$ is a  disk, then we say that $(B, \pi)$ is an open book decomposition \emph{with disk-like pages}. In this case,   $M$ is    diffeomorphic to   $S^3$.     We say that an open book decomposition $(B, \pi)$ with disk-like pages of a closed contact three-manifold $(M, \lambda)$ is \emph{adapted to $\lambda$} if  $B$ is a periodic  orbit, and every page is a disk-like global surface of section for $R = R_{\lambda}$.

 A $\lambda$-adapted \emph{$p$-rational open book decomposition}  with disk-like pages is a pair $(P, \pi)$, where $P$ is a $p$-unknotted periodic orbit and a fibration $\pi$ satisfies that for every $\theta \in S^1$, the closure of the fiber $\pi^{-1}(\theta)$ is the image of a $p$-disk for $P$.

\subsection{Pseudoholomorphic curves in symplectizations}

In this subsection we recall the definitions  and   basic properties of   pseudoholomorphic curves in symplectizations of closed contact three-manifolds introduced by Hofer \cite{Hofer93} and developed by Hofer, Wysocki, and Zehnder \cite{HWZ95I,  HWZII, HWZI, HWZ98convex, HWZ99II, HWZIII, HWZ03foliation}. We also follow a recent book   \cite{FvK18book}. For real pseudoholomorphic curves, we refer to  \cite{FK16}.

\subsubsection{Finite energy spheres}
Let $(M, \xi =\ker \lambda)$ be a closed contact three-manifold.  
It is well-known that the space $\mathcal{J} = \mathcal{J}(\xi, d\lambda)$ of $d\lambda$-compatible almost complex structures on $\xi$ is non-empty and contractible in the $C^{\infty}$-topology. 
Denote by $\pi \colon TM \to \xi$ the projection along $R$.  Associated to every $J \in \mathcal{J}$ is the  \emph{SFT-like almost complex structure} $\widetilde{J}$ on $ \R \times M$  defined by
\[
\widetilde{J}(r,p)(h,k) = ( - \lambda(p)k, J(p) (\pi k) + hR(p)), \quad (r,p) \in \R \times M.
\]
Here, the non-compact four-manifold $\R \times M$ is equipped with the symplectic form $d(e^r \lambda)$. We call the pair $(\R \times M, d(e^r \lambda))$ the \emph{symplectization of $(M, \lambda)$}. It is straightforward to check that   $\widetilde{J}$ is $d(e^r \lambda)$-admissible.

Let $(S^2 = \C \cup \{ \infty \}, i)$ be a closed Riemann sphere and $\Gamma \subset S^2$ be a finite set of points. Fix $J \in \mathcal{J}$. A smooth map $\widetilde{u}=(a,u) \colon S^2 \setminus \Gamma \to \R \times M$ is called a \emph{finite energy $(\widetilde{J}$-holomorphic$)$ sphere} if it satisfies the nonlinear Cauchy-Riemann equation
\[
d\widetilde{u} \circ i = \widetilde{J} \circ d\widetilde{u}
\]
and has   non-zero  finite energy $E(\widetilde{u}) \in (0, \infty)$, where $E(\widetilde{u})$ is defined as
\[
E(\widetilde{u}) := \sup_{\phi \in \Xi} \int_{S^2 \setminus \Gamma} \widetilde{u}^* d (\phi \lambda).
\]
Here $\Xi$ is the set of  all smooth functions  $ \phi \in C^{\infty}(\R, [0,1])$  with $\phi' \geq 0 $. The elements in $\Gamma$ are called \emph{punctures}. If $\Gamma = \{ \infty\}$ so that $S^2 \setminus \Gamma = \C$, then $\widetilde{u}$ is called a \emph{finite energy plane}.  Since the SFT-like almost complex structure $\widetilde{J}$ is $\R$-invariant,  for every $r \in \R$ and for every finite energy $\widetilde{J}$-holomorphic sphere $\widetilde{u} = (a,u)$, the map $\widetilde{u}_r := (a+r, u)$  is a finite energy  $\widetilde{J}$-holomorphic sphere as well.

Fix $ z \in \Gamma$ and choose a small neighborhood of $U_z$ of $z$ in $S^2 \setminus \Gamma$. If $a   $ is bounded on $U_z$,  then $z$ is called a \emph{removable puncture}. If $a$ is unbounded on $U_z$, then it is bounded either from  below or from above. In the first case, we call $z$ a \emph{positive puncture}. It is called a \emph{negative puncture} in the latter case. In the sequel we always assume that $\Gamma$ consists only of non-removable punctures and write 
\[
\Gamma = \Gamma^+ \sqcup \Gamma^-,
\]
where $\Gamma^{\pm}$ are the set of positive and negative punctures, respectively.  The maximum principle says that $\Gamma^+ \neq \emptyset$, see \cite[Lemma 5.2]{HWZII}. If $\widetilde{u}$ is a finite energy plane, then $\Gamma = \Gamma^+ = \{ +\infty \}$.

We now consider a special class of finite energy spheres that respect a given symmetry. Let $(M, \lambda, \rho)$  be a closed real contact three-manifold. The anti-contact involution $\rho$ on $(M, \lambda)$ defines the exact anti-symplectic involution $\widetilde{\rho}:=\mathrm{Id}_{\R} \times \rho$ on $(\R \times M, d(e^r \lambda))$, meaning that $\widetilde{\rho}^* ( e^r \lambda) = - e^r \lambda$. An almost complex structure $J \in \mathcal{J}$ is \emph{$\rho$-anti-invariant} if 
\[
D\rho|_{\xi} \circ J \circ ( D\rho|_{\xi})^{-1} = -J.
\]
Abbreviate by $\mathcal{J}_{\rho} \subset \mathcal{J}$ the set of such almost complex structures. For $J \in \mathcal{J}_{\rho}$, the associated SFT-like almost complex structure $\widetilde{J}$ is $\widetilde{\rho}$-anti-invariant. Assume that $\widetilde{u} = (a,u)$ is a finite energy sphere with respect to $J \in \mathcal{J}_{\rho}$. It is easy to check that $\widetilde{\rho} \circ \widetilde{u} \circ I$ is also such  a finite energy sphere, where $I(z) = \overline{z}$. The following definition is due to Frauenfelder and Kang \cite{FK16}.

\begin{definition} A finite energy sphere $\widetilde{u} = (a,u)$ with respect to $J \in \mathcal{J}_{\rho}$ is said to be \emph{invariant} if
\[
\widetilde{\rho} \circ \widetilde{u} \circ I = \widetilde{u}.
\]
\end{definition}

 Let $\widetilde{u} = (a,u)$ be an invariant finite energy sphere. In this case we have $I(\Gamma) = \Gamma$ and   write it   as the disjoint union $\Gamma = \Gamma_{\p} \sqcup \Gamma_{\mathrm{int}}$,  where  $I(z)= z$ for all $ z \in \Gamma_{\p}$ and $I(z) \neq z $ for all $z  \in \Gamma_{\mathrm{int}}$. Obviously, $+\infty \in \Gamma_{\p}$. Note that   $z \in \Gamma_{\p} $ if and only if $ \mathrm{Im}(z) =0$. Via $I$, we divide $S^2$ into two (closed) hemispheres and denote by $H$ the one containing $+\infty$. Note that $\Gamma_{\p} \subset H$.  Set $\Gamma_I = \Gamma_{\p} \sqcup ( \Gamma_{\mathrm{int}} \cap H)$.  Associated to $\widetilde{u}$ is a holomorphic map $\widetilde{u}_I =(a_I, u_I) \colon  (H \setminus \Gamma_I, i) \to ( \R \times M, \widetilde{J})$ with the boundary condition $\widetilde{u}_I (\mathrm{Fix}(\rho) \setminus \Gamma_{\p}) \subset \mathrm{Fix}(\widetilde{\rho})$. If it has non-zero finite energy, then we call it a \emph{finite energy half-sphere} (associated to $\widetilde{u}$). If $\Gamma = \{ + \infty \}$, then it is called a \emph{finite energy half-plane}.

\subsubsection{Asymptotic behavior}
  Choose a holomorphic chart $\psi_z \colon (V, 0) \to (U_z, z)$, where $U_z$ is a small neighborhood of $z \in \Gamma$. If $z \in \Gamma^+$ or $z \in \Gamma^-$, then we set   $\widetilde{u}(s,t) = \widetilde{u} \circ \psi_z (e^{- 2 \pi (s+it) })$ for $(s,t) \in [R, +\infty) \times S^1$ or $\widetilde{u}(s,t) = \widetilde{u} \circ \psi_z (e^{  2 \pi (s+it) })$ for $(s,t) \in (-\infty, -R]  \times S^1$, respectively,    for some $R>0$ large enough.    The following theorem is due to Hofer \cite[Theorem 31]{Hofer93} and Hofer, Wysocki, and Zehnder \cite[Theorem 1.3]{HWZI}.

\begin{theorem}\label{thm:asymphfwz} Assume that $\widetilde{u} =(a,u) \colon S^2 \setminus \Gamma \to \R \times M$ be a finite energy sphere. We fix $z \in \Gamma^{\pm}$ and consider a holomorphic chart $\psi_z$ near $z $ as above. Then for every sequence $s_n \to \pm \infty$, there exists a subsequence, still denoted by $s_n$, and a periodic   orbit $P=(x,T)$ such that $u(s_n, t)$ converges in $C^{\infty}(S^1, M)$ to $x_T(t) =x( Tt)$ as $n \to +\infty$.   Moreover, if $P$ is non-degenerate, then we have 
\[
\lim_{s \to \pm \infty} u(s, t) = x_T(  t) \quad \text{ in }\; C^{\infty}(S^1, M).
\]
\end{theorem}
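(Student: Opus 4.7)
The plan is to work in cylindrical coordinates $(s,t) \in [R,\infty) \times S^1$ near the positive puncture $z \in \Gamma^+$ (the negative case is analogous). In these coordinates the nonlinear Cauchy-Riemann equation $d\widetilde u \circ i = \widetilde J \circ d\widetilde u$ splits, via the formula for $\widetilde J$, into
\[
\partial_s a = \lambda(\partial_t u), \quad \partial_t a = -\lambda(\partial_s u), \quad \pi \partial_s u + J(u) \pi \partial_t u = 0,
\]
and the slice action $T(s) := \int_{\{s\} \times S^1} u^*\lambda$ equals the $t$-average of $\partial_s a$. The finite energy condition forces $\int_{[R,\infty) \times S^1} u^*d\lambda < \infty$ and $\sup_s |T(s)| < \infty$, and from the CR equation $u^*d\lambda \geq 0$ pointwise.

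The first and most delicate step is to prove a uniform gradient bound for $\widetilde u$ in the cylindrical coordinates on $[R,\infty) \times S^1$. I argue by contradiction: if there is a sequence $z_n = (s_n, t_n)$ with $s_n \to \infty$ and $|\nabla \widetilde u(z_n)| \to \infty$, a Hofer-style rescaling (using Hofer's lemma to relocate $z_n$ near an approximate maximum of $|\nabla \widetilde u|$ on a suitable disk, and then rescaling so the gradient at the new centers equals $1$) yields, after passing to a subsequence, a non-constant $\widetilde J$-holomorphic finite energy plane $\widetilde v \colon \C \to \R \times M$. Such a plane has strictly positive $d\lambda$-energy bounded below by the smallest period of a periodic Reeb orbit on $M$, which is positive since periods of non-constant Reeb orbits form a closed subset of $(0,\infty)$. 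On the other hand, the $d\lambda$-energy of $\widetilde u$ on small neighborhoods of $z_n$ tends to zero, since the tail $\int_{[s,\infty) \times S^1} u^*d\lambda$ vanishes as $s \to \infty$. This contradiction is the main technical obstacle; once resolved, the rest is elliptic PDE plus Reeb dynamics.

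With gradient bounds in hand, the loops $t \mapsto u(s_n,t)$ are equicontinuous, so Arzel\`a-Ascoli yields a subsequential $C^0$-limit $\gamma \colon S^1 \to M$. Since $u^*d\lambda \to 0$ on slabs $[s, s+1] \times S^1$, one has $\pi \partial_t \gamma \equiv 0$, hence $\dot\gamma = \lambda(\partial_t \gamma) R(\gamma)$ with $\int_0^1 \lambda(\partial_t \gamma)\,dt = T := \lim T(s_n)$, and $T > 0$ follows from the non-removability of $z$ (a removable puncture would give $T = 0$). Thus the subsequential limit is $x_T$ for a periodic Reeb orbit $P = (x,T)$, and elliptic bootstrapping for the CR equation upgrades the convergence from $C^0$ to $C^\infty(S^1, M)$. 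For the final statement, assume $P$ is non-degenerate. Then the asymptotic operator $A_S = -J_0 \partial_t - S$ obtained by linearizing the CR equation along $x_T$ in a unitary trivialization of $x_T^*\xi$ has $0 \notin \sigma(A_S)$ by the isomorphism $\ker A_S \cong \ker(d\phi_R^T(x(0))|_{\xi_{x(0)}} - \mathrm{Id})$ recorded in Subsection \ref{sec:ind1}. The standard Hofer-Wysocki-Zehnder exponential decay argument — writing $u(s,t) = \exp_{x(Tt)}(\eta(s,t))$ for a section $\eta$ of $x_T^*\xi$ along large slices, and expanding $\eta$ along the eigenbasis of $A_S$ so that each spectral component decays with rate equal to the corresponding eigenvalue away from zero — then forces convergence along the full limit $s \to +\infty$, not merely along a subsequence.
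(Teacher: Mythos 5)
The paper offers no proof of this statement: it is quoted verbatim as a known result of Hofer \cite{Hofer93} and Hofer--Wysocki--Zehnder \cite{HWZI}, so there is nothing internal to compare against. Your outline faithfully reproduces the standard published argument (cylindrical coordinates, uniform gradient bounds via bubbling against the minimal period, Arzel\`a--Ascoli plus vanishing of the $d\lambda$-energy on slabs to identify subsequential limits as Reeb loops, non-removability to get $T>0$, and the spectral-gap exponential decay for the non-degenerate case). The one step worth spelling out more carefully is the passage from subsequential to full convergence: before the representation $u(s,t)=\exp_{x(Tt)}(\eta(s,t))$ is available for \emph{all} large $s$, one must use non-degeneracy (isolatedness of $x_T$ among nearby loops of period close to $T$) together with connectedness of the $\omega$-limit set to confine $u(s,\cdot)$ to a fixed tubular neighborhood of the single geometric orbit and control the phase; this is exactly where the Hofer--Wysocki--Zehnder decay analysis you invoke does its work.
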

 
 \begin{remark} Let $\widetilde{u}=(u,a)$ be a finite energy plane. Abbreviate by $\Omega = \Omega(\widetilde{u})$ the $\omega$-limit set of $\widetilde{u}$ consisting of periodic   orbits $P=(x,T)$ for which there exists  $s_n \to +\infty$ such that  $ u(s_n, t) $ converges in   $C^{\infty}(S^1, M)$ to $x_T( t)$.  It is non-empty, compact, and connected, see \cite[Lemma 13.3.1]{FvK18book}.  The last statement of the preceding theorem tells us that if an asymptotic limit $P$ of $\widetilde{u}$ is non-degenerate, then   $\Omega$ consists of a single element $P$, up to reparametrization. In \cite{Sie17multiple}  Siefring provided explicit examples of finite energy planes with the image of the $\omega$-limit set being diffeomorphic to the two-torus.  
 
 \end{remark}

Let $P=(x,T)$ be a non-degenerate periodic orbit. Denote by $\Gamma^{1,2}(x_T^* \xi)$ and   $\Gamma^{0,2}(x_T^* \xi)$ the Hilbert spaces of $W^{1,2}$-sections and $L^2$-sections  of $x_T^*\xi$, respectively.  Given $J \in \mathcal{J}$, the $L^2$-inner product here is defined as
\[
\left< \zeta_1, \zeta_2 \right> = \int_0^1 d\lambda(x_T(t)) ( \zeta_1(t), J(x_T(t)) \zeta_2(t))dt,
\]
where $\zeta_1, \zeta_2$ are sections of $x_T^*\xi$. For  any symmetric connection $\nabla$ on $TM$, we define the    operator $A_P \colon \Gamma^{1,2}(x_T^* \xi) \to \Gamma^{0,2}(x_T^* \xi)$  as $A_P( \zeta):=  J(- \nabla_t \zeta + T \nabla_{\zeta} R). $  This operator does not depend on the choice of $\nabla$.  Any unitary trivialization $\mathfrak{T} \colon x_T^* \xi \to S^1 \times \C$ provides us with Hilbert space isomorphisms $   \Gamma^{1,2}(x_T^* \xi ) \cong W^{1,2}(S^1, \C)$ and $\Gamma^{0,2}(x_T^* \xi ) \cong L^{ 2}(S^1, \C)$. Under these isomorphisms the operator $A_P$ becomes the self-adjoint operator $A_S$ defined in Subsection \ref{sec:ind1}, where $S=S_P^{\mathfrak{T}}$ is a smooth loop of symmetric matrices associated to $P$ and $\mathfrak{T}$.

In \cite{HWZI}, Hofer, Wysocki, and Zehnder proved the following asymptotic behavior of a non-degenerate finite energy sphere.

\begin{theorem}\label{thm:asymptoticbah} Let $\widetilde{u} = (a,u) \colon S^2 \setminus \Gamma \to \R \times M$ be a   finite energy sphere. Suppose that an asymptotic limit $P_z=(x_z,T_z)$ at $z \in \Gamma$ is non-degenerate.  Let $\psi_z \colon (V, 0) \to (U_z, z)$ be a holomorphic chart as above.  If  $z$ is a positive  puncture $($or a negative puncture$)$,   then there exists  a negative $($or positive$)$ eigenvalue $\eta_z$ of the operator $A_{P_z}$ defined as above and an eigenfunction $\zeta_z$ of $A_{P_z}$ to the eigenvalue $\eta_z$ such that 
\[
\widetilde{u}(s,t) = (T_zs, \exp_{x_{T_z}(t)} [ e^{ \eta_z s}( \zeta_z(t) + \kappa_z(s,t))]),  \quad  s \gg 1 \;\; \text{$($or $s \ll -1)$},
\]
where $\kappa_z$ decays exponentially with all derivatives to $0$ uniformly in $t$ as $s \to + \infty$ $($or as $s \to - \infty$$)$. Here $\exp$ is the exponential map of the restriction of the metric $d(e^r \lambda)( \cdot, \widetilde{J} \cdot)$ to $M \cong \{ 0 \} \times M \subset \R \times M$.   
\end{theorem}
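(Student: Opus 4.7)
The plan is to follow the classical Hofer--Wysocki--Zehnder strategy: reduce the problem near each puncture to a cylindrical end, rewrite the Cauchy--Riemann equation as a nonlinear perturbation of the linear asymptotic operator $A_{P_z}$, establish exponential decay via the spectral gap coming from non-degeneracy, and then extract the leading eigenfunction.

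First, I would use Theorem \ref{thm:asymphfwz} at the puncture $z$ (treating the case of a positive puncture; the negative case is symmetric under $s \mapsto -s$) to get that $u(s,t) \to x_{T_z}(t)$ in $C^\infty(S^1,M)$ as $s \to +\infty$ after passing to the biholomorphic half-cylinder $[R,+\infty) \times S^1$ via the chart $\psi_z$. Choosing a tubular neighborhood of $x_{T_z}(\R)$ trivialized by the Reeb flow, together with a symmetric connection on $TM$ compatible with $\xi$, I can write
\[
u(s,t) = \exp_{x_{T_z}(t + \vartheta(s,t))} \zeta(s,t)
\]
for $s$ large, where $\zeta(s,\cdot) \in \Gamma(x_{T_z}^* \xi)$ and $\vartheta$ is a small phase correction, both going to zero as $s \to +\infty$. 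The $a$-component likewise satisfies $a(s,t) - T_z s - c \to 0$ for some constant $c$; after a translation in the $\R$-factor and in the $s$-variable, one may assume $c = 0$ and absorb $\vartheta$ into $\zeta$ by reparametrizing, so that the $\widetilde J$-holomorphic equation becomes
\[
\partial_s \zeta + J(x_{T_z}(t)) \partial_t \zeta + S(t)\zeta = N(s,t,\zeta,\nabla \zeta),
\]
where $S$ is the loop of symmetric matrices attached to $P_z$ and $\mathfrak T$, and $N$ is a nonlinear remainder vanishing quadratically in $(\zeta,\nabla\zeta)$.

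Next, the linear part of this PDE is precisely $\partial_s \zeta + A_{P_z} \zeta = 0$, with $A_{P_z}$ self-adjoint on $L^2(S^1, x_{T_z}^*\xi)$. Non-degeneracy of $P_z$ gives $0 \notin \sigma(A_{P_z})$, hence a spectral gap. I would then run the standard exponential decay argument: testing against $\zeta$ in the $L^2$-pairing from Subsection \ref{sec:ind1} yields
\[
\tfrac{1}{2}\tfrac{d^2}{ds^2} \|\zeta(s,\cdot)\|_{L^2}^2 \geq c_0 \|\zeta(s,\cdot)\|_{L^2}^2 - C\|\zeta\|_{L^2}^2 \cdot o(1),
\]
where $c_0 > 0$ is determined by $\dist(0, \sigma(A_{P_z}))$ and the $o(1)$ absorbs the nonlinear remainder (which is small because $\|\zeta(s,\cdot)\|_{C^0} \to 0$). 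A maximum-principle/comparison argument upgrades this to the uniform exponential bound $\|\zeta(s,\cdot)\|_{L^2} \leq C e^{-cs}$ for some $c \in (0, c_0)$; elliptic bootstrapping on the equation above then gives exponential decay of $\zeta$ together with all derivatives.

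Once exponential decay in all Sobolev norms is in hand, the refined asymptotic formula is extracted by an eigenfunction expansion. Expanding $\zeta(s,\cdot) = \sum_k c_k(s) e_k(\cdot)$ with respect to the orthonormal eigenbasis $\{e_k\}$ of $A_{P_z}$ with eigenvalues $\eta_k$, the PDE gives
\[
c_k'(s) + \eta_k c_k(s) = \langle N(s,\cdot,\zeta,\nabla\zeta), e_k \rangle_{L^2},
\]
and the already established exponential decay of the right-hand side makes each $c_k(s) e^{\eta_k s}$ converge to a limit $\alpha_k \in \R$. Let $\eta_z$ be the largest negative eigenvalue such that $\alpha_z \neq 0$ (it is negative because we are at a positive puncture and $\zeta$ must decay); finiteness of energy combined with a unique-continuation type argument for the linearized operator guarantees some $\alpha_k \neq 0$ unless $\zeta \equiv 0$, in which case the orbit is already the trivial cylinder. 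Setting $\zeta_z := \alpha_z e_z$ and writing the remainder as $\kappa_z(s,t) := e^{-\eta_z s} \zeta(s,t) - \zeta_z(t)$, the above eigenvalue analysis shows $\kappa_z$ decays exponentially in all derivatives. The $a$-component asymptotics $a(s,t) = T_z s + O(e^{-\delta s})$ then follow by integrating the CR equation $\partial_s a = \lambda(\partial_t u)$ and using the already proved $C^\infty$ convergence of $u$. The main obstacle is the passage from mere exponential decay to the precise identification of $\eta_z$ and $\zeta_z$: ensuring $\zeta \not\equiv 0$ has a genuine leading eigenmode requires a careful unique-continuation argument, and controlling the nonlinear remainder $N$ while projecting onto the (possibly slowly decaying) eigenspaces demands uniform estimates that mix the linear spectral theory of $A_{P_z}$ with delicate bootstrapping in weighted Sobolev spaces.
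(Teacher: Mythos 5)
This theorem is not proved in the paper: it is quoted verbatim from Hofer--Wysocki--Zehnder \cite{HWZI} (and \cite{Hofer93}), and your sketch reproduces exactly the strategy of that original proof --- cylindrical coordinates at the puncture, the nonlinear perturbation of the asymptotic operator $A_{P_z}$, exponential decay from the spectral gap given by non-degeneracy, and the eigenfunction expansion isolating the leading mode (with the trivial-cylinder alternative correctly set aside). So the proposal is correct and takes essentially the same approach as the source the paper relies on.
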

\noindent
The eigenvalue $\eta_z$  and the eigenfunction $\zeta_z$ of $A_{P_z}$ in the above statement are called the \emph{asymptotic eigenvalue} and the \emph{asymptotic eigenfunction} of $\widetilde{u}$ at $z$, respectively.

Assume now that $\widetilde{u} =(a,u)$ is an invariant finite energy sphere. For $ z \in \Gamma_{\mathrm{int}}$, we have the same assertions as above. If $z \in \Gamma_{\p}$, then we   require a holomorphic chart   $\psi_z \colon (V, 0) \to (U_z, z)$ to further satisfy $I \circ \psi_z = \psi_z \circ I$. In this case, $\widetilde{u}$ tends asymptotically to a symmetric periodic orbit $P=(x,T)$ at $ z $. Assume that $P$ is non-degenerate and consider the associated non-degenerate half-chord $C=(c, T/2)$.  Set $c_T(t):=c(Tt)=x(Tt)|_{t \in [0,1/2]}$ and denote by $ \Gamma^{1,2}_{\rho} (c_T^* \xi)$ the set of $W^{1,2}$-sections $\zeta$ of $c_T^* \xi$ with boundary condition $\zeta(t) \in \mathrm{Fix}( D\rho(c_T(t))|_{\xi_{c_T(t)}})$ for $ t=0,1/2$. The Hilbert space $\Gamma^{0,2}(c_T^*\xi)$ is defined as before.    For $J \in \mathcal{J}_{\rho}$ and  any $\rho$-invariant symmetric connection $\nabla$ on $TM$,  the    operator $A_C \colon \Gamma^{1,2}_{\rho} (c_T^* \xi) \to \Gamma^{0,2}(c_T^* \xi)$ is defined by the same formula as $A_P$. As before, via a symmetric unitary trivialization  this operator is conjugated to the operator $A_D^{\mathfrak{T}}$ defined in Subsection \ref{sec:ind2}.    We have the following symmetric counterpart of the preceding two theorems.

\begin{theorem}[{\cite[Corollary 4.2]{FK16}}] \label{thm:asymptoticbahinv} 
 Assume that $\widetilde{u} =(a,u) \colon S^2 \setminus \Gamma \to \R \times M$ is an invariant  finite energy sphere with all asymptotic limits being non-degenerate.  For $z \in \Gamma_{\mathrm{int}}$, the same assertions as in Theorems  \ref{thm:asymphfwz} and  \ref{thm:asymptoticbah} hold.     Let $Q_{z}=(y_{z}, T_{z})$  be the associated asymptotic limit and   $\eta_{z}$ be the asymptotic eigenvalue of $A_{Q_{z}}$.  Then the periodic orbit  $\overline{Q_{z}}=( \rho \circ y_{z} , T_{z})$  is the asymptotic limit of $\widetilde{u}$ at $\overline{z} \in \Gamma_{\mathrm{int}}$ and the associated asymptotic eigenvalue $\eta_{\overline{z}}$ of $A_{\overline{Q_{z}}}$ coincides with $\eta_{z}$.

 Suppose that $ z \in \Gamma_{\p}$ is a positive $($or negative$)$ puncture and $P_z=(x_z,T_z)$ be the associated asymptotic limit.  Let $C_z =(c_z,T_z/2)$ be the half-chord of $P_z$.    Then there exists  a negative $($or positive$)$ eigenvalue $\eta_z^{\rho}$ of the operator $A_{C_z}$   and an eigenfunction $\zeta_z^{\rho}$ of $A_{C_z}$ to the eigenvalue $\eta_z^{\rho} $ such that 
\[
\widetilde{u}_I(s,t) = (T_zs, \exp_{c_{T_z}(t)} [ e^{ \eta_z^{\rho} s}( \zeta_z^{\rho} (t) + \kappa _z ^{\rho}(s,t))]) , \quad  s \gg 1 \;\; \text{$($or $s \ll -1)$},
\]
where $\kappa_z^{\rho}$ decays exponentially with all derivatives to $0$ uniformly in $t$ as $s \to + \infty$ $($or $s \to - \infty)$.  Here $\exp$ is the exponential map of the restriction of the $\widetilde{\rho}$-invariant metric $d(e^r \lambda)( \cdot, \widetilde{J} \cdot)$ to $M \cong \{ 0 \} \times M \subset \R \times M$. 
\end{theorem}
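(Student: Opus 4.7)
The plan is to split the analysis of an invariant finite energy sphere $\widetilde{u}$ according to whether the puncture lies in $\Gamma_{\mathrm{int}}$ or in $\Gamma_{\p}$, treating the former by reduction to the non-symmetric case and the latter by a boundary-value version of Hofer--Wysocki--Zehnder's asymptotic analysis.

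For $z\in\Gamma_{\mathrm{int}}$, the invariance $\widetilde{u}=\widetilde{\rho}\circ\widetilde{u}\circ I$ is locally invisible at $z$ since $\overline{z}\neq z$, so Theorems~\ref{thm:asymphfwz} and \ref{thm:asymptoticbah} apply verbatim to $\widetilde{u}$ at $z$ and produce the non-degenerate asymptotic limit $Q_z=(y_z,T_z)$, an asymptotic eigenvalue $\eta_z$, and an asymptotic eigenfunction $\zeta_z$. To transfer this information to the twin puncture $\overline{z}$, I would pick holomorphic charts matched by $\psi_{\overline{z}}=I\circ\psi_z\circ I$ and apply $\widetilde{\rho}$ to the asymptotic formula near $z$. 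Because $\widetilde{\rho}$ acts trivially on the $\R$-factor, $\overline{z}$ is of the same sign (positive or negative) as $z$; because $D\rho|_{\xi}$ intertwines $A_{Q_z}$ with $A_{\overline{Q_z}}$ where $\overline{Q_z}=(\rho\circ y_z,T_z)$, the asymptotic eigenvalue is unchanged and the eigenfunction is transported by $D\rho$. This gives the claim at $\overline{z}$.

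For $z\in\Gamma_{\p}$ I would pass to the invariant half-sphere picture: choose a holomorphic chart $\psi_z$ with $I\circ\psi_z=\psi_z\circ I$ and work with $\widetilde{u}_I$ in strip coordinates $(s,t)\in[R,+\infty)\times[0,1/2]$ (or the corresponding negative strip). The Cauchy--Riemann equation is supplemented by the totally real boundary condition
\[
\widetilde{u}_I(s,0),\ \widetilde{u}_I(s,1/2)\in\R\times\mathrm{Fix}(\rho),
\]
i.e.\ into the cylindrical, exact, $\widetilde{\rho}$-invariant Lagrangian over the Legendrian $\mathrm{Fix}(\rho)$. The relevant asymptotic target is a Reeb chord $C_z=(c_z,T_z/2)$ in $\mathrm{Fix}(\rho)$, and the linearized asymptotic operator is precisely $A_{C_z}$ from Subsection~\ref{sec:ind2}, self-adjoint on the path space $\Gamma^{1,2}_{\rho}(c_{T_z}^{*}\xi)$ with discrete real spectrum. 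I would then mimic the Hofer--Wysocki--Zehnder scheme in this boundary setting: first, energy-plus-bubbling compactness produces $C^\infty$-convergence of $\widetilde{u}_I$ to a Reeb chord along subsequences of $s$; second, non-degeneracy of $C_z$ upgrades this to convergence along all $s$; third, sub-exponential decay in strip coordinates is obtained from a monotonicity argument for the action functional on the path space with Legendrian boundary; and finally, the decay is improved to genuine exponential decay governed by the smallest-modulus non-zero eigenvalue of $A_{C_z}$, with $\zeta_z^{\rho}$ read off as the leading asymptotic eigenfunction.

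The main obstacle is this last analytic step: porting the loop-space spectral analysis of \cite{HWZI} to the path space $W^{1,2}_{\R}([0,1/2],\C)$ with boundary conditions along $\mathrm{Fix}(\rho)$. The required ingredients are elliptic boundary regularity for Cauchy--Riemann operators with totally real boundary conditions, discreteness of $\sigma(A_{C_z})$ (already in Subsection~\ref{sec:ind2}), and a suitable isoperimetric/action estimate adapted to strips rather than cylinders. Once this linear framework is in place, the non-linear asymptotic expansion of \cite{HWZI} transplants mechanically to the half-strip because the Lagrangian $\R\times\mathrm{Fix}(\rho)$ is cylindrical and exact; the symmetry assertion at interior punctures is then immediate from the first paragraph, completing the proof.
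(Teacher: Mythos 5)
This theorem is not proved in the paper at all: it is quoted verbatim as \cite[Corollary 4.2]{FK16}, so the relevant comparison is with Frauenfelder--Kang's argument. Your treatment of interior punctures is correct and is the standard one: apply Theorems \ref{thm:asymphfwz} and \ref{thm:asymptoticbah} at $z$, then push the asymptotic representative through $\widetilde{\rho}$ and the chart relation $\psi_{\overline{z}}=I\circ\psi_z\circ I$; the two sign reversals (from $D\rho|_{\xi}\circ J\circ(D\rho|_{\xi})^{-1}=-J$ and from $t\mapsto -t$) cancel, so the eigenvalue is preserved.

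For $z\in\Gamma_{\p}$, however, you propose rebuilding the entire Hofer--Wysocki--Zehnder asymptotic machinery on a half-strip with totally real boundary in $\R\times\mathrm{Fix}(\rho)$, and you yourself flag the decisive step (exponential decay governed by $\sigma(A_{C_z})$ in the Lagrangian-boundary setting) as an unresolved obstacle. That program can be carried out, but it is far heavier than what is needed, and as written it leaves the main analytic ingredient unproven. The reason the statement is a \emph{corollary} in \cite{FK16} is that no new boundary asymptotics are required: since $z\in\Gamma_{\p}$ is a puncture of the full invariant sphere, Theorem \ref{thm:asymptoticbah} already gives $\widetilde{u}(s,t)=(T_zs,\exp_{x_{T_z}(t)}[e^{\eta_z s}(\zeta_z(t)+\kappa_z(s,t))])$ with $\eta_z\in\sigma(A_{P_z})$. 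Feeding the identity $\widetilde{\rho}\circ\widetilde{u}\circ I=\widetilde{u}$ into this formula and invoking the uniqueness of the leading asymptotic representative (together with the $\widetilde{\rho}$-invariance of the metric, so that $\rho\circ\exp=\exp\circ D\rho$) yields $D\rho(x_{T_z}(-t))\,\zeta_z(-t)=\zeta_z(t)$. Evaluating at $t=0$ and $t=1/2$ shows that $\zeta_z|_{[0,1/2]}$ satisfies the boundary condition defining $\Gamma^{1,2}_{\rho}(c_{T_z}^*\xi)$, and since $A_{C_z}$ is the restriction of $A_{P_z}$, this restriction is an eigenfunction of $A_{C_z}$ with the \emph{same} eigenvalue $\eta_z^{\rho}=\eta_z$. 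The asserted formula for $\widetilde{u}_I$ is then literally the restriction of the closed-string formula to $t\in[0,1/2]$, with $\kappa_z^{\rho}=\kappa_z|_{[0,1/2]}$. You should replace your half-strip program by this restriction argument, or else supply the missing boundary-value asymptotic analysis in full.
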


 \subsubsection{Algebraic invariants}
 
 Let  $\widetilde{u} = (a,u)$ be a finite energy sphere such that every asymptotic limit  is non-degenerate and $\pi \circ du \not\equiv0$. Since it is $\widetilde{J}$-holomorphic, the map $z \mapsto \pi \circ du(z)$ is a section of the complex line  bundle $\mathrm{Hom}_{\C} (T( S^2 \setminus \Gamma ), u^* \xi) \to S^2 \setminus \Gamma$. Due to Carleman's similarity principle     $\pi \circ du$ has finitely many zeros, and  they are in particular isolated. Moreover,   as a section of the bundle $u^* \xi \to S^2 \setminus \Gamma$, the zeros have a positive degree, see \cite[Proposition 4.1]{HWZII}. Let $\mathrm{wind}_{\pi}(\widetilde{u})$ denote the sum of the degrees of the zeros of the section $\pi \circ du$.  
\begin{lemma} [{\cite[Corollary 4.2]{HWZII}}] It holds that  $\mathrm{wind}_{\pi}(\widetilde{u}) \geq 0$, and $\mathrm{wind}_{\pi}(\widetilde{u}) =0$ if and only if $\pi \circ du$ is non-vanishing, i.e.\  $u$ is an immersion transverse to   $R$. 
\end{lemma}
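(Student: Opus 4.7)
The plan is to exhibit $\pi\circ du$ as a solution of a Cauchy--Riemann type equation on the complex line bundle $\mathrm{Hom}_\C(T(S^2\setminus\Gamma), u^*\xi)$ and then apply Carleman's similarity principle to conclude that its zeros are isolated and carry a positive local mapping degree. This is the standard device of Hofer--Wysocki--Zehnder for extracting positivity/intersection information from pseudoholomorphic maps in symplectizations.

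First I would fix a point $z_0 \in S^2\setminus\Gamma$, a holomorphic chart centered at $z_0$, and a local unitary frame of $u^*\xi$; this trivializes the coefficient bundle and presents $s := \pi\circ du$ as a $\C$-valued (or rather $\C$-linear map valued) function. Splitting $d\widetilde{u} = \p_s \widetilde{u}\,ds + \p_t \widetilde{u}\,dt$ and using the identity $du = \pi\circ du + \lambda(du)\,R$ together with the nonlinear Cauchy--Riemann equation $d\widetilde{u}\circ i = \widetilde{J}\circ d\widetilde{u}$, a direct computation (carried out for instance in \cite{HWZII}) produces a linear equation of the form
\[
\bar\partial\, s + A\, s = 0
\]
with $A$ a smooth endomorphism-valued coefficient. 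Carleman's similarity principle then gives, near any zero $z_0$ of $s$, a continuous $\Phi$ with $\Phi(z_0)=\mathrm{Id}$ and a genuinely holomorphic local representative $\widehat{s}$ such that $s = \Phi \cdot \widehat{s}$. Consequently the zeros of $s$ are isolated, and the local winding number of $s$ about each of them agrees with the vanishing order of $\widehat s$, which is a positive integer.

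Since the asymptotic limits of $\widetilde{u}$ at every puncture are non-degenerate periodic orbits, the standard asymptotic formula (Theorem \ref{thm:asymptoticbah}) shows that $\pi\circ du$ behaves near each puncture like an exponentially decaying or growing eigensection of $A_P$, hence in particular has no zeros in a punctured neighborhood. Therefore the zero set of $s$ is a finite subset of $S^2\setminus\Gamma$, and summing the positive local contributions yields $\wind_\pi(\widetilde{u})\geq 0$, with equality if and only if $s=\pi\circ du$ is nowhere vanishing. For the last clause, because $T_z(S^2\setminus\Gamma)$ is one complex dimensional, non-vanishing of $\pi\circ du$ as a $\C$-linear map is equivalent to its injectivity, i.e.\ to $du(z)$ having image transverse to $\langle R(u(z))\rangle$ and of rank two; this is exactly the condition that $u$ be an immersion transverse to $R$. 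The main technical obstacle is the derivation of the Cauchy--Riemann equation satisfied by $\pi\circ du$ in an invariant way, but this calculation is standard and may be imported directly from \cite{HWZII}.
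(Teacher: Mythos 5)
Your argument is exactly the one the paper relies on: the text immediately preceding the lemma already sets up $\pi\circ du$ as a section of $\mathrm{Hom}_{\C}(T(S^2\setminus\Gamma),u^*\xi)$ satisfying a Cauchy--Riemann type equation, invokes Carleman's similarity principle for isolated zeros of positive degree, and uses the non-degenerate asymptotics to exclude zeros near the punctures, with the lemma itself simply cited from Hofer--Wysocki--Zehnder. Your proposal is correct and coincides with that standard route, including the final linear-algebra observation identifying non-vanishing of $\pi\circ du$ with $u$ being an immersion transverse to $R$.
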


Choose $ z \in \Gamma$ with the asymptotic limit $P_z=(x_z,T_z)$. Let $\eta_z$ be  the asymptotic eigenvalue  of  $\widetilde{u}$ at $z$ given in Theorem \ref{thm:asymptoticbah}. Since the operator $A_{P_z}$ is conjugated to the self-adjoint operator $A_{S_{P_z}^{\mathfrak{T}_z}}$, the asymptotic eigenvalue $\eta_z$ is also an eigenvalue of $A_{S_{P_z} ^{\mathfrak{T}_z}}$ for any unitary trivialization $\mathfrak{T}_z$ of $x_{T_z}^* \xi$.  However, the winding number of $\eta_z$, defined in Subsection \ref{sec:ind1}, depends on the choice of the trivialization. Indeed, if $\zeta_z$ is the asymptotic eigenfunction, then $\Phi_{\mathfrak{T}_z} \zeta_z$ is an eigenvector of $A_{S_{P_z}^{\mathfrak{T}_z}}$ to the eigenvalue $\eta_z$, where  $ \Phi_{\mathfrak{T}_z}  \colon \Gamma^{1,2}(x_{T_z}^* \xi ) \to  W^{1,2}(S^1, \C)$ is the Hilbert space isomorphism defined via $\mathfrak{T}_z$.  For two different trivializations $\mathfrak{T}_1, \mathfrak{T}_2$, the winding numbers of the eigenvectors $\Phi_{\mathfrak{T}_1} \zeta_z,  \Phi_{\mathfrak{T}_2} \zeta_z$ are not necessarily equal.

We now define the asymptotic winding number of $\widetilde{u} $ at $z$  with respect to a unitary trivialization $\mathfrak{T}_z$ of $x_{T_z}^* \xi$  as
\[
\wind_{\infty}^{\mathfrak{T}_z}(\widetilde{u}, z) := w(\eta_z, S_{P_z}^{\mathfrak{T}_z}).
\]
The \emph{asymptotic winding number of $\widetilde{u}$} is then defined as
\[
\mathrm{wind}_{\infty} (\widetilde{u}  ) := \sum_{z \in \Gamma^+} \mathrm{wind}_{\infty}^{\mathfrak{T} }(\widetilde{u}, z)- \sum_{z \in \Gamma^-} \mathrm{wind}_{\infty} ^{\mathfrak{T} }(\widetilde{u}, z).
\]
Each term in the right-hand side is computed using a unitary trivialization $\mathfrak{T}$ of $u^* \xi$, and  as the notation indicates, the left-hand side is independent of the choice of $\mathfrak{T}$.   The following statement reveals a relation  between the above defined two winding numbers of $\widetilde{u}$.

\begin{proposition}[{\cite[Proposition 5.6]{HWZII}}] \label{prophwzii5.6} Let $\widetilde{u} \colon S^2 \setminus \Gamma \to \R \times M$ be a non-degenerate finite energy sphere. Then we have
\[
\wind_{\infty}(\widetilde{u}) = \wind_{\pi} (\widetilde{u}) + 2 - \#\Gamma.
\]
In particular, if $\widetilde{u}$ is a finite energy plane, then $\wind_{\infty}(\widetilde{u})  \geq 1$, and $\wind_{\infty}(\widetilde{u}) =1$ if and only if $\pi \circ du$ is non-vanishing, i.e.\  $u$ is an immersion transverse to  $R$. 
\end{proposition}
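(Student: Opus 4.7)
The plan is to reduce the identity to the standard relative degree formula for sections of a complex line bundle over a compact oriented surface with boundary. Remove a small disk-like neighborhood $U_z$ of each puncture $z \in \Gamma$ to obtain a compact surface $\Sigma \subset S^2$ with oriented boundary $\partial\Sigma = \bigsqcup_{z\in\Gamma}\partial U_z$. Because $\widetilde{u}$ is $\widetilde{J}$-holomorphic, $\pi \circ du$ is a $\C$-linear bundle map, hence a section of the complex line bundle $L := T^*\Sigma \otimes_{\C} u^*\xi$. By the Carleman similarity principle its zeros are isolated with positive local degree, so that their total count equals $\wind_\pi(\widetilde{u})$.

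I would then fix the trivializations against which everything is measured. Since $S^2 \setminus \Gamma$ is homotopy equivalent to a graph, the complex line bundle $u^*\xi$ admits a global unitary trivialization $\mathfrak{T}$, whose restriction to each asymptotic orbit we keep calling $\mathfrak{T}_z$. Near each puncture $z$ the cylindrical coordinate vector field $\partial_s$ coming from the holomorphic chart $\psi_z$ provides a preferred frame of $T\Sigma$. Together these give a trivialization $\tau' := ds \otimes \mathfrak{T}$ of $L|_{\partial\Sigma}$, and the relative degree formula reads
\[
\wind_\pi(\widetilde{u}) \;=\; c_1(L,\tau') \;+\; \deg_{\tau'}\!\bigl(\pi \circ du\big|_{\partial\Sigma}\bigr).
\]

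To evaluate the boundary degree I would apply Theorem \ref{thm:asymptoticbah}: near a positive puncture $z$, $\pi\,\partial_s u \sim \eta_z e^{\eta_z s}\zeta_z(t) + \text{l.o.t.}$, so in the trivialization $\tau'$ the section $\pi\circ du$ is, to leading order, a nowhere-vanishing real multiple of $\Phi_{\mathfrak{T}_z}\zeta_z$, whose $t$-winding is $w(\eta_z, S_{P_z}^{\mathfrak{T}_z}) = \wind_\infty^{\mathfrak{T}_z}(\widetilde{u}, z)$. A quick check of orientations shows that the induced orientation of $\partial\Sigma$ coincides with $+\partial_t$ at positive punctures and $-\partial_t$ at negative ones, so that summation yields exactly $\wind_\infty(\widetilde{u})$ in the sense defined in the paper. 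For the relative Chern number I would use the multiplicativity $c_1(L,\tau') = -c_1(T\Sigma, \partial_s) + c_1(u^*\xi, \mathfrak{T})$. The second summand vanishes because $\mathfrak{T}$ extends globally. For the first, the outward normal $\nu$ to $\partial\Sigma$ equals $\partial_s$ at positive punctures and $-\partial_s$ at negative ones, so $\partial_s/\nu$ has zero winding on each boundary circle, whence $c_1(T\Sigma, \partial_s) = c_1(T\Sigma, \nu) = \chi(\Sigma) = 2 - \#\Gamma$.

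Combining the pieces gives $\wind_\pi(\widetilde{u}) = (\#\Gamma - 2) + \wind_\infty(\widetilde{u})$, which rearranges to the claimed identity. The finite energy plane case ($\#\Gamma = 1$) is immediate: one obtains $\wind_\infty(\widetilde{u}) = \wind_\pi(\widetilde{u}) + 1 \geq 1$, with equality iff $\wind_\pi(\widetilde{u}) = 0$, which by the lemma recorded immediately before the proposition means that $\pi \circ du$ is nowhere vanishing, i.e.\ $u$ is an immersion transverse to $R$. The main technical obstacle is pure bookkeeping: one must keep the orientation conventions consistent across the two kinds of punctures and across the two factors of $L$, so that the signs in the definition of $\wind_\infty(\widetilde{u})$ come out correctly from the degree formula.
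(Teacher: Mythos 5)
The paper does not prove this proposition; it is quoted verbatim from Hofer--Wysocki--Zehnder, and your argument is precisely the standard relative-degree proof given in that reference: count the zeros of $\pi\circ du$ as a section of $\mathrm{Hom}_{\C}(T\Sigma,u^*\xi)$ over the compact surface obtained by excising the punctures, identify the boundary winding with $\wind_\infty(\widetilde{u})$ via the asymptotic formula, and evaluate the relative Chern number as $-\chi(\Sigma)=\#\Gamma-2$. Your orientation bookkeeping (boundary orientation $+\partial_t$ at positive and $-\partial_t$ at negative punctures, outward normal $\pm\partial_s$) is consistent with the sign conventions in the definition of $\wind_\infty$, and the only implicit hypothesis you use, namely $\pi\circ du\not\equiv 0$ and non-degeneracy of the asymptotic limits, is the standing assumption of the subsection. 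The proof is correct.
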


Likewise, associated to the non-degenerate asymptotic limit $P_z $ of $z \in \Gamma$ with respect to a unitary trivialization $\mathfrak{T}_z$ of $x_{T_z}^* \xi$ is the Conley-Zehnder index
\[
\muCZ^{\mathfrak{T}_z}(P_z)  = 2\alpha(S_P^{\mathfrak{T}_z}) + p(S_P^{\mathfrak{T}_z}),
\]
where it is defined as in Subsection \ref{sec:ind1}. The \emph{index of a non-degenerate finite energy sphere $\widetilde{u}$} is then defined to be
\[
\mu(\widetilde{u}) := \sum_{z \in \Gamma^+} \muCZ^{\mathfrak{T}}(P_z)- \sum_{z \in \Gamma^-} \muCZ^{\mathfrak{T}}(P_z)
\]
Again, each index   in the right-hand side is computed using a unitary trivialization $\mathfrak{T}$ of $u^* \xi$, and    this definition does not depend  on the choice of the trivialization, see \cite[Proposition 5.5]{HWZII}.

\begin{theorem}[{\cite[Theorem 5.8]{HWZII}}] \label{thm:indexhzwinequva}
If $\widetilde{u}=(a,u)$ is a non-degenerate finite energy sphere with $\pi \circ du \not\equiv 0$, then 
\[
\mu(\widetilde{u}) \geq 2 \wind_{\pi}(\widetilde{u}) + 4 - 2 \# \Gamma^{\mathrm{even}} - \# \Gamma^{\mathrm{odd}},
\]
where $\Gamma^{\mathrm{even}}$ and $\Gamma^{\mathrm{odd}}$ are the sets of punctures whose asymptotic limit has even and odd Conley-Zehnder index, respectively. 
\end{theorem}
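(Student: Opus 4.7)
The plan is to establish puncturewise inequalities relating $\pm\muCZ(P_z)$ and $\pm 2\wind_\infty^{\mathfrak{T}_z}(\widetilde{u},z)$ at each puncture, sum them, and then substitute the identity of Proposition \ref{prophwzii5.6}. The key analytical input is the description $\muCZ(P_z) = 2\alpha(S_{P_z}^{\mathfrak{T}_z}) + p(S_{P_z}^{\mathfrak{T}_z})$ from Theorem \ref{thm:indexdefhwz}, together with the fact that via Theorem \ref{thm:asymptoticbah} one has $\wind_\infty^{\mathfrak{T}_z}(\widetilde{u},z) = w(\eta_z, S_{P_z}^{\mathfrak{T}_z})$ where $\eta_z$ is the asymptotic eigenvalue of $A_{P_z}$. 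Throughout, non-degeneracy of $P_z$ ensures $0 \notin \sigma(A_{P_z})$ via the kernel identification in Subsection \ref{sec:ind1}.

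First I would handle a positive puncture $z$. Here $\eta_z < 0$, so by the very definition of $\alpha$ we have $\wind_\infty^{\mathfrak{T}_z}(\widetilde{u},z) \leq \alpha(S_{P_z}^{\mathfrak{T}_z})$, which yields
\[
\muCZ(P_z) - 2\wind_\infty^{\mathfrak{T}_z}(\widetilde{u},z) \geq p(S_{P_z}^{\mathfrak{T}_z}),
\]
and this lower bound is $0$ if $\muCZ(P_z)$ is even and $1$ if it is odd. At a negative puncture $z$ one has $\eta_z > 0$, and the argument is subtler. Monotonicity of $w$ on $\sigma(A_S)$ forces $w(\eta_z, S) \geq \alpha(S)$ because $\eta_z$ dominates a negative eigenvalue achieving winding $\alpha(S)$. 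When $p(S)=1$, both eigenvalues realizing $\alpha(S)$ are negative, so no positive eigenvalue can attain winding $\alpha(S)$ and the bound improves to $w(\eta_z,S) \geq \alpha(S)+1$. When $p(S)=0$, the unique positive eigenvalue $\eta_0$ with $w(\eta_0,S)=\alpha(S)$ is the only positive one with that winding (else we would exceed the multiplicity-two count), so either $\eta_z = \eta_0$ giving $w = \alpha(S)$, or $\eta_z > \eta_0$ giving $w \geq \alpha(S)+1$. In either subcase
\[
-\muCZ(P_z) + 2\wind_\infty^{\mathfrak{T}_z}(\widetilde{u},z) \geq p(S_{P_z}^{\mathfrak{T}_z}),
\]
which again equals $0$ when $\muCZ(P_z)$ is even and $1$ when it is odd.

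Summing these puncturewise inequalities in the definition of $\mu(\widetilde{u})$ and $\wind_\infty(\widetilde{u})$ gives
\[
\mu(\widetilde{u}) - 2\wind_\infty(\widetilde{u}) \geq \#\Gamma^{\mathrm{odd}}.
\]
Substituting the identity $\wind_\infty(\widetilde{u}) = \wind_\pi(\widetilde{u}) + 2 - \#\Gamma$ of Proposition \ref{prophwzii5.6} and writing $\#\Gamma = \#\Gamma^{\mathrm{even}} + \#\Gamma^{\mathrm{odd}}$ produces exactly the claimed bound. Note that the hypothesis $\pi \circ du \not\equiv 0$ enters only to make $\wind_\pi(\widetilde{u})$ well-defined and nonnegative via the Carleman similarity principle, which underlies Proposition \ref{prophwzii5.6}.

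The main obstacle, as I see it, is the negative-puncture case: one must carefully exploit both of the characteristic properties of the winding function of $A_S$ — monotonicity and the exactly-twice multiplicity of each integer winding — together with non-degeneracy to exclude zero eigenvalues, so as to obtain the sharp one-unit improvement in the odd case. The positive-puncture estimate, by contrast, is essentially tautological from the definition of $\alpha$, and the bookkeeping at the end is mechanical.
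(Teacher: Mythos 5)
Your proof is correct and is essentially the argument behind the cited result \cite[Theorem 5.8]{HWZII} (the paper itself only quotes it without proof): the puncturewise estimates $\pm\bigl(\muCZ(P_z)-2\wind_{\infty}^{\mathfrak{T}}(\widetilde{u},z)\bigr)\geq p(S_{P_z}^{\mathfrak{T}})$, derived from the definition of $\alpha$ at positive punctures and from monotonicity plus the multiplicity-two property of the winding at negative punctures, summed and combined with Proposition \ref{prophwzii5.6}, give exactly the stated inequality. Your handling of the negative-puncture case, including the one-unit improvement when $p(S)=1$ and the use of non-degeneracy to exclude $0$ from the spectrum, is the correct and standard way to obtain the sharp bound.
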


If $\widetilde{u}$ is a non-degenerate finite energy plane, then both its asymptotic winding number and  index are independent of the choice of the trivializations. By definition of $\alpha = \alpha(S)$ and Proposition \ref{prophwzii5.6} we have

\begin{theorem}[{\cite[Theorem 1.2]{HWZII}}] \label{HWZIIthm} If $\widetilde{u}$ be a non-degenerate finite energy plane,  then 
\[
\muCZ(\widetilde{u}) := \mu(\widetilde{u}) \geq 2.
\]
\end{theorem}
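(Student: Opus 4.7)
The plan is to chain together the three numerical invariants the author has just set up, namely $\mu(\widetilde{u})$, $\wind_{\infty}(\widetilde{u})$ and $\wind_{\pi}(\widetilde{u})$, specialized to the case of a single positive puncture. Since $\widetilde{u}$ is a finite energy plane, the maximum principle from \cite{HWZII} forces $\Gamma = \Gamma^+ = \{+\infty\}$, so the defining sum for $\mu(\widetilde{u})$ collapses to the single term $\muCZ^{\mathfrak{T}}(P)$ for the asymptotic limit $P = P_{+\infty}$, and the same thing happens for $\wind_{\infty}(\widetilde{u})$, which reduces to $w(\eta_{+\infty}, S_P^{\mathfrak{T}})$.

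First I would invoke Proposition \ref{prophwzii5.6} with $\#\Gamma = 1$ to get the baseline inequality
\[
\wind_{\infty}(\widetilde{u}) \;=\; \wind_{\pi}(\widetilde{u}) + 2 - 1 \;=\; \wind_{\pi}(\widetilde{u}) + 1 \;\geq\; 1,
\]
where the nonnegativity of $\wind_{\pi}$ is the lemma recalled just before the definition of the asymptotic winding number (a consequence of Carleman similarity). Note that once we know $\wind_{\pi}(\widetilde{u}) \geq 0$ is finite, the hypothesis $\pi \circ du \not\equiv 0$ needed to talk about $\wind_{\pi}$ is automatic, since a finite energy plane with $\pi \circ du \equiv 0$ would be a trivial cylinder over a Reeb orbit, which is not a plane.

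Next I would compare this winding number with the invariant $\alpha(S) = \alpha(S_P^{\mathfrak{T}})$. Because $+\infty$ is a positive puncture, Theorem \ref{thm:asymptoticbah} supplies a \emph{negative} asymptotic eigenvalue $\eta = \eta_{+\infty}$ of $A_{S_P^{\mathfrak{T}}}$, so $\eta \in \sigma(A_S) \cap (-\infty,0)$ and the monotonicity of the winding number (part (3) of the properties of $w$) gives
\[
\wind_{\infty}(\widetilde{u}) \;=\; w(\eta, S_P^{\mathfrak{T}}) \;\leq\; \alpha(S_P^{\mathfrak{T}}).
\]
Combining the last two displays yields $\alpha(S_P^{\mathfrak{T}}) \geq 1$.

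Finally, I would feed this into the analytic formula from Theorem \ref{thm:indexdefhwz}:
\[
\muCZ(\widetilde{u}) \;=\; \muCZ^{\mathfrak{T}}(P) \;=\; 2\alpha(S_P^{\mathfrak{T}}) + p(S_P^{\mathfrak{T}}) \;\geq\; 2 \cdot 1 + 0 \;=\; 2,
\]
using $p(S) \in \{0,1\}$. The whole proof is therefore essentially bookkeeping on top of the two genuinely substantial inputs (the plane version of $\wind_{\infty} - \wind_{\pi} = 1$ and the identification of $\muCZ$ with $2\alpha + p$), and there is no real obstacle; the only point requiring care is to observe that the sign of the asymptotic eigenvalue at a positive puncture places it in the regime used to define $\alpha(S)$, which is exactly what turns the lower bound on $\wind_{\infty}$ into a lower bound on $\muCZ$.
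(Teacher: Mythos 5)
Your argument is correct and is exactly the one the paper intends: the paper's one-line justification ("by definition of $\alpha=\alpha(S)$ and Proposition \ref{prophwzii5.6}") unpacks precisely into your chain $\wind_{\infty}(\widetilde{u})=\wind_{\pi}(\widetilde{u})+1\geq 1$, then $\wind_{\infty}(\widetilde{u})=w(\eta_{+\infty},S)\leq\alpha(S)$ because the asymptotic eigenvalue at the positive puncture is negative, and finally $\muCZ=2\alpha(S)+p(S)\geq 2$. Your side remark that $\pi\circ du\not\equiv 0$ holds automatically for a finite energy plane is also the standard observation, so nothing is missing.
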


The following definition is due to Hryniewicz \cite{Hry12fast}.
\begin{definition} 
A non-degenerate finite energy plane $\widetilde{u} =(a,u) \colon \C \to \R \times M$ is said to be \emph{fast} if and only if   $u$ is an immersion transverse to the Reeb vector field, or equivalently, $\wind_{\infty}(\widetilde{u}) =1$.
\end{definition}

\noindent
From the definitions  we obtain the following inequality for a non-degenerate finite energy plane $\widetilde{u}$:
\[
1 \leq \wind_{\infty}(\widetilde{u}) \leq \left\lfloor \frac{\muCZ(\widetilde{u})}{2} \right\rfloor
\]
from which the following assertion is obtained.

\begin{lemma}\label{lem:muCZfast}
If a non-degenerate finite energy plane $\widetilde{u} =(a,u) \colon \C \to \R \times M$ has  $\muCZ(\widetilde{u}) \in \{2,3\}$, then it is fast.
\end{lemma}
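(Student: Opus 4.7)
The plan is to deduce the statement directly from the displayed inequality
\[
1 \leq \wind_{\infty}(\widetilde{u}) \leq \left\lfloor \frac{\muCZ(\widetilde{u})}{2} \right\rfloor
\]
that immediately precedes the lemma, together with the characterization of fastness in terms of $\wind_{\infty}$ given by Proposition \ref{prophwzii5.6}.

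First I would recall why both halves of that inequality hold, so that the argument is self-contained. The lower bound is exactly Proposition \ref{prophwzii5.6}: for a finite energy plane $\Gamma=\Gamma^+=\{\infty\}$, so $\wind_{\infty}(\widetilde{u})=\wind_{\pi}(\widetilde{u})+2-\#\Gamma=\wind_{\pi}(\widetilde{u})+1\geq 1$. The upper bound comes from the analytic definition of $\muCZ$ via the spectral data of $A_{S_P^{\mathfrak{T}}}$: the asymptotic eigenvalue $\eta_{\infty}$ at the positive puncture is negative by Theorem \ref{thm:asymptoticbah}, so by definition of $\alpha(S)$ and monotonicity of the winding map,
\[
\wind_{\infty}(\widetilde{u})=w(\eta_{\infty},S_P^{\mathfrak{T}})\leq \alpha(S_P^{\mathfrak{T}}),
\]
and hence $\wind_{\infty}(\widetilde{u})\leq \alpha=\tfrac{1}{2}(\muCZ(\widetilde{u})-p(S))\leq \lfloor \muCZ(\widetilde{u})/2\rfloor$ since $p(S)\in\{0,1\}$.

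Next I would apply this to the two cases. If $\muCZ(\widetilde{u})\in\{2,3\}$, then $\lfloor \muCZ(\widetilde{u})/2\rfloor=1$, so the inequality forces $\wind_{\infty}(\widetilde{u})=1$. By the second half of Proposition \ref{prophwzii5.6}, this is equivalent to $\pi\circ du$ being non-vanishing, i.e.\ $u$ is an immersion transverse to $R$. By definition of a fast plane, $\widetilde{u}$ is fast.

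There is no genuine obstacle here; the only mild subtlety is being explicit about the sign of the asymptotic eigenvalue at a positive puncture in order to bound $\wind_{\infty}$ by $\alpha(S)$ rather than by the winding number attached to a positive eigenvalue. Once this sign convention is made explicit, the lemma is an immediate consequence of the preceding structural results and requires no additional input.
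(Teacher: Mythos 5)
Your argument is correct and is exactly the route the paper takes: the displayed inequality $1 \leq \wind_{\infty}(\widetilde{u}) \leq \lfloor \muCZ(\widetilde{u})/2 \rfloor$ (lower bound from Proposition \ref{prophwzii5.6} with $\#\Gamma=1$, upper bound from the negativity of the asymptotic eigenvalue at the positive puncture and the identity $\muCZ = 2\alpha(S)+p(S)$) forces $\wind_{\infty}(\widetilde{u})=1$ when $\muCZ(\widetilde{u})\in\{2,3\}$, which is the definition of fast. The paper leaves these details implicit, but your filled-in version matches its intended derivation.
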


We now consider a  finite energy half-sphere $\widetilde{u}_I$ associated to a non-degenerate invariant finite energy sphere $\widetilde{u}$. The $\pi$-winding number $\wind_{\pi}(\widetilde{u}_I)$ is defined to be
\[
\wind_{\pi}(\widetilde{u}_I):= \frac{1}{2} \wind_{\pi}(\widetilde{u}).
\]
For every  $ z \in \Gamma_{\mathrm{int}} \cap H$,  the asymptotic winding number and the Conley-Zehnder index with respect to a given trivialization are defined as above. Assume that $ z \in \Gamma_{\p}$. Let $P_z=(x_z,T_z)$ be the asymptotic limit of $\widetilde{u}$ at $z$ which is a symmetric periodic orbit,  and $C_z=(c_z,T_z/2)$ be its half-chord which is the asymptotic limit of $\widetilde{u}_I$ at $z$.  In view of Theorem \ref{thm:asymptoticbahinv}, we find the asymptotic eigenvalue $\eta_z^{\rho}$ and the asymptotic eigenfunction $\zeta_z^{\rho}$ of $\widetilde{u}_I$ at $z$. Let $\mathfrak{T}_z$ be a symmetric unitary trivialization   of $x_{T_z}^*\xi$.  As before, we define the asymptotic relative winding number of $\widetilde{u}_I$ at $ z \in \Gamma_{\p}$ with respect to $\mathfrak{T}_z$ as
\[
\wind_{\infty}^{\mathfrak{T}_z}(\widetilde{u}_I, z) := w(\eta_z, D_{C_z}^{\mathfrak{T}_z}),
\]
where the latter is the relative winding number of the eigenvalue   $\eta_z$ of the operator $D_{C_z}^{\mathfrak{T}_z}:= S_{P_z}^{\mathfrak{T}_z} |_{[0,1/2]}$ defined as in Subsection \ref{sec:ind2}. Likewise, the Robbin-Salamon index of $P_z$ with respect to $\mathfrak{T}_z$ is defined as
\[
\muRS^{\mathfrak{T}_z}(P_z) = 2\alpha_I( D_{C_z}^{\mathfrak{T}_z}) + \frac{1}{2},
\]
where $\alpha_I$ is defined as in Subsection \ref{sec:ind2}. Given any symmetric unitary trivialization $\mathfrak{T}$ of $u^*\xi$,  the \emph{relative asymptotic winding number of $\widetilde{u}_I$} is  defined as
\[
\mathrm{wind}_{\infty} (\widetilde{u}_I  ) := \sum_{z \in \Gamma_I^+} \mathrm{wind}_{\infty}^{\mathfrak{T} }(\widetilde{u}_I, z)- \sum_{z \in \Gamma_I^-} \mathrm{wind}_{\infty} ^{\mathfrak{T} }(\widetilde{u}_I, z),
\]
     and the \emph{index of a non-degenerate finite energy half-sphere $\widetilde{u}_I$} is   defined to be
\[
\mu (\widetilde{u}_I) := \sum_{z \in \Gamma^+_{\mathrm{int}} \cap H} \muCZ^{\mathfrak{T} }(P_z)  +  \sum_{z \in \Gamma^+_{\p}  } \muRS^{\mathfrak{T} }(P_z)   - \sum_{z \in \Gamma^-_{\mathrm{int}} \cap H} \muCZ^{\mathfrak{T} }(P_z)  -  \sum_{z \in \Gamma^-_{\p}  } \muRS^{\mathfrak{T} }(P_z)   .
\]
These are independent of   the choice of the trivialization.

\begin{lemma}[{\cite[Proposition 4.6 and Corollary 4.7]{FK16}}]\label{cor:lemaKF16} If   $\widetilde{u}=(a,u)$ is a non-degenerate invariant finite energy sphere with $\pi \circ du \not\equiv 0$, then 
\[
\mathrm{wind}_{\infty}( \widetilde{u})  = 2 \mathrm{wind}_{\infty}( \widetilde{u}_I ) .
\]
It follows that 
\[
\mathrm{wind}_{\infty}( \widetilde{u}_I ) = \mathrm{wind}_{\pi} ( \widetilde{u}_I) + 1 - \frac{ \# \Gamma_{\p}}{2} - \# ( \Gamma_{\mathrm{int}} \cap H).
\]
In particular, if $\widetilde{u}$ is an invariant finite energy plane, then $\wind_{\infty}(\widetilde{u}_I)  \geq 1/2$, and $\wind_{\infty}(\widetilde{u}_I) =1/2$ if and only if $\pi \circ du$ is non-vanishing.  
\end{lemma}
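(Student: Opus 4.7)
The plan is to establish the first identity $\wind_\infty(\widetilde{u})=2\wind_\infty(\widetilde{u}_I)$ puncture-by-puncture, and then to deduce the second identity purely formally from Proposition \ref{prophwzii5.6} together with the definition $\wind_\pi(\widetilde{u}_I)=\tfrac12\wind_\pi(\widetilde{u})$. Throughout, I fix a symmetric unitary trivialization $\mathfrak{T}$ of $u^*\xi$ along each asymptotic limit, compatible with the symmetric trivializations at the punctures supplied by Lemma 3.10 of \cite{FK16}; all winding quantities below are computed in $\mathfrak{T}$, and independence of the sums from $\mathfrak{T}$ is standard.

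The punctures split as $\Gamma=\Gamma_{\partial}\sqcup\Gamma_{\mathrm{int}}$, with $\Gamma_{\mathrm{int}}$ decomposing further into $I$-related pairs $\{z,\bar z\}$ sitting in $H$ and its complement. I handle the two types separately. For an interior puncture $z\in\Gamma_{\mathrm{int}}\cap H$, Theorem \ref{thm:asymptoticbahinv} tells us that the asymptotic limits at $z$ and $\bar z$ are $Q_z$ and $\overline{Q_z}=\rho\circ Q_z$, and that the asymptotic eigenvalues $\eta_z=\eta_{\bar z}$ coincide. In a symmetric trivialization this forces
\[
\wind_\infty^{\mathfrak{T}}(\widetilde{u},z)=\wind_\infty^{\mathfrak{T}}(\widetilde{u},\bar z),
\]
so that the $\pm$ contributions from $\Gamma_{\mathrm{int}}$ to $\wind_\infty(\widetilde{u})$ are exactly twice those from $\Gamma_{\mathrm{int}}\cap H$ to $\wind_\infty(\widetilde{u}_I)$. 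For a boundary puncture $z\in\Gamma_\partial$, the invariance $\widetilde{\rho}\circ\widetilde{u}\circ I=\widetilde{u}$ propagates to the asymptotic formula in Theorems \ref{thm:asymptoticbah}--\ref{thm:asymptoticbahinv}: the asymptotic eigenfunction $\zeta_z$ of $A_{P_z}$ satisfies the reality condition $\zeta_z(1-t)=I\zeta_z(t)$ in $\mathfrak{T}$, and restricts on $[0,1/2]$ to the asymptotic eigenfunction $\zeta_z^{\rho}$ of the chord operator $A_{C_z}$ with the same eigenvalue. Since $\zeta_z$ is a loop obtained by doubling a path from $\mathbb{R}$ to $\mathbb{R}$ via complex conjugation, its degree over $[0,1]$ is exactly twice its relative degree over $[0,1/2]$; this yields
\[
\wind_\infty^{\mathfrak{T}}(\widetilde{u},z)=2\,\wind_\infty^{\mathfrak{T}}(\widetilde{u}_I,z)\qquad \text{for } z\in\Gamma_\partial.
\]
Summing over $\Gamma^{\pm}$ and using $\Gamma_I^\pm=\Gamma_\partial^\pm\sqcup(\Gamma_{\mathrm{int}}^\pm\cap H)$ gives $\wind_\infty(\widetilde{u})=2\wind_\infty(\widetilde{u}_I)$.

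For the second equality, Proposition \ref{prophwzii5.6} gives $\wind_\infty(\widetilde{u})=\wind_\pi(\widetilde{u})+2-\#\Gamma$. Dividing by $2$, using the first identity, the defining relation $\wind_\pi(\widetilde{u}_I)=\tfrac12\wind_\pi(\widetilde{u})$, and the puncture count $\#\Gamma=\#\Gamma_\partial+2\#(\Gamma_{\mathrm{int}}\cap H)$ (from the pairing of interior punctures by $I$), one obtains
\[
\wind_\infty(\widetilde{u}_I)=\wind_\pi(\widetilde{u}_I)+1-\tfrac{\#\Gamma_\partial}{2}-\#(\Gamma_{\mathrm{int}}\cap H).
\]
Finally, if $\widetilde{u}$ is an invariant finite energy plane, then $\Gamma=\Gamma_\partial=\{+\infty\}$ and $\Gamma_{\mathrm{int}}\cap H=\emptyset$, so the formula reduces to $\wind_\infty(\widetilde{u}_I)=\wind_\pi(\widetilde{u}_I)+\tfrac12$. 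Since $\wind_\pi(\widetilde{u}_I)=\tfrac12\wind_\pi(\widetilde{u})\geq0$ with equality exactly when $\pi\circ du$ is non-vanishing (by the lemma following Proposition \ref{prophwzii5.6}'s companion statement on $\wind_\pi$), the stated inequality and the characterization of equality follow.

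The main delicate point is the boundary-puncture winding comparison: one must ensure that the reality property of the asymptotic eigenfunction survives the passage from $A_{P_z}$ on $S^1$ to $A_{C_z}$ on $[0,1/2]$ in a way that halves the winding exactly, and this is where the choice of a symmetric trivialization and the compatibility of chart $\psi_z$ with $I$ must be used carefully. Once that identification is made, the rest is an accounting exercise organized around Proposition \ref{prophwzii5.6}.
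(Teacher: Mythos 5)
The paper does not prove this lemma but imports it from \cite{FK16} (Proposition 4.6 and Corollary 4.7), and your argument is exactly the one given there: the reality condition on the asymptotic eigenfunction at a boundary puncture doubles the relative winding over $[0,1/2]$ to the full winding over $S^1$, conjugate interior punctures contribute equal windings in a symmetric trivialization, and the second identity is then arithmetic from Proposition \ref{prophwzii5.6} together with $\#\Gamma=\#\Gamma_{\p}+2\#(\Gamma_{\mathrm{int}}\cap H)$ and $\wind_{\pi}(\widetilde{u}_I)=\tfrac{1}{2}\wind_{\pi}(\widetilde{u})$. Your sketch is correct, and you rightly identify the only delicate point, namely that the asymptotic eigenfunction of $A_{P_z}$ restricts on $[0,1/2]$ to that of the chord operator $A_{C_z}$ with the same eigenvalue.
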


\begin{proposition}[{\cite[Proposition 4.9]{FK16}}] \label{prop:FK16index} If $\widetilde{u}=(a,u)$ is a non-degenerate invariant finite energy sphere with $\pi \circ du \not\equiv 0$, then 
\[
\mu(\widetilde{u}_I)  \geq 2\wind_{\pi}(\widetilde{u}_I) + 2 - \frac{1}{2} \# \Gamma_{\p} -  \# \Gamma_{\mathrm{int}}^{\mathrm{even}}   - \#( \Gamma_{\mathrm{int}}^{\mathrm{odd}} \cap H),
\]
where $\Gamma_{\mathrm{int}}^{\mathrm{even}}$ and $\Gamma_{\mathrm{int}}^{\mathrm{odd}}$ are the sets of  interior punctures whose asymptotic limit has even and odd Conley-Zehnder index, respectively.

\end{proposition}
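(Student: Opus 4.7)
\medskip

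\noindent
\textbf{Proof proposal.} I would mimic the proof of Theorem~\ref{thm:indexhzwinequva} in the invariant setting, handling interior punctures with the Conley-Zehnder formalism of Subsection~\ref{sec:ind1} and boundary punctures with the Robbin-Salamon formalism of Subsection~\ref{sec:ind2}. The plan is to first estimate $\wind_{\infty}^{\mathfrak{T}_z}(\widetilde{u}_I, z)$ at each puncture $z$ by an index invariant, then sum and combine with Lemma~\ref{cor:lemaKF16}.

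At an interior puncture $z \in \Gamma_{\mathrm{int}} \cap H$ with asymptotic limit $P_z$, the asymptotic eigenvalue $\eta_z$ from Theorem~\ref{thm:asymptoticbah} is negative (resp.\ positive) if $z$ is positive (resp.\ negative). Monotonicity of the winding number on $\sigma(A_{S_{P_z}^{\mathfrak{T}_z}})$ together with $\muCZ^{\mathfrak{T}_z}(P_z) = 2\alpha(S_{P_z}^{\mathfrak{T}_z}) + p(S_{P_z}^{\mathfrak{T}_z})$ then yields
\[
\pm \wind_{\infty}^{\mathfrak{T}_z}(\widetilde{u}_I, z) \;\leq\; \tfrac{1}{2}\muCZ^{\mathfrak{T}_z}(P_z) - \tfrac{1}{2}p(S_{P_z}^{\mathfrak{T}_z}),
\]
with sign $+$ at a positive puncture and $-$ at a negative one, exactly as in the proof of Theorem~\ref{thm:indexhzwinequva}. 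At a boundary puncture $z \in \Gamma_{\p}$, using Theorem~\ref{thm:asymptoticbahinv} and the fact that each half-integer appears exactly once as a relative winding number on $\sigma(A_{D_{C_z}^{\mathfrak{T}_z}})$, combined with $\muRS^{\mathfrak{T}_z}(P_z) = 2\alpha_I(D_{C_z}^{\mathfrak{T}_z}) + \tfrac{1}{2}$, I would obtain the analogous bound
\[
\pm \wind_{\infty}^{\mathfrak{T}_z}(\widetilde{u}_I, z) \;\leq\; \tfrac{1}{2}\muRS^{\mathfrak{T}_z}(P_z) - \tfrac{1}{4}.
\]

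Summing these estimates over all punctures and using the definitions of $\mu(\widetilde{u}_I)$ and $\wind_{\infty}(\widetilde{u}_I)$ gives
\[
2\wind_{\infty}(\widetilde{u}_I) \;\leq\; \mu(\widetilde{u}_I) - \#(\Gamma_{\mathrm{int}}^{\mathrm{odd}} \cap H) - \tfrac{1}{2}\#\Gamma_{\p}.
\]
Substituting the identity from Lemma~\ref{cor:lemaKF16},
\[
\wind_{\infty}(\widetilde{u}_I) = \wind_{\pi}(\widetilde{u}_I) + 1 - \tfrac{1}{2}\#\Gamma_{\p} - \#(\Gamma_{\mathrm{int}} \cap H),
\]
and using that interior punctures are exchanged in pairs by $I$ so that $\#(\Gamma_{\mathrm{int}}^{\mathrm{even}} \cap H) = \tfrac{1}{2}\#\Gamma_{\mathrm{int}}^{\mathrm{even}}$, a routine rearrangement produces the claimed inequality.

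The only genuinely nontrivial step is establishing the boundary-puncture estimate: it requires combining the exponential asymptotic formula of Theorem~\ref{thm:asymptoticbahinv} with the spectral picture for $A_D$ recorded in the second lemma of Subsection~\ref{sec:ind2} (the eigenvalues of $A_D$ carry relative windings in $\tfrac{1}{2}\Z$ with each half-integer appearing exactly once, and windings are monotone in the eigenvalues). Once this is in place, everything else is bookkeeping, and I anticipate no further obstruction.
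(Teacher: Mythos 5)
The paper does not prove this statement itself; it imports it verbatim from \cite[Proposition 4.9]{FK16}, and your proposal correctly reconstructs that cited proof: the relative (Robbin--Salamon) analogue of the winding-versus-index estimates underlying Theorem \ref{thm:indexhzwinequva} at the boundary punctures, the usual Conley--Zehnder estimates at the interior punctures, and then Lemma \ref{cor:lemaKF16}. Two small points to tighten: at a \emph{negative} puncture the displayed estimate must flip sign on the index term as well, i.e.\ $-\wind_{\infty}^{\mathfrak{T}_z}(\widetilde{u}_I,z) \le -\tfrac12\muCZ^{\mathfrak{T}_z}(P_z) - \tfrac12 p(S_{P_z}^{\mathfrak{T}_z})$ (resp.\ $-\tfrac12\muRS^{\mathfrak{T}_z}(P_z) - \tfrac14$, which follows since every eigenvalue $\eta\ge 0$ of $A_{D_{C_z}^{\mathfrak{T}_z}}$ has relative winding at least $\alpha_I + \tfrac12$ by monotonicity and the uniqueness of each half-integer winding); and the final identity $\#(\Gamma_{\mathrm{int}}^{\mathrm{even}}\cap H) = \tfrac12\#\Gamma_{\mathrm{int}}^{\mathrm{even}}$ uses that $\muCZ(Q_z)=\muCZ(\overline{Q_z})$ for the $I$-paired interior punctures, which holds because $\rho$ conjugates the transverse linearized flows.
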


If $\widetilde{u}$ is a non-degenerate invariant  finite energy plane, then both the relative  asymptotic winding number and   index of $\widetilde{u}_I$ are well-defined, and we have

\begin{theorem}[{\cite[Proposition 4.9]{FK16}}] \label{FK16thminv} If $\widetilde{u}$ is a non-degenerate invariant finite energy plane, then
\[
\muRS(\widetilde{u} ) :=\mu(\widetilde{u}_I) \geq \frac{3}{2}.
\]
\end{theorem}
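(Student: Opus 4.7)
The plan is to mirror the proof of the Hofer--Wysocki--Zehnder lower bound $\muCZ(\widetilde{u}) \geq 2$ from Theorem \ref{HWZIIthm}, replacing the winding/index estimates by their symmetric counterparts from Subsection \ref{sec:ind2}. Since $\widetilde{u}$ is an invariant finite energy plane, its puncture set is $\Gamma = \{+\infty\}$, with $+\infty \in \Gamma^+ \cap \Gamma_\p$ and $\Gamma_{\mathrm{int}} = \emptyset$. Let $P = (x, T)$ denote the non-degenerate symmetric asymptotic limit at $+\infty$ and let $C = (c, T/2)$ be its half-chord; by construction $\mu(\widetilde{u}_I) = \muRS^{\mathfrak{T}}(P)$ for any symmetric unitary trivialization $\mathfrak{T}$ of $x_T^*\xi$.

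Assume first that $\pi \circ du \not\equiv 0$. Then Lemma \ref{cor:lemaKF16} applies, and with $\#\Gamma_\p = 1$ and $\#(\Gamma_{\mathrm{int}} \cap H) = 0$ it gives
\[
\wind_\infty(\widetilde{u}_I) = \wind_\pi(\widetilde{u}_I) + 1 - \tfrac{1}{2} \geq \tfrac{1}{2}.
\]
On the other hand, by definition $\muRS^{\mathfrak{T}}(P) = 2\alpha_I(D_C^{\mathfrak{T}}) + \tfrac{1}{2}$, and since the asymptotic eigenvalue $\eta_{+\infty}^\rho \in \sigma(A_{D_C^{\mathfrak{T}}})$ of $\widetilde{u}_I$ at the positive puncture $+\infty$ is negative (Theorem \ref{thm:asymptoticbahinv}), the monotonicity of the relative winding number forces
\[
\wind_\infty^{\mathfrak{T}}(\widetilde{u}_I, +\infty) = w(\eta_{+\infty}^\rho, D_C^{\mathfrak{T}}) \leq \alpha_I(D_C^{\mathfrak{T}}).
\]
Chaining these two inequalities yields
\[
\muRS(\widetilde{u}) = 2\alpha_I(D_C^{\mathfrak{T}}) + \tfrac{1}{2} \geq 2 \wind_\infty(\widetilde{u}_I) + \tfrac{1}{2} \geq \tfrac{3}{2},
\]
which is the desired bound.

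The remaining case is $\pi \circ du \equiv 0$, which is not covered directly by Lemma \ref{cor:lemaKF16}. For this I would invoke the classical structure theorem for finite energy spheres with identically vanishing $\xi$-component: any such non-constant map is a branched cover of a trivial cylinder $\R \times x(\R)$ over a periodic Reeb orbit, and the domain must therefore be a sphere with exactly two punctures of opposite sign. This topology is incompatible with that of a finite energy plane, so this alternative is excluded, and the first case exhausts the possibilities.

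I expect the main (minor) obstacle to be verifying that the asymptotic eigenvalue at $+\infty$ genuinely lies in $\sigma(A_{D_C^{\mathfrak{T}}}) \cap (-\infty,0)$ rather than just being bounded from above by~$0$; this is the content of the exponential decay part of Theorem \ref{thm:asymptoticbahinv}, together with the fact that $\widetilde{u}$ is a non-constant $\widetilde{J}$-holomorphic map (so the asymptotic formula is nontrivial and the eigenvalue is strictly negative at a positive puncture). Once this is in place, the whole argument reduces to combining Lemma \ref{cor:lemaKF16} with the definition of $\muRS$ via $\alpha_I$, exactly parallel to the non-symmetric derivation of Theorem \ref{HWZIIthm}.
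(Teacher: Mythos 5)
Your argument is correct and is essentially the route the paper takes: the statement is quoted from \cite[Proposition 4.9]{FK16}, and it follows immediately by specializing Proposition \ref{prop:FK16index} (equivalently, by the combination of Lemma \ref{cor:lemaKF16} with the definition of $\alpha_I$ and the negativity of the asymptotic eigenvalue at the positive puncture, which is exactly the computation you carry out), together with the standard exclusion of $\pi\circ du\equiv 0$ for a plane via the branched-cover-of-a-cylinder structure theorem. The only shortcut you missed is that with $\#\Gamma_{\p}=1$ and $\Gamma_{\mathrm{int}}=\emptyset$ the already-stated inequality $\mu(\widetilde{u}_I)\geq 2\wind_{\pi}(\widetilde{u}_I)+2-\tfrac{1}{2}\#\Gamma_{\p}$ gives the bound in one line.
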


\noindent
By definition    the following corollary is immediate.

\begin{corollary}\label{cor:muRSfast}
Assume that $\widetilde{u} =(a,u) \colon \C \to \R \times M$ is a non-degenerate invariant finite energy plane such that  $\muRS(\widetilde{u} ) = 3/2  $. Then it is fast.
\end{corollary}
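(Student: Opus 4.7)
The plan is to extract the statement directly from Proposition \ref{prop:FK16index} (the relative index inequality) applied in the very special case of a finite energy plane, where $\Gamma=\Gamma_\p=\{+\infty\}$ and $\Gamma_{\mathrm{int}}=\emptyset$. The upshot is that the inequality becomes tight at $\muRS(\widetilde{u})=3/2$ and forces all the relevant winding numbers to take their minimal value, which is precisely the ``fast'' condition.

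First I would dispose of the degenerate alternative $\pi\circ du\equiv 0$: this case is standard and incompatible with $\widetilde{u}$ being a (non-constant) finite energy plane, since $\pi\circ du\equiv 0$ would force $u$ to factor through an integral curve of $R$, which cannot have the topology of a plane with a single non-removable positive puncture and non-zero contact energy. Thus we may assume $\pi\circ du\not\equiv 0$ and invoke Proposition \ref{prop:FK16index}.

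Substituting $\#\Gamma_\p=1$, $\Gamma_{\mathrm{int}}=\emptyset$, $\Gamma_{\mathrm{int}}^{\mathrm{even}}=\emptyset$, and $\Gamma_{\mathrm{int}}^{\mathrm{odd}}\cap H=\emptyset$ into the inequality yields
\[
\muRS(\widetilde{u})=\mu(\widetilde{u}_I)\;\geq\;2\wind_\pi(\widetilde{u}_I)+\frac{3}{2}.
\]
Combined with the hypothesis $\muRS(\widetilde{u})=3/2$ and the obvious bound $\wind_\pi(\widetilde{u}_I)=\tfrac12\wind_\pi(\widetilde{u})\geq 0$, this sandwiches $\wind_\pi(\widetilde{u}_I)=0$, hence $\wind_\pi(\widetilde{u})=0$. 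By the characterization in Lemma \ref{cor:lemaKF16} (equivalently, by the second assertion of Proposition \ref{prophwzii5.6}), the vanishing of $\wind_\pi(\widetilde{u})$ is equivalent to $\pi\circ du$ being non-vanishing everywhere, i.e.\ to $u$ being an immersion transverse to $R$. This is exactly the definition of $\widetilde{u}$ being fast, concluding the proof.

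There is no real obstacle here; the only point requiring a moment's care is the exclusion of the $\pi\circ du\equiv 0$ case, and the bookkeeping of the factor $1/2$ between the $\pi$-winding numbers of $\widetilde{u}$ and $\widetilde{u}_I$. Everything else is an immediate numerical consequence of the symmetric index inequality established in \cite{FK16}.
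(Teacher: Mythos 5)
Your proposal is correct and is essentially the paper's argument: the paper treats the corollary as an immediate numerical consequence of the symmetric winding/index relations of \cite{FK16} (Lemma \ref{cor:lemaKF16} and Proposition \ref{prop:FK16index}), which is exactly what you do by specializing the index inequality to $\#\Gamma_\p=1$, forcing $\wind_\pi(\widetilde{u}_I)=0$. Your preliminary exclusion of $\pi\circ du\equiv 0$ (impossible for a finite energy plane, whose single positive puncture carries positive $d\lambda$-energy) is the standard and correct justification for applying that inequality.
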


 \section{Fredholm theory}\label{sec:Fredth}

Let $(M, \lambda, \rho)$ be a closed real   contact  three-manifold.
Given two $\rho$-invariant positive smooth functions $f,g$ on $M$ with $f<g$, we choose a smooth function $h \colon \R \times M \to (0, \infty)$ satisfying
\begin{align}
\nonumber & f(x) \leq h(a, x ) \leq g(x ) \quad \forall (a,x ) \in \R \times M, \\
\nonumber & h(a,  x ) = \begin{cases} f(x) & \text{ if } a \leq -2, \\ g(x) & \text{ if } a \geq 2, \end{cases}\\
\nonumber& \frac{\p h}{\p a}  \geq 0 \;\; \text{ on}\;\; \R \times M,\\
 \label{eq:positive} & \frac{\p h}{\p a} \geq \sigma >0 \;\; \text{on }\; [-1, 1] \times M \;\; \text{for some constant } \sigma >0,\\
\nonumber & h(a, \rho(x)) = h(a, x ) \quad \forall (a, x ) \in \R \times M.
\end{align}
 Condition \eqref{eq:positive} tells us that the two-form $\Omega= d(h \lambda)$ is a symplectic form on $[-1 ,  1] \times M$. 
For every $a \in \R$, we define a  $\rho$-anti-invariant contact form  $\lambda_a := h(a , \cdot) \lambda$  on $M$ defining the contact structure $ \ker \lambda =\xi   = \ker \lambda_a $.
The associated Reeb vector field is denoted by $R_a$.

 Let $ a\mapsto J_a$ be a smooth family of $d\lambda_a$-compatible and $\rho |_{\xi}$-anti-invariant almost complex structures on $\xi$ satisfying that
\[
J_a= \begin{cases} J_g & \text{ if } a \geq 2 , \\ J_f & \text{ if } a \leq -2, \end{cases}
\]
for some fixed $J_g \in \mathcal{J}_{\rho}(\xi, d(g\lambda) |_{\xi})$ and $J_f  \in \mathcal{J}_{\rho}(\xi, d(f\lambda) |_{\xi})$.
This gives rise to the $\widetilde{\rho}$-anti-invariant almost complex structure $\overline{J}$ on $\R \times M$ defined by
\[
\overline{J}(a,x)(h, k) = ( -\lambda_a(x)(k), J_a(x) \pi_a (k) + hR_a(x)),
\]
where $\widetilde{\rho} = \mathrm{Id}_{\R} \times \rho$  and $\pi_a \colon TM \to \xi$ is the projection along $R_a$.
We consider a  $\widetilde{\rho}$-anti-invariant almost complex structure $\hat{J}$ on $\R \times M$ 
which coincides with $\overline{J}$ on $(\R \setminus [- 1,  1] ) \times M$ and which is $\Omega$-compatible on $[- 1,  1] \times M$.
 The space of such almost complex structures $\hat{J}$ on $\R \times M$,  denoted by $\mathcal{J}_{\widetilde{\rho}}(g\lambda , J_g, f\lambda , J_f )$, is  non-empty and  contractible in the $C^{\infty}$-topology.

 Let  $\Gamma \subset S^2$ be a finite set of points in $S^2$ such that $I(\Gamma) = \Gamma$. Recall that $I(z) = \overline{z}$, $z \in S^2$.  Let $j$ be an $I$-anti-invariant complex structure on $S^2 \setminus \Gamma$.  We shall study smooth maps $\widetilde{u} \colon S^2 \setminus \Gamma \to \R \times M$ satisfying
\begin{equation}\label{eq:generalfiepdddd}
\hat{J}(\widetilde{u}) \circ d\widetilde{u} =  d\widetilde{u} \circ j  \quad \text{ and } \quad \widetilde{\rho} \circ \widetilde{u} \circ I = \widetilde{u} \;\; \text{ on }\;  S^2 \setminus \Gamma
\end{equation}
with finite energy $E(\widetilde{u}) < \infty$ defined as follows. Abbreviate by $\Xi$ the set of non-decreasing smooth functions $\varphi \colon \R \to [0,1]$ such that $\varphi \equiv \frac{1}{2}$ on $[- 1, 1]$. For $\varphi \in \Xi$, we introduce a one-form $\lambda_{\varphi}$ on $\R \times M$ defined as
\[
\lambda_{\varphi}(a, x) (h, k) = \varphi(a) \lambda_a(x) k.
\]
The energy $E(\widetilde{u})$ is then defined as 
\[
E(\widetilde{u}):= \sup_{\varphi \in \Xi} \int_{S^2 \setminus \Gamma} \widetilde{u}^* d\lambda_{\varphi}.
\]
 Such maps are called \emph{generalized invariant finite energy spheres}. If $\Gamma = \{ + \infty \}$, then they are referred to as  \emph{generalized invariant finite energy planes}.

\subsection{The implicit function theorem}

  For every $\hat{J}$ not necessarily anti-invariant, Hofer, Wysocki, and Zehnder proved the implicit function theorem for embedded generalized finite energy planes, see \cite[Theorem 1.6]{HWZIII}. An important feature is that  no genericity assumption on almost complex structures is assumed, due to automatic transversality.

\begin{theorem} \label{thm:iFTfheotem} Fix  $\hat{J} \in    \mathcal{J}_{\widetilde{\rho}}(g\lambda , J_g, f\lambda , J_f )$ and let $\widetilde{u} =(a,u) \colon \C \to \R \times M$ be an embedded  generalized invariant finite energy  $\hat{J}$-holomorphic plane with a non-degenerate and simply covered asymptotic limit $P=(x,T)$. Assume that    $\muCZ( \widetilde{u})=3 $.     Then for all $\epsilon>0$ small enough, there exists an embedded generalized   invariant  finite energy   $\hat{J}$-holomorphic plane  $\widetilde{w} = (c,w)$ asymptotic to $P$  such that 
\[
\inf \{ c(z) \colon z \in \C \} = \inf\{a(z) \colon z \in \C \}- \epsilon   .
\]
\end{theorem}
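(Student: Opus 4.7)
The plan is to adapt Hofer--Wysocki--Zehnder's implicit function theorem \cite[Theorem 1.6]{HWZIII} to the $\widetilde{\rho}$-invariant setting. The underlying framework is a Banach manifold $\mathcal{B}$ of invariant maps $\widetilde{v}=(c,w)\colon \C \to \R \times M$ converging exponentially to $x_T$ at $\infty$, together with a Banach bundle $\mathcal{E}\to\mathcal{B}$ of invariant $(0,1)$-forms valued in $\widetilde{v}^*T(\R\times M)$. The nonlinear Cauchy--Riemann operator $\bar\partial_{\hat J}$ is a smooth equivariant section of $\mathcal{E}$, and we wish to describe $\bar\partial_{\hat J}^{-1}(0)$ near $\widetilde{u}$.

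As a preliminary, I would identify $\widetilde{u}$ as fast. Lemma~\ref{inexd"rel} gives $|\muCZ(\widetilde{u})-2\muRS(\widetilde{u})|\le 1$, so $\muRS(\widetilde{u})\in\{1,3/2,2\}$; combining Lemma~\ref{cor:lemaKF16} with embeddedness of $\widetilde{u}$ forces $\wind_\pi(\widetilde{u}_I)=0$ and hence $\muRS(\widetilde{u})=3/2$. Corollary~\ref{cor:muRSfast} then yields that $\widetilde{u}$ is fast.

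The main analytic step is surjectivity of the linearization $L=D\bar\partial_{\hat J}(\widetilde{u})$, restricted to invariant sections. In the non-equivariant setting this is Hryniewicz's automatic transversality for embedded fast planes~\cite{Hry12fast}. To pass to the invariant setting I would use an averaging argument: since $L$ commutes with the $I$-action, it splits along the invariant/anti-invariant decomposition of sections, and any cokernel vector of $L|_{\mathrm{inv}}$ symmetrizes to a cokernel vector of the full $L$, which must vanish. A parallel Fredholm-index computation (using Proposition~\ref{prop:FK16index} and the index formula for Cauchy--Riemann operators with totally real boundary) gives $\dim\ker L|_{\mathrm{inv}}=3$. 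Modding out by the two-dimensional group of $I$-equivariant affine reparametrizations $z\mapsto az+b$ with $a>0$ and $b\in\R$ produces a one-dimensional smooth moduli space $\mathcal{M}$ of invariant embedded generalized $\hat J$-holomorphic planes asymptotic to $P$ passing through $\widetilde{u}$.

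Finally, I would check that the height functional $\mathcal{H}(\widetilde{v}):=\inf_{z\in\C} c(z)$ is a local diffeomorphism on $\mathcal{M}$ at $\widetilde{u}$. The strict monotonicity $\partial h/\partial a\ge\sigma>0$ on $[-1,1]\times M$ implies that the vertical translation field $\partial_a$ is not an infinitesimal symmetry of $\bar\partial_{\hat J}$, and a direct computation shows that the tangent to $\mathcal{M}$ at $\widetilde{u}$ has non-zero $\partial_a$-component at a point where $a$ attains its minimum. Hence $d\mathcal{H}_{\widetilde{u}}\ne 0$, and the implicit function theorem yields, for every small $\epsilon>0$, an element $\widetilde{w}=(c,w)\in\mathcal{M}$ with $\mathcal{H}(\widetilde{w})=\mathcal{H}(\widetilde{u})-\epsilon$. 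The hardest part is the equivariant automatic transversality: while Hryniewicz's complex-analytic argument rests on pointwise properties that remain valid under the $I$-symmetry, one must check compatibility of the averaging with the exponential weights at $\infty$ and the preservation of embeddedness in a small $C^0$-neighborhood.
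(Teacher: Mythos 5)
Your route is genuinely different from the paper's. The paper does not set up an equivariant Fredholm theory at all: it invokes the non-equivariant implicit function theorem of Hofer--Wysocki--Zehnder to obtain a smooth embedding $\widetilde{\Phi}\colon\mathbb{B}\times\C\to\R\times M$ whose leaves $\widetilde{\Phi}(b,\C)$ are embedded generalized planes asymptotic to $P$ and \emph{foliate an open neighborhood} of $\widetilde{u}(\C)$, and then exploits the symmetry softly: since $\hat J$ is $\widetilde{\rho}$-anti-invariant, $\widetilde{\rho}$ permutes the leaves, so a leaf containing a point of $\mathrm{Fix}(\widetilde{\rho})$ meets its own $\widetilde{\rho}$-image and hence coincides with it by mutual disjointness. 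Applying this to the point $(a_0-\epsilon,x_0)$ with $x_0\in u(\C)\cap\mathrm{Fix}(\rho)$ (which lies on \emph{some} leaf because the family fills an open set) produces the invariant plane $\widetilde{w}$ directly, with no averaging, no equivariant index computation, and no discussion of a height functional.

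The genuine gap in your argument is the last step. The assertion that $d\mathcal{H}_{\widetilde{u}}\neq 0$ is not a "direct computation": the justification you offer (that $\partial h/\partial a\geq\sigma>0$ prevents $\partial_a$ from being an infinitesimal symmetry) says nothing about the particular invariant kernel element spanning $T_{\widetilde{u}}\mathcal{M}$, and a priori that deformation could be tangent to the level $\{a=a_0\}$ at the argmin, i.e.\ the one-parameter invariant family could slide "sideways" with $\inf a$ constant to first order. Note that your invariant family sweeps out only a three-dimensional subset of the four-manifold $\R\times M$, so unlike the full two-parameter family it does not foliate a neighborhood of $\widetilde{u}(\C)$, and you cannot conclude that points strictly below $\widetilde{u}(\C)$ are reached. (There is also the unaddressed issue that $\mathcal{H}(\widetilde{v})=\inf_z c(z)$ is merely an infimum, so its smoothness along $\mathcal{M}$ requires an argument.) To close the gap one must show that the invariant leaves sweep out, inside the two-dimensional surface $\R\times\mathrm{Fix}(\rho)$, a neighborhood of $\widetilde{u}(\C)\cap(\R\times\mathrm{Fix}(\rho))$ --- and the natural way to do that is exactly the paper's observation that every leaf of the \emph{full} local foliation through a point of $\mathrm{Fix}(\widetilde{\rho})$ is automatically invariant. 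A minor additional remark: from $\lvert\muCZ-2\muRS\rvert\leq 1$ and $\muRS\in\Z+\tfrac12$ one gets $\muRS(\widetilde{u})=3/2$ immediately, not a three-element list; and your equivariant transversality-by-averaging and the count $\dim\ker L|_{\mathrm{inv}}=3$ (consistent with $\mathrm{Fred}(\widetilde{u}_I)=1$ from Theorem \ref{thm:genfinite}) are fine as far as they go.
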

\begin{proof} An application of the implicit function theorem due to Hofer, Wysocki, and Zehnder provides us with a smooth embedding $\widetilde{\Phi}=(A, \Phi) \colon \mathbb{B} \times \C \to \R \times M$ such that $ \mathbb{B} \subset \C$ is a small open ball centered at $0$, and for each $b \in  \mathbb{B}$, the map $\widetilde{\Phi}(b, \cdot)$ is an embedded generalized finite energy $\hat{J}$-holomorphic plane asymptotic to $P$. We in particular have $\widetilde{\Phi}(0, \cdot) = \widetilde{u}$.   This embedding foliates an open neighborhood of $\widetilde{u}(\C)$ in $\R \times M$ and is unique up to   reparametrization. We introduce the  embedding $\widetilde{\Phi}_{\rho} \colon  \mathbb{B} \times \C \to \R \times M$ defined by 
\[
\widetilde{\Phi}_{\rho} (b, z) := \widetilde{\rho} \circ \widetilde{\Phi} (b,  I(z)), \;\; (b,z) \in  \mathbb{B} \times \C .
\]
 Because $\hat{J}$ is $\widetilde{\rho}$-anti-invariant, for each $b \in  \mathbb{B}$, the map $\widetilde{\Phi}_{\rho}(b, \cdot)$ is also an embedded generalized finite energy $\hat{J}$-holomorphic plane asymptotic to $P$.  Moreover, since $\widetilde{u}$ is invariant, we have $\widetilde{\Phi}_{\rho} (0, \cdot) = \widetilde{u} =\widetilde{\Phi}  (0, \cdot)$.

Choose $ x_0 \in \im(u) \cap \mathrm{Fix}(\rho)$ and $z_0 \in \C$ such that $ u(z_0)=x_0$. Set $a_0:= a(z_0)$. Since $\widetilde{u}$ is invariant,   $ a(\overline{z_0}) = a_0$. For $\epsilon  >0$   small enough, there exists $b _0 = b_0(z_0, \epsilon) \in \mathbb{B}$ such that $\widetilde{\Phi}(b_0, \C)$ contains the point $(a_0-\epsilon, x_0) \in \R \times M$ because $\widetilde{\Phi}( \mathbb{B}, \C)$ is an open neighborhood of $\widetilde{u}(\C)$ in $\R \times M$.  In particular, $a_0 - \epsilon \in A(b_0, \C)$ and $x_0 \in \Phi(b_0, \C)$.  We observe that $(a_0 - \epsilon, x_0 ) \in \widetilde{\Phi}_{\rho}(b_0, \C)$. Indeed, it follows from that  $\widetilde{\Phi}_{\rho}(b_0, \C) = (A(b_0, \C), \rho \circ \Phi(b_0, \C))$ and   $x_0 \in \mathrm{Fix}(\rho)$. Consequently, $\widetilde{\Phi} (b_0, \C) \cap\widetilde{\Phi}_{\rho}(b_0, \C) \neq \emptyset$. The mutual disjointness now tells us that  $\widetilde{\Phi} (b_0, \C)=\widetilde{\Phi}_{\rho}(b_0, \C)$. By reparametrizing if necessary, we conclude that $\widetilde{\Phi}(b_0, \cdot)=:\widetilde{w} $ is an embedded generalized invariant finite energy $\hat{J}$-holomorphic plane asymptotic to $P$.    Recall from above that $\widetilde{u}(z_0) = (a_0, x_0)$ and $\widetilde{w}(z_1)= (a_0- \epsilon, x_0)$ for some $ z_1 \in \C$. For $\epsilon >0$ small enough we have $\epsilon_* \widetilde{w}(\C)  \in \widetilde{\Phi}( \mathbb{B}, \C)$, where $\epsilon_* \widetilde{w} := (c+\epsilon, w)$. It follows again from the mutual disjointness that $\epsilon_* \widetilde{w}(\C) = \widetilde{u}(\C)$ and hence 
\[
\inf \{ c(z) \colon z \in \C \} = \inf\{a(z) \colon z \in \C \} - \epsilon < \inf \{ a(z) \colon z \in \C \}.
\]
 Since $\epsilon>0$ was arbitrary, this finishes the proof of the theorem. 
\end{proof}

\subsection{Transversality} 

 Let $P_1^+, \ldots, P_k^+$  and  $Q_1^+, \ldots, Q_{\ell}^+$ be symmetric and  non-symmetric periodic orbits of $ R_{g\lambda }$, respectively. Similarly, let    $P_1^-, \ldots, P_m^-$ and   $Q_1^-, \ldots, Q_n^-$ be symmetric and   non-symmetric periodic orbits of $ R_{f\lambda }$, respectively.  Assume that   they are all  non-degenerate.

\begin{theorem}\label{thm:genfinite} There exists a dense subset $\mathcal{J}_{\mathrm{reg}} \subset  \mathcal{J}_{\widetilde{\rho}}(g\lambda , J_g, f\lambda , J_f )$ such that the following assertion holds. Pick  $\hat{J} \in \mathcal{J}_{\mathrm{reg}}$ and let $\widetilde{u}_I$ be the half-sphere of  a somewhere injective immersed generalized invariant finite energy $\hat{J}$-holomorphic sphere $\widetilde{u}=(a,u) \colon  S^2 \setminus \Gamma  \to   \R \times M $. Assume that $\widetilde{u}_I$   has   $k+\ell$ positive punctures asymptotic to the given periodic orbits of $R_{g\lambda }$ and   $m+n$ negative punctures asymptotic to the given periodic orbits  $R_{f\lambda }$, then its Fredholm index, defined by
\begin{align*}
\mathrm{Fred}(\widetilde{u}_I):= &   \sum_{j=1}^k \muRS(P^+_j) + \sum_{i=1}^{\ell} \muCZ(Q^+_i)-  \sum_{j=1}^m \muRS(P^-_j) - \sum_{i=1}^{n} \muCZ(Q^-_i) \\ &+ \frac{k+m}{2} +\ell+n-1 ,
\end{align*}
is non-negative.
The indices of the asymptotic orbits are computed using a symmetric unitary trivialization of $u^*\xi$. 
\end{theorem}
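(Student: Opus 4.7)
The plan is to follow the standard universal moduli space/Sard-Smale strategy, adapted to the equivariant setting of real pseudoholomorphic curves. First I would introduce a Banach manifold $\mathcal{J}^\ell \subset \mathcal{J}_{\widetilde\rho}(g\lambda,J_g,f\lambda,J_f)$ of $C^\ell$-regular $\widetilde\rho$-anti-invariant almost complex structures (with $\ell$ large), fixing $J_g$ and $J_f$ on the ends. For each tuple of asymptotic orbits, consider the universal moduli space
\[
\mathcal{M}^*_{\mathrm{univ}} = \{(\widetilde u,\hat J)\ |\ \hat J\in\mathcal{J}^\ell,\ \widetilde u \text{ is a somewhere injective immersed invariant solution of \eqref{eq:generalfiepdddd} with the prescribed asymptotics}\}
\]
modulo equivariant reparametrizations of the domain $(S^2\setminus\Gamma,j,I)$. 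The Cauchy-Riemann section restricted to equivariant maps may be described, after passing to the half-sphere $\widetilde u_I$, as a section of a Banach bundle over the space of $W^{k,p}$-maps from $H\setminus\Gamma_I$ into $\R\times M$ satisfying a totally real boundary condition on $\R\times\mathrm{Fix}(\rho)$ along $\mathrm{Fix}(I)\setminus\Gamma_\partial$ with exponentially weighted Sobolev spaces at the punctures.

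Second, I would compute the Fredholm index of the linearized Cauchy-Riemann operator $D_{\widetilde u_I}$ on this half-domain using the Riemann-Roch theorem with totally real boundary conditions of Schwarz/Hofer-Wysocki-Zehnder. The interior punctures contribute $\muCZ(Q_i^\pm)$ (signed), the boundary punctures contribute $\muRS(P_j^\pm)\pm\tfrac12$, the Euler characteristic of $H\setminus\Gamma_I$ produces the additive constant, and the bundle $u_I^*\xi$ being trivializable via a symmetric unitary trivialization makes the relative first Chern number vanish. Collecting terms yields exactly the expression displayed for $\mathrm{Fred}(\widetilde u_I)$.

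Third, and this is the genuinely delicate step, I would establish surjectivity of the universal linearized operator $D\mathcal{F}(\widetilde u_I,\hat J)$ at somewhere injective solutions. The standard argument (cf.\ McDuff-Salamon, Dragnev) proceeds by contradiction: if $\eta$ annihilates the image, then $\eta$ satisfies an adjoint elliptic equation, hence vanishes on an open set or is identically zero by unique continuation. The obstacle in the equivariant setting is that perturbations of $\hat J$ must respect $\widetilde\rho$-anti-invariance, so one cannot freely perturb $\hat J$ near an arbitrary injective point $z_0$. I would resolve this exactly as in \cite{FK16}: pick a somewhere injective point $z_0\in S^2\setminus\Gamma$ such that either (i) $z_0\notin\mathrm{Fix}(I)$ and $\widetilde u(z_0)\neq\widetilde u(I(z_0))$, so that an anti-invariant perturbation supported near $\widetilde u(z_0)\cup\widetilde\rho(\widetilde u(z_0))$ can be prescribed freely near $\widetilde u(z_0)$, or (ii) $z_0\in\mathrm{Fix}(I)$ with $\widetilde u(z_0)\in\mathrm{Fix}(\widetilde\rho)$, where anti-invariance forces a one-sided constraint that still permits enough perturbation directions transverse to $\R\times\mathrm{Fix}(\rho)$. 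The existence of such $z_0$ follows from somewhere injectivity together with the observation that the set of points where $\widetilde u$ fails both (i) and (ii) is nowhere dense.

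Finally, with transversality in hand, the universal moduli space $\mathcal{M}^*_{\mathrm{univ}}$ is a smooth Banach manifold and the projection to $\mathcal{J}^\ell$ is Fredholm with the same index as $D_{\widetilde u_I}$. The Sard-Smale theorem yields a residual set of regular values in $\mathcal{J}^\ell$; taking $\ell\to\infty$ via Taubes' trick produces a dense subset $\mathcal{J}_{\mathrm{reg}}\subset\mathcal{J}_{\widetilde\rho}(g\lambda,J_g,f\lambda,J_f)$. For $\hat J\in\mathcal{J}_{\mathrm{reg}}$, any somewhere injective immersed invariant finite energy sphere with the prescribed asymptotics lies in a manifold of dimension $\mathrm{Fred}(\widetilde u_I)$, which is therefore non-negative. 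The main obstacle, as noted, is ensuring that the equivariance constraint on perturbations of $\hat J$ does not obstruct surjectivity at points of $\mathrm{Fix}(I)$, which is precisely where the half-sphere formulation and the Lagrangian boundary setup of \cite{FK16} become essential.
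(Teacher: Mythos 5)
Your proposal follows essentially the same route the paper takes: the paper does not prove this theorem itself but defers to the standard cobordism transversality scheme of Bourgeois, Dragnev, and Hofer--Wysocki--Zehnder (with Zhou's Theorem 3.2 for a complete account), with the index computed via linear gluing and Riemann--Roch with totally real boundary conditions as in Schwarz rather than by spectral flow, and with the equivariant refinement of the surjectivity argument exactly as in Frauenfelder--Kang -- all of which matches your universal-moduli-space/Sard--Smale outline, and your accounting of the puncture contributions ($\muRS \pm \tfrac12$ at boundary punctures, $\muCZ$ at interior punctures, Euler characteristic of the half-domain) reproduces the stated formula. The one point your sketch elides is that elements of $\mathcal{J}_{\widetilde{\rho}}(g\lambda, J_g, f\lambda, J_f)$ may only be perturbed on the compact cobordism piece $[-1,1]\times M$, so the somewhere injective point used in the surjectivity step must additionally be chosen with image in that region (available here because the curves in question connect orbits of $R_{g\lambda}$ to orbits of $R_{f\lambda}$ and hence cross it); this is handled in the cited references and does not change the architecture.
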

\noindent
The proof is based on standard arguments given in \cite{Bour02, Dra04, HWZIII}. See    \cite[Theorem 3.2]{Zhou19} for a complete proof.  We comment that, as Schwarz pointed out, one is not able to compute the Fredholm index via spectral flow. Instead, it can be  computed using the linear gluing operation and the Riemann-Roch theorem, see \cite{Sch95}.

\section{Proof of Theorems \ref{ThmA1},   \ref{ThmB1}, and \ref{Thmnew}}\label{se:profthemBF}

This section is devoted to prove the theorems on necessary and sufficient conditions for a symmetric periodic orbit to bound an invariant disk-like global surface of section.

We begin with considering almost complex structures.  Let $(M, \omega)$ be a symplectic manifold equipped with an anti-symplectic involution $\rho$ and a $p$-periodic symplectomorphism $\sigma$ satisfying condition \eqref{eq:action}. Recall that an $\omega$-compatible almost complex structure $J \in \mathcal{J}(M, \omega)$  is said to be $\rho$-anti-invariant if $D\rho^{-1} \circ J \circ D \rho = -J$. The space of such $J$'s is denoted by $\mathcal{J}_{\rho}(M, \omega)$. It is  non-empty and contractible in the $C^{\infty}$-topology, see    \cite[Proposition 2.4.2]{Evers}. Actually, it is a convex set.

An  $\omega$-compatible almost complex structure $J \in \mathcal{J}(M, \omega)$ is called \emph{$(\rho, \sigma) $-anti-invariant} if 
\begin{equation}\label{eq:jconju}
D\rho_j ^{-1}\circ J \circ D\rho_j = - J, \quad \forall j,
\end{equation}
where $\rho_j := \sigma^j \circ \rho$. 
Note that $J \in \mathcal{J}_{\rho}(M, \omega)$ is $(\rho, \sigma)$-anti-invariant if and only if it is  $\sigma$-invariant, meaning that $D\sigma ^{-1}\circ J \circ D \sigma = J$.  The space of $(\rho,\sigma)$-anti-invariant and  $\omega$-compatible almost complex structures is denoted by $\mathcal{J}_{\rho,\sigma} (M, \omega)$.

\begin{lemma} \label{spaceofacs} The space   $\mathcal{J}_{\rho, \sigma}(M, \omega)$ is     non-empty and contractible in the $C^{\infty} $-topology.
\end{lemma}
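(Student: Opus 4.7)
The plan is to reduce both claims to the standard polar-decomposition construction that turns Riemannian metrics into $\omega$-compatible almost complex structures, applied to the \emph{convex} subspace of metrics invariant under the symmetry. First I would observe that the relations $\sigma^p = \mathrm{Id}$, $\rho^2 = \mathrm{Id}$, and $\sigma \circ \rho \circ \sigma = \rho$ generate a finite group $G$ of order at most $2p$: its elements in $\langle \sigma \rangle$ are symplectic and preserve $\omega$, while the elements $\rho_j = \sigma^j \rho$ are anti-symplectic involutions. Let $\mathrm{Met}^{\rho,\sigma}(M)$ be the space of smooth Riemannian metrics on $M$ invariant under $G$. Since the defining conditions are linear and preserved under convex combinations of positive-definite symmetric tensors, $\mathrm{Met}^{\rho,\sigma}(M)$ is a convex (hence contractible in the $C^\infty$-topology) subset of the space of all Riemannian metrics on $M$, and it is non-empty by averaging any metric over $G$.

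For each $g \in \mathrm{Met}^{\rho,\sigma}(M)$, define the bundle endomorphism $A_g$ of $TM$ by $\omega(X,Y) = g(A_g X, Y)$ and set
\[
T(g) := (-A_g^2)^{-1/2} A_g,
\]
the usual $\omega$-compatible almost complex structure associated with $g$. The key equivariance properties are as follows: $\sigma^* g = g$ and $\sigma^* \omega = \omega$ together force $D\sigma^{-1} \circ A_g \circ D\sigma = A_g$, while $\rho^* g = g$ and $\rho^* \omega = -\omega$ force $D\rho^{-1} \circ A_g \circ D\rho = -A_g$. Since $(-A_g^2)^{-1/2}$ is an \emph{even} analytic function of $A_g$, these two identities pass through the polar decomposition to give $\sigma$-invariance and $\rho$-anti-invariance of $T(g)$; combined with the relation $\sigma \circ \rho \circ \sigma = \rho$ this yields $D\rho_j^{-1} \circ T(g) \circ D\rho_j = -T(g)$ for every $j$. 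Hence $T$ lands in $\mathcal{J}_{\rho,\sigma}(M,\omega)$, which is in particular non-empty.

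For contractibility, I would introduce the one-sided inverse $J \mapsto g_J$ with $g_J(X,Y) := \omega(X,JY)$. Using $\omega$-compatibility of $J$ together with its $(\rho,\sigma)$-anti-invariance, one checks directly that $g_J \in \mathrm{Met}^{\rho,\sigma}(M)$, and that $T(g_J) = J$ because $A_{g_J} = J$ and therefore $-A_{g_J}^2 = \mathrm{Id}$. Fixing a base point $J_* \in \mathcal{J}_{\rho,\sigma}(M,\omega)$ produced by the previous step, I then define
\[
H \colon [0,1] \times \mathcal{J}_{\rho,\sigma}(M,\omega) \to \mathcal{J}_{\rho,\sigma}(M,\omega), \qquad H(t, J) := T\bigl( (1-t)\, g_J + t\, g_{J_*} \bigr).
\]
The convexity of $\mathrm{Met}^{\rho,\sigma}(M)$ makes $H$ well-defined, and smoothness of $T$ and of $J\mapsto g_J$ makes $H$ continuous in the $C^\infty$-topology. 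Since $H(0, \cdot) = \mathrm{id}$ and $H(1, \cdot) \equiv J_*$, this exhibits $\mathcal{J}_{\rho,\sigma}(M,\omega)$ as contractible.

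The main obstacle I anticipate is purely bookkeeping: keeping track of how the two different symmetry types (symplectic-and-isometric versus anti-symplectic-and-isometric) propagate through the polar decomposition, and invoking the dihedral relation $\sigma \circ \rho \circ \sigma = \rho$ to reduce $\rho_j$-anti-invariance for all $j$ to the separate $\sigma$-invariance and $\rho$-anti-invariance of the output. Everything else --- convexity of $\mathrm{Met}^{\rho,\sigma}(M)$, smoothness of the polar decomposition in the $C^\infty$-topology, and the identity $T(g_J) = J$ --- is standard.
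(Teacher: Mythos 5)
Your proposal is correct and follows essentially the same route as the paper: average a metric over the group generated by $\rho$ and $\sigma$, use the polar decomposition $J = A(-A^2)^{-1/2}$, and track how the symplectic versus anti-symplectic symmetries propagate through it. The only difference is that you spell out the contractibility via the retraction $J \mapsto g_J$ and linear interpolation of invariant metrics, whereas the paper delegates this to a standard reference (its remark that the space ``is convex'' is really shorthand for exactly the retraction you describe, since a convex combination of almost complex structures is not itself one).
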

\begin{proof} Pick any Riemannian metric $g_0$ on $M$ and define
\[
g:= \frac{\sum_{j=0}^{p-1} (\sigma^j)^*( g_0 + \rho^* g_0) }{2p}.
\]
In view of \eqref{eq:action} this Riemannian metric is invariant under both $\rho$ and $\sigma$.     Consider the smooth section $A$ of the automorphism bundle $\mathrm{Aut}(TM) \to M$ defined by $\omega_x(X, Y) = g_x(A_xX, Y)$ for all $x \in M$ and for all $X,Y \in T_x M$. In view  of the invariances of $g$, the section $A$ satisfies that $D\rho ^{-1} \circ A  \circ D \rho  = -A$ and $D\sigma^{-1} \circ A \circ D\sigma= A$.  For each $ x\in M$, the automorphism $-A^2_x$ is symmetric and positive-definite with respect to $g$. Denoting by $Q  $ the associated smooth map defined as $Q_x := \sqrt{-A_x^2}$ for all $ x \in M$, this implies that $J:= A Q^{-1}$  is $\omega$-compatible almost complex structure on $M$. We claim that  $J \in \mathcal{J}_{\rho,  \sigma}(M, \omega). $   Indeed,  $D\rho$ and $D\sigma $  preserve  the eigenspaces of $-A^2$, and hence   $Q$ commutes with them.   This proves the claim and  shows that the space $\mathcal{J}_{\rho,  \sigma}(M, \omega)$  is non-empty.   By a standard argument, it is convex. For more details, see for example  \cite[Proposition 1.3.10]{Geiges08book}. This finishes the proof of the lemma.
 \end{proof}

The following statement will be crucial  in the proofs of the theorems.

\begin{theorem}\label{thm:automatrix} \label{thm:cobordism}
Let $P= (x,T)$ be a  simply covered periodic orbit on a non-degenerate tight three-sphere $(S^3, \lambda)$.  Assume that   $\muCZ(P) \geq 3$ and it is linked to every periodic orbit of $\muCZ=2$. 
 Then the following assertion holds: Assume that there exists  a $d\lambda|_{\xi}$-compatible almost complex structure $J$ such that there exists an embedded fast finite energy $\widetilde{J}$-holomorphic plane asymptotic to $P$. Then for every $d\lambda|_{\xi}$-compatible almost complex structure $J',$   there   exists an embedded fast finite energy $\widetilde{J'}$-holomorphic plane asymptotic to $P$.
\end{theorem}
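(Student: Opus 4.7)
The plan is a standard continuation-and-compactness argument. Since $\mathcal{J}(\xi,d\lambda)$ is contractible in the $C^{\infty}$-topology, I would choose a smooth path $(J_t)_{t\in[0,1]}$ joining $J_0=J$ to $J_1=J'$ and consider the set
\[
I := \bigl\{\, t \in [0,1] \,\big|\, \text{an embedded fast finite energy }\widetilde{J_t}\text{-plane asymptotic to } P \text{ exists} \,\bigr\}.
\]
Since $0\in I$ by hypothesis, it suffices to show that $I$ is both open and closed in $[0,1]$, whence $1\in I$ by connectedness of $[0,1]$.

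For openness, any witness $\widetilde{u}_0$ to $t_0\in I$ is an embedded immersion transverse to $R$ with $\wind_{\infty}(\widetilde{u}_0)=1$ and $\muCZ(\widetilde{u}_0)\geq 3$. Hryniewicz's automatic transversality theorem for embedded fast planes \cite{Hry12fast} yields surjectivity of the linearized Cauchy-Riemann operator at $\widetilde{u}_0$. Applying the implicit function theorem to the parametrized problem $(t, \widetilde{u}) \mapsto \overline{\partial}_{\widetilde{J_t}}\widetilde{u}$ would then produce, for every $t$ near $t_0$, an embedded fast $\widetilde{J_t}$-plane asymptotic to $P$.

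For closedness, I would take $t_n\to t_{\infty}$ with $t_n\in I$ and let $\widetilde{u}_n=(a_n,u_n)$ be the corresponding embedded fast planes, normalized by translation in $\R$. The SFT compactness theorem then produces, after passing to a subsequence, a pseudoholomorphic building $\mathfrak{u}$ for $\widetilde{J_{t_\infty}}$ whose top positive puncture is asymptotic to $P$. If $\mathfrak{u}$ is a single smooth finite energy plane, its embeddedness is inherited from that of the $\widetilde{u}_n$ via positivity of intersections (with non-trivial branched covers ruled out by Proposition~\ref{prophwzii5.6} and Theorem~\ref{thm:indexhzwinequva}), and its fastness follows because $\pi\circ d u_n$ is nowhere vanishing. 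Consequently $t_{\infty}\in I$.

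The crux — and the main obstacle — is ruling out non-trivial breaking of the limit $\mathfrak{u}$. Suppose $\mathfrak{u}$ has more than one SFT level or a node; then a non-trivial finite energy plane component $\widetilde{v}$ must appear, with asymptotic limit some closed Reeb orbit $P'$. By Theorem~\ref{HWZIIthm} one has $\muCZ(P')\geq 2$, and the additivity of Conley-Zehnder indices along the building together with the fastness of the top puncture pin $\muCZ(P')$ to $\{2,3\}$. Intersection-theoretic bounds between the bubble and the $\widetilde{u}_n$, combined with the winding-versus-index inequalities for embedded curves in tight three-spheres as developed in \cite{HS11}, would single out $\muCZ(P')=2$. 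On the other hand, since $\widetilde{v}$ arises as a $C^{\infty}_{\mathrm{loc}}$-limit of subdomains of embedded planes $\widetilde{u}_n$, whose images project into $S^3\setminus x(\R)$, the homology class $[P']\in H_1(S^3\setminus x(\R);\Z)$ must vanish, i.e.\ $P'$ is unlinked from $P$. This contradicts the hypothesis that $P$ is linked to every periodic orbit of Conley-Zehnder index $2$, forcing $\mathfrak{u}$ to be a single plane and closing the argument.
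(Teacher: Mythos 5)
Your proposal is correct and follows essentially the same route as the paper: automatic transversality for embedded fast planes plus SFT compactness, with the linking hypothesis used to exclude bubbling (a broken limit would produce a Conley--Zehnder index $2$ orbit unlinked from $P$, as in \cite{HS11}). The only difference is presentational: the paper packages your open-and-closed continuation argument as compactness of the parametric moduli space $\mathcal{N}$ over $[0,1]$ followed by the Ehresmann fibration theorem, which yields the slightly stronger conclusion that $\mathcal{M}_{\mathrm{fast}}(P,J)/\R$ and $\mathcal{M}_{\mathrm{fast}}(P,J')/\R$ are diffeomorphic.
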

\begin{proof} 
We fix any $d\lambda|_{\xi}$-compatible almost complex structure $J$, and let $\widetilde{u} = (a,u)$ be an embedded fast finite energy $\widetilde{J}$-holomorphic plane asymptotic to $P$ with $\muCZ(P) \geq 3$. Automatic transversality for embedded fast finite energy planes, see   \cite[Section 15.5]{FvK18book} and  \cite{Hry12fast}, tells us that the moduli space $\mathcal{M}_{\mathrm{fast}}(P, J)$ of (unparametrized) fast finite energy $\widetilde{J}$-holomorphic planes asymptotic to $P$   is a smooth manifold   of dimension two. Moreover, every element is embedded. Recall that the SFT-like almost complex structure $\widetilde{J}$ is $\R$-invariant from which we obtain the free $\R$-action on $\mathcal{M}_{\mathrm{fast}}(P, J)$ given by
\[
r_* [\widetilde{u}] = r_*[ (a,u)] = [(a+r, u)], \quad r \in \R, \;\; [ \widetilde{u}] \in \mathcal{M}_{\mathrm{fast}}(P,J).
\]
It is not hard to see that the quotient $\mathcal{M}_{\mathrm{fast}}(P,J)/\R$ is also a smooth manifold. We claim that it is compact.  Choose any sequence $\{\widetilde{u}_k \}$ such that $[\widetilde{u}_k] \in \mathcal{M}_{\mathrm{fast}}(P,J)$ for all $k \in \N$. The SFT-compactness theorem \cite{SFT03, HWZ95I, HWZcorr} tells us that up to   subsequence and   reparametrization, the sequence converges in $C^{\infty}_{\mathrm{loc}}$-topology to an embedded finite energy spheres $\widetilde{u} =(a,u)\colon \C \setminus \Gamma\to \R \times S^3$ such that it tends asymptotically to $P$ at $+\infty$,    $\Gamma$ consists of negative punctures, and it is not a trivial orbit cylinder over $P$.   Assume by contradiction that $\Gamma \neq \emptyset$. 
 Since $\muCZ(P) \geq 3$,  an argument using Fredholm theory and bubbling off analysis shows that there exists a periodic orbit which is not linked to $P$ and has the Conley-Zehnder index equal to 2, see  \cite[Section 4]{HS11}.  Because of the hypothesis, we conclude that $\Gamma = \emptyset$, and hence $\widetilde{u}$ is an embedded finite energy plane.  It is not hard to see that it is indeed fast. This proves the claim.

The remaining proof is exactly as in  \cite[Theorem 17.1.3]{FvK18book}. We include it here for readers' convenience. 
As the space of $d\lambda|_{\xi}$-compatible almost complex structures is non-empty and contractible, we can choose a smooth path $\{J_t\}_{t \in [0,1]}$ of $d\lambda|_{\xi}$-compatible almost complex structures such that $J_0 = J$ and $J_1 = J'$. Consider the moduli space
\[
\mathcal{N}= \{ (t, [\widetilde{u}]) \in [0,1] \times(  \mathcal{M}_{\mathrm{fast}}(P, J_t) / \R) \}.
\]
The preceding discussion tells us that    it  is a two-dimensional compact manifold with boundary 
\[
\p\mathcal{N}=  \left( \{ 0 \} \times  \mathcal{M}_{\mathrm{fast}}(P, J ) / \R \right)  \cup  \left( \{ 1 \} \times  \mathcal{M}_{\mathrm{fast}}(P, J ') / \R \right). 
\]
As the map $\mathcal{N} \to [0,1]$, $(t, [\widetilde{u}])\mapsto t$ is a proper submersion, the Ehresmann fibration theorem shows that it is a locally trivial fibration. In particular, we obtain the diffeomorphism
\[
 \mathcal{M}_{\mathrm{fast}}(P, J ) / \R \cong  \mathcal{M}_{\mathrm{fast}}(P, J') / \R .
 \]
 By the hypothesis, the left-hand side is non-empty. This completes the proof of the theorem. 
\end{proof}

We are now in position to prove the theorems.

\medskip

\noindent
\emph{Proof of Theorem \ref{ThmA1}}. 
 Suppose first that a simply covered symmetric periodic orbit $P$ bounds an invariant disk-like global surface of section. Then in view of \cite[Section 6]{HS11} we see that $\muCZ(P) \geq3$ and $sl(P)=-1$. It is obvious that $P$ is unknotted. Moreover, by definition of a global surface of section, it is linked to every periodic orbit different from $P$.

We now prove sufficiency. We follow the argument due to Frauenfelder and van Koert  in  \cite[Corollary 17.1.6]{FvK18book} closely.    Suppose that an unknotted simply covered symmetric periodic orbit $P$ has $sl(P)=-1$ and $\muCZ(P) \geq 3$. The bubbling off analysis due to Hryniewicz and Salom\~ao \cite{HS11} shows that if $P$ satisfies  the  hypothesis  in the statement, then there exists a $d\lambda|_{\xi}$-compatible almost complex structure $J'$ for which we have an embedded fast finite energy $\widetilde{J'}$-holomorphic plane asymptotic to $P$, which is not necessarily invariant. Choose any $J \in \mathcal{J}_{\rho}$. By Theorem \ref{thm:cobordism}, we find that $\mathcal{M}_{\mathrm{fast}}(P, J )/\R $ is  a non-empty compact one-dimensional manifold. 
 A standard argument shows that  
 \[
  \mathcal{M}_{\mathrm{fast}}(P, J )/\R \cong S^1
  \]
   and 
 \[
S^3 \setminus x(\R) = \bigcup_{[ \widetilde{u}]=[(a,u)] \in   \mathcal{M}_{\mathrm{fast}}(P, J )/\R} u(\C) 
\]
is a $\lambda$-adapted open book decomposition with disk-like pages.    See for example \cite[Theorem 17.1.1]{FvK18book}.

Abbreviate by $\widehat{\mathcal{M}}_{\mathrm{fast}}(P, J)$ the moduli space of parametrized  fast finite energy planes asymptotic to $P$ such that $\widehat{\mathcal{M}}_{\mathrm{fast}}(P, J) \to    {\mathcal{M}}_{\mathrm{fast}}(P, J)$, $\widetilde{u} \mapsto [ \widetilde{u}]$. Recall that every element  in $\widehat{\mathcal{M}}_{\mathrm{fast}}(P, J)$ is embedded. As $J$ is $\rho$-anti-invariant, the involution $\rho$ induces the involution $\rho_* \colon  \widehat{\mathcal{M}}_{\mathrm{fast}}(P, J) \to \widehat{\mathcal{M}}_{\mathrm{fast}}(P, J)$  given by
\[
\rho_* (\widetilde{u}):= \widetilde{\rho} \circ \widetilde{u} \circ I, \quad \widetilde{u} \in  \widehat{\mathcal{M}}_{\mathrm{fast}}(P, J) . 
\]
Recall that $\mathrm{Fix}(\rho) \cong S^1$. We denote by $\mathcal{L}_1$ and $\mathcal{L}_2$ the connected components of  $ \mathrm{Fix}(\rho) \setminus x(\R)$ which are diffeomorphic to an open interval. Choose $q \in \mathcal{L}_1$.  The preceding argument shows that there exists  an embedded fast finite energy $\widetilde{J}$-holomorphic plane $\widetilde{u}=(a,u)$ asymptotic to $P$ such that $q \in u(\C)$. Due to the construction of the involution $\rho_*$, we have $\rho_* (\widetilde{u}) = (a \circ I , \rho \circ u \circ I)$ is also an embedded fast finite energy $\widetilde{J}$-holomorphic plane   asymptotic to $P$ such that $q \in \rho ( u  (\C))$. We in particular find that 
\[
u(\C) \cap \rho \left(  u(\C) \right) \neq \emptyset 
\]
from which we conclude that $u(\C) = \rho( u(\C) )$, i.e.\ the closed embedded disk $\overline{u(\C)}$ is an invariant disk-like global surface of section. Since the restriction of $\rho$ to $\overline{u(\C)}$ is orientation-reserving,  {K}er\'{e}kj\'{a}rt\'{o}'s theorem     tells us that $ \mathcal{L}_1 \subset \mathrm{Fix}(\rho|_{\overline{u(\C)}})$. In the same way, we find an embedded fast finite energy $\widetilde{J}$-holomorphic plane  $\widetilde{v}=(b,v)$ asymptotic to $P$ such that $\overline{v(\C)}$ is an invariant disk-like global surface of section and $\mathcal{L}_2 \subset \mathrm{Fix}(\rho|_{\overline{v(\C)}})$.  If $\lambda$ is dynamically convex, a limiting argument as in \cite{HWZ98convex, Hry14system} allows us to drop the non-degeneracy assumption. This finishes the proof of the theorem.  \hfill $\square$

\bigskip

\noindent
\emph{Proof of Theorem \ref{ThmB1}.}   Fix  $J \in \mathcal{J}_{\rho,  \sigma}(\xi, d\lambda|_{\xi})$. In view of Lemma  \ref{spaceofacs} and the discussion preceding it, by arguing as in the above proof, we  find  the involutions    $  (\rho_j) _* \colon  \widehat{\mathcal{M}}_{\mathrm{fast}}(P, J) \to \widehat{\mathcal{M}}_{\mathrm{fast}}(P, J)$ for $j=0, \ldots, p-1$. Choose $q \in \mathrm{Fix}(\rho) \cap \mathrm{Fix}(\sigma)$ and   find an embedded fast finite energy $\widetilde{J}$-holomorphic plane $\widetilde{u}=(a,u)$ asymptotic to $P$ such that $ q\in u(\C)$.   As above, this implies that 
\[
   u(\C) = \rho  ( u(\C)) = \sigma( u(\C)).
 \]
 In other words, the disk $u(\C)$   a  $(\rho, \sigma)$-invariant disk-like global surface  of section.

 We now argue as in \cite[Remark 6.6]{FK16}. Recall that the first return map $\psi$ associated to $\mathfrak{D}:= u(\C)$ is defined by  $\psi(x) = \phi_R^{\tau(x)}(x)$, where $\tau(x)$ is the first return time of $ x \in \mathring{\mathfrak{D}}$. Define a diffeomorphism $\Phi \colon \R / \Z \times \mathfrak{D} \to S^3 \setminus P  $ as
\[
\Phi(t , x):= \phi_R^{ t \tau(x) } (x) ,
\]
 such that for each $t \in \R / \Z$, the closure $\overline{\Phi(t, \mathfrak{D})}$   is a disk-like global surface of section spanned by $P$. Since $\rho _j  \circ \phi_R^t  \circ \rho_j  = \phi_R^{-t}$ for all $j$, we find that 
 \[
 \tau_x = \tau_{\rho_j  \circ \phi_R^{\tau(x)}(x)} \quad \forall j.
 \] 
   This open book decomposition is then $(\rho,\sigma)$-symmetric in the sense that
\[
\Phi (1-t, \mathfrak{D}) = \rho _j\circ \Phi(t, \mathfrak{D}), \quad \forall t \in \R /\Z , \; \forall j=0,1,\ldots, p-1.
\]
We  find that $P$ bounds two $(\rho, \sigma)$-invariant global surfaces of section $\mathfrak{D} = \Phi(0, \mathfrak{D})$  and $\mathfrak{D}':=\Phi( 1/2, \mathfrak{D})$.
Each of them contains the only one  common fixed point of $\rho$ and $\sigma$, i.e.\   $\# ( \mathrm{Fix}(\rho ) \cap \mathrm{Fix}(\sigma) ) =   2$. This completes the proof of the theorem.   \hfill $\square$

\bigskip

\noindent
\emph{Proof of Theorem \ref{Thmnew}.} Let $P=(x,T)$ be a simply covered symmetric periodic orbit on $(L(p,q), \overline{\lambda}, \overline{\rho})$, where $\overline{\lambda}$ is dynamically convex.

If   $P$ bounds an invariant $p$-rational disk-like global surface of section, by definition  it is $p$-unknotted. An application of \cite[Lemma 3.10]{HLS15} shows $sl(P)=-1/p$.

 Assume that $P$ is $p$-unknotted and has $sl(P)=-1/p$. Choose $J \in \mathcal{J}_{\overline{\rho}}( \overline{\xi}, d \overline{\lambda}|_{\overline{\xi}})$. Hryniewicz and Salom\~ao show in \cite[Section 4]{HS16elliptic} that $\mathcal{M}_{\mathrm{fast}}(P, J) /\R \cong S^1$, and $L(p,q)$ admits a $\overline{\lambda}$-adapted $p$-rational open book decomposition with disk-like pages whose   binding is  $P$. We fix $q \in \mathrm{Fix}(\overline{\rho}) \setminus x(\R)$ and argue as before to obtain that there exists an invariant $p$-rational disk-like global surface of section containing $q$. This finishes the proof of the theorem. \hfill $\square$

\section{Proof of Theorems  \ref{ThmA2} and \ref{ThmB2}}\label{sec:thmA}


We give a quick  review  of    the example provided by Hofer, Wysocki, and Zehnder in \cite[Lemma 1.6]{HWZ95I} and \cite[Section 4]{HWZ98convex}.

Consider the Hamiltonian $H \colon \C^2 \to \R$, defined by
\[
H(z_1, z_2) := \frac{ \lvert z_1 \rvert^2}{r_1^2} +  \frac{ \lvert z_2  \rvert^2}{r_2^2}, 
\]
where $0<r_1<r_2$ and  $r_2^2 / r_1^2 \notin \mathbb{Q}$. It is invariant under  the family of the  exact anti-symplectic involutions 
\[
\rho_{\theta} (z_1, z_2) =  ( e^{ i \theta_1}\overline{z _1}, e^{i \theta_2}\overline{z_2} ),   \quad  \theta=(\theta_1, \theta_2) \in \R^2.
\]
  The (regular) level set  $E:= H^{-1}(1)$ is referred to as an \emph{irrational ellipsoid}. The restriction  of each $\rho _\theta$ to $E$ is an anti-contact involution.  By abuse of notation, we denote the restrictions again by the same letters.  For sake of convenience,  in what follows we concentrate only on $\rho_\theta$ with $\theta_1 = \theta_2=0$ and write $\rho =\rho_\theta$. An analogue assertion holds also for the other involutions. The fixed point set of $\rho$ is the ellipse
\[
\{ (x_1, 0,  x_2, 0 ) \in E \mid \frac{x_1^2}{r_1^2} + \frac{x_2^2}{r_2^2} =1 \} .  
\]
 We define the Hamiltonian vector field $X_H$   as $\iota_{X_H}\omega_0 = - dH$, where $\omega_0 = dx_1 \wedge dy_1 + dx_2 \wedge dy_2$ is the standard symplectic form. The Liouville vector field
\[
Y= \frac{1}{2} \left( x_1 \p_{x_1} + y_1 \p_{y_1} + x_2 \p_{x_2} + y_2 \p_{y_2} \right)
\]
is transverse to $E$ so that the Liouville one-form $\lambda_0 = \iota_{Y} \omega_0  $ restricts to  the  contact form $\alpha$ on $E$.    It is straightforward to check that $R  = X_H$, where $R= R_{\alpha}$ is the Reeb vector field associated to $\alpha$, and that $\rho  ^* R = -R$.   
The Reeb flow is given by
\[
\phi_R^t(z_1, z_2) = \left( e^{ (2 / r_1^2)it}z_1 , e^{ (2 / r_2^2)it}z_2 \right).
\]
The condition $r_1^2 / r_2^2 \notin \mathbb{Q}$ tells us that there exist exactly two simply covered periodic orbits, up to reparametrization,
\[
\gamma_1(t) =( r_1e^{ (2 / r_1^2)it}, 0) \quad \text{ and } \quad \gamma_2(t) =(0,  r_2 e^{ (2 / r_2^2)it}) 
\]
having the minimal periods $T_1 = \pi r_1^2$ and $T_2 = \pi r_2^2$, respectively.  
 As usual, we abbreviate $P_j = (\gamma_j, T_j), j=1,2$.   For each $ j=1,2$ the half-chord of $P_j$ with respect to $\rho$ is written as   $C_j   = (c_j, T_j/2)$, where $ c_j(t) := \gamma_j(t) |_{[0,T_j/2]}$.  These orbits have the following properties:
\begin{enumerate}
\item    $P_1$ and $P_2$ are   symmetric and unknotted and have $sl(P_1)= sl(P_2)=-1$. 
\item   $P_1$ and $P_2$ and all their iterates are non-degenerate and elliptic.
\item   $C_1$ and $  C_2$ and all their iterates are non-degenerate.
\item $\muCZ(P_1) = 2\muCZ(C_1 )=3 $ and $\muCZ(P_2) = 2\muRS(C_2)= 3+ 2\left \lfloor r_2^2 / r_1^2 \right \rfloor \geq 5.$
\end{enumerate}
  
  Set $S^3 = \{ (z_1, z_2) \in \C^2 \mid \lvert z_1 \rvert^2  + \lvert z_2 \rvert^2=1 \}$. 
 Define the diffeomorphism $\Psi \colon S^3 \to E$ as $z \mapsto \sqrt{f_E(z)}z$, where  $f_E \colon S^3 \to \R$ is given by $f_E(z_1, z_2):= 1/H (z_1, z_2) $.  It is straightforward to check that $\Psi^* ( \lambda_0 |_E) = f_E \lambda_0 |_{S^3} =:\lambda_E |_{S^3}$ and $\Psi^* \rho = \rho$. Therefore,  the Hamiltonian vector field $X_H$ on $E$ is equivalent to the Reeb vector field $R_E$ of the contact form $\lambda_E |_{S^3}$ on $S^3$.  In particular, the $\lambda_E$-dynamics on $S^3$ satisfies all the properties of the $X_H$-dynamics on $E$.  By abuse of notation we denote the corresponding periodic orbits and chords on $S^3$ again by $P_1, P_2, C_1$, and $C_2$. Note that $\im (\gamma_1 )= S^1 \times \{ 0 \}$ and  $ \im (\gamma_2) = \{ 0 \} \times S^1$.

By  the proof of Theorem  \ref{ThmA1} we obtain the following.
\begin{proposition}\label{prop:ellipexit} 
For every $J \in \mathcal{J}_{\rho }$, there exist two embedded invariant  fast  finite energy $\widetilde{J}$-holomorphic planes asymptotic to $P_1$ whose  projections to $S^3$ are invariant disk-like global surfaces of section.  An analogous  statement holds for $P_2$. 
\end{proposition}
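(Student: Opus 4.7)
The plan is to apply Theorem \ref{ThmA1} directly to each of $P_1$ and $P_2$, and then to read off the existence of the embedded invariant fast finite energy planes from the construction used in the proof of that theorem. The main task will be to verify the hypotheses of Theorem \ref{ThmA1}; the linking condition turns out to be vacuous, which is the only nontrivial point.

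First I would verify the hypotheses for $P_j$, $j=1,2$. Properties (1) and (4) listed just above the proposition give that each $P_j$ is a simply covered symmetric unknotted periodic orbit with $sl(P_j)=-1$ and $\muCZ(P_j)\geq 3$. For the linking condition, recall that the only simply covered periodic orbits of $R_E$ on $S^3$ are $P_1$ and $P_2$, both of Conley-Zehnder index at least $3$; by Proposition \ref{lem:indeitercz}, every iterate $P_j^k$ satisfies $\muCZ(P_j^k)\geq 2k+1\geq 3$. Hence no periodic orbit of $R_E$ has Conley-Zehnder index equal to $2$, and the requirement that $P_j$ be linked to every such orbit is vacuously satisfied.

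Having verified the hypotheses, Theorem \ref{ThmA1} yields two invariant disk-like global surfaces of section bounded by $P_j$. Inspection of its proof shows that each of them arises as the closure $\overline{u(\C)}$ of the image of an embedded fast finite energy $\widetilde{J}$-holomorphic plane $\widetilde{u}=(a,u)\colon \C\to\R\times S^3$ asymptotic to $P_j$; this is the plane extracted from the moduli space $\mathcal{M}_{\mathrm{fast}}(P_j,J)/\R\cong S^1$ whose nonemptiness is guaranteed by Theorem \ref{thm:cobordism}. The invariance $\widetilde{\rho}\circ\widetilde{u}\circ I=\widetilde{u}$ is arranged, as in the proof of Theorem \ref{ThmA1}, by choosing $\widetilde{u}$ whose image meets a component $\mathcal{L}_i\subset\mathrm{Fix}(\rho)$, comparing $\widetilde{u}$ with $\rho_*(\widetilde{u})=\widetilde{\rho}\circ\widetilde{u}\circ I$, and reparametrizing by the biholomorphism of $\C$ identifying the two planes within the $S^1$-fibration.

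The only potential difficulty is to confirm that the linking hypothesis of Theorem \ref{ThmA1} is indeed vacuous; as noted above, this is immediate from the explicit description of the Reeb dynamics on the irrational ellipsoid together with the iteration inequality of Proposition \ref{lem:indeitercz}. Everything else is a direct appeal to results already established in Section \ref{se:profthemBF}.
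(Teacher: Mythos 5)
Your proposal is correct and follows exactly the paper's route: the paper derives this proposition directly from the proof of Theorem \ref{ThmA1}, and your verification of the hypotheses (in particular that the linking condition is vacuous since every orbit of $R_E$ is an iterate of $P_1$ or $P_2$ with $\muCZ\geq 3$) is precisely the check that makes this legitimate.
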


Recall that  for coprime integers $p \geq q \geq 1$, $(S^3, \lambda_E)$ admits  the   $p$-periodic strict contactomorphism
\[
g_{p,q}(z_1, z_2) = (e^{ 2 \pi i /p}z_1, e^{ 2 \pi i q/p}z_2)
\]
satisfying that $g_{p,q} \circ \rho \circ g_{p,q} = \rho$. It generates a free $\Z_p$-action on $S^3$ whose associated orbit space is the lens space $L(p,q)$. The contact form $\lambda_E$ and the anti-contact involution $\rho$ descend  to a contact form $\overline{\lambda_E}$  and an anti-contact involution $\overline{\rho}$ on $L(p,q)$, respectively.  Let $\pi_{p,q} \colon S^3 \to L(p,q)$ be the quotient projection.  The Reeb vector field $\overline{R_E}$ of $\overline{\lambda_E}$ has exactly two simply covered periodic orbits $ \overline{P_1}= (\pi _{p,q}\circ \gamma_1, T_1/p)$ and $\overline{P_2} = (\pi _{p,q}\circ \gamma_2, T_2 /p)$.  They are $\overline{\rho}$-symmetric and $p$-unknotted and has $sl( \overline{P_1}) = sl(\overline{P_2}) = -1/p$.

Fix $y \in \mathrm{Fix}(\overline{\rho}) \setminus \pi_{p,q}(S^1 \times \{ 0 \} )$ and $J \in \mathcal{J}_{\overline{\rho}}$. In view of \cite[Proposition 4.17]{HS16elliptic} we find an embedded fast finite energy $\widetilde{J}$-holomorphic plane $\widetilde{u}=(a,u)$ asymptotic to the $p$-th iterate  $\overline{P _1}^p  $ of $\overline{P_1}$ such that $y \in u(\C)$. Since $J$ is anti-invariant, the pseudoholomorphic map $\widetilde{\overline{\rho}} \circ \widetilde{u} \circ I$ has the same properties. By the choice of $y$, we in particular have $y \in    \overline{\rho}  \circ u(\C)$. Then \cite[Theorem 4.18]{HS16elliptic} tells us that $\widetilde{u}(\C) = \widetilde{\overline{\rho}}\circ \widetilde{u} \circ I(\C)$.  We have proven the following.

\begin{proposition}\label{prop:rp3ellipsoid} 
For each $J \in \mathcal{J}_{\overline{\rho}}$, there exists an  embedded invariant  finite energy $\widetilde{J}$-holomorphic plane  in $L(p,q)$ asymptotic to $\overline{P_1}^p $. 
\end{proposition}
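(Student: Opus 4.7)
The strategy is to symmetrize an embedded fast $\widetilde{J}$-holomorphic plane asymptotic to $\overline{P_1}^p$ by means of the involution $\widetilde{\overline{\rho}}$, using that the symmetry point forces two such planes to coincide as subsets. First I would fix a point $y \in \mathrm{Fix}(\overline{\rho}) \setminus \pi_{p,q}(S^1 \times \{0\})$ (which exists because $\mathrm{Fix}(\overline{\rho})$ is one-dimensional and is not contained in the trace of $\overline{P_1}$) and invoke the existence result \cite[Proposition 4.17]{HS16elliptic} to obtain an embedded fast finite energy $\widetilde{J}$-holomorphic plane $\widetilde{u}=(a,u)$ in $\R \times L(p,q)$ asymptotic to $\overline{P_1}^p$ and satisfying $y \in u(\C)$.

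Next I would exploit the anti-invariance of $J$: since $\widetilde{J}$ is $\widetilde{\overline{\rho}}$-anti-invariant, the reflected map $\widetilde{u}' := \widetilde{\overline{\rho}} \circ \widetilde{u} \circ I$ is again an embedded fast finite energy $\widetilde{J}$-holomorphic plane asymptotic to $\overline{P_1}^p$ (the asymptotic limit is preserved because $\overline{P_1}$ is $\overline{\rho}$-symmetric), and fixedness $\overline{\rho}(y) = y$ gives $y \in u'(\C)$. The intersection/uniqueness statement \cite[Theorem 4.18]{HS16elliptic} for embedded fast finite energy planes in $L(p,q)$ asymptotic to a common simply covered orbit and sharing a point then forces the equality of images $\widetilde{u}(\C) = \widetilde{u}'(\C)$.

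The final step, and where I expect the main technical care to be required, is upgrading this set-theoretic equality to a genuine invariant parametrization. Equality of images combined with the asymptotic normalization at the positive puncture $+\infty$ produces an affine biholomorphism $\phi \colon \C \to \C$ with $\widetilde{u}' = \widetilde{u} \circ \phi$; a short computation using $\widetilde{\overline{\rho}}^2 = \mathrm{Id}$ and $I^2 = \mathrm{Id}$ shows that $\Phi := \phi \circ I$ is an antiholomorphic involution of $\C$ fixing $\infty$. Any such involution is conjugate to the standard $I$ by some $\psi \in \mathrm{Aut}(\C)$, and replacing $\widetilde{u}$ by $\widetilde{u} \circ \psi$ yields the desired embedded invariant finite energy $\widetilde{J}$-holomorphic plane asymptotic to $\overline{P_1}^p$. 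The delicate point is choosing $\psi$ so that it preserves the puncture structure at $\infty$, which follows from the fact that both $\Phi$ and $I$ fix $\infty$ and are affine-conjugate.
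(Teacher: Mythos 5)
Your proposal matches the paper's argument: fix $y \in \mathrm{Fix}(\overline{\rho}) \setminus \pi_{p,q}(S^1 \times \{0\})$, obtain an embedded fast plane through $y$ via \cite[Proposition 4.17]{HS16elliptic}, reflect it using the anti-invariance of $J$, and apply \cite[Theorem 4.18]{HS16elliptic} to conclude the images coincide. Your final reparametrization step, promoting equality of images to the identity $\widetilde{\overline{\rho}} \circ \widetilde{u} \circ I = \widetilde{u}$, is correct and is the one point the paper leaves implicit.
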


 \subsection{The existence of a symmetric periodic orbit}
 
 This subsection is devoted to prove Theorem \ref{ThmA2}.

 Assume that    a non-degenerate real tight three-sphere $(S^3, \lambda, \rho )$ satisfies   $\mathscr{P}_* = \emptyset$. Let $f \colon S^3 \to (0, \infty)$ be   a smooth $\rho$-invariant function such that $\lambda |_{S^3}= f \lambda_0 |_{S^3}$, where $\rho$ is of the form \eqref{antis} with $\theta_1 = \theta_2 =0$.  We denote by $R= R_{\lambda}$ the Reeb vector field of $\lambda$. Let $(S^3, \lambda_E, \rho)$ be as above, where  $r_1$ and $r_2$ are chosen to be   $f_E > f.$   
  interpolate between the $\lambda_E$-dynamics and the $\lambda$-dynamics as in  Section \ref{sec:Fredth} with the data $(M, g, f) = (S^3, f_E, f)$,   $J_E \in \mathcal{J}_{\rho}(\xi, d\lambda_E|_{\xi})$, and $J \in \mathcal{J}_{\rho}(\xi, d\lambda|_{\xi})$ and study generalized  invariant  finite energy planes.  

 Due to Hofer, Wysocki, and Zehnder  \cite[Theorem 4.6]{HWZ98convex} every generalized  invariant  finite energy plane tends asymptotically to a periodic orbit of $R_E$. We fix $\hat{J} \in \mathcal{J}_{\widetilde{\rho}}(\lambda_E, J_E, \lambda, J)$ and denote by $\Theta $ the set of   generalized  invariant  finite energy $\hat{J}$-holomorphic planes, modulo  reparametrizations,  asymptotic to $P_1 = (\gamma_1, T_1)$ which has the smallest period among all periodic orbits of $R_E$.  We observe that this set is non-empty.
 Indeed,  in view of Proposition \ref{prop:ellipexit}   there exists an embedded invariant finite energy  $\widetilde{J}_E$-holomorphic plane $\widetilde{v} = (b,v)$ asymptotic to $P_1$.
 Since $+\infty$ is its unique puncture, the smooth function $b \colon \C \to \R$ is bounded from below, and hence we find a constant $C>0$ such that $b(z) + C>2$ for all $ z \in \C$.
 Define $\widetilde{u}_E = (a_E, u_E) := (b + C , v)$.
 As $a_E(z) >2$ on $\C$, we find that $\hat{J} = \widetilde{J}_E$ and hence $\widetilde{u}_E \in \Theta$. Using  \cite[Proposition 4.13]{HWZ98convex}, we find that every element in $\Theta$ is an embedding. Moreover,  if   $\widetilde{u}, \widetilde{v} \in \Theta $, then they satisfy either $\im (\widetilde{u}) = \im (\widetilde{v})$  or  $\im (\widetilde{u}) \cap \im (\widetilde{v}) = \emptyset.$  
  
  Arguing as in \cite[Propositions 4.17 and 4.18]{HWZ98convex}, we find using   Theorem \ref{thm:iFTfheotem}     a sequence $\widetilde{u}_k =(a_k, u_k) \in \Theta $ such that 
\begin{equation}\label{eq:bubblineoff}
\inf_k (\min_{z \in \C}a_k (z) ) = -\infty \quad \text{ and } \quad \sup_k ( \max_{z \in \C}\lvert \nabla \widetilde{u}_k \rvert )= \infty.
\end{equation}
Let $\Gamma = \{ z_1, \ldots, z_k, \zeta_1, \ldots, \zeta_{\ell}, \overline{\zeta_1}, \ldots, \overline{\zeta_{\ell}} \} $ be the (non-empty) set of bubbling off points of the sequence $\{ \widetilde{u}_k \}$ such that $\mathrm{Im}(z_j) =0$ and $\mathrm{Im}(\zeta_j) \neq 0$ for all $j$. Using a standard argument we find a sequence $\{ \widetilde{u}_n\}  = \{ (a_n, u_n ) \} \in \Theta$ which, up to   subsequence, converges in $C_{\mathrm{loc}}^{\infty}(\C \setminus \Gamma, \R \times S^3)$ to an embedded  generalized invariant  finite energy $\hat{J}$-holomorphic sphere $\widetilde{u}=(a,u) \colon \C \setminus \Gamma \to \R \times S^3$ such that $\Gamma$ consists only of negative punctures. In view of \eqref{eq:bubblineoff} we find symmetric periodic orbits $P_1, \ldots, P_k$ and non-symmetric periodic orbits $Q_1, \ldots, Q_{\ell}$ of the Reeb vector field $R$ such that they are the negative asymptotic limits of $\widetilde{u}$ at $z_1, \ldots, z_k$ and $\zeta_1, \ldots, \zeta_{\ell}$, respectively. Note that $\overline{Q_i} $ is the asymptotic limit at $\overline{\zeta_i}$, $i=1,\ldots, \ell$.

We carry out bubbling off analysis following \cite{HWZ95I, HWZcorr, HWZ03foliation}. 
Pick any $z_{j_0} \in \Gamma $. For each $\epsilon>0$ small enough, the limit
\[
m_{\epsilon}( z_{j_0}) = \lim_{n \to \infty} \int_{B_{\epsilon}(z_{j_0})} u_n ^* d \lambda = \int_{\p B_{\epsilon}(z_{j_0})} u ^*   \lambda 
\]
exists since $\widetilde{u}_n \to \widetilde{u}$ in $C^{\infty}_{\mathrm{loc}}( \C \setminus \Gamma , \R \times S^3  )$.
The quantity $m_{\epsilon}( z_{j_0})$ is decreasing as $\epsilon \to 0^+$, and hence we have
\[
m(z_{j_0})  := \lim_{\epsilon \to 0^+} m_{\epsilon}( z_{j_0}) \in \R,
\]
which is referred to as the \emph{mass of $\widetilde{u}$ at the puncture $z_{j_0}$}.
Let $\sigma>0$ be a positive real number less than $\min \{ \sigma_1, \sigma_2 \}$, where $\sigma_1$ is the minimum of the set of periods of all periodic   orbits of $R$ and 
\[
\sigma_2 =   \min \{ \lvert T_1- T_2 \rvert \mid T_1 \neq T_2  \leq \pi r_1^2 \text{ are periods of periodic   orbits of $R$}\}.
\]
Note that $\sigma_1, \sigma_2$ are positive since the contact form $\lambda$ is assumed to be non-degenerate. 

Choose $\epsilon >0$ such that 
\begin{equation*}\label{eq:sdf33}
 m_{\epsilon}(z_{j_0}) - m( z_{j_0}) \leq \frac{\sigma}{2}
 \end{equation*}
  and   a sequence $z_n \in \D$ defined by $a_n(z_n) = \inf_{B_{\epsilon}(  z_{j_0}  )} a_n$.  Since $z_{j_0} \in \Gamma$, we have $z_n \to z_{j_0}$ as $n \to \infty$. We write $z_n = (x_n, y_n)$, and by abuse of notation we identify $x_n$ with $(x_n, 0)$. 
Note that $x_n \to z_{j_0}$ as $n \to \infty$. We claim that    a sequence $\{ \delta_n \}$ defined by 
\begin{equation}\label{eq:choiceofzn}
\int_{B_{\delta_n}(x_n)} u_n ^* d\lambda =   m(z_{j_0})- \sigma>0
\end{equation}
satisfies that $\delta_n \to 0$ as $n \to \infty$.
Indeed, for $\epsilon>0$ sufficiently small, we find that
\[
\lim_{n \to \infty} \int_{B_{\epsilon}( x_n)} u_n ^* d\lambda = \lim_{n \to \infty} \int_{ \p B_{\epsilon}( x_n)} u_n ^* \lambda = \int_{ \p B_{\epsilon}( z_{j_0} )} u  ^* \lambda =  m_{\epsilon}(z_{j_0}).
\]
Thus, for $n$ large enough we obtain that
\[
    \int_{B_{\epsilon}(x_n)} u_n^* d \lambda \geq  m_{\epsilon}(z_{j_0}) -  {\sigma} \geq  m (z_{j_0}) - {\sigma}.
\]
  Since $x_n \to z_{j_0}$, this proves the claim. 
 We then find a sequence $R_n \to \infty$ such that 
 \begin{equation} \label{eq:choiceorRN}
  B_{R_n \delta_n}(x_n) \subset B_{\epsilon}(z_{j_0}).
  \end{equation}
Define the rescaled $\widetilde{J}$-holomorphic maps $\widetilde{v}_n \colon B_{ R_n } (0) \to \R \times S^3$ as
\begin{equation}\label{eq:limitsequence}
\widetilde{v}_n(z) = (b_n(z), v_n(z)) :=  \left( a_n (x_n + \delta_n z) - a_n(x_n + 2\delta_n), u_n (x_n + \delta_n z)\right) 
\end{equation}
which are invariant.  In view of  \eqref{eq:choiceorRN} we find that
\[
\int_{ B_{R_n}(0) \setminus \D} v_n^* d \lambda= \int_{ B_{R_n \delta_n}(x_n) \setminus B_{\delta_n}(x_n) } u_n^* d \lambda \leq \int_{ B_{\epsilon }(z_{j_0} ) \setminus B_{\delta_n}(x_n) } u_n^* d \lambda 
\]
from which together with  \eqref{eq:choiceofzn} we obtain that
\[
\limsup_{n \to \infty} \int_{ B_{R_n}(0) \setminus \D} v_n^* d \lambda  \leq m_{\epsilon}(z_{j_0}) - m( z_{j_0}) + \sigma
\]
for   $\epsilon>0$ small enough. Since $\epsilon $ was arbitrary, we have 
\[
\limsup_{n \to \infty} \int_{ B_{R_n}(0) \setminus \D} v_n^* d \lambda  \leq   \sigma,
\]
and hence    the sequence $\widetilde{v}_n$ has uniform gradient bound on $\C \setminus \D$. 
A standard argument shows that  $\widetilde{v}_n$ converges in $C^{\infty}_{\mathrm{loc}}( \C \setminus \Gamma' , \R \times S^3 )$ to an   invariant  finite energy $\widetilde{J}$-holomorphic sphere $\widetilde{v}=(b,v) \colon \C \setminus \Gamma' \to \R \times S^3  $, where $\Gamma' \subset \D$ is a finite set consisting of negative punctures such that $I(\Gamma') = \Gamma'$. Moreover, $\widetilde{v}$ tends asymptotically to $P_{j_0}  $ at the unique positive puncture $+\infty.$

If $\zeta_{i_0} \in \Gamma, $ then arguing as above we find a  finite energy $\widetilde{J}$-holomorphic sphere $\widetilde{w} =(c ,w ) \colon \C \setminus \Gamma'' \to \R \times S^3$, where $\Gamma'' \subset \D$ is a finite set consisting of negative punctures and  $\widetilde{w} $ tends asymptotically to $Q_{i_0} $ at a unique positive puncture $+\infty.$ The presence of the $\widetilde{\rho}$-symmetry  provides us  with  a finite energy $\widetilde{J}$-holomorphic sphere $\widetilde{\rho} \circ \widetilde{w}  \circ I=(c \circ I ,\rho \circ w \circ I) \colon \C \setminus I(\Gamma'') \to \R \times S^3$ which  tends asymptotically to $\overline{Q_{i_0}} $ at the unique positive puncture $+\infty.$

As in \cite[Proposition 8.1]{HWZcorr}, one can show that if   $\Gamma '\neq \emptyset$, then $0 \in \Gamma'$. Moreover, if $\Gamma'= \{ 0 \}$, then it then takes at least the amount $\sigma$ of $d \lambda$-energy away.  If $\#\Gamma' \geq2 $, then by means of \cite[Theorem 6.11]{HWZII} we find that every negative limit of $\widetilde{v}$ has smaller action than $P_{j_0}$. An analogous statement holds for $\widetilde{w}$. Thus, if either $\Gamma'$ or $\Gamma''$ is non-empty, then we  proceed in a similar way as above and obtain the   following  symmetric bubbling off tree:

\begin{enumerate}
\item The top is a single  embedded generalized    invariant finite energy sphere $\widetilde{u} \colon \C \setminus \Gamma \to \R \times S^3$. 
  It tends asymptotically to the symmetric periodic orbit $P_1 = (x_1, T_1)$ of $R_E$ at its unique positive puncture $+\infty$.
  All other punctures in $\Gamma$ are negative.
At  each $z_j \in \Gamma$ and   each $\zeta_i\in \Gamma$, the map $\widetilde{u}$  tends asymptotically to a symmetric periodic orbit $P_j$  and    a non-symmetric periodic orbit $Q_i$ of  the Reeb vector field $R$, respectively. 
The non-symmetric  periodic orbit $\overline{Q _i}  $ is the asymptotic limit of $\widetilde{u}$ at   $\overline{\zeta_i} \in \Gamma$.

\item The curves in the  middle  are    finite energy $\widetilde{J}$-holomorphic spheres $\widetilde{v}_i \colon \C \setminus \Gamma_i \to \R \times S^3$ for the $\lambda$-equation with one positive end and finitely many negative ends.

\item The bottom consists of  finite energy $\widetilde{J}$-holomorphic planes   for the $\lambda$-equation. 

\item One of the negative  asymptotic orbits of a finite energy sphere, not a plane,  coincides with the positive asymptotic orbit of some finite energy sphere in the next generation.

\item A finite energy $\widetilde{J}$-holomorphic sphere  is invariant if and only if  the positive asymptotic orbit is symmetric. 
\end{enumerate}
\medskip
\noindent
See   Figure \ref{fig:building} for an illustration of the above described symmetric bubbling off tree.
 
  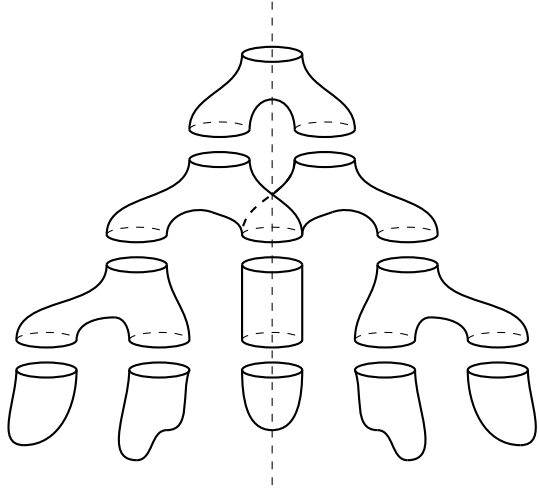
\begin{figure}[h]
\begin{center}
\begin{tikzpicture}

\draw[dashed] (0.4,3.7) to (0.4,-2.8);

 \draw[thick]   (0,3) arc (180:360:0.4cm and 0.1cm);
 \draw[thick]  (0,3) arc (180:0:0.4cm and 0.1cm);

 \draw[thick] (-0.7,2) arc (180:360:0.4cm and 0.1cm);
 \draw[dashed]  (-0.7,2) arc (180:0:0.4cm and 0.1cm);

 \draw[thick] ( 0.7,2) arc (180:360:0.4cm and 0.1cm);
 \draw[dashed]  ( 0.7,2) arc (180:0:0.4cm and 0.1cm);

 \draw [thick]  (0.7,1.6) arc (180:360:0.4cm and 0.1cm);
 \draw[thick]  (0.7,1.6) arc (180:0:0.4cm and 0.1cm);

 \draw[thick]   (-0.7,1.6) arc (180:360:0.4cm and 0.1cm);
 \draw[thick]  (-0.7,1.6) arc (180:0:0.4cm and 0.1cm);

 \draw[thick] (0,0.6) arc (180:360:0.4cm and 0.1cm);
 \draw[dashed]  ( 0, 0.6) arc (180:0:0.4cm and 0.1cm);

 \draw[thick] ( -1.8,0.6) arc (180:360:0.4cm and 0.1cm);
 \draw[dashed]  (-1.8,0.6) arc (180:0:0.4cm and 0.1cm);

 \draw[thick] (  1.8,0.6) arc (180:360:0.4cm and 0.1cm);
 \draw[dashed]  ( 1.8,0.6) arc (180:0:0.4cm and 0.1cm);

 \draw[thick] (0,0.2) arc (180:360:0.4cm and 0.1cm);
 \draw [thick] ( 0, 0.2) arc (180:0:0.4cm and 0.1cm);

 \draw[thick] ( -1.8,0.2) arc (180:360:0.4cm and 0.1cm);
 \draw  [thick] (-1.8,0.2) arc (180:0:0.4cm and 0.1cm);

 \draw[thick] (  1.8,0.2) arc (180:360:0.4cm and 0.1cm);
 \draw[thick] ( 1.8,0.2) arc (180:0:0.4cm and 0.1cm);

 \draw[thick] (0, -0.8) arc (180:360:0.4cm and 0.1cm);
 \draw[dashed]  ( 0, -0.8) arc (180:0:0.4cm and 0.1cm);

 \draw[thick] (  1.5, -0.8) arc (180:360:0.4cm and 0.1cm);
 \draw[dashed]  ( 1 .5, -0.8) arc (180:0:0.4cm and 0.1cm);

 \draw[thick] ( 3,-0.8) arc (180:360:0.4cm and 0.1cm);
 \draw[dashed,  ]  ( 3, -0.8) arc (180:0:0.4cm and 0.1cm);

 \draw[thick] (  -1.5,-0.8) arc (180:360:0.4cm and 0.1cm);
 \draw[dashed]  ( -1 .5, -0.8) arc (180:0:0.4cm and 0.1cm);

 \draw[thick] ( -3, -0.8) arc (180:360:0.4cm and 0.1cm);
 \draw[dashed]  ( -3, -0.8) arc (180:0:0.4cm and 0.1cm);

 \draw[thick] (0, -1.2) arc (180:360:0.4cm and 0.1cm);
 \draw  [thick] ( 0, -1.2) arc (180:0:0.4cm and 0.1cm);

 \draw[thick] (  1.5, -1.2) arc (180:360:0.4cm and 0.1cm);
 \draw[thick]   ( 1 .5, -1.2) arc (180:0:0.4cm and 0.1cm);

 \draw[thick] ( 3, -1.2) arc (180:360:0.4cm and 0.1cm);
 \draw[thick]  ( 3, -1.2) arc (180:0:0.4cm and 0.1cm);

 \draw[thick] (  -1.5, -1.2) arc (180:360:0.4cm and 0.1cm);
 \draw[thick]  ( -1 .5, -1.2) arc (180:0:0.4cm and 0.1cm);

 \draw[thick] ( -3, -1.2) arc (180:360:0.4cm and 0.1cm);
 \draw[thick]    ( -3, -1.2) arc (180:0:0.4cm and 0.1cm);

  \draw[thick] (0, 0.2) to  (0, -0.8);
  \draw[thick] (0.8, 0.2) to  (0.8, -0.8);
   
   \draw[thick]  (0, -1.2) to [out=270, in=180] (0.4, -2);
      \draw[thick]  (0.8, -1.2) to [out=270, in= 0] (0.4, -2);

      \draw[thick]  (1.5, -1.2) to [out=290, in=180] (1.8, -2);
            \draw[thick]  (1.8, -2) to [out= 0, in=180] (2.2, -2.4);

      \draw[thick]  (2.3, -1.2) to [out=270, in=0] (2.2, -2.4);

         \draw[thick]  (3, -1.2) to [out=270, in=180] ( 3.7 , -2.2);
      \draw[thick]  (3.8, -1.2) to [out=270, in=0] (   3.7, -2.2);

      \draw[thick]  (0, 3) to [out=270, in=90] (-0.7,  2);
      \draw[thick]  (0.8, 3) to [out=270, in=90] (1.5,  2);
         \draw[thick]  (0.1, 2) to [out=90, in=180] ( 0.4 ,  2.4);
      \draw[thick]  (0.4,  2.4) to [out=0, in=90] (   0.7, 2);

      \draw[thick]   (1.8, 0.2) to [out=270, in=90] (1.5, -0.8);
      \draw[thick]   (2.6, 0.2) to [out=270, in=90] (3.8, -0.8);
      \draw[thick]   (2.3, -0.8) to [out=90, in=180] (2.5 , -0.5 );
      \draw[thick]   (2. 5, -0.5 ) to [out= 0, in=90] (3, -0.8);

      \draw[thick, dashed ]   (0.7, 1.6) to [out=270, in=90] (0, 0.6);
       \draw[thick]   (0.7, 1.6) to [out=270, in=40] (0.4, 1.13);
      \draw[thick]   (1.5, 1.6) to [out=270, in=90] (2.6, 0.6);
      \draw[thick]   (0.8,  0.6) to [out=90, in= 200] (1.2 , 0.9 );
      \draw[thick]   (1.2, 0.9 ) to [out= 20, in=90] (1.8,  0.6);

\begin{scope}[xscale=-1, xshift=-0.8cm]

      \draw[thick]  (1.5, -1.2) to [out=290, in=180] (1.8, -2);
            \draw[thick]  (1.8, -2) to [out= 0, in=180] (2.2, -2.4);

      \draw[thick]  (2.3, -1.2) to [out=270, in=0] (2.2, -2.4);         \draw[thick]  (3, -1.2) to [out=270, in=180] ( 3.7 , -2.2);
      \draw[thick]  (3.8, -1.2) to [out=270, in=0] (   3.7, -2.2);

      \draw[thick]   (1.8, 0.2) to [out=270, in=90] (1.5, -0.8);
      \draw[thick]   (2.6, 0.2) to [out=270, in=90] (3.8, -0.8);
      \draw[thick]   (2.3, -0.8) to [out=90, in=180] (2.5 , -0.5 );
      \draw[thick]   (2. 5, -0.5 ) to [out= 0, in=90] (3, -0.8);

      \draw[thick]   (0.7, 1.6) to [out=270, in=90] (0, 0.6);
      \draw[thick]   (1.5, 1.6) to [out=270, in=90] (2.6, 0.6);
      \draw[thick]   (0.8,  0.6) to [out=90, in= 200] (1.2 , 0.9 );
      \draw[thick]   (1.2, 0.9 ) to [out= 20, in=90] (1.8,  0.6);

\end{scope}

 \end{tikzpicture}
\end{center}
\caption{An illustration of a symmetric bubbling off tree. The dashed vertical line indicates the symmetry}
\label{fig:building}
\end{figure}

\begin{proposition}\label{lem:indexbetweeb} Let $P=(x,T)$ and $Q=(y,T')$ be symmetric and non-symmetric periodic orbits appearing as asymptotic limits in the  symmetric bubbling off tree above.  Assume  that $\hat{J}$ is generic in the sense of Theorem \ref{thm:genfinite}. Then  $\mu_{\mathrm{RS}}(P )=  3/2$  and $\mu_{\mathrm{CZ}}(Q )= 2$. 
\end{proposition}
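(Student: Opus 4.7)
The strategy is a Fredholm-index counting argument on the symmetric bubbling off tree, combining the genericity of Theorem~\ref{thm:genfinite} with the automatic lower bounds for finite energy planes from Theorems~\ref{HWZIIthm} and~\ref{FK16thminv}. First I would perturb $\hat{J}$ within $\mathcal{J}_{\widetilde{\rho}}(\lambda_E,J_E,\lambda,J)$ so that every somewhere-injective piece of the tree has non-negative Fredholm index; by standard arguments using positivity of intersections and $d\lambda$-energy estimates, one reduces the case of multiply-covered pieces to the somewhere-injective case. A key point here is that the bubbling picture producing the tree is stable under small perturbations of $\hat{J}$, so passing to a generic $\hat{J}$ does not destroy the limiting configuration.

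The central observation is index additivity along the tree. Each $\widetilde{u}_n$ is a generalized invariant finite energy plane asymptotic to $P_1$; since $\muRS(P_1)=3/2$, the formula of Theorem~\ref{thm:genfinite} yields $\mathrm{Fred}(\widetilde{u}_n)=3/2+1/2-1=1$, and asymptotic orbits matched between parent and child pieces cancel pairwise in the Fredholm sum, giving
\[
\sum_{\text{pieces in }H}\mathrm{Fred}(\text{piece})\;=\;1.
\]
Every bottom piece is a finite energy plane of Fredholm index $\muRS(P)-1/2\geq 1$ (invariant case, by Theorem~\ref{FK16thminv}) or $\muCZ(Q)-1\geq 1$ (non-invariant case, by Theorem~\ref{HWZIIthm}). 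Combined with non-negativity of the Fredholm index of every piece, this forces the tree to be a chain: there is a unique bottom piece of Fredholm index $1$, and every non-bottom piece has Fredholm index $0$ and exactly one child subtree in $H$.

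If the bottom were a non-invariant plane, its asymptotic orbit $Q$ would satisfy $\muCZ(Q)=2$; by Lemma~\ref{lem:muCZfast} it would then be fast, and by \cite[Theorem~1.7]{HWZ96unknotted} $Q$ would be simply covered and unknotted with $sl(Q)=-1$, placing $Q\in\mathscr{P}_*$ and contradicting the hypothesis $\mathscr{P}_*=\emptyset$. Hence the bottom is an invariant half-plane with $\muRS(P)=3/2$, establishing the claim for that orbit. Walking back up the chain, the condition $\mathrm{Fred}(\text{piece})=0$ together with exactly one child, when substituted into the relevant Fredholm formula, pins the index of each successive asymptotic orbit to its extremal value: $\muRS=3/2$ at every symmetric puncture and $\muCZ=2$ at every non-symmetric puncture. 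A short induction propagates these values to every asymptotic orbit appearing in the tree.

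The main obstacle lies in the combinatorial bookkeeping at each inductive step: one must carefully distinguish invariant half-spheres from non-invariant spheres (which appear in mirror pairs and are individually $\widetilde{J}$-holomorphic), track which negative ends of a given piece are symmetric or non-symmetric, and insert the correct values of $k,\ell,m,n$ into the Fredholm formula of Theorem~\ref{thm:genfinite} in each configuration — in particular when a non-invariant piece (with non-symmetric positive end) terminates at a symmetric negative end, so that the chain crosses between the symmetric and non-symmetric worlds.
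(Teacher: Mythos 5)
Your telescoping identity $\sum_{\text{pieces}}\mathrm{Fred}=1$ is a correct formal computation, and the lower bound $\mathrm{Fred}\geq 1$ for bottom planes via Theorems \ref{HWZIIthm} and \ref{FK16thminv} is fine. The gap is the linchpin of your argument: the claim that, after a generic perturbation, \emph{every} piece of the tree has non-negative Fredholm index. Theorem \ref{thm:genfinite} perturbs only the cobordism almost complex structure $\hat{J}$ on $[-1,1]\times S^3$; every piece of the tree below the top level is a $\widetilde{J}$-holomorphic curve in the $\R$-invariant symplectization of $(S^3,\lambda)$ for the \emph{fixed} $J$, where no perturbation is available — and even if one also perturbed $J$, transversality fails for multiply covered curves. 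Such covers genuinely occur here: whenever an asymptotic limit is multiply covered, the tree can contain branched covers of trivial cylinders (pieces with $\pi\circ dv\equiv 0$), which are never somewhere injective and need not have non-negative index. Your one-sentence reduction ``to the somewhere-injective case'' is precisely the unsolved transversality problem for multiple covers, not a standard argument; consequently the chain structure you derive, and the walk back up the chain, are unsupported. A secondary misstep: you invoke $\mathscr{P}_*=\emptyset$ inside the proof to rule out non-invariant bottom planes, but the proposition explicitly allows non-symmetric asymptotic limits $Q$ and asserts $\muCZ(Q)=2$ for them; the hypothesis $\mathscr{P}_*=\emptyset$ enters only afterwards, in the deduction of Theorem \ref{ThmA2}, once embeddedness and $sl(Q)=-1$ have been established.

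The paper instead runs two propagations with tools that require no genericity below the top. First, the lower bounds $\muRS(P)\geq 3/2$ and $\muCZ(Q)\geq 2$ are obtained by contradiction: a hypothetical low-index positive asymptotic limit forces, via the winding-number inequalities (Lemma \ref{cor:lemaKF16}, Proposition \ref{prophwzii5.6}) when $\pi\circ dw\not\equiv 0$, and via \cite[Theorem 6.11]{HWZII} together with the iteration formulas (Propositions \ref{lem:indeitercz} and \ref{lem:indeiterrs}) when $\pi\circ dw\equiv 0$, a low-index negative asymptotic limit one generation down, until a bottom plane contradicts Theorem \ref{HWZIIthm} or \ref{FK16thminv}. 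Second, the Fredholm index theorem is applied exactly once, to the top curve (somewhere injective because $P_1$ is simply covered), yielding $1\geq \sum_j(\muRS(P_j)-\tfrac12)+\sum_i(\muCZ(Q_i)-1)$ and hence equality at the first generation; the extremal values are then pushed down generation by generation using Theorem \ref{thm:indexhzwinequva} and Proposition \ref{prop:FK16index}, which hold for arbitrary non-degenerate finite energy spheres with $\pi\circ du\not\equiv 0$. If you want to keep a global index-counting flavor, you must at minimum isolate and separately treat the $\pi\circ dv\equiv 0$ pieces, which is where the paper's use of the iteration formulas is doing the real work.
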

\begin{proof} We follow \cite[Lemma 5.14]{HS11} closely.   

Let $\widetilde{w} =(c,w)\colon \C \setminus \Gamma^0 \to \R \times S^3$ be the invariant finite energy  sphere in the symmetric bubbling off tree having $P$ as its positive asymptotic limit.  Set $\Gamma^0 = \{ z^0_1, \ldots, z_m^0, \zeta_1^0, \ldots, \zeta^0_n,  \overline{\zeta_1^0}, \ldots, \overline{\zeta^0_n}  \}$ with $\mathrm{Im}(z^0_j) =0$ and $\mathrm{Im}(\zeta^0_i)\neq 0$ for all $i,j$. Abbreviate by $P^0_j$ and $Q^0_i$ the associated negative limits which are symmetric and non-symmetric, respectively.   We claim that if  $\muRS(P ) \leq 1/2$, then there exists either $j \in \{ 1, \ldots, m\}$ such that $\muRS(P^0_j) \leq 1/2$ or $i \in \{ 1, \ldots, n\}$ such that $\muCZ(Q^0_i) \leq 1 $.   Arguing indirectly we assume that $\muRS(P^0_j) \geq 3/2$ and $\muCZ(Q^0_i) \geq 2$ for all $i, j$. The asymptotic behavior of the finite energy half-sphere $\widetilde{w}_I$ associated to $\widetilde{w}$ described in Theorem \ref{thm:asymptoticbahinv} implies that     $ \wind_{\infty}(\widetilde{w}_I,+ \infty) \leq 0$,  $\wind_{\infty}(\widetilde{w}_I, z^0_j) \geq  1  $, and $\wind_{\infty}(\widetilde{w}_I, \zeta^0_i) \geq 1$. Here, the asymptotic winding number at each puncture is computed via a global trivialization of the trivial bundle $\xi \to S^3$. 
 If    $\pi \circ dw$ does not vanish identically, then in view of Lemma \ref{cor:lemaKF16},  we find that 
\begin{align*}
\frac{1}{2} -\frac{1}{2} m - n  &\leq \mathrm{wind}_{\infty}( \widetilde{w}_I,+ \infty)  - \sum_{j=1}^m   \mathrm{wind}_{\infty}( \widetilde{w}_I, z^0_j ) - \sum_{ i=1}^n  \mathrm{wind}_{\infty}( \widetilde{w}_I, \zeta^0_i)    \\
& \leq - m - n
\end{align*}
  which is absurd. Therefore, $\pi \circ dw \equiv 0$. Denote by $\tau $ the minimal period of $P  $ so that $T =  \kappa  \tau $, where $\kappa \geq 1$ is the covering number. In view of  \cite[Theorem 6.11]{HWZII}   we find that   $n=0$ and $  P_j ^0 = (x , \kappa_j \tau )$ for all $j$   with $\kappa_j \in \N$ and  $\kappa   = \sum_{j=1}^m \kappa_j $.  If $\muRS(x , \tau/2) \geq 3/2$, in view of Proposition \ref{lem:indeiterrs} we find that $\muRS(P   ) \geq 3/2$,  which contradicts the hypothesis.    Therefore, $\muRS(x , \tau/2) \leq1/2$.
Again by   Proposition \ref{lem:indeiterrs}    $\muRS(P^0_j) \leq 1/2$ for all $j$. This contradiction proves the claim.

Let    $\widetilde{w}'=(c',w') \colon \C \setminus \Gamma_1 \to \R \times S^3$ be the finite energy sphere in the symmetric bubbling off tree having $Q$ as its positive asymptotic limit.  Assume that $\muCZ(Q ) \leq 1$. Using Propositions   \ref{lem:indeitercz} and  \ref{prophwzii5.6},  one can  argue as above and find that  among the negative asymptotic limits  there exists either a symmetric periodic orbit  of   $\muRS \leq 1/2$ or  a non-symmetric periodic orbit  $\muCZ  \leq 1 $.  

We repeat the above process and obtain a finite energy $\widetilde{J}$-holomorphic plane in the bottom of the bubbling off tree. If it is   invariant, then it is   asymptotic to a symmetric periodic orbit $\overline{P}$ such that $\muRS(\overline{P}) \leq 1/2 $. If it is not invariant, then it is asymptotic to a non-symmetric periodic orbit $\overline{Q}$ such that   $\muCZ(\overline{Q}) \leq 1$.  
These contradict   Theorems  \ref{HWZIIthm} and \ref{FK16thminv}.  Therefore, we have $\muRS(P) \geq 3/2$ and $\muCZ(Q) \geq 2$.

In view of $\muRS(P_1) = 3/2$, an application of   Theorem \ref{thm:genfinite} provides the inequality 
\[
1 \geq \sum_{j=1}^m \left( \muRS(P_j) - \frac{1}{2} \right) + \sum_{i=1}^n \left( \muCZ(Q_i) - 1\right).
\]
The preceding discussion   implies that  $1 \geq m+n$ and hence either  $m=0, \;n=1$ or  $m=1,\; n=0$.   In the first  case $\widetilde{u}$ has exactly two   negative asymptotic limits $Q^0,\; \overline{Q^0}$ which are non-symmetric  and have $\muCZ(Q^0) = \muCZ(\overline{Q^0}) = 2$. In the latter case the negative asymptotic limit $P^0$   is symmetric and has $\muRS (P^0) = 3/2$ so that in view of Proposition \ref{lem:indeiterrs} it is simply covered.

Assume the non-symmetric case. Let $\widetilde{v}^0=(b^0,v^0) \colon \C \setminus \Gamma^0  \to \R \times S^3$ be the finite energy $\widetilde{J}$-holomorphic sphere in the second generation of the symmetric bubbling off tree having $Q^0$ as its unique positive asymptotic limit. Denote by $Q^{0}_1, \ldots, Q^0_k$ its negative asymptotic limits. An application of Theorem \ref{thm:indexhzwinequva}  tells us that
\begin{align*}
\muCZ(Q^0) - \sum_{i=1}^k \muCZ(Q^0_i) &\geq 2 \mathrm{wind}_{\pi}(\widetilde{v}) + 4 - 2 (\# \Gamma^{\mathrm{even}}+1)- \# \Gamma^{\mathrm{odd}}\\
&\geq 2 - 2\# \Gamma .
\end{align*}
Since $\muCZ(Q^0) = 2$, we have $0 \geq  \sum_{i=1}^k\left(  \muCZ(Q^0_i) -2\right)$. Again by the preceding discussion we   have  $\muCZ(Q^0_i) = 2$ for all $i$.  If $Q^0_{i_0}$ is a symmetric periodic orbit for some $i_0 \in \{ 1,\ldots, k\}$, then together with Lemma \ref{inexd"rel} the fact that $\muRS(Q^0_{i_0}) \geq 3/2$ tells us that $\muRS(Q^0_{i_0})=3/2$.

 We now assume the symmetric case, i.e. $(m,n)=(1,0)$.  Let $\widetilde{v}^0=(b^0,v^0) \colon \C \setminus \Gamma^0  \to \R \times S^3$  be the invariant finite energy $\widetilde{J}$-holomorphic sphere   having $P^0$ as its   positive asymptotic limit. Denote by $P_1^0, \ldots, P_{\ell}^0$ and by $Q_1^0, \ldots, Q_k^0$ its  symmetric negative asymptotic limits and non-symmetric negative asymptotic limits, respectively. Recall that $\muRS(P_j^0) \geq 3/2$ and $\muCZ(Q_i^0) \geq 2$ for each $i,j$.  In view of Proposition \ref{prop:FK16index}, arguing as above, we find that
\[
\sum_{j=1}^{\ell} \left( \muRS(P_j^0) - \frac{1}{2} \right) + \sum_{i=1}^{k} \left( \muCZ(Q_i^0) - 2 \right) \leq 0
\]
from which we conclude that $\ell=0$ and $\muCZ(Q_i^0)=2$ for all $i$.

We have shown that all positive asymptotic limits in the second and the third generations of the bubbling off tree have $\muRS=3/2$ for symmetric asymptotic orbits and $\muCZ=2$ for non-symmetric asymptotic orbits. By repeating the process, we obtain the desired result. This finishes   the proof of the proposition.
\end{proof}

Assume that $\hat{J}$ is generic in the sense of Theorem \ref{thm:genfinite} so that the assertion of the preceding proposition holds.   Consider the finite energy planes in the bottom of the bubbling off tree. If there exists a non-invariant plane, i.e.\ a finite energy plane $\widetilde{u}=(a,u)$ whose asymptotic limit $Q$ is non-symmetric, then $\muCZ(Q)=2$. By Theorem \ref{HWZIIthm}, $\widetilde{u}$ is fast. By definition, $u$ is an immersion transverse to $R$, and hence $\widetilde{u}$ is an immersion. We consider the set 
\[
D=\{ (z_1, z_2) \in \C \times \C \mid z_1 \neq z_2, \; \widetilde{u}(z_1) =\widetilde{u}(z_2) \}.
\]
If $D$ has a limit point in $( \C \times \C) \setminus \Delta$, where $\Delta$ is the diagonal, then Carleman's similarity principle tells us that there exists a polynomial $p \colon \C \to \C$ such that $\mathrm{deg}(p) \geq2$ and a $\widetilde{J}$-holomorphic plane $\widetilde{v} \colon \C \to \R \times S^3$ such that $\widetilde{u} = \widetilde{v} \circ p$. This implies that $d \widetilde{u}$ has a zero, which is impossible because $\widetilde{u}$ is an immersion. Therefore, $D$ is a discrete set. Recall that $\widetilde{u}$ is the limit of a sequence $\widetilde{u}_n$ of some embedded $\widetilde{J}$-holomorphic maps. If $D \neq \emptyset$, then positivity and stability of self-intersections of immersed pseudoholomorphic curves tell us that $\widetilde{u}_n$ has a self-intersection for $n$ sufficiently large. This contradiction shows that $D =\emptyset$. In view of the asymptotic behavior of a non-degenerate finite energy plane we conclude that $\widetilde{u}$ is embedded. By   \cite[Theorem 1.7]{HWZ96unknotted} we then have $sl(Q)=-1$. In other words,
\[
Q \in \mathscr{P}_* = \emptyset,
\]
which is absurd. Therefore, every bottom finite energy plane is invariant, and its asymptotic limit $P$ is an unknotted simply covered symmetric periodic orbit of $\muRS (P) = 3/2$. In view of Corollary \ref{cor:muRSfast} it is fast. Arguing as above, we find that every bottom finite energy plane is embedded. Again by   \cite[Theorem 1.7]{HWZ96unknotted}, the asymptotic limit $P$ has  $sl(P)=-1$.   Lemma \ref{inexd"rel}  shows that  $\muCZ(P) \in \{2,3,4 \}$.  This proves the assertion of the theorem in the non-degenerate case.

\begin{remark}\label{rmk:infcat} We have assumed that $\lambda$ is non-degenerate. However, the proof only makes use of   periodic orbits of $R$ of periods less than $T_1 = \pi r_1^2$. Therefore, the assertion also holds if  we assume that all periodic orbits of periods less than $T_1$ are non-degenerate. 
\end{remark}

 In order to complete the proof, assume that $\lambda$ is dynamically convex, not necessarily non-degenerate.

 \begin{lemma}\label{lemmarealrbinson} Let $(S^3, \lambda, \rho)$ be a    real tight three-sphere. Assume that $\lambda$ is dynamically convex and fix $T>0$. Then there exists a sequence of smooth $\rho$-invariant functions $f_k \colon S^3 \to (0, \infty)$ converging to the constant function $1$ having the properties that all periodic orbits of $f_k \lambda$ of periods less than $T$ are non-degenerate and have $\muCZ \geq 3$ for every $k$.    In particular, every symmetric periodic orbit of $f_k \lambda$ of period less than $T$ has $\muRS \geq 3/2$ for all $k$. 
 \end{lemma}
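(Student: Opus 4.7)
The plan is to combine a $\rho$-equivariant Robinson-type genericity theorem with the openness of the dynamical convexity condition when restricted to periodic orbits of bounded period. Denote by $\mathcal{F}_\rho$ the Fr\'echet space of smooth $\rho$-invariant positive functions on $S^3$ endowed with the $C^\infty$-topology. For $f \in \mathcal{F}_\rho$ sufficiently close to the constant function $1$, the Reeb flow of $f\lambda$ is a $C^\infty$-small perturbation of the Reeb flow of $\lambda$, so every periodic orbit of $f\lambda$ of period less than $T$ either comes from deformation of a periodic orbit of $\lambda$ of period less than $T+1$ or is created by a degenerate orbit of $\lambda$ splitting into several orbits.

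The first step is to produce, for each $k$, a function $f_k \in \mathcal{F}_\rho$ with $\|f_k - 1\|_{C^k} < 1/k$ such that every periodic orbit of $f_k \lambda$ of period less than $T$ is non-degenerate. The non-symmetric orbits come in pairs $\{P, \rho(P)\}$, and a localized perturbation supported in a disjoint tubular neighborhood of $P$, averaged under $\rho$, breaks degeneracy at both orbits simultaneously. For a symmetric orbit $P$ one performs a perturbation supported in a $\rho$-invariant tubular neighborhood; the return map carries a real structure induced by $\rho$, and the space of $\rho$-equivariant perturbations is still large enough to hit the transverse complement of the degenerate locus in the appropriate jet space. A Baire category argument inside $\mathcal{F}_\rho$ then produces the required $f_k$. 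This equivariant Robinson-type theorem is essentially contained in the Frauenfelder--Kang and Frauenfelder--van Koert framework.

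The second step is to upgrade the non-degenerate orbits obtained in Step 1 to ones satisfying $\muCZ \geq 3$. The definition of the Conley-Zehnder index via formula \eqref{cznew} extends continuously to degenerate orbits, and the index is lower semi-continuous in the symmetric-loop parameter $S_\Phi$ defined in Subsection \ref{sec:ind1}. If $P_k$ is a periodic orbit of $f_k \lambda$ of period at most $T$, then by Arzel\`a--Ascoli a subsequence of $P_k$ converges to a periodic orbit $P_\infty$ of $\lambda$ of period at most $T$; since $\lambda$ is dynamically convex, $\muCZ(P_\infty) \geq 3$, and lower semi-continuity together with non-degeneracy of $P_k$ yield $\muCZ(P_k) \geq 3$ for $k$ large enough. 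Intersecting this condition with the generic set from Step 1 and passing to a diagonal subsequence produces the desired $f_k$.

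The final assertion about symmetric orbits is an immediate consequence of Lemma \ref{inexd"rel}: from $\muCZ(P) \geq 3$ and $|\muCZ(P) - 2\muRS(P)| \leq 1$ we get $\muRS(P) \geq 1$, and since $\muRS(P) \in \Z + \tfrac{1}{2}$ this forces $\muRS(P) \geq 3/2$. The main technical obstacle is the equivariant perturbation at symmetric orbits, where the reality constraint restricts the allowed perturbations; this is overcome by exploiting the real structure on the linearized Poincar\'e return map so that $\rho$-equivariant variations still project surjectively onto the finite-dimensional obstruction space to non-degeneracy.
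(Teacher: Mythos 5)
Your Steps 2 and 3 are essentially the paper's argument: the paper also fixes the non-degenerate approximating forms first and then argues by contradiction via Arzel\`a--Ascoli and lower semi-continuity of the generalized Conley--Zehnder index that orbits of period $<T$ must have $\muCZ\geq 3$ for $k$ large, and the final claim about $\muRS\geq 3/2$ is exactly the second assertion of Lemma \ref{inexd"rel} (your re-derivation of it is correct). Where you diverge is Step 1. The paper does not perturb the contact form directly by localized $\rho$-equivariant bumps near individual orbits; instead it works in the symplectization $(\R\times S^3, d(e^r\lambda))$, perturbs within the space $C^\infty_c(\R\times S^3;\widetilde\rho)$ of $\widetilde\rho$-invariant Hamiltonians that are constant outside a compact set, invokes a second-category genericity statement adapted from \cite[Theorem B.1]{CF09} to make all orbits on the zero level non-degenerate, realizes the perturbed levels as graphs over $S^3$ transverse to $\partial_r$, and then uses a \emph{real} Gray stability theorem to convert the perturbed hypersurfaces into $\rho$-invariant functions $f_k\to 1$. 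This buys two things: it gets non-degeneracy of \emph{all} periodic orbits (not just those of period $<T$), and it shifts the equivariant transversality burden onto an established Hamiltonian genericity theorem rather than an ad hoc jet-space argument.

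That said, the crux of your Step 1 --- the claim that the space of $\rho$-equivariant perturbations ``is still large enough to hit the transverse complement of the degenerate locus'' at a symmetric orbit --- is precisely the nontrivial content of any equivariant Kupka--Smale statement, and you assert it rather than prove it. The reversibility constraint genuinely restricts the admissible variations of the linearized return map (the monodromy of a symmetric orbit factors through the half-period map conjugated by the real structure), so surjectivity onto the obstruction space must be checked in the reversible symplectic group, not just asserted by analogy with the non-equivariant case. Moreover, your attribution of this to the Frauenfelder--Kang / Frauenfelder--van Koert framework is not accurate as stated; the paper's own source for the equivariant genericity is a modification of Cieliebak--Frauenfelder. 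If you want to keep your direct perturbation route, you need to either supply the linear-algebraic surjectivity argument at symmetric orbits or cite a reversible Kupka--Smale theorem that actually contains it; otherwise the cleaner path is the paper's detour through invariant Hamiltonians and real Gray stability.
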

 \begin{proof} 
 We follow \cite[Corollary 6.2]{HWZ98convex} closely.
  Denote by $R= R_\lambda$ the Reeb vector field of $\lambda$.  
Recall that the  symplectization $( \R \times S^3, d(e^r \lambda))$,  where $r $ denotes the coordinate on $\R$, is equipped with the exact anti-symplectic involution  $\widetilde{\rho}:= \mathrm{Id}_{\R} \times \rho$.     
We identify $S^3$ with $\{ 0 \} \times S^3$.

 Denote by $C^{\infty}_c( \R \times S^3 ; \widetilde{\rho})$ the space of $\widetilde{\rho}$-invariant smooth Hamiltonians on $\R \times S^3$ such that each  element $H$ equals   a non-zero constant outside a compact subset of $\R \times S^3$. 
 A slight modification of the proofs of the first three steps in the proof of  \cite[Theorem B.1]{CF09} shows that there is a subset $\mathcal{U} \subset C^{\infty}_c( \R \times S^3 ; \widetilde{\rho})$ of the second category which consists of  $H \in C^{\infty}_c( \R \times S^3 ; \widetilde{\rho})$  such that $0$ is a regular value of $H$ and every periodic   orbit of $X_H$ on $H^{-1}(0)$ is non-degenerate. 
 
Pick  $H \in  C^{\infty}_c( \R \times S^3 ; \widetilde{\rho})$ such that $0$ is a regular value of $H$, $H^{-1}(0)=S^3$ and  $X_H = R$ along $S^3$. Find a sequence $H_k \in  \mathcal{U}$ converging to $H$ as $k \to +\infty$. 
There exists $k_0 \in \N$ such that for all $k \geq k_0$,  the regular level  $S_k:=H_k^{-1}(0) \subset \R \times S^3$ is $C^\infty$-close to $S^3$ so that the Liouville vector field $\p_r$ is transverse to $S_k$. Note that  $X_{H_k} = R_k$ along $S_k$, where $R_k$ is the Reeb vector field of the contact form $e^r \lambda |_{S_k}$ on $S_k$.  In particular, every periodic    orbit of  $R_k$  is non-degenerate for all $k \geq k_0$.

  Let $k \geq k_0$. Find a smooth function $\psi_k \colon S^3 \to \R$ such that 
\[
S_k = \{ (\psi_k(x), x) \in \R \times S^3 \mid x \in S^3 \} 
\]
so that we have a diffeomorphism
\[
\Phi_k \colon S^3 \to S_k, \quad x \mapsto (\psi_k(x), x)
\]
satisfying
\[\rho = \Phi_k ^{-1} \circ \widetilde{\rho}|_{M_k} \circ \Phi_k .\]
Abbreviate $\lambda_k :=   \Phi_k ^*( e^r \lambda |_{S_k}) $ that is a $\rho$-anti-invariant contact form on $S^3$,  $C^\infty$-close to $\lambda$. 
It follows that  $(S^3, \lambda_k, \rho)$ is a real contact manifold. The real Gray stability theorem  \cite[Theorem 2.1.6]{Evers} now states that there is a diffeomorphism $\Psi_k \colon S^3 \to S^3$ such that it   satisfies $\Psi_k \circ \rho = \rho \circ \Psi_k$ and there is a    
sequence of $\rho$-invariant smooth functions $f_k \colon S^3 \to \R$ such that $\Psi_k ^* \lambda_k = f_k \lambda$ and $f_k$ converges to the constant function $1$ as $k \to +\infty$.  Note that   the contact form  $f_k \lambda$ is non-degenerate for all $k \geq k_0$.

Assume by contradiction that we find a periodic orbit $P_k = (x_k, T_k)$ of $f_k \lambda $ having  period  $T_k \leq T$ and $\mu_{\mathrm{CZ}}(P_k)  \leq 2$ for all $k \geq k_0$. Since the periods are uniformly bounded, 
using Arzel\`a-Ascoli theorem  
we find   a periodic orbit $P_0 = (x_0, T_0)$ of $\lambda$ such that, up to subsequence, $T_k \to T_0$ and $x_k (T_k \cdot) \to x_0 ( T_0 \cdot)$ in $C^{\infty}(S^1)$ as $k \to +\infty$. The lower semi-continuity of   the Conley-Zehnder index  implies   $\mu_{\mathrm{CZ}}(P_0) \leq 2$, contradicting the dynamical convexity of $\lambda$. The statement on the Robbin-Salamon index follows from Lemma \ref{inexd"rel}.
This finishes the proof of the lemma. 
 \end{proof}

 \begin{remark}\label{rmk:lensspaceok} 
 The only reason we work on $S^3$ in the above lemma is that  the Conley-Zehnder and Robbin-Salamon indices are well-defined on $S^3$.  An analogous statement holds for lens spaces if we replace periodic orbits by contractible periodic orbits. 
  \end{remark}

   Recall that $\lambda = f\lambda_0$ for some $f \colon S^3 \to (0, \infty)$  such that $f<f_E$.  Choose a sequence $f_k$ of positive smooth functions on $S^3$ such that $f_k$ is $\rho$-invariant and converges to the constant function $1$, and all periodic orbits of $f_k \lambda$ of periods less than $T_1$ are non-degenerate and have $\muCZ \geq 3$. In view of  Remark \ref{rmk:infcat},  the proof in the non-degenerate case tells us that there exists  a sequence $P_k = (x_k, T_k)$ such that $P_k$ is an unknotted simply covered symmetric periodic orbit of $f_k \lambda$ such that $sl(P_k)=-1$ and $\muCZ(P_k) \in \{3,4\}$ for all $k$. Moreover, we have the uniform bound $T_k \leq \pi r_1^2$. Therefore, by Arzel\`a-Ascoli theorem, up to a subsequence, $P_k$ converges in $C^{\infty}(S^1)$ to a symmetric periodic orbit $P=(x,T)$ of $\lambda$ as $k\to \infty$. Since the Conley-Zehnder index is lower semi-continuous,  we have $ \muCZ(P) \in \{3,4\}$.  By \cite[Theorem 3.4]{HWZ98convex}, $P$ is simply covered. It follows that $P$ is transversally isotopic to  all $P_k$ for $k$ sufficiently large, and hence it is unknotted and has self-linking number equal to $-1$. 
  
   We have proved the theorem for $\rho$ with $\theta_1 = \theta_2 =0$. 
  An analogous  argument also holds   with arbitrary $\theta_1, \theta_2$.    This finishes the proof of Theorem \ref{ThmA2}.

 \subsection{The existence of a $(\rho, \sigma)$-symmetric periodic orbit} We prove Theorem \ref{ThmB2} by following the argument in \cite{Sch19} closely.  Fix $p \geq 1$ and assume that  $\lambda$ is dynamically convex. We only consider the case that $\theta_2=0$, namely, we have       $\rho(z_1, z_2) = (e^{i \theta} \overline{z_1}, \overline{z_2})$ for some $\theta \in \R$. The other case can be proved in a similar manner by interchanging the roles of $r_1$ and $r_2$ in the $\lambda_E$-dynamics.

Assume first that $\overline{\lambda}$ is non-degenerate.    For sake of convenience, we may assume that $\theta_1=0$.   We also assume that $\hat{J}$ is generic in the sense of Theorem \ref{thm:genfinite}. We carry out bubbling off analysis as in the preceding subsection   and obtain a symmetric bubbling off tree. The analysis we need is done in \cite[Section 3]{HS16elliptic} and \cite[Section 4.1]{Sch19}. The top of the symmetric bubbling off tree is a single generalized invariant finite energy sphere $\widetilde{u} \colon \C \setminus \Gamma \to \R \times L(p,1)$ which is asymptotic to $\overline{P_1} ^p $ at $+\infty$.  All punctures in $\Gamma$ are negative. Moreover, every  asymptotic limit in the bubbling off tree is contractible.  In particular, symmetric asymptotic orbits are symmetrically contractible, meaning that if $x \colon \R \to L(p,1)$ is a symmetric asymptotic limit, then there exists a spanning disk $v_x \colon \D \to L(p,1)$ such that $\overline{\rho} \circ v_x = v_x \circ I$. Therefore, every negative  asymptotic limit has well-defined Conley-Zehnder index or Robbin-Salamon index.   In what follows, $\muCZ(P)$ or $\muRS(P)$ denotes the Conley-Zehnder index or the Robbin-Salamon index of $P$ computed using a  trivialization that extends to a (symmetric) spanning disk, respectively.

  \begin{lemma} \label{lemindxc} We have   $\Gamma = \{ z_* \}$ with $\mathrm{Im}(z_*) =0$. The associated asymptotic limit   $P $   has $\muRS(P)=3/2$.

  \end{lemma}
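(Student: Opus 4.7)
The plan is to apply the Fredholm index formula of Theorem~\ref{thm:genfinite} to the invariant half-sphere $\widetilde{u}_I$ and combine it with dynamical convexity, sharpening the inductive argument of Proposition~\ref{lem:indexbetweeb}. I will write $\Gamma = \{z^0_1,\ldots,z^0_m,\zeta^0_1,\ldots,\zeta^0_n,\overline{\zeta^0_1},\ldots,\overline{\zeta^0_n}\}$ with $\mathrm{Im}(z^0_j)=0$ and $\mathrm{Im}(\zeta^0_i)\neq 0$, denote by $P_j^-$ and $Q_i^-$ the corresponding symmetric and non-symmetric negative asymptotic limits of $\widetilde{u}$, and observe that $\widetilde{u}_I$ has a single positive puncture at $+\infty$ asymptotic to $\overline{P_1}^p$, together with $m$ negative boundary punctures and $n$ negative interior punctures in $H$.

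The first step will be the identity $\muRS(\overline{P_1}^p)=3/2$. Since $\overline{P_1}^p$ is contractible in $L(p,1)$ and lifts via the $p$-fold covering $S^3\to L(p,1)$ to the simply covered elliptic orbit $P_1$ of the irrational ellipsoid, a symmetric trivialization extending over a symmetric spanning disk lifts compatibly, and the index coincides with $\muRS(P_1)=3/2$ computed in $S^3$.

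Next I will invoke the inductive tree argument of Proposition~\ref{lem:indexbetweeb}, now simplified by the fact that dynamical convexity applies directly: the bubbling tree is rooted at the contractible orbit $\overline{P_1}^p$ and bottoms out in finite energy planes, whose unique asymptotic orbits are automatically contractible, so one propagates contractibility up the tree and concludes that every $P_j^-$ is symmetrically contractible while every $Q_i^-$ is contractible. Then dynamical convexity together with Lemma~\ref{inexd"rel} gives $\muRS(P_j^-)\geq 3/2$ and $\muCZ(Q_i^-)\geq 3$. Substituting these bounds into Theorem~\ref{thm:genfinite} with $k=1$, $\ell=0$ and the generic $\hat{J}$ fixed earlier, I obtain
\[
0\;\leq\;\mathrm{Fred}(\widetilde{u}_I)\;\leq\;\tfrac{3}{2}-\tfrac{3m}{2}-3n+\tfrac{1+m}{2}+n-1\;=\;1-m-2n,
\]
so $m+2n\leq 1$. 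Since the bubbling analysis of the preceding subsection produces $|\Gamma|=m+2n\geq 1$, we are forced into $(m,n)=(1,0)$, and hence $\Gamma$ consists of a single real point $z_*$. Equality throughout the chain then pins down $\muRS(P)=3/2$ for the asymptotic limit $P$ at $z_*$.

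The main obstacle is the contractibility step for asymptotic orbits lying deeper in the bubbling tree, since in $L(p,1)$ individual asymptotic homotopy classes may be non-trivial in $\pi_1(L(p,1))=\Z_p$. I would handle this by lifting the tree to $S^3$ and exploiting that $\overline{P_1}^p$ lifts to the contractible simple orbit $P_1$, controlling the classes as in the argument of \cite{Sch19}.
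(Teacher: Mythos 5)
Your index computation reproduces, essentially verbatim, the paper's argument for the case in which the top curve $\widetilde{u}$ is somewhere injective, and in that case it is correct: with $\muRS(\overline{P_1}^p)=3/2$, $\muRS(P_j^-)\geq 3/2$ and $\muCZ(Q_i^-)\geq 3$, Theorem \ref{thm:genfinite} forces $m+2n\leq 1$, hence $(m,n)=(1,0)$ and, by equality, $\muRS(P)=3/2$. The genuine gap is that Theorem \ref{thm:genfinite} applies only to \emph{somewhere injective} immersed curves, and $\widetilde{u}$ need not be one: its positive asymptotic limit $\overline{P_1}^p$ is the $p$-fold cover of a simple orbit, so for $p\geq 2$ the curve may factor as $\widetilde{u}=\widetilde{v}\circ q$ with $\deg(q)\geq 2$. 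This is not a side issue --- it is the case to which the paper devotes most of the proof. The underlying curve $\widetilde{v}$ is asymptotic at $+\infty$ to $\overline{P_1}^{k}$ with $k=p/\deg(q)$, which is \emph{non-contractible}, and it necessarily has at least one non-contractible negative asymptotic limit; for such orbits dynamical convexity gives no index bound, and $\muRS$ and $\muCZ$ are not even well defined without fixing trivializations. The paper handles this with the explicit trivializations $\overline{\Psi}^{k}$, the computations $\muRS^{\overline{\Psi}^k}(\overline{P_1}^k)=k-\tfrac12$ and $\muRS^{\Psi'}(P_{z_1'})\geq k-\tfrac12$ (via Schneider's bound $\muCZ^{\Psi'}(P_{z_1'})\geq 2k-1$), the observation that $q$ has real coefficients so that all non-symmetric negative limits of $\widetilde{v}$ are contractible, the Fredholm inequality applied to $\widetilde{v}_I$, and finally the rigidity $q(z)=z_1'+a(z-z_*)^{\deg q}$ to conclude $\Gamma=\{z_*\}$ and $\muRS(P)=3/2$ for $P=(P_{z_1'})^{\deg q}$. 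None of this appears in your proposal.

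Your closing paragraph also misdiagnoses the obstacle. The difficulty in Lemma \ref{lemindxc} is not contractibility of orbits ``deeper in the bubbling tree'' (the lemma concerns only the top curve; the later generations are treated after it), but the possible multiple covering of $\widetilde{u}$ itself, forced by the multiply covered positive asymptotic limit. The suggested remedy of lifting to $S^3$ is not developed and does not obviously help: the Fredholm/transversality statement you need is formulated downstairs for generic $\hat{J}\in\mathcal{J}_{\widetilde{\rho}}$, and the paper resolves the problem entirely in $L(p,1)$ by passing to the underlying somewhere injective curve and doing the index bookkeeping with trivializations that do not extend over disks, following \cite{Sch19}. To complete your proof you must supply this non-somewhere-injective case.
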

  \begin{proof}  
  
      Different from the previous subsection,   $\widetilde{u}$ is not necessarily embedded because the positive asymptotic limit $\overline{P_1} ^p $ is not simply covered.  
  We first assume that $\widetilde{u}$ is not somewhere injective. Then we find a somewhere injective generalized invariant finite energy sphere $\widetilde{v} \colon \C \setminus \Gamma' \to \R \times L(p,1)$ and a polynomial $q \colon \C \to \C$ with $\mathrm{deg}(q) \geq2$ such that $\widetilde{u} = \widetilde{v} \circ q$,   $q^{-1}( \Gamma')=\Gamma$, and $I \circ q = q \circ I$, implying that  $q$ has real coefficients.
    Moreover, $\widetilde{v}$ is asymptotic to $\overline{P_1}^k $ at $+\infty$, where $k=p/\mathrm{deg}(q)$, and the punctures in $\Gamma' \neq \emptyset$  are negative. Note that $\overline{P_1}^k $ is non-contractible. Set $\Gamma'= \{ z_1' , \ldots, z_m', \zeta_1', \ldots, \zeta_{\ell}', \overline{\zeta_1'}, \ldots, \overline{\zeta_{\ell}'}\}$ with $\mathrm{Im}(z_j')=0$ and $\mathrm{Im}(\zeta_j')\neq0$ for all $j$. Abbreviate by $P_{z_j '}$ and $Q_{\zeta_i '}$ the associated asymptotic limits of $\widetilde{v}$.  We observe that at least one negative asymptotic limit is non-contractible. Indeed, if every negative asymptotic limit is contractible, then this forces $\overline{P_1}^k $ to be contractible.

 We claim that  the non-symmetric asymptotic limits   are contractible. 
 To this end,  let $w \in \Gamma'$ be a negative puncture of $\widetilde{v}$ whose corresponding asymptotic limit, denoted by $P_w$, is non-contractible. Write $q^{-1}(w) = \{ z_1, \ldots, z_r\} \subset \Gamma$. 
We then find $d \neq 0$ and $n_1, \ldots, n_r \in \N$ such that
 \[
 q( z) = w + d ( z-z_1)^{n_1} \cdots (z-z_r)^{n_r}.
 \]
If $q'(z_j) \neq 0$ for some $j$, then    $q$ is a local bi-holomorphism near $z_j$, implying that $P_w$ is contractible, a contradiction. Therefore, $q'(z_j)=0$, and hence $n_j \geq 2$ for all $j$. 
Since all coefficients of $q$ are real, this implies that $w$ is real, proving the claim.

It was shown in  \cite[Lemma 4.7]{Sch19}  that if there exists more than one non-contractible negative asymptotic limit, then the polynomial $q$ has at least ${\mathrm{deg}}(q)$ critical points. This contradiction shows that there is a unique non-contractible negative asymptotic limit that is symmetric.  Without loss of generality, we may assume that $P_{z_1'}  = (x',T')$ is the non-contractible negative asymptotic limit. Its $\mathrm{deg}(q)$-fold cover $(P_{z_1'} )^{\mathrm{deg}(q)}$  is (symmetrically) contractible.

    Recall that $P_1=(x,T)$ is   a non-degenerate  symmetric periodic orbit on $(S^3, \lambda_E, \rho)$ such that $x(t)=(e^{2 \pi i t/T} ,0)$ and $T=\pi r_1^2$. Under the projection $\pi_{p,1} \colon S^3 \to L(p,1)$, we obtain a simply covered $\overline{\rho}$-symmetric periodic orbit  $\overline{P_1 }=(x, \overline{T})$ on $L(p,1)$, where $\overline{T}=T/p$.   
 Consider the $\rho$-symmetric unitary trivialization
  \[
 \widetilde{\Psi} \colon S^1 \times \C \to x_T^*\xi, \quad ( e^{2\pi  it}, \alpha) \mapsto  ( x_T(t), e^{ -2\pi  i(p-1)t } \alpha).
  \]
 Denoting by $\widetilde{\Psi}(t, \cdot):=\widetilde{\Psi}(e^{  2 \pi i t}, \cdot)$,   $t \in \R/\Z$, we find that 
 \[
 \widetilde{\Psi}\left( t + \frac{1}{p} \right) = e^{ 2 \pi i  /p} \widetilde{\Psi}(t), \quad \forall t \in \R/\Z.
 \]
 Therefore, it descends to a $\overline{\rho}$-symmetric unitary trivialization $\overline{\Psi}$ of $x_{\overline{T}}^* \overline{\xi}$.       It was shown in \cite[Lemma 4.8]{Sch19} that $\muCZ^{ \overline{\Psi}^k}( \overline{P_1}^k      )= 2k-1$.  By Lemma \ref{inexd"rel}, we have 
 
 \noindent
  \begin{align*}
 &\lvert \muCZ^{ \overline{\Psi}^k}(  \overline{P_1}^k   )   -2 \muRS^{ \overline{\Psi}^k}(      \overline{P_1}^k   )  \rvert \leq 1 \\
 \Rightarrow \quad  &   \muCZ^{ \overline{\Psi}^k}(  \overline{P_1}^k    )  -1 \leq 2 \muRS^{ \overline{\Psi}^k}(    \overline{P_1}^k   )  \leq  \muCZ^{ \overline{\Psi}^k}(  \overline{P_1}^k    )  +1 \\
  \Rightarrow \quad  &  k-1 \leq   \muRS^{ \overline{\Psi}^k}(  \overline{P_1}^k     )  \leq  k  .
  \end{align*}
  Since $\muRS^{ \overline{\Psi}^k}(   \overline{P_1}^k   )  \in \Z+ \frac{1}{2}$,  we conclude that
 \[
 \muRS^{ \overline{\Psi}^k}( \overline{P_1}^k  ) = k-\frac{1}{2}.  
 \]
  Let $v_j'$ be a symmetric spanning disk of $P_{z_j '}$ for $j=2,\ldots, m$ and   $w_i'$  be a  capping disk  of  $Q_{\zeta_i '}$ for    $i=1, \ldots, \ell$. Choose a symmetric unitary trivialization $\widehat{\Psi}$ over a cylinder $\widetilde{v} \# v_2' \# \cdots \#v_m' \# w_1'\# \cdots \# w_{\ell}' \# \overline{w_1'} \# \cdots \# \overline{w_{\ell}'}$ such that $\widehat{\Psi} = \overline{\Psi}^k$ along $\overline{P_1}^k $. Denote by $\Psi'$ the induced symmetric unitary trivialization   of $(x'_{T'})^* \overline{\xi}$.  
  The second half of the proof  of   \cite[Lemma 4.9]{Sch19} carries over to our case, and hence we obtain  $ \muCZ^{\Psi'} (  {P_{z_1'}}   ) \geq 2k-1.$ 
  Arguing as before, we find that 
 \[
 \muRS^{\Psi'}(   {P_{z_1'}}   ) \geq k-\frac{1}{2}.  
 \]
 Since $\widetilde{u}$ is the limit of a sequence of embedded pseudoholomorphic curves, by positivity and stability of self-intersections of immersed pseudoholomorphic curves, $\widetilde{v}$ is immersed. An application of Theorem \ref{thm:genfinite} provides us with the following inequality
  \[
 \muRS^{\overline{\Psi}^k} (   \overline{P_1}^k  ) -   \muRS^{\Psi'}(   {P_{z_1'}} ) -  \sum_{j=2}^m \left( \muRS( P_j'  ) - \frac{1}{2} \right) -  \sum_{i=1}^{\ell} \left( \muCZ( Q_i'  ) -1 \right) \geq 0.
  \]
Together with the dynamical convexity of $\lambda$  this implies that   $(m,\ell)=(1,0)$, i.e.\  $\Gamma'=\{z_1'\}$,  and $\muRS^{\Psi'}(   {P_{z_1'}}   ) =k-\frac{1}{2}$.

As has been shown by Schneider in \cite[Section 4]{Sch19}, we have $q(z) = z_1'+a(z-z_*)^{\mathrm{deg}(q)}$ for some non-zero $a \in \C$ and hence $\Gamma=\{z_*\}$. This implies that $\widetilde{u}$ has a unique negative asymptotic limit $P:= (P_{z_1'} )^{\mathrm{deg}(q)}$ at $z_*$ which is   contractible.

     There exists an embedded symmetric spanning disk $u_0 \colon \D \to S^3$ of $x_T$ such that $u_0^* \xi$ has a $\rho$-symmetric unitary trivialization $\widehat{\Phi}$ that induces a $\rho$-symmetric unitary trivialization $\Phi \colon S^1 \times \C \to x_T^* \xi$ given by
   \[
   \Phi( e^{2\pi it}, \alpha) = ( x_T(t), e^{ -2\pi  it } \alpha), \quad e^{2\pi   it} \in S^1,\; \alpha \in \C.
   \]
    For a proof, see \cite[Lemma 1.6]{HWZ95I}. 
Using the symmetric unitary trivialization $\widehat{\Phi} \# \widehat{\Psi}$   over the symmetric spanning disk $u_0 \# \widetilde{v} \# v_2' \# \cdots \# v_m' \# w_1'\# \cdots \# w_{\ell}' \# \overline{w_1'} \# \cdots \# \overline{w_{\ell}'}$ of $P$, Schneider showed  that $\muCZ(P)=3$. Arguing as above we finally have
\[
\muRS(P)=\frac{3}{2}.
\]

We now assume that $\widetilde{u}$ is somewhere injective. Since it is immersed, we are able to apply Theorem \ref{thm:genfinite}  and obtain that
\[
\muRS( \overline{P_1}^p) - \frac{1}{2} \geq \sum_{z \in  \Gamma_{\p}} \left( \muRS(P_z) - \frac{1}{2} \right)+ \sum_{\zeta \in  \Gamma_{\mathrm{int}}\cap H } \left( \muCZ(Q_z) - 1\right).
\]
Since $\muRS( \overline{P_1}^p) =3/2$ and $\lambda$ is dynamically convex, we conclude that  $\Gamma= \{z\}$ and $\muRS(P_z)=3/2$. This finishes the proof of the lemma.  \end{proof}

  Let $\widetilde{v}  =(b, v) \colon \C \setminus \Gamma^0 \to \R \times L(p,1)$ be the invariant finite energy $\widetilde{J}$-holomorphic sphere in the second generation of the symmetric bubbling off tree. The preceding lemma tells us that the positive asymptotic limit $P$  has   $\muRS(P)=3/2$.    Assume that $\Gamma^0 \neq \emptyset$. We first suppose that $\pi \circ dv \not\equiv0$. Let $\mathfrak{T}$ be a symmetric unitary trivialization of $\widetilde{v}^*\overline{\xi}$ that extends to (symmetric) spanning disks of the negative asymptotic limits. Since $\muRS(P) =3/2$, $\muRS(P_z)\geq3/2$ for all $ z \in \Gamma^0_{\p}$, and $\muCZ(Q_{\zeta}) \geq3$ for all $ \zeta \in \Gamma^0_{\mathrm{int}}\cap H$, we have $\wind_{\infty}^{\mathfrak{T}}(\widetilde{v}_I, +\infty) \leq 1/2$, $\wind_{\infty}^{\mathfrak{T}}(\widetilde{v}_I, z) \geq1$ for all $ z \in \Gamma^0_{\p}$, and $\wind_{\infty}^{\mathfrak{T}}(\widetilde{v}_I, \zeta) \geq 2$ for all $ \zeta \in \Gamma^0_{\mathrm{int}}\cap H$.  An application of Lemma \ref{cor:lemaKF16} tells us that 
  \begin{align*}
  0& \leq \wind_{\pi}(\widetilde{v}) \\
  &= \wind_{\infty}^{\mathfrak{T}}(\widetilde{v}_I, +\infty) - \sum_{ z \in \Gamma^0_{\p}} \wind_{\infty}^{\mathfrak{T}}(\widetilde{v}_I, z) - \sum_{ \zeta \in \Gamma^0_{\mathrm{int}}\cap H} \wind_{\infty}^{\mathfrak{T}}(\widetilde{v}_I, \zeta) \\
  & \quad -1 + \frac{\# \Gamma^0_{\p}}{2} + \# ( \Gamma^0_{\mathrm{int}}\cap H) \\
  &\leq -\frac{1}{2} -  \frac{\# \Gamma^0_{\p}}{2} - \# ( \Gamma^0_{\mathrm{int}}\cap H) 
  \end{align*} 
  which is absurd. Therefore, $\pi \circ dv  \equiv0$. Recall that   $\# \Gamma^0 \geq 2$.  By  \cite[Theorem 6.11]{HWZII} we have $\mathrm{im} \widetilde{v} = \R \times x(\R)$, where $P=(x,T)$, and $\Gamma^0 = \Gamma^0_{\p}$, i.e.\ every negative asymptotic limit is symmetric.  Denote by $P_{\min}=(x, T_{\min})$ the underlying simply covered symmetric periodic orbit of $P$ with $T=m_{\infty} T_{\min}$ for some $m_{\infty} \in \N$. For each $z \in \Gamma^0$, the associated asymptotic limit $P_z =(x, T_z)$ satisfies that $T_z = m_z T_{\min}$, where $m_z \in \N$ and $\sum_{z \in \Gamma^0} m_z = m_{\infty}$. Since $\# \Gamma^0 \geq 2$, we have $m_{\infty} \geq 2$.

  \begin{lemma} \label{lem:covnoncontd} The symmetric periodic orbit $P_{\min}$  is non-contractible, and the covering number $m_{\infty}$ is minimal in the sense that if $P_{\min}^k$ is contractible, then $k \geq m_{\infty}$.  Moreover, $m_{\infty}$ divides $p$. 
  \end{lemma}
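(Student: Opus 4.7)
The strategy is to identify $m_\infty$ with the order $d$ of $[P_{\min}]$ in $\pi_1(L(p,1)) \cong \Z_p$. Once $d = m_\infty$ is established, all three claims follow at once: $P_{\min}$ is non-contractible because $d \geq 2$ (since $m_\infty \geq 2$), $m_\infty$ is minimal by the very definition of $d$, and $m_\infty \mid p$ since $d$ necessarily divides $|\Z_p| = p$.

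First I would use the standing fact, recalled at the outset of the bubbling off analysis, that every asymptotic orbit appearing anywhere in the symmetric bubbling off tree is contractible in $L(p,1)$. (This propagates upward from the bottom planes, which automatically give null-homotopies, through each intermediate sphere-with-punctures.) Applying this to the negative asymptotic limits $P_z = P_{\min}^{m_z}$ of $\widetilde{v}$, one obtains $d \mid m_z$ for every $z \in \Gamma^0$, and therefore $d \mid m_\infty = \sum_{z \in \Gamma^0} m_z$.

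The core step is to rule out $d < m_\infty$ by contradiction. If $d < m_\infty$, set $k := m_\infty/d \geq 2$. Then $P_{\min}^d$ is a contractible symmetric periodic orbit, and dynamical convexity of $\overline{\lambda}$ forces $\muCZ(P_{\min}^d) \geq 3$. Lemma \ref{inexd"rel}, applied with a symmetric unitary trivialization of $(x_{dT_{\min}})^*\overline{\xi}$ extending over a $\overline{\rho}$-invariant spanning disk (available since $\pi_2(L(p,1)) = 0$), combined with $\muRS \in \Z + \tfrac{1}{2}$, yields $\muRS(P_{\min}^d) \geq 3/2$. Iterating via Proposition \ref{lem:indeiterrs} $k$ times,
\begin{equation*}
\muRS(P) \;=\; \muRS\!\bigl((P_{\min}^d)^k\bigr) \;\geq\; \frac{2k+1}{2} \;\geq\; \frac{5}{2},
\end{equation*}
contradicting $\muRS(P) = 3/2$ established in Lemma \ref{lemindxc}. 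Hence $d = m_\infty$, and the three conclusions of the lemma follow.

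The technically delicate part is verifying that $P_{\min}^d$ really is a symmetric periodic orbit with a well-defined Robbin--Salamon index, so that Proposition \ref{lem:indeiterrs} can be invoked with a symmetric trivialization compatible with the one used for $P$. Symmetry of $P_{\min}^d$ is inherited from symmetry of $P$ together with the reparametrization discussion in Subsection \ref{sec:ind2}; symmetric contractibility, and hence well-definedness of $\muRS(P_{\min}^d)$, then reduces to ordinary contractibility via $\pi_2(L(p,1)) = 0$ as described at the end of Subsection \ref{sec:ind2}. Checking that the symmetric trivialization used for $P_{\min}^d$ iterates to the one used for $P$ — so the iteration inequality is genuinely applicable — is the only subtlety one must be careful with.
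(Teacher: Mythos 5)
Your overall strategy is genuinely different from the paper's and, in outline, it works: you identify $m_\infty$ with the order $d$ of $[P_{\min}]$ in $\pi_1(L(p,1))\cong\Z_p$, which delivers all three assertions simultaneously. The paper instead proves only non-contractibility directly, via transverse rotation numbers (contractibility of $P_{\min}$ would force its rotation number to be $\leq 1$ because $\muCZ(P)\in\{3,4\}$ and rotation numbers scale under iteration, contradicting the bound $>1$ imposed by dynamical convexity), and then cites Schneider's Lemma~4.10 for minimality and divisibility. Your contradiction mechanism --- dynamical convexity forces index growth under iteration, which is incompatible with $\muRS(P)=3/2$ --- is the same mechanism underlying the paper's rotation-number computation, so the two proofs are close in spirit; yours has the advantage of being self-contained and of making the divisibility $m_\infty\mid p$ transparent (order of an element of $\Z_p$).

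There is, however, a genuine gap precisely in the step you flagged as delicate. To invoke Lemma \ref{inexd"rel} and Proposition \ref{lem:indeiterrs} for $P_{\min}^d$ you need $\muRS(P_{\min}^d)$ to be defined, i.e.\ $P_{\min}^d$ must be \emph{symmetrically} contractible, with a symmetric trivialization extending over a $\overline{\rho}$-invariant spanning disk. Your justification --- that symmetric contractibility ``reduces to ordinary contractibility via $\pi_2(L(p,1))=0$'' --- is not what Subsection \ref{sec:ind2} provides: vanishing of $\pi_2$ gives independence of the index from the \emph{choice} of invariant spanning disk, not its \emph{existence}. On $S^3$ existence follows from $\pi_1(S^3,\mathrm{Fix}(\rho))=\{0\}$, but on a lens space a contractible symmetric orbit need not bound an invariant disk; in the bubbling-off tree this property is supplied for the asymptotic limits by the invariant holomorphic curves, and $P_{\min}^d$ with $d<m_\infty$ is not a priori one of these. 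The gap is easily repaired by running the argument with the Conley--Zehnder index instead: $P_{\min}^d$ is contractible, so dynamical convexity gives $\muCZ(P_{\min}^d)\geq 3$, and Proposition \ref{lem:indeitercz} (with disk trivializations, which are compatible under iteration because $c_1(\overline{\xi_0})$ vanishes on $\pi_2$) yields $\muCZ(P)\geq 2k+1\geq 5$, while $\muRS(P)=3/2$ together with Lemma \ref{inexd"rel} forces $\muCZ(P)\leq 4$. With that substitution your proof is complete.
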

  \begin{proof}    Let $Q=(y,T)$ be a periodic orbit on $(M, \lambda)$ and choose a unitary trivialization $\mathfrak{T}$ of $y^*\xi$. Associated to $Q$ and $\mathfrak{T}$ is  the \emph{transverse rotation number}   $\rho^{\mathfrak{T}}(Q)$ with respect to $\mathfrak{T}$. This depends only on the homotopy class of $\mathfrak{T}$. We do not give the definition, but recollect the following important properties. 
   \begin{itemize}
   \item Given $n \in N$, we write $Q^n=(y, nT)$. Then 
   \begin{equation}\label{eq:rotnume}
   \rho^{   \mathfrak{T}^n}(Q^n)=n    \rho^{\mathfrak{T}}(Q).
   \end{equation}
   \item    If $Q^n$ is non-degenerate, then
   \begin{equation}\label{eq:rotnumind}
   \muCZ^{\mathfrak{T}^n} (Q^n)  =\begin{cases} 2\rho^{  \mathfrak{T}^n}(Q^n)   & \text{ if $Q^n$ is hyperbolic,}  \\ 2 \left \lfloor \rho^{ \mathfrak{T}^n}(Q^n)  \right \rfloor  +1 & \text{ if $Q^n$ is elliptic.}  \end{cases}
   \end{equation}
   In particular, if $Q^n$ is elliptic, $\rho^{\mathfrak{T}^n}(Q^n)$ is not an integer.
   \end{itemize}
  \noindent 
  If $Q$ is contractible, we simply write as $\rho(Q)$, where the transverse rotation number is computed using any unitary trivialization over any spanning disk.

  Assume by contradiction that $P_{\min}$ is contractible. Since $\muRS(P)=3/2$, we have $\muCZ(P) \in \{3 , 4 \}$.  If $\muCZ(P)=3$, then by \eqref{eq:rotnumind} we have $1 <   \rho(P) <2$ and hence 
  \[
  \frac{1}{m_{\infty}} < \rho(P_{\min}) < \frac{2}{m_{\infty}} \leq 1
  \]
  by \eqref{eq:rotnume}. If   $\muCZ(P)=4$, then   $\rho(P)=2$ and hence $\rho(P_{\min})=2/m_{\infty} \leq 1$.  In any case we have $\rho(P_{\min}) \leq 1$.  On the other hand, since $P_{\min}$ is contractible, the dynamical convexity of the contact form implies  $\muRS(P_{\min})\geq 3/2$ so that $1 <   \rho(P_{\min}) $. This  contradiction shows that $P_{\min}$ is non-contractible.
  
  The remaining assertions are proved in \cite[Lemma 4.10]{Sch19}. This completes the proof of the lemma.   
  \end{proof}

  \noindent
  Since $P_z =(x,  m_z T_{\min})$ is contractible, and $ 1 \leq m_z \leq m_{\infty}$, by the preceding lemma we have $m_z = m_{\infty}$ for all $ z \in \Gamma^0$ from which we obtain that  $\# \Gamma^0=1$. This contradicts to the fact that $\# \Gamma^0 \geq2$.

  We have shown that the symmetric bubbling off tree consists of the two pseudoholomorphic curves: The top is a   generalized invariant finite energy sphere $\widetilde{u} \colon \C \setminus \{z_* \} \to \R \times L(p,1)$ such that $\overline{P_1}^p$ is the asymptotic limit at $+\infty$, and the negative asymptotic limit is a symmetric periodic orbit $P$ of $\muRS(P)=3/2$. The bottom is an invariant finite energy plane $\widetilde{v} =(b,v) \colon \C \to \R \times L(p,1)$ asymptotic to $P$. Since $\muRS(P)=3/2$, by Corollary \ref{cor:muRSfast} we find that $\widetilde{v}$ is fast, and hence   $v$ is an immersion transverse to the Reeb vector field. Moreover, it is somewhere injective. Otherwise, there exists a somewhere injective invariant finite energy plane $\widetilde{w}$ and a polynomial $q \colon \C \to \C$ with $\mathrm{deg}(q) \geq 2$ such that $\widetilde{v}= \widetilde{w}\circ q$. This forces $d \widetilde{v}$ to have a zero, which is impossible since $\widetilde{v}$ is an immersion. As in the previous subsection, using positivity and stability of self-intersections of immersed pseudoholomorphic curves, we find that $\widetilde{v}$ is embedded.

  We claim that there exists $R\gg1$ such that $v$ is an embedding on $\C \setminus B_R(0)$. Lemma \ref{lem:covnoncontd} implies that either $P$ is simply covered or there exists a simply covered non-contractible  symmetric periodic orbit $P_{\min} = (x, T_{\min})$ such that $P=(P_{\min})^m$ for some $m \geq2$ that divides $p$. In the first case, the claim is straightforward. We now assume the last case. We shall show that $+\infty$ is a relative prime puncture, meaning that given a symmetric unitary trivialization $\Phi$ of $x_{T_{\min}}^*\overline{\xi}$, the winding number $\wind_{\infty}^{\Phi^m}(\widetilde{v})$ and $m$ are relative prime integers. This is independent of the choice of a trivialization. Once this is proved, the claim follows from \cite[Lemma 2.11]{HS16elliptic}. It remains to compute $\wind_{\infty}^{\Phi^m}(\widetilde{v})$ with respect to a suitable trivialization $\Phi$.

    Let $\Psi$ be   a symmetric unitary trivialization of $\overline{\xi}$ along periodic orbit $Q$ such that $[Q]=1 \in \Z_p \cong \pi_1( L(p,1))$,  constructed in the proof of Lemma \ref{lemindxc}. Since $P=(P_{\min})^m$ is contractible, it follows that $\Psi^p$ defines a symmetric unitary trivialization of $x_T^* \overline{\xi}$. Recall that by $\muRS(P) $ we mean  the Robbin-Salamon index of $P$ computed using a symmetric unitary trivialization $\mathfrak{T}$  that extends over a symmetric spanning disk. In view of Lemma \ref{lem:difftrirel}  we find that
    \[
    \muRS^{\Psi^p}(P) = \muRS(P)+ \wind (\Psi^p, \mathfrak{T}). 
    \]
     It was shown  in \cite[Section 4]{Sch19} that $\wind (\Psi^p, \mathfrak{T})=p-2$. Since $\muRS(P)=3/2$, it follows that
     \[
        \muRS^{\Psi^p}(P) = p - \frac{1}{2}.
        \]
  Using the analytic definition of the Robbin-Salamon index     and the fastness of $\widetilde{v}$ we obtain that
  \[
  \wind_{\infty}^{\Psi^p}(\widetilde{v}) = 2  \wind_{\infty}^{\Psi^p}(\widetilde{v}_I) = p-1.
  \]
            Since $P_{\min}$ is non-contractible, we have $[P_{\min}]:=r \neq 0  \in \Z_p \cong   \pi_1 ( L(p,1)) $ from which we find that $\Psi^r$ provides a symmetric unitary trivialization of $\overline{\xi}$ along $P_{\min}$. We set $\Phi=\Psi^r$ and compute $\wind_{\infty}^{\Phi^m}(\widetilde{v})$. We find that 
\[
\wind_{\infty}^{\Phi^m}(\widetilde{v}) = \wind_{\infty}^{\Psi^{rm}}(\widetilde{v}) = \wind_{\infty}^{\Psi^{np}}(\widetilde{v}) = n \wind_{\infty}^{\Psi^{p}}(\widetilde{v}) =n(p-1),
\]    
    where the fact that $rm=np$ for some $n \in \N$ in the second identity follows from the contractibility of $P=(P_{\min})^m$. Since $m$ divides $p$, the two integers $m$ and $p-1$ are relative prime. It is easy to see that $n$ and $m$ are also relatively prime. Therefore, $\wind_{\infty}^{\Phi^m}(\widetilde{v})$ and $m$ are relatively prime from which the claim is proved.

     In view of the previous claim, the argument given in \cite[Section 3.1.8]{HS16elliptic} carries over to our case, and hence we have $v(\C) \cap x(\R) = \emptyset$. Since $\widetilde{v}$ is embedded, we are able to apply \cite[Theorem 2.3]{HWZII} to obtain that $v \colon \C \to L(p,1) \setminus x(\R)$ is injective. Therefore, we are able to compute the rational self-linking number of $P$ with respect to $v$, and by \cite[Lemma 3.10]{HLS15} we have
     \[
     sl(P) = - \frac{1}{m},
     \]
     where $m$ is the covering number of $P$ that divides $p$. An application of \cite[Lemma 7.3]{HLS15} tells us that $P$ binds a $\overline{\lambda}$-adapted $m$-rational open book decomposition with disk-like pages. In view of \cite[Theorem 1.3]{HLS15} we find that $L (p,1)$ is diffeomorphic to $L(m,k)$ for some $k$. The classification of lens spaces implies that $m=p$.  We have found that there exists a $p$-unknotted $\overline{\rho}$-symmetric periodic orbit  of $sl=-1/p$ whose $p$-th iterate  has $\muRS =3/2$, provided that $\overline{\lambda}$ is non-degenerate.

    As in the previous subsection, an argument using Remark \ref{rmk:infcat}, Lemma \ref{lemmarealrbinson} and Remark \ref{rmk:lensspaceok} shows that there exists a sequence of non-degenerate contact forms $\overline{\lambda}_n$ on $L(p,1)$ converging to $\lambda$ such that $\overline{\rho}^*\overline{\lambda}_n  = -\overline{\lambda}_n $ and  for all $n$ large enough, $\overline{\lambda}_n$ admits a $p$-unknotted $\overline{\rho}$-symmetric periodic orbit $P_n$ of $\muRS(P_n^p) = 3/2$ and $sl(P_n)=-1/p$.  Since their periods are uniformly bounded by $\pi r_1 ^2/p$, by applying the Arzel\`a-Ascoli theorem we find that up to  subsequence, $P_n$ converges in $C^{\infty}$ to a $\overline{\rho}$-symmetric periodic orbit $P$ of $\lambda$. Since $P_n^p$ is contractible for every $n$, the $p$-th iterate $P^n$ is also contractible. The lower semi-continuity of the Conley-Zehnder index implies that $\muCZ(P^p) \in \{3,4\}$. It is not hard to see that $P$ is simply covered so that it is transversely isotopic to $P_n$ for $n$ sufficiently large. Therefore, $P$ is $p$-unknotted and has $sl(P)=-1/p$.      
   This finishes the proof of Theorem \ref{ThmB2}.

  \bibliographystyle{abbrv}
\bibliography{mybibfile}

\end{document}